\tikzset{
    state/.style={
           rectangle,
           rounded corners,
           draw=black, very thick,
           minimum height=2em,
           inner sep=2pt,
           text centered,
           },
}
\tikzset{global scale/.style={
    scale=#1,
    every node/.append style={scale=#1}
  }
}
\definecolor{darkgreen}{rgb}{0.00,0.50,0.25}
\definecolor{purple}{rgb}{0.54,0.17,0.89}
\definecolor{purple1}{rgb}{1.00,0.00,1.00}
\titleformat*{\subsection}{\bfseries}
\theoremstyle{plain}                       
\newtheorem{lemma}{Lemma}[section]
\newtheorem{theorem}[lemma]{Theorem}
\newtheorem{corollary}[lemma]{Corollary}
\newtheorem{remark}[lemma]{Remark}
\newtheorem{definition}[lemma]{Definition}
\newtheorem{proposition}[lemma]{Proposition}
\theoremstyle{remark}
\numberwithin{equation}{section}
\def\Xint#1{\mathchoice
  {\XXint\displaystyle\textstyle{#1}}%
  {\XXint\textstyle\scriptstyle{#1}}%
  {\XXint\scriptstyle\scriptscriptstyle{#1}}%
  {\XXint\scriptscriptstyle\scriptscriptstyle{#1}}%
  \!\int}
\def\XXint#1#2#3{{\setbox0=\hbox{$#1{#2#3}{\int}$}
  \vcenter{\hbox{$#2#3$}}\kern-.5\wd0}}
\def\dashint{\Xint-}
\DeclareMathOperator*{\esssup}{ess\,sup}
\begin{document}
\allowdisplaybreaks

\author[a]{Li Wang\thanks{Email: li$\_$wang@lzu.edu.cn.}
}

\author[a]{Qiang Xu\thanks{Email: xuq@lzu.edu.cn.}
}

\affil[a]{School of Mathematics and Statistics, Lanzhou University, Lanzhou, 730000, China.}

%


%


\title{\textbf{Annealed Calder\'on-Zygmund
estimates for  elliptic \\ operators with random coefficients\\ on $C^{1}$ domains}}

\maketitle
\begin{abstract}
Concerned with elliptic operators with stationary random coefficients
governed by linear or nonlinear mixing conditions and bounded (or unbounded) $C^1$ domains, this paper mainly studies
(weighted) annealed Calder\'on-Zygmund estimates, some of which are new even in a
periodic setting. Stronger than some classical results derived by a perturbation argument in the deterministic case, our results own a scaling-invariant property, which additionally requires the  non-perturbation method (based upon a quantitative homogenization theory and a set of functional analysis techniques) recently developed by M. Joisen and F. Otto \cite{Josien-Otto22}.
To handle boundary estimates in certain UMD (unconditional martingale differences) spaces, we hand them over to  Shen's real arguments \cite{Shen05,Shen23} instead of using Mikhlin's theorem. As a by-product, we also established ``resolvent estimates''.
The potentially attractive part is to show how the two powerful kernel-free methods work together to make the results clean and robust.

\medskip
\noindent
\textbf{Key words:}
annealed Calder\'on-Zygmund estimates; random coefficients; resolvent estimates; stochastic homogenization.
\end{abstract}

\tableofcontents

\section{Introduction}

\subsection{Motivation and main results}
\noindent
Over the past few decades,
since Calder\'on-Zygmund estimates have many applications in applied mathematics, there have been an enormous amount of research activities (see e.g. \cite{Armstrong-Daniel16,Auscher07,
Auscher-Qafsaoui02,Auscher-Coulhon-Duong-Hofmann04,
Avelin-Kuusi-Mingione18,Avellaneda-Lin91,
Breit-Cianchi-Diening-Kuusi-Schwarzacher18,
Byun-Wang04,Caffarelli-Peral98,
Dong-Phan23,Dong-Kim11,Geng12,Geng-Shen24,
Gloria-Neukamm-Otto20,Jerison-Kenig95,Josien23,Krylov99,Mingione07,
Shen05,Shen-Zhuge18,Wang-Xu22,Wang03,Yao-Zhou17,Zhang-Zhou14}) concerning various versions of the classical Calder\'on-Zygmund estimates established in \cite{Calderon-Zygmund52} under different assumptions.
In the present work,  we are interested in
the so-called \emph{annealed Calder\'on-Zygmund estimates} for
linear elliptic equations with random coefficients, whose notion
was formally introduced in \cite{Duerinckx-Otto20,Josien-Otto22}, arising from the quantitative stochastic homogenization theory. Also,
the present contribution is to stress the related ``resolvent estimates''.

First, quantifying homogenization errors (in the sense of oscillating or   fluctuation) plays a crucial role in the real implementation of the numerical algorithm and therefore is one of the core research fields in  the quantitative homogenization theory (see e.g.  \cite{Armstrong-Kuusi-Mourrat19, Clozeau-Josien-Otto-Xu,Josien-Otto22,Wang-Xu22}). If one ignores the two-scale asymptotic analysis, from the perspective of harmonic analysis, the error estimates can be transformed into the
study of the boundedness of singular integrals (non-convolution type) in appropriate Banach spaces, which is known quite early in the periodic framework (see e.g. \cite{Avellaneda-Lin91}).
Second, the fluctuation estimate of (higher-order) correctors received
a great concern in the quantitative stochastic homogenization theory, which also heavily relies on the
weighted annealed Calder\'on-Zygmund estimates (see e.g. \cite{Clozeau-Josien-Otto-Xu,Duerinckx23,Duerinckx-Otto20}).
Last but not least, when addressing evolution problems arising from heterogeneous materials,
some $L^p$-resolvent estimates seem to be a promising initial approach, which are closely intertwined with the Calder\'on-Zygmund theory.
Furthermore, it leads to a nontrivial extension of the semigroup theory
and presents us with some new challenges due to the stochastic nature of the coefficients.
Therefore, these aforementioned three points serve as the primary motivation behind the present work (see also the previous work \cite[Subsection 1.1]{Wang-Xu22}).


Precisely, we are interested in the elliptic operator in the divergence form\footnote{For the ease of the statement, we adopt scalar notation and language throughout the paper.} $\mathcal{L}:=-\nabla\cdot a\nabla$,
where $a$ satisfies $\lambda$-uniformly elliptic conditions, i.e., for some
$\lambda\in(0,1)$ there holds
\begin{equation}\label{a:1}
\lambda|\xi|^2 \leq  \xi\cdot a(x)\xi \leq \lambda^{-1}|\xi|^2
\qquad
\text{for~all~}\xi\in\mathbb{R}^{d}\text{~and~}x\in\mathbb{R}^d.
\end{equation}

One can introduce the configuration space that is the set
of coefficient fields satisfying $\eqref{a:1}$, equipped with a
probability measure on it, which is also called as an ensemble
and denote the expectation by $\langle\cdot\rangle$. This ensemble
is assumed to be \emph{stationary}, i.e., for all shift vectors $z\in\mathbb{R}^d$, $a(\cdot+z)$ and $a(\cdot)$ have the same law under
$\langle\cdot\rangle$. Also, we
assume that the ensemble $\langle\cdot\rangle$ satisfies the \emph{spectral gap
condition} (known as a nonlinear mixing condition): For any random variable $F$, which is a functional of $a$, there exists $\lambda_1\geq 1$ such that
\begin{equation}\label{a:2}
 \big\langle (F-\langle F\rangle)^2 \big\rangle \leq \lambda_1
 \Big\langle\int_{\mathbb{R}^d}\Big(\dashint_{B_1(x)}\big|\frac{\partial F}{\partial a}\big|\Big)^2 dx\Big\rangle,
\end{equation}
where
the definition of the functional derivative of $F$ with respect to $a$ can be found in $\eqref{functional}$ (or see \cite[pp.15]{Josien-Otto22}).
Moreover, for obtaining pointwise estimates, a \emph{local smoothness} assumption for the admissible coefficients
has been postulated, i.e., there exists $\sigma_0\in(0,1)$ such that for any $0<\sigma<\sigma_0$
and $1\leq \gamma<\infty$ there holds
\begin{equation}\label{a:3}
 \big\langle  \|a\|_{C^{0,\sigma}(B_1)}^\gamma
 \big\rangle
 \leq \lambda_2(d,\lambda,\sigma,\gamma).
\end{equation}

Throughout the paper, let
$B_r(x)$ denote the open ball centered at $x$ of radius $r$, which
is also abbreviated as $B$. When we use this shorthand notation,
let $x_B$ and $r_B$ represent the center and radius of $B$, respectively.
In addition, we mention that a class of Gaussian ensembles stated in \cite[Section 3.2]{Josien-Otto22} satisfy the above assumptions $\eqref{a:1}, \eqref{a:2}$ and $\eqref{a:3}$.

\medskip

The definition of domains relevant to the main results of the paper is introduced.
\begin{definition}[Lipschitz, $C^1$ domains]
Let $\Omega\subset\mathbb{R}^d$ with $d\geq 2$ be a (bounded) domain. $\Omega$
is called a Lipschitz domain, with Lipschitz constant less than or equal to $M_0$
if we can cover a neighborhood of $\partial\Omega$ by
(finite many) balls $B$ (with $x_B\in\partial\Omega$) so
that, in an appropriate orthonormal coordinate system, $B\cap\Omega=
\{x=(x',x_d): x'\in\mathbb{R}^{d-1}\text{~and~}
x_d>\varphi(x')\}\cap B$, where $\varphi:\mathbb{R}^{d-1}\to\mathbb{R}$ is
a Lipschitz function, i.e., $|\varphi(x')-\varphi(y')|\leq M_0|x'-y'|$ for
any $x',y'\in\mathbb{R}^{d-1}$, with $\varphi(0)=0$. The domain is called a
$C^1$ domain if the function $\varphi$ can be additionally chosen to be of
class $C^1$.
\end{definition}

We now consider the following Dirichlet boundary value problems:
\begin{equation}\label{pde:0}
(\text{D})~\left\{\begin{aligned}
-\nabla\cdot a \nabla u
&= \nabla\cdot f
&\quad&\text{in}~~\Omega;\\
u &= 0
&\quad&\text{on}~~\partial\Omega,
\end{aligned}\right.
\end{equation}
where $\Omega$ could be an unbounded domain such as the
half-space $\mathbb{R}^d_{+}:=\{x\in\mathbb{R}^d:x_d>0\}$
and exterior domains. Then,
the main results are stated as follows.

\begin{theorem}[annealed Calder\'on-Zygmund estimates]\label{thm:1}
Let $\Omega\subset\mathbb{R}^d$ with $d\geq 2$ be a bounded $C^{1}$ domain.
Suppose that $\langle\cdot\rangle$ is stationary and satisfies the spectral gap condition $\eqref{a:2}$,
and the (admissible) coefficient additionally satisfies the local regularity condition $\eqref{a:3}$.
Let $\nabla u$ and square-integrable $f$ be
associated by the elliptic systems $\eqref{pde:0}$.
Then, for any $1<p,q<\infty$ with $\bar{p}>p$, we have the annealed Calder\'on-Zygmund estimate
\begin{equation}\label{A}
\Big(\int_{\Omega}\langle|\nabla u|^p\rangle^{\frac{q}{p}} \Big)^{\frac{1}{q}}
\lesssim_{\lambda,\lambda_1,\lambda_2,d,M_0,p,\bar{p},q} \Big(\int_{\Omega}\langle|f|^{\bar{p}}\rangle^{\frac{q}{\bar{p}}} \Big)^{\frac{1}{q}}.
\end{equation}
In particular, if $\Omega$ is an unbounded $C^1$ domain, then
the estimate $\eqref{A}$ still holds true.
\end{theorem}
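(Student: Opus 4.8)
The estimate \eqref{A} will be localised to a finite cover of $\overline{\Omega}$: interior balls $B$ with $2B\subset\Omega$, and boundary balls $B$ centred on $\partial\Omega$ subordinate to the $C^{1}$ charts. On an interior ball, a cut-off reduces $u$ to a solution on all of $\mathbb{R}^{d}$, for which the annealed Calder\'on--Zygmund estimate is the whole-space bound furnished by the non-perturbative two-scale scheme of Josien--Otto \cite{Josien-Otto22}; the commutator terms created by the cut-off involve only $\nabla u$ on the doubled ball and are reabsorbed in the course of Shen's real-variable argument \cite{Shen05,Shen23}. The boundary balls carry the genuinely new work: there the target is the Dirichlet estimate in a $C^{1}$ half-ball, which for transparency I describe in the model case of the half-space $\mathbb{R}^{d}_{+}$, namely $\|\nabla u\|_{L^{q}(\mathbb{R}^{d}_{+};L^{p}(\langle\cdot\rangle))}\lesssim\|f\|_{L^{q}(\mathbb{R}^{d}_{+};L^{\bar p}(\langle\cdot\rangle))}$ for all $1<p,q<\infty$ with $\bar p>p$, the coefficient still being stationary and satisfying \eqref{a:2} and \eqref{a:3}.

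The half-space estimate is proved by a boundary adaptation of the two-scale mechanism of \cite{Josien-Otto22}. At scales below the homogenisation length $1$, assumption \eqref{a:3} gives $a\in C^{0,\sigma}$ almost surely, so every realisation obeys the classical boundary Calder\'on--Zygmund estimate on $C^{1}$ domains with constants uniform in $\omega$; I would in turn derive this deterministic estimate from Shen's real-variable argument, using the $L^{2}$ energy bound and the boundary reverse-H\"older inequality (up to any finite exponent) available for continuous coefficients on $C^{1}$ domains. At scales above $1$, quantitative stochastic homogenisation near $\{x_{d}=0\}$ supplies the two-scale expansion $\nabla u\approx(\mathrm{Id}+\nabla\phi)\nabla\bar u$, with $\bar u$ solving the homogenised Dirichlet problem $-\nabla\cdot\bar a\nabla\bar u=\nabla\cdot f$ on $\mathbb{R}^{d}_{+}$ and with the interior and boundary-layer correctors plus the flux corrector controlling the homogenisation error in annealed norms. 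For the leading term, H\"older's inequality in the probability variable extracts the stationary factor $\mathrm{Id}+\nabla\phi$, which has all moments finite under the spectral gap \eqref{a:2}, at the cost of raising the stochastic exponent from $p$ to some $p_{3}\in(p,\bar p\,]$; the remaining $\nabla\bar u$ is obtained from $f$ by a purely spatial operator, and the required bound $\|\nabla\bar u\|_{L^{q}(\mathbb{R}^{d}_{+};L^{p_{3}}(\langle\cdot\rangle))}\lesssim\|f\|_{L^{q}(\mathbb{R}^{d}_{+};L^{p_{3}}(\langle\cdot\rangle))}$ is nothing but the $L^{q}(\mathbb{R}^{d}_{+};X)$-boundedness of the homogenised Dirichlet solution operator for the UMD space $X=L^{p_{3}}(\langle\cdot\rangle)$. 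Since the Dirichlet condition breaks translation invariance in $x_{d}$ this operator is not a clean Fourier multiplier, so instead of Mikhlin's operator-valued multiplier theorem I would get its $L^{q}(X)$-boundedness from Shen's real-variable argument — whose ingredients, the scalar $L^{2}$-theory and a boundary reverse-H\"older inequality, pass verbatim to the UMD-valued setting — which yields the range $q\ge 2$, with $1<q<2$ then following by duality against the adjoint homogenised Dirichlet problem (of the same type). The two scales are summed by the functional-analytic device of \cite{Josien-Otto22}. The loss of stochastic integrability in the H\"older extraction and in the annealed error bounds is structural: it forces $\bar p>p$ (the constant degenerating as $\bar p\downarrow p$) rather than $\bar p=p$.

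Patching the interior and boundary contributions over the finitely many charts gives \eqref{A} on a bounded $C^{1}$ domain. An unbounded $C^{1}$ domain needs no new idea: the cover of $\partial\Omega$ is locally finite, the two-scale splitting and the real-variable argument are local, and the constants are scale-invariant, so compactness of $\overline{\Omega}$ is never used; alternatively one exhausts $\Omega$ by bounded $C^{1}$ subdomains with uniformly bounded characteristics and lets them fill $\Omega$, the scale-invariance of \eqref{A} keeping the constants uniform. (The same scheme produces the weighted estimate announced in the abstract, once Shen's argument is used in its $A_{q}$-weighted form and the corrector bounds are invoked with weights.)

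The principal obstacle, as I see it, is the boundary half-space analysis: constructing and estimating the boundary-layer correctors and controlling the error $\nabla u-(\mathrm{Id}+\nabla\phi)\nabla\bar u$ in the annealed norms of \eqref{A}, and then dovetailing this with the small-scale deterministic Calder\'on--Zygmund estimate so that gluing the two scales costs only stochastic integrability — the gap $\bar p>p$ — and nothing on the spatial exponent $q$. The borderline nature of $C^{1}$ (rather than $C^{1,\alpha}$) domains is what excludes a naive Campanato--Morrey iteration at the boundary and makes Shen's $L^{p}$-type real-variable argument the natural tool throughout.
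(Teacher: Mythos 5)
Your proposal is genuinely different from the paper's proof, and it has a concrete gap that the paper's route is specifically designed to avoid.

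The paper never localizes to interior balls and boundary charts, and it never works in the half-space model. Instead it introduces the massive operator $\mathcal L+1/T$ on the full domain, proves a suboptimal annealed estimate with a $T$-dependent constant $C_{p,r,\bar r}(T)\lesssim T^{d/2}$ (Theorem \ref{thm:2}, built from Propositions \ref{P:3} and \ref{P:6}), and then upgrades to a $T$-uniform constant via the Josien--Otto high/low-pass splitting and a real (K-method) plus complex interpolation bootstrap (Section \ref{sec:4}); finally $T\to\infty$ by weak compactness. Crucially, the two-scale expansion in the low-pass step uses only whole-space stationary \emph{massive} correctors $(\phi_\tau,\sigma_\tau,\psi_\tau)$ and a spatial cut-off $\eta$ that vanishes in a unit boundary collar $\tilde O_{1/2}$ (see \eqref{cut-off} and \eqref{app-corrector-u}), so the corrector term $\phi_{\tau,i}\varphi_i$ automatically has zero trace and \emph{no boundary-layer correctors are needed}; the leftover $(1-\eta)\nabla\bar u$ is absorbed by the constant-coefficient boundary estimate (Lemma \ref{lemma:ap-2}, proved via Shen's real argument in place of Mikhlin). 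This cut-off trick is the key device, noted explicitly in Section \ref{sec:2}'s overview and in Lemma \ref{lemma:7.0}.

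The genuine gap in your plan is the reliance on ``boundary-layer correctors'' in the half-space: you name them and mark them as the principal obstacle, but you give no construction and no estimate, and in random homogenization such objects are not available in a form that would feed into an annealed Calder\'on--Zygmund iteration (even in the periodic Dirichlet case boundary-layer correctors are delicate, and in the stochastic setting they are not stationary in the normal direction and lack the moment bounds you would need). Without them your two-scale expansion near $\{x_d=0\}$ cannot close. The paper avoids this by never expanding up to the boundary: the collar cut-off converts the boundary discrepancy into an interior source term supported one unit away from $\partial\Omega$. A secondary gap is that your pathwise small-scale step (``a\in C^{0,\sigma}$ almost surely, so every realisation obeys the classical boundary CZ estimate'') would produce random constants with only polynomial moments; turning this into the clean deterministic estimate with the loss $\bar p>p$ requires the random small-scale radius $\rho_*$ and the locally-frozen coefficient $\tilde a_x$ as in Lemma \ref{lemma:4.1} and Proposition \ref{P:6}, which your outline does not supply. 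Finally, your proposal never introduces the massive regularisation $1/T$, yet the $T$-parameter is what makes both the infrared cut-off and the interpolation bootstrap run; without it the real-interpolation step (Lemma \ref{lemma:7.2}) that passes from $\tau^{\theta/2}$-growth to a $\tau$-uniform bound has no handle.
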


To present the weighted counterparts of the annealed Calder\'on-Zygmund estimates, we start from
the definition of the Muckenhoupt's weight class.
\begin{definition}[$A_q$-weights]\label{def:weight}
Let $1\leq q<\infty$. A non-negative, locally integrable function $\omega$
is said to be an $A_q$ weight, if there exists a positive constant
$A$ such that, for every ball $B\subset\mathbb{R}^d$,
\begin{equation}\label{f:w-1}
 \bigg(\dashint_{B}\omega\bigg)\bigg(\dashint_{B}\omega^{-1/(q-1)}\bigg)^{q-1}
  \leq A,
\end{equation}
if $q>1$, where
the average integral symbol $\dashint_{B}$ is defined by $\frac{1}{|B|}\int_B$
with $|B|$ being the Lebesgue measure of $B$,  or
\begin{equation}\label{f:w-2}
\bigg(\dashint_{B}\omega\bigg)\esssup_{x\in B}\frac{1}{\omega(x)}\leq A,
\end{equation}
if $q=1$. The infimum over all such constants $A$ is called
the $A_q$ Muckenhoupt characteristic constant of $\omega$, denoted by
$[\omega]_{A_q}$. Moreover, $A_q$ represents
the set of all $A_q$ weights.
\end{definition}

\begin{theorem}[weighted annealed Calder\'on-Zygmund estimates]\label{thm:1-w}
Let $\Omega\subset\mathbb{R}^d$ with $d\geq 2$ be a bounded $C^{1}$ domain.
Suppose that $\langle\cdot\rangle$ is stationary and satisfies the spectral gap condition $\eqref{a:2}$,
and the (admissible) coefficient additionally satisfies the local regularity condition $\eqref{a:3}$.
Let $\nabla u$ and square-integrable $f$ be
associated by the elliptic systems $\eqref{pde:0}$.
Then, for any $\omega\in A_q$,
there holds the weighted
annealed Calder\'on-Zygmund estimate
\begin{equation}\label{B}
\Big(\int_{\Omega}\big\langle|\nabla u|^p
\big\rangle^{\frac{q}{p}}\omega\Big)^{\frac{1}{q}}
\lesssim_{\lambda,\lambda_1,\lambda_2,d,M_0,p,\bar{p},q,[\omega]_{A_q}} \Big(\int_{\Omega}\big\langle|f|^{\bar{p}}
\big\rangle^{\frac{q}{\bar{p}}}\omega\Big)^{\frac{1}{q}}
\end{equation}
for any $1<p,q<\infty$ with $\bar{p}>p$.
In particular, if $\Omega$ is an unbounded $C^1$ domain, then
the stated estimate $\eqref{B}$ is still valid.
\end{theorem}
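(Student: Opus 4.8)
The plan is to derive the weighted estimate \eqref{B} as an extrapolation consequence of the unweighted estimate \eqref{A} combined with a weighted real-variable argument of Shen's type at the level of the spatial integrability in $q$. First I would observe that the quantity $x\mapsto\langle|\nabla u(x)|^p\rangle^{1/p}$ and $x\mapsto\langle|f(x)|^{\bar p}\rangle^{1/\bar p}$ may be treated as genuine (deterministic) functions on $\Omega$, so that \eqref{A} asserts the boundedness of the sublinear map sending the second into the first from $L^q(\Omega)$ to $L^q(\Omega)$ for every $q\in(1,\infty)$, with the implicit constant locally bounded in $q$ and scale-invariant. The heart of the matter is to promote this to a weighted $L^q(\omega\,dx)$ bound for every $\omega\in A_q$. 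I would do this by combining the Rubio de Francia extrapolation theorem with the uniform-in-$q$ unweighted bound: once one has $L^q\to L^q$ boundedness for all $q$ in an open interval, extrapolation upgrades any single exponent to the full weighted scale $A_q$. Alternatively, and this is likely how the paper proceeds given the stated reliance on \cite{Shen05,Shen23}, one runs Shen's weighted real-variable machinery directly: one needs an interior/boundary $L^{\bar p}$-to-$L^2$ reverse Hölder type self-improvement for $\langle|\nabla u|^p\rangle^{1/p}$ away from the support of a localized right-hand side, which is exactly the kind of estimate the non-perturbative homogenization input supplies, together with the $C^1$ flatness of $\partial\Omega$ to handle the boundary balls.

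The key steps, in order, would be: (i) Fix $\omega\in A_q$ and recall the self-improvement property of Muckenhoupt weights, so that $\omega\in A_{q/s}$ for some $s>1$ close to $1$; choose $\bar p > p$ and an auxiliary exponent accordingly so the gap $\bar p - p$ is respected uniformly. (ii) Localize: for a fixed ball $B$ with either $2B\subset\Omega$ or $x_B\in\partial\Omega$, split $f = f\mathbf{1}_{2B} + f\mathbf{1}_{\Omega\setminus 2B}$ and correspondingly $u = u_1 + u_2$ where $u_1$ solves \eqref{pde:0} with datum $f\mathbf{1}_{2B}$; apply the unweighted estimate \eqref{A} (which holds on $\Omega$ and, by the unbounded-domain part, on the relevant model domains) to control $\int_{2B}\langle|\nabla u_1|^p\rangle^{q/\bar p}$, and use interior/boundary regularity for the $a$-harmonic-type remainder $u_2$ to get an $L^\infty$-in-$B$ control of $\langle|\nabla u_2|^p\rangle^{1/p}$ by its $L^2$-average on $2B$. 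Here the $C^1$ assumption and the annealed Lipschitz/large-scale regularity from the quantitative homogenization theory enter. (iii) Feed the resulting good-$\lambda$/real-variable inequality into Shen's weighted lemma (as formulated in \cite{Shen23}), whose hypotheses are precisely a weak-type $(1,1)$-like decomposition plus the local reverse-Hölder gain, to conclude $\|\langle|\nabla u|^p\rangle^{1/p}\|_{L^q(\omega)} \lesssim \|\langle|f|^{\bar p}\rangle^{1/\bar p}\|_{L^q(\omega)}$ with constant depending on $[\omega]_{A_q}$. (iv) Finally, transfer from bounded to unbounded $C^1$ domains by exhaustion: approximate $\Omega$ by bounded $C^1$ subdomains with uniformly controlled $C^1$ character, apply the bounded case with constants independent of the exhaustion, and pass to the limit using weak lower semicontinuity of the weighted norm.

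The main obstacle I expect is step (ii): obtaining the boundary $L^\infty$-type (or higher-integrability) control of the annealed gradient $\langle|\nabla u_2|^p\rangle^{1/p}$ for the homogenization remainder near $\partial\Omega$ with a $C^1$ — not $C^{1,\alpha}$ — boundary. In the deterministic periodic setting this is exactly where a mere perturbation argument fails to be scale-invariant, and one must instead invoke the non-perturbative method of \cite{Josien-Otto22} to produce a genuine large-scale $C^1$/Lipschitz estimate for the heterogeneous equation that is stable under the stochastic averaging $\langle\cdot\rangle$; marrying this with the boundary flattening and Shen's argument — in particular checking that all constants stay scale-invariant and depend on $\partial\Omega$ only through $M_0$ and the modulus of continuity of $\nabla\varphi$ — is the delicate part. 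Once that annealed boundary regularity is in hand, the weighted conclusion follows from the already-established unweighted Theorem~\ref{thm:1} essentially formally, either via Shen's weighted real-variable theorem or via Rubio de Francia extrapolation.
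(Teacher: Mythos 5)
There is a genuine gap in your proposal, and the route you sketch is also different from the paper's. You propose to obtain \eqref{B} from the unweighted estimate \eqref{A}, either by Rubio de Francia extrapolation or by an outer Shen-type good-$\lambda$ argument run at the level of the spatial exponent $q$. The extrapolation alternative fails as stated: Rubio de Francia requires \emph{weighted} boundedness at a single exponent $q_0$ for every $\omega\in A_{q_0}$ as input; unweighted $L^q$ boundedness for all $q\in(1,\infty)$ does not by itself imply any $A_q$-weighted bound, so this route is circular. The outer Shen-argument alternative would require a reverse-H\"older self-improvement (from $L^2$ to some $L^{q_1}$, $q_1>q$) for the \emph{pointwise annealed} field $x\mapsto\langle|\nabla u_2(x)|^p\rangle^{1/p}$ up to a $C^1$ boundary, where $u_2$ solves the homogeneous equation on $2B\cap\Omega$. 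You flag this as ``the main obstacle'' and defer it to the Josien--Otto non-perturbative method, but this is precisely the part that does not come for free: the paper remarks, in the discussion around $\eqref{f:1.1}$, that the mixed-norm estimate supplied by quenched large-scale regularity \emph{cannot} be upgraded to the pointwise form \eqref{A}/\eqref{B} by localization, because no size relation between $p$ and $q$ is available. The self-improvement you assume is thus essentially equivalent to the conclusion you are trying to prove.

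The paper's actual argument never derives \eqref{B} from \eqref{A}. It runs the weighted version of the entire non-perturbative machinery in parallel with the unweighted one: the weighted constant-coefficient annealed estimate $\eqref{pri:A-3}$ is obtained via Shen's weighted lemma; this is fed into the weighted suboptimal estimate $\eqref{pri:5.0-w}$ with a $T$-dependent constant through Propositions~\ref{P:3} and \ref{P:6}; the $T$-dependence is then removed by a high/low-pass decomposition and real/complex interpolation with the weight carried through every step (Lemma~\ref{lemma:7.0}, estimates $\eqref{pri:7.3a}$--$\eqref{pri:7.3b}$, Lemmas~\ref{lemma:7.2} and~\ref{lemma:5.1}), culminating in $\eqref{pri:7.1-w}$, after which one lets $T\to\infty$. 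Shen's weighted real argument is invoked only at the constant-coefficient level, where a genuine reverse H\"older inequality for $\bar a$-harmonic functions is available; it is not used to bootstrap the heterogeneous annealed estimate itself.
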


In fact, to handle Theorems $\ref{thm:1}$ and $\ref{thm:1-w}$, we appeal to the massive elliptic operator $\mathcal{L}+1/T$ with $T>0$. Therefore, by additionally
assuming the symmetry condition
\begin{equation}\label{a:4}
a=a^* \quad\text{i.e.,}\quad a_{ij}=a_{ji},
\end{equation}
it is reasonable to consider the resolvent problem:
\begin{equation}\label{pde:0-r}
\left\{\begin{aligned}
\mu \mathbf{u} -\nabla\cdot a \nabla \mathbf{u}
&= \mathbf{g} + \nabla\cdot \mathbf{f}
&\quad&\text{in}~~\Omega;\\
u &= 0
&\quad&\text{on}~~\partial\Omega,
\end{aligned}\right.
\end{equation}
where $\mu\in\Sigma_{\theta}\subset\mathbb{C}$ is an parameter, and
\begin{equation}\label{a:5}
\Sigma_{\theta}:=\{z\in\mathbb{C}:z\not=0\text{~and~}|\text{arg}(z)|
<\pi-\theta\}
\quad \text{with}\quad \theta\in(0,\pi/2),
\end{equation}
and $\mathbf{u}$ and $\mathbf{g}$ are complex-valued function.
The following is the main results.

\begin{theorem}[``resolvent estimates'']\label{thm:resolvent}
Let $\Omega\subset\mathbb{R}^d$ with $d\geq 2$ be a bounded $C^{1}$ domain.
Let $1<p,q<\infty$ with $\bar{p}>p$ and $\mu\in\Sigma_\theta$.
Suppose that $\langle\cdot\rangle$ is stationary and satisfies the spectral gap condition $\eqref{a:2}$,
and the (admissible) coefficient satisfies $\eqref{a:3}$ and $\eqref{a:4}$.
Let $\mathbf{u}$ and square-integrable $\mathbf{g}$ with $\mathbf{f}$ be associated by the elliptic systems $\eqref{pde:0-r}$.
Then, for any $\omega\in A_q$,
there holds the weighted resolvent estimate
\begin{equation}\label{C-1}
\begin{aligned}
(|\mu|+R_0^{-2})\Big(\int_{\Omega}\big\langle|\mathbf{u}|^p
\big\rangle^{\frac{q}{p}}\omega\Big)^{\frac{1}{q}}
&+ (|\mu|+R_0^{-2})^{\frac{1}{2}}
\Big(\int_{\Omega}\big\langle|\nabla\mathbf{u}|^p
\big\rangle^{\frac{q}{p}}\omega\Big)^{\frac{1}{q}}\\
&\lesssim \Big(\int_{\Omega}\big\langle|\mathbf{g}|^{\bar{p}}
\big\rangle^{\frac{q}{\bar{p}}}\omega\Big)^{\frac{1}{q}}
+(|\mu|+R_0^{-2})^{\frac{1}{2}}
\Big(\int_{\Omega}\big\langle|\mathbf{f}|^{\bar{p}}
\big\rangle^{\frac{q}{\bar{p}}}\omega\Big)^{\frac{1}{q}},
\end{aligned}
\end{equation}
where $R_0$ represents the diameter of $\Omega$.
If $\Omega$ is an unbounded $C^1$ domain, then we have
\begin{equation}\label{C-2}
|\mu|\Big(\int_{\Omega}\big\langle|\mathbf{u}|^p
\big\rangle^{\frac{q}{p}}\omega\Big)^{\frac{1}{q}}
+ |\mu|^{\frac{1}{2}}
\Big(\int_{\Omega}\big\langle|\nabla\mathbf{u}|^p
\big\rangle^{\frac{q}{p}}\omega\Big)^{\frac{1}{q}}
\lesssim \Big(\int_{\Omega}\big\langle|\mathbf{g}|^{\bar{p}}
\big\rangle^{\frac{q}{\bar{p}}}\omega\Big)^{\frac{1}{q}}
+|\mu|^{\frac{1}{2}}\Big(\int_{\Omega}\big\langle|\mathbf{f}|^{\bar{p}}
\big\rangle^{\frac{q}{\bar{p}}}\omega\Big)^{\frac{1}{q}}.
\end{equation}
In particular, if $\mu>0$ and the elliptic systems $\eqref{pde:0-r}$ are real valued, then the estimates $\eqref{C-1}$ and $\eqref{C-2}$ are still true without the symmetry condition $\eqref{a:4}$. The multiplicative constants of $\eqref{C-1}$ and $\eqref{C-2}$ additionally depend on
$\theta$ compared to their counterpart in $\eqref{B}$.
\end{theorem}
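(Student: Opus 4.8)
The plan is to treat the resolvent problem \eqref{pde:0-r} as a perturbation of the \emph{massive} operator $\mathcal{L}+1/T$ on which the proofs of Theorems \ref{thm:1} and \ref{thm:1-w} already rest, and to let scaling fix the powers of $|\mu|+R_0^{-2}$. First, for $\mu>0$ the equation \eqref{pde:0-r} is precisely the Dirichlet problem for $\mathcal{L}+1/T$ with $1/T=\mu$, augmented by the zeroth-order datum $\mathbf{g}$. Rescaling $x\mapsto\ell x$ with $\ell:=(|\mu|+R_0^{-2})^{-1/2}$ turns it into the same problem on the $C^1$ domain $\ell^{-1}\Omega$, of diameter $R_0/\ell\ge 1$ and unchanged Lipschitz character, with the rescaled coefficient field still admissible for the same $\lambda,\lambda_1,\lambda_2$ (assumptions \eqref{a:1}--\eqref{a:3} being scale-invariant), and with normalized mass $|\mu|\ell^2\in(0,1]$. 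I would then invoke the weighted annealed Calder\'on--Zygmund estimate for the massive operator in this normalized regime --- the massive counterpart of Theorem \ref{thm:1-w}, stated for both a divergence datum $\nabla\cdot\mathbf{f}$ and an $L^{\bar p}$ datum $\mathbf{g}$, together with Poincar\'e's inequality to cover the range $1/T\le R_0^{-2}$ on a bounded domain --- whose constants are uniform in the mass; undoing the rescaling reinstates the factors $(|\mu|+R_0^{-2})$, $(|\mu|+R_0^{-2})^{1/2}$ on the left and $1$, $(|\mu|+R_0^{-2})^{1/2}$ on the right, i.e.\ \eqref{C-1}, while for unbounded $\Omega$ the Poincar\'e scale disappears, $\ell=|\mu|^{-1/2}$, and \eqref{C-2} results. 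Since \eqref{a:1} already yields $\xi\cdot a\xi\ge\lambda|\xi|^2$ without symmetry, this argument uses no hypothesis beyond \eqref{a:1} when the data are real, which is the ``in particular'' clause.

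For complex $\mu\in\Sigma_\theta$ I would first establish the base estimate at $p=q=2$, $\omega\equiv 1$. Under \eqref{a:4} the Dirichlet realization of $\mathcal{L}$ is non-negative self-adjoint on $L^2(\Omega)$, with spectrum in $[0,\infty)$ (in $[cR_0^{-2},\infty)$ for bounded $\Omega$), so $\mu+\mathcal{L}$ is boundedly invertible for every $\mu\in\Sigma_\theta$; testing \eqref{pde:0-r} with $\overline{\mathbf{u}}$, using that $\int a\nabla\mathbf{u}\cdot\overline{\nabla\mathbf{u}}$ is real and $\ge\lambda\|\nabla\mathbf{u}\|_{L^2}^2$, together with the aperture bound $|\mathrm{arg}\,\mu|<\pi-\theta$ (and the spectral gap on bounded $\Omega$), and absorbing, gives $(|\mu|+R_0^{-2})\|\mathbf{u}\|_{L^2}+(|\mu|+R_0^{-2})^{1/2}\|\nabla\mathbf{u}\|_{L^2}\lesssim_\theta\|\mathbf{g}\|_{L^2}+(|\mu|+R_0^{-2})^{1/2}\|\mathbf{f}\|_{L^2}$, with a constant degenerating like $1/\sin\theta$.

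To reach the full range $1<p,q<\infty$, $\bar p>p$, $\omega\in A_q$, I would then run the \emph{same} real-variable scheme as in the massive case, now for $\mu+\mathcal{L}$. That scheme requires only two $\mu$-dependent ingredients: the base $L^2$ estimate just obtained, and a boundary/interior reverse-H\"older plus excess-decay estimate for local solutions of $\mu\mathbf{v}-\nabla\cdot a\nabla\mathbf{v}=0$ that is uniform for $\mu\in\Sigma_\theta$ at the scale $\ell=(|\mu|+R_0^{-2})^{-1/2}$. The latter I would obtain exactly as for $\mathcal{L}+1/T$: on balls of radius $\lesssim\ell$ the term $\mu\mathbf{v}$ is lower order with relative weight $|\mu|\ell^2\le 1$, so the quantitative large-scale $C^1$ regularity (up to the boundary) from the stochastic homogenization theory of Josien--Otto \cite{Josien-Otto22} applies with $\mu$-independent constants, and on larger balls the homogeneous resolvent equation decays like $\mathrm{e}^{-c\,\mathrm{dist}/\ell}$. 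Inserting these into Shen's real-variable lemma \cite{Shen05,Shen23} and its annealed and $A_q$-weighted refinements (those already used for Theorems \ref{thm:1} and \ref{thm:1-w}) would yield \eqref{C-1} for bounded and \eqref{C-2} for unbounded $C^1$ domains, with constants depending in addition on $\theta$.

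The hard part will be the uniformity of this last ingredient over the \emph{entire} sector: one must verify that the perturbative bound and the exponential decay for the homogeneous resolvent equation survive as $\theta\to 0^+$, i.e.\ for $\mu$ approaching the negative real axis, and that no step of the annealed/weighted real-variable machinery tuned to the positive mass $1/T$ implicitly used positivity of the zeroth-order coefficient rather than the mere aperture condition $|\mathrm{arg}\,\mu|<\pi-\theta$. A secondary but genuine point is bookkeeping under the rescaling $x\mapsto\ell x$: one has to check that the spectral-gap constant $\lambda_1$ in \eqref{a:2} and the local-regularity constant $\lambda_2$ in \eqref{a:3} are not degraded, which is handled by the scale-invariant formulation already adopted in the body.
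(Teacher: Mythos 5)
Your outline correctly identifies the two deterministic building blocks — the $L^2$ resolvent theory coming from the symmetry hypothesis \eqref{a:4} and Shen's real-variable lemma — and your scaling bookkeeping for the powers of $|\mu|+R_0^{-2}$ is sound. However, the plan as written misses the obstruction that forces the paper to take a different route for the bounded-domain estimate \eqref{C-1}, and your proposed way around it does not close.

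First, the step "run the same real-variable scheme as in the massive case, now for $\mu+\mathcal{L}$" conflates two different schemes. Theorems \ref{thm:1} and \ref{thm:1-w} are \emph{not} proven via Shen's argument applied to the annealed quantities; they are proven via the non-perturbative high/low-pass decomposition of Section~\ref{sec:4}, whose real-interpolation step (Lemma~\ref{lemma:7.2}) and two-scale expansion \eqref{app-corrector-u}--\eqref{pde:7.3-u} use essentially that the mass $1/T$ is a positive real parameter that can be split as $T>\tau\geq 1$ and tied to massive correctors $\phi_\tau,\sigma_\tau,\psi_\tau$. None of this carries over when $1/T$ is replaced by a complex $\mu\in\Sigma_\theta$. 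So your reduction "the massive counterpart of Theorem~\ref{thm:1-w} in the normalized regime" is unavailable for complex $\mu$; indeed the paper explicitly notes that \eqref{C-1} cannot be obtained from \eqref{C-2} plus Poincar\'e, and goes to Shen's lemma for exactly this reason.

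Second, once one does turn to Shen's real argument, the reverse-H\"older/excess-decay input you describe — local solutions of $\mu\mathbf{v}-\nabla\cdot a\nabla\mathbf{v}=0$ obeying a uniform $L^{q_1}$-to-$L^2$ improvement — is a \emph{quenched} statement whose constant is random: it is controlled by $\|a\|_{C^{0,\sigma}}$ or the minimal radius, not by a deterministic bound. Feeding a random multiplicative constant into Lemma~\ref{shen's lemma} at the annealed level does not produce a fixed-$p$ annealed estimate; one cannot pull the random constant out of $\langle|\cdot|^p\rangle^{1/p}$, and trying to H\"older it away forces $\bar{p}>p$ at each iteration and destroys the induction. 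This is precisely the loss of stochastic integrability the paper flags. The machinery that fills this gap in the paper is Theorem~\ref{thm:5.1} (quenched-type Calder\'on--Zygmund estimates with a scaling-invariant random constant $\mathcal{C}_{*,R_0}$ having all moments, \eqref{f:5.5}), the resulting local quenched reverse-H\"older \eqref{pri:5.2} with $\theta$-uniform constants, and then the "total probability rule" decomposition \eqref{decomposition}: partition the probability space into $\Xi^k=\{2^{k-1}<\mathcal{C}_{*,R_0}\le 2^k\}$, apply Shen's lemma to the conditioned operators $T^k_\mu,Q^k_\mu$ with the now \emph{deterministic} constant $2^{2\beta_d k}$ (\eqref{f:5.2}, \eqref{f:5.3}, \eqref{f:5.4}), and sum the geometric series \eqref{f:5.6} using Chebyshev together with \eqref{f:5.5} in \eqref{f:5.8}, choosing $N_0$ large. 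Your proposal does not contain any version of this conditioning step, and without it the argument cannot be completed at the annealed level for complex $\mu$. (For \eqref{C-2}, by contrast, your perturbative route is essentially what the paper does: modify Step~4 of Proposition~\ref{P:3} by replacing \eqref{f:3.0} with the weighted complex energy estimate \eqref{pri:5.4-w}.)
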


A few remarks on theorems are in order.

\begin{remark}
\emph{From the scale point of view in the regularity theory of PDEs, the perturbation method mainly acts on the small-scale range, and that is where the assumption $\eqref{a:3}$ entered. To get large-scale estimates,
a class of additional structure conditions (such as periodicity, stationarity) on the coefficients seems inevitable, and, in the stochastic setting, a quantified
ergodicity assumption is crucial according to the important contributions
(see e.g. \cite{Armstrong-Kuusi22,Armstrong-Kuusi-Mourrat19,
Armstrong-Smart16,Gloria-Otto11,Gloria-Neukamm-Otto20,Gloria-Neukamm-Otto21}).
Here we adopt assumption $\eqref{a:2}$ only for the convenience of writing, and the approach used in this paper does not directly depend on the assumption $\eqref{a:2}$. Therefore, our results stated in Theorems $\ref{thm:1}$, $\ref{thm:1-w}$ and $\ref{thm:resolvent}$ can be hopefully generalized to
the stationary random coefficients that have slowly decaying correlations addressed in \cite{Gloria-Neukamm-Otto21}, as well as,
to other cases presented in the literatures \cite{Armstrong-Kuusi22,Armstrong-Kuusi-Mourrat19}. }
\end{remark}

\begin{remark}
\emph{In view of rescaling,
homogenization simply corresponds to the case of $R_0\gg 1$
and the multiplicative constants in
$\eqref{A}$, $\eqref{B}$, $\eqref{C-1}$, and $\eqref{C-2}$ never depend on $R_0$, so we can omit the
microcosmic parameter $\varepsilon$ for the ease of the statement. Roughly speaking, there are three approaches to arrive at the annealed Calder\'on-Zygmund estimates
stated in Theorems $\ref{thm:1}$ and $\ref{thm:1-w}$:
\begin{itemize}
  \item [(i).] Based upon quenched estimates (large-scale regularities),
  use the modern kernel-free methods originally developed in \cite{Caffarelli-Peral98} and further contributed by \cite{Shen05,Wang03}; This set of arguments was carried out by
  Duerinckx and Otto \cite{Duerinckx-Otto20} and the recent work \cite{Wang-Xu22}.
  \item [(ii).] Leaning towards functional analysis techniques, a non-perturbation method recently developed by Josien and Otto  \cite{Josien-Otto22},
      which completely avoided using large-scale estimates as in (i).
  \item [(iii).] On account of harmonic analysis techniques,
  the kernel method
  is still valid in the random setting\footnote{The second author discussed with M. Josien and F. Otto about this approach with details when he cooperated with them in the same project \cite{Clozeau-Josien-Otto-Xu} in 2020. However, they did not seek for a publication.}, which was traced to
  the pioneering work of Avellaneda and Lin \cite{Avellaneda-Lin91}.
\end{itemize}}

\emph{In our previous work \cite[Theorem 1.1]{Wang-Xu22}, purely based upon Shen's real arguments \cite{Shen05,Shen23} according to the regime mentioned in (i), we had already established
\begin{equation}\label{f:1.1}
\bigg(\int_{\Omega}
\Big\langle\Big(\dashint_{B_{1}(x)\cap\Omega}|\nabla u|^2
\Big)^{\frac{p}{2}}
\Big\rangle^{\frac{q}{p}}dx\bigg)^{\frac{1}{q}}
\lesssim_{\lambda,d,M_0,p,\bar{p},q}
\bigg(
\int_{\Omega}
\Big\langle\Big(\dashint_{B_{1}(x)\cap\Omega}
|f|^{2}\Big)^{\frac{\bar{p}}{2}}\Big\rangle^{\frac{q}{\bar{p}}}dx
\bigg)^{\frac{1}{q}},
\end{equation}
but the estimates $\eqref{A}$ and $\eqref{B}$ can not simply derived from $\eqref{f:1.1}$ together with
the local estimates at small scales (see Proposition $\ref{P:6}$), since
any special size relationship between $p,q\in(1,\infty)$ is not desired.
Besides, it is crucial to emphasize that despite the possibility of using approach (i), employing the non-perturbation method developed in (ii) becomes inevitable.  Therefore, we mainly adopt the approach (ii) in the present work for simplicity.}
\end{remark}

\begin{remark}
\emph{The results presented in Theorem $\ref{thm:resolvent}$
can be viewed from two distinct perspectives.
The first approach involves extending the results of Theorems $\ref{thm:1}$ and $\ref{thm:1-w}$ to encompass the case of the elliptic operators with complex coefficients. Subsequently, one can treat the massive elliptic operator as a new homogeneous operator in the region that expands one dimension, thereby enabling us to derive the resolvent estimates based upon the previous result on the homogenous operator. This methodology was well  developed by Gr\"oger and Rehberg \cite{Groger-Rehberg89}, initially inspired by Agmon \cite{Agmon62}. The second approach is to directly seek the corresponding estimates that never relies on $\mu$ in the resolvent
of $\mathcal{L}$, which had already been provided by the real methods employed in this work. Therefore, this approach merely requires us to obtain the corresponding estimates in the energy framework, and we adopt it here. However, the estimate $\eqref{C-1}$ is a little bit more complicated than $\eqref{C-2}$ and the reader will find the concrete reason in the proof of Theorem $\ref{thm:resolvent}$. Although the stated resolvent estimates $\eqref{C-1}$ and $\eqref{C-2}$ cannot be used directly in the semigroup theory, the demonstration of $\eqref{C-1}$  actually reveals
that one may employ the ``total probability rule'' to study the related
evolution problems, and we leave it in a separated work.  }
\end{remark}

\begin{remark}
\emph{The the above outcomes in Theorems $\ref{thm:1}$, $\ref{thm:1-w}$, and
$\ref{thm:resolvent}$ can be extended to some Lipschitz domains with
small Lipschitz constants (i.e., $M_0\ll1$).
Concerned with general Lipschitz domains,
we guess that the range of stochastic integrability should not be affected by the boundary regularity, (i.e., keeping the stochastic integrability $p\in(1,\infty)$
in Theorems $\ref{thm:1}$, $\ref{thm:1-w}$, and
$\ref{thm:resolvent}$) but we failed to provide a proof by using the methods in this paper. Therefore, it is still unclear for us.}
\end{remark}

\subsection{Organization of the paper}

\noindent
In Section $\ref{sec:2}$, we start from the annealed Calder\'on-Zygmund estimates for the massive operator with constant coefficients (see Lemma $\ref{lemma:ap-2}$); As a preparation for suboptimal estimates, we established annealed Calder\'on-Zygmund estimates for Meyer's type index in Lemma $\ref{lemma:2.2}$. Both of them rely on
Shen's real arguments \cite{Shen05,Shen23}.

In Section $\ref{sec:3}$, we derive an suboptimal annealed Calder\'on-Zygmund estimate (see Theorem $\ref{thm:2}$), which actually consists of two parts: (1). We show the related suboptimal estimates on the mixed norm defined in $\eqref{mnorm}$ in Proposition $\ref{P:3}$; (2).
Some annealed estimates at small scales are derived in Proposition $\ref{P:6}$. Both of them essentially rely on a local regularity
while the nestedness properties of the mixed norm in Lemma $\ref{lemma:3.1}$ still plays a key role in the demonstrations.
The main contribution is manifested in the aspect
that we take into account weighted estimates, correspondingly.

In Section $\ref{sec:4}$, the non-perturbation approach is employed to
promote the suboptimal estimate derived in Section $\ref{sec:3}$
to the optimal one, inspired by Josien and Otto \cite{Josien-Otto22}.
We complete the whole arguments by three main ingredients which has been named as the different subsections. To handle
the discrepancy on the boundary, caused by the standard two-scale expansion, we introduce a cut-off function (at small scales) therein, and it is indeed  compatible with infra-red cut-off very well in the whole regime (see Lemma $\ref{lemma:7.0}$).

In Section $\ref{sec:6}$, we mainly provide the proof of Theorem $\ref{thm:resolvent}$. Due to the assumption $\eqref{a:4}$, its $L^2$-theory is more or less standard and stated in Theorem $\ref{thm:5.0}$. To show the estimate $\eqref{C-1}$, we establish
the quenched-type Calder\'on-Zygmund estimates in Theorem $\ref{thm:5.1}$,
which also provides us with a new stationary random field to make
the idea of ``total probability rule'' realizable. Then, we hand
the $L^p$-theory over to Shen's real argument (see Lemma $\ref{shen's lemma}$) again.

Section $\ref{sec:5}$ is the appendix of the paper,
which contains some properties of
$A_q$ class, Meyer's estimates, and Shen's real methods for the reader's convenience.

\subsection{Notations}

\begin{enumerate}

  \item Notation for estimates.
\begin{enumerate}
  \item $\lesssim$ and $\gtrsim$ stand for $\leq$ and $\geq$
  up to a multiplicative constant,
  which may depend on some given parameters in the paper,
  but never on $\varepsilon$ and $R_0$. The subscript form $\lesssim_{\lambda,\cdots,\lambda_n}$ means that the constant depends only on parameters $\lambda,\cdots,\lambda_n$.
\item We use superscripts like $\lesssim^{(1.5)}$ to indicate the formula or estimate referenced.
  We write $\sim$ when both $\lesssim$ and $\gtrsim$ hold.
  \item We use $\gg$ instead of $\gtrsim$ to indicate that the multiplicative constant is much larger than 1 (but still finite),
      and it's similarly for $\ll$.
\end{enumerate}
  \item Notation for derivatives.
  \begin{enumerate}
    \item Spatial derivatives:
  $\nabla v = (\partial_1 v, \cdots, \partial_d v)$ is the gradient of $v$, where
  $\partial_i v = \partial v /\partial x_i$ denotes the
  $i^{\text{th}}$ derivative of $v$.
  $\nabla^2 v$  denotes the Hessian matrix of $v$;
  $\nabla\cdot v=\sum_{i=1}^d \partial_i v_i$
  denotes the divergence of $v$, where
  $v = (v_1,\cdots,v_d)$ is a vector-valued function.
   \item Functional (or vertical) derivative:  the random tensor field $\frac{\partial F}{\partial a}$ (depending on $(a,x)$) is the functional derivative of $F$ with respect to $a$, defined by
       \begin{equation}\label{functional}
       \lim_{\varepsilon\to 0} \frac{F(a+\varepsilon\delta a)
       -F(a)}{\varepsilon}
       =\int_{\mathbb{R}^d} \frac{\partial F(a)}{\partial a_{ij}(x)}
       (\delta a)_{ij}(x)dx,
       \end{equation}
       where the Einstein's summation convention for repeated indices is
used throughout.
  \end{enumerate}

\item Geometric notation.
\begin{enumerate}
\item $d\geq 2$ is the dimension, and $R_0\geq 1$ represents
  the diameter of $\Omega$.
\item For a ball $B$, we set $B=B_{r_B}(x_B)$ and abusively write
  $\alpha B:=B_{\alpha r_B}(x_B)$.
\item For any  $x\in\partial\Omega$,
we introduce $D_r(x):= B_r(x)\cap\Omega$ and $\Delta_r(x):=
B_r(x)\cap\partial\Omega$.
We will omit the center point of
$B_r(x),D_r(x),\Delta_r(x)$ only when $x=0$.
For any $x\in\Omega$, we introduce $U_r(x):=B_r(x)\cap\Omega$.
\end{enumerate}

\item Notation for functions.
\begin{enumerate}
    \item The function $\mathds{1}_{E}$ is the indicator function of region $E$.
\item We denote $F(\cdot/\varepsilon)$ by $F^{\varepsilon}(\cdot)$ for simplicity, and $(f)_r = \dashint_{B_r} f  = \frac{1}{|B_r|}\int_{B_r} f(x) dx$.
  \end{enumerate}
\end{enumerate}

\section{Preliminaries}\label{sec:2}

\begin{lemma}\label{lemma:ap-2}
Let $\Omega\subset\mathbb{R}^d$ with $d\geq 2$
be a (bounded) $C^1$ domain and $T>0$.
Suppose that $\bar{a}$ is a constant coefficient satisfying a uniform ellipticity condition $\eqref{a:1}$.
Assume that $v\in H_0^1(\Omega;\mathbb{C})$ is associated with the given complex-valued function $(f,g)$ by the following equations:
\begin{equation}\label{pde:1}
\left\{\begin{aligned}
\frac{1}{T}v-\nabla\cdot \bar{a}\nabla v&= \nabla\cdot f + \frac{1}{T}g
&~&\text{in}~~\Omega;\\
v&= 0 &~&\text{on}~\partial\Omega.
\end{aligned}\right.
\end{equation}
Then, for any $1<p<\infty$, there holds the Calder\'on-Zygmund estimate
\begin{equation}\label{pri:A-1}
  \int_{\Omega}
  |(\nabla v,\frac{v}{\sqrt{T}})|^p
  \lesssim_{\lambda,d,M_0,p}
  \int_{\Omega}
  |(f,\frac{g}{\sqrt{T}})|^p.
\end{equation}
Moreover, for any $1<p,q<\infty$, we have
annealed Calder\'on-Zygmund estimate
\begin{equation}\label{pri:A-2}
  \int_{\Omega}
  \big\langle |(\nabla v,\frac{v}{\sqrt{T}})|^p\rangle^{\frac{q}{p}}
  \lesssim_{\lambda,d,M_0,p,q}
  \int_{\Omega}
  \langle |(f,\frac{g}{\sqrt{T}})|^p\big\rangle^{\frac{q}{p}},
\end{equation}
and weighted annealed Calder\'on-Zygmund estimates, i.e.,
for any $\omega\in A_{q}$, there holds
\begin{equation}\label{pri:A-3}
  \Big(\int_{\Omega}
  \big\langle |(\nabla v,\frac{v}{\sqrt{T}})|^p\rangle^{\frac{q}{p}}
  \omega\Big)^{\frac{1}{q}}
  \lesssim_{\lambda,d,M_0,p,q,[\omega]_{A_q}}
  \Big(\int_{\Omega}
  \langle |(f,\frac{g}{\sqrt{T}})|^p\big\rangle^{\frac{q}{p}}
  \omega\Big)^{\frac{1}{q}}.
\end{equation}
In particular,  if $\Omega$ is an unbounded $C^1$ domain, then
the estimates $\eqref{pri:A-1}$, $\eqref{pri:A-2}$, and
$\eqref{pri:A-3}$ are still true.
\end{lemma}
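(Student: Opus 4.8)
The plan is to establish the three inequalities in a hierarchy: a single realisation first, then the annealed bound at the diagonal exponent $q=p$ by Fubini, and finally the whole range of $q$ together with the $A_q$ weights by extrapolation. As a preliminary reduction I would remove $T$: the combined norm $|(\nabla v,v/\sqrt T)|$ is invariant under $x=\sqrt T\,y$, $v(x)=\tilde v(y)$, $f(x)=\tilde f(y)$, $g(x)=\tilde g(y)$, which turns $\eqref{pde:1}$ into the same problem on $T^{-1/2}\Omega$ (still a $C^1$ domain, with $C^1$ character controlled by that of $\Omega$), with $T=1$ and data $(\sqrt T\,\tilde f,\tilde g)$; chasing the Jacobians shows that $\eqref{pri:A-1}$--$\eqref{pri:A-3}$ for general $T>0$ are equivalent to their $T=1$ instances. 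From now on I take $T=1$, so that every constant obtained below is automatically independent of $T$ and, a fortiori, of $R_0$.

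\emph{Step 1 (single realisation).} For $p=2$, inequality $\eqref{pri:A-1}$ is the energy estimate: test $\eqref{pde:1}$ against $\bar v$ and use $\eqref{a:1}$ and Young's inequality. For $2<p<\infty$ I would feed Shen's real-variable lemma (Lemma~\ref{shen's lemma}) with $F=|(\nabla v,v)|$ and forcing $|(f,g)|$: on a boundary ball $D_{2r}(x_0)$, and likewise on an interior ball, write $v=w+(v-w)$ with $w-\nabla\cdot\bar a\nabla w=0$ carrying the Dirichlet trace of $v$; the localised energy estimate controls $|(\nabla(v-w),v-w)|$ in $L^2(D_{2r})$ by $|(f,g)|$, while $w$ satisfies, for any prescribed $p_1<\infty$, the reverse H\"older inequality $\big(\dashint_{D_r}|(\nabla w,w)|^{p_1}\big)^{1/p_1}\lesssim\big(\dashint_{D_{2r}}|(\nabla w,w)|^{2}\big)^{1/2}$ --- interior by the (real-analytic) regularity of constant-coefficient solutions, boundary by the classical $W^{1,p_1}$ estimate for the homogeneous Dirichlet problem of a constant-coefficient operator on a $C^1$ domain, obtained by flattening the boundary and perturbing off the half-space, the $C^1$ flatness making the perturbation small at every scale. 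Shen's lemma then gives $\eqref{pri:A-1}$ for $p\ge2$; the $\|F\|_{L^2}$ term it generates on the right-hand side is reabsorbed via the energy estimate and H\"older's inequality on the bounded domain, the volume factors cancelling so that no $R_0$-dependence survives. Duality --- the formal adjoint of $\eqref{pde:1}$ is of the same type with $\bar a^{*}$ in place of $\bar a$ --- covers $1<p<2$. Re-running this real-variable argument in its weighted form, the reverse H\"older inequality being its only structural input, gives in addition $\int_\Omega|(\nabla v,v)|^{p}\nu\lesssim_{\lambda,d,M_0,p,[\nu]_{A_p}}\int_\Omega|(f,g)|^{p}\nu$ for all $\nu\in A_p$ and all $1<p<\infty$.

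\emph{Step 2 (diagonal annealed bound, then all $q$).} Since the operator in $\eqref{pde:1}$ is deterministic, the solution map $(f,g)\mapsto v$ acts on the spatial variable only and commutes with $\langle\cdot\rangle$, so Fubini and the weighted bound of Step 1 give, for every $\nu\in A_p$,
\begin{equation*}
\int_\Omega\big\langle|(\nabla v,v)|^{p}\big\rangle\,\nu
=\Big\langle\int_\Omega|(\nabla v,v)|^{p}\,\nu\Big\rangle
\lesssim\Big\langle\int_\Omega|(f,g)|^{p}\,\nu\Big\rangle
=\int_\Omega\big\langle|(f,g)|^{p}\big\rangle\,\nu ,
\end{equation*}
which is $\eqref{pri:A-3}$ at $q=p$. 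Setting $G_1:=\langle|(\nabla v,v)|^{p}\rangle^{1/p}$ and $G_2:=\langle|(f,g)|^{p}\rangle^{1/p}$, functions of $x$ alone, this reads $\|G_1\|_{L^p(\Omega,\nu)}\lesssim\|G_2\|_{L^p(\Omega,\nu)}$ for all $\nu\in A_p$, with constant depending only on $\lambda,d,M_0,p,[\nu]_{A_p}$. The Rubio de Francia extrapolation theorem, applied to the family of pairs $(G_1,G_2)$ generated by $\eqref{pde:1}$, then yields $\|G_1\|_{L^q(\Omega,\nu)}\lesssim_{\lambda,d,M_0,p,q,[\nu]_{A_q}}\|G_2\|_{L^q(\Omega,\nu)}$ for all $1<q<\infty$ and all $\nu\in A_q$: this is $\eqref{pri:A-3}$, and $\eqref{pri:A-2}$ is the case $\nu\equiv1$. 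For unbounded $\Omega$ (the half-space, exterior domains) I would exhaust $\Omega$ by bounded $C^1$ subdomains of uniformly controlled $C^1$ character and pass to the limit, the constants in Steps 1--2 being independent of the exhaustion.

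The hard part will be the uniform boundary reverse H\"older / $W^{1,p_1}$ estimate in Step 1: on a merely $C^1$ domain there is no $C^{1,\alpha}$ regularity, so $L^{p_1}$-integrability of $\nabla w$ for arbitrarily large $p_1$ has to be extracted from the $C^1$ flatness uniformly across scales and, once the rescaling is undone, uniformly in $T$. Everything downstream is comparatively soft: because the constant-coefficient operator carries no randomness, the passage to the probabilistic norms in Step 2 costs nothing beyond Fubini and scalar extrapolation, so no vector-valued singular-integral machinery is needed --- which is exactly the clean point this lemma is meant to illustrate.
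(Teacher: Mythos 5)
Your proposal is correct in substance but follows a genuinely different route from the paper's proof in the annealed part, and the comparison is worth spelling out. The paper establishes $\eqref{pri:A-1}$ by Shen's real argument (decomposition $v=w+z$, reverse H\"older for the $\bar a$-harmonic $z$, energy estimate for $w$) — essentially what you do in Step~1, though without rescaling to $T=1$: they instead fix $r_B=1$ and run the reverse H\"older bootstrap separately for $T\ge1$ and $0<T<1$ (tracking the factor $T^{-k_0}$ explicitly). Your $T$-rescaling reduction is a legitimate simplification once one accepts that the relevant $L^p$ estimate is scale-invariant and that the $C^1$ character of $T^{-1/2}\Omega$ is controlled uniformly (it is, at scales up to the diameter of the rescaled domain). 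Where you diverge sharply is $\eqref{pri:A-2}$ and $\eqref{pri:A-3}$: the paper re-runs Shen's lemma twice more at the level of the annealed quantities $V=\langle|\cdot|^p\rangle^{1/p}$ — once unweighted (Part~II), once weighted (Part~III), and a fourth time for unbounded domains (Part~IV) — whereas you exploit that the constant-coefficient solution operator acts only in the spatial variable and commutes with $\langle\cdot\rangle$, so Fubini hands over the diagonal $q=p$ (with $A_p$ weights) from the deterministic weighted bound of Step~1, and Rubio de Francia extrapolation (applied to the family of pairs $G_1=\langle|(\nabla v,v)|^p\rangle^{1/p}$, $G_2=\langle|(f,g)|^p\rangle^{1/p}$, extended by zero to $\mathbb{R}^d$) then produces all $(q,A_q)$ simultaneously. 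This is a cleaner and shorter argument for the constant-coefficient case, though it imports the extrapolation theorem as a black box where the paper stays entirely within Shen's machinery (which it reuses for the random-coefficient operator, where Fubini is unavailable).

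Two caveats. First, the extrapolation shortcut consumes the weighted deterministic estimate at $q=p$ for all $\nu\in A_p$; you assert this follows from the weighted form of Shen's real argument, which is true but is not a ``free'' consequence of Step~1's unweighted argument — it is precisely what the paper's Part~III proves, and you should be clear that you are invoking the weighted Lemma~\ref{shen's lemma2} (or a classical weighted Calder\'on–Zygmund theorem for constant-coefficient Dirichlet problems on $C^1$ domains). Second, your treatment of unbounded $\Omega$ by exhaustion is thinner than the paper's: you need an exhaustion by bounded $C^1$ subdomains with uniformly controlled $C^1$ character (smoothing the corners where a truncating ball meets $\partial\Omega$), and then a convergence argument identifying the limit with the solution on $\Omega$; this is doable but not automatic, whereas the paper's Part~IV avoids exhaustion altogether by taking $r_{B_0}\to\infty$ inside Shen's local estimate and showing the extraneous term $r_{B_0}^{d(1-q/\underline q)}\dashint_{B_0}\omega$ vanishes via $\eqref{f:ap-14}$. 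Either approach works, but the paper's is more self-contained.
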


\begin{proof}
Before providing specific demonstration, it is necessary to address two key points. (1). Our proof is based upon Shen's real argument (see Lemma $\ref{shen's lemma2}$), and how to use it has been systematically stated
in the previous work \cite[Subsection 3.1]{Wang-Xu22} with details. So,
to shorten the proof, we omit some unnecessary steps, such as, reducing the desired estimates to some local estimates, a covering argument, and
duality arguments, in the present statement.
(2). The fundamental idea of the estimates $\eqref{pri:A-1}$ and $\eqref{pri:A-2}$ remains consistent regardless of whether the domain is bounded or unbounded. Therefore, we solely present the proof of
$\eqref{pri:A-3}$ for the distinguishing part (see Part IV).

\medskip
\noindent
\textbf{Part I. Arguments for $\eqref{pri:A-1}$.}
Let $\Omega$ be a bounded or unbounded domain. The proof will be finished by two steps.

\noindent
\textbf{Step 1.} Reduction and outline of the proof.
In view of Lemma $\ref{shen's lemma2}$
with $\omega =1$ therein, it suffices to consider that for any ball $B$ satisfying $x_B\cap\partial\Omega$ or $2B\subset\Omega$, and for any solution of
\begin{equation*}
\left\{\begin{aligned}
 \frac{1}{T}w - \nabla\cdot\bar{a}\nabla w &=
 \nabla\cdot fI_{B}+\frac{1}{T}gI_{B}
 &\quad& \text{in}\quad \Omega;\\
 w &= 0
 &\quad& \text{on}\quad \partial\Omega,
\end{aligned}\right.
\end{equation*}
with $z:=v-w$ in $B\cap\Omega$ satisfying
\begin{equation*}
\left\{\begin{aligned}
\nabla\cdot\bar{a}\nabla z &= \frac{1}{T}z &\quad&\text{in}\quad B\cap\Omega;\\
z &=0 &\quad&\text{on}\quad B\cap\partial\Omega,
\end{aligned}\right.
\end{equation*}
one can verify the following estimates: for any $2<q<\infty$,
\begin{subequations}
\begin{align}
& \Big(\dashint_{\frac{1}{4}B\cap\Omega}
 |(\nabla z,\frac{z}{\sqrt{T}})|^q\Big)^{\frac{1}{q}}
 \lesssim_{\lambda,d,M_0,q}  \Big(\dashint_{B\cap\Omega}
 |(\nabla z,\frac{z}{\sqrt{T}})|^2\Big)^{\frac{1}{2}};
 \label{pri:ap-3}\\
& \dashint_{\frac{1}{2}B\cap\Omega}
 |(\nabla w,\frac{w}{\sqrt{T}})|^2
 \lesssim_{\lambda,d,M_0} \dashint_{B\cap\Omega}|(f,\frac{g}{\sqrt{T}})|^2.
 \label{pri:ap-4}
\end{align}
\end{subequations}

Once the above two estimates are established, we can show that
the desired estimate $\eqref{pri:A-1}$ for any $2<p<q$, while the case $1<p<2$
follows from a simple duality argument, where we also
employ the energy estimate
\begin{equation}\label{f:ap-10}
\int_{\Omega} |(\nabla v,\frac{v}{\sqrt{T}})|^2
\lesssim_{\lambda,d}
\int_{\Omega} |(f,\frac{g}{\sqrt{T}})|^2.
\end{equation}
Therefore, the remainder of the proof is to show the estimates $\eqref{pri:ap-3}$
and $\eqref{pri:ap-4}$. In fact, we can derive the estimate $\eqref{pri:ap-4}$
immediately by $\eqref{f:ap-10}$.

\medskip
\noindent
\textbf{Step 2.} Show the reverse H\"older's inequality $\eqref{pri:ap-3}$.
To see this, it is fine to assume $r_B=1$ by a rescaling argument (and therefore we slip the related estimates into the cases $T\geq 1$ and $0<T<1$.).
We divided the proof into two cases: (1) $B\cap\partial\Omega\not=\emptyset$; (2) $2B\subset\Omega$. The idea is
to appeal to the corresponding estimates for homogeneous elliptic operator.
For the case (1), let $D:=B_{2r_B}(\bar{x})\cap\Omega$ with
$\bar{x}$ being such that $\text{dist}(x_B,\partial\Omega)=|\bar{x}-x_B|$. It follows from $L^q$ estimates (see for example \cite[Chapter 7]{Giaquinta-Martinazzi12}) that, in the case of $0<T<1$,
\begin{equation}\label{f:ap-1}
 \|\nabla z\|_{L^q(\frac{1}{2}D)}
 \lesssim_{\lambda,d,M_0,q}
 \|z\|_{L^{2}(D)}
 +\frac{1}{T}\|z\|_{L^{\underline{q}}(D)}
 \lesssim \|z\|_{L^{2}(D)}
 +\frac{1}{T}\|\nabla z\|_{L^{\underline{q}}(D)}
\end{equation}
for $1/q=1/\underline{q}-1/d$ (it is fine to assume $q\gg 1$
and $\underline{q}\geq 2$), where the second line above follows from Poincar\'e's inequality; Similarly, in the case of $T\geq 1$, we have
\begin{equation}\label{f:ap-1*}
 \|\nabla z\|_{L^q(\frac{1}{2}D)}
 \lesssim_{\lambda,d,M_0,q}
 \|\nabla z\|_{L^{2}(D)}
 +\frac{1}{T}\|z\|_{L^{\underline{q}}(D)}
 \lesssim \|\nabla z\|_{L^{2}(D)}
 +\|\nabla z\|_{L^{\underline{q}}(D)},
\end{equation}
where the second line above follows from Poincar\'e's inequality
and the condition $T\geq 1$.

For the case (2), we start from the case of $0<T<1$, and
\begin{equation}\label{f:ap-2}
\begin{aligned}
 \|\nabla z\|_{L^q(\frac{1}{2}B)}
 \lesssim_{\lambda,d,M_0,q}
 \|z\|_{L^{2}(\frac{3}{4}B)}
 +\frac{1}{T}\|z-c\|_{L^{\underline{q}}(\frac{3}{4}B)} + \frac{|c|}{T}
\lesssim \|z\|_{L^{2}(B)}
 +\frac{1}{T}\|\nabla z\|_{L^{\underline{q}}(B)}.
\end{aligned}
\end{equation}
where $c:=\dashint_{B}z$, and we also employ Poincar\'e's inequality
and Caccioppoli's inequality for
the second inequality above; Similarly, in the case of $T\geq 1$, it follows that
\begin{equation}\label{f:ap-2*}
\begin{aligned}
 \|\nabla z\|_{L^q(\frac{1}{2}B)}
 \lesssim_{\lambda,d,M_0,q}
 \|\nabla z\|_{L^{2}(B)}
 +\frac{1}{T}\|z-c\|_{L^{\underline{q}}(B)} + \frac{|c|}{T}
\lesssim \|(\nabla z,\frac{z}{\sqrt{T}})\|_{L^{2}(B)}
 +\|\nabla z\|_{L^{\underline{q}}(B)},
\end{aligned}
\end{equation}
where we use Poincar\'e's inequality
and $T\geq 1$ in the second inequality above;
Moreover, we have
\begin{equation}\label{f:ap-3}
 \|\frac{z}{\sqrt{T}}\|_{L^q(\frac{1}{2}B\cap\Omega)}
 \lesssim_{d} \|\frac{z-\tilde{c}}{\sqrt{T}}\|_{L^q(\frac{1}{2}B\cap\Omega)}
 +\frac{|\tilde{c}|}{\sqrt{T}}
 \lesssim \|\nabla z\|_{L^q(\frac{1}{2}B\cap\Omega)}+\|\frac{z}{\sqrt{T}}\|_{L^2(B\cap\Omega)},
\end{equation}
where $\tilde{c}:=\dashint_{B\cap\Omega}z$.
Combining the above estimates
$\eqref{f:ap-1*}$, $\eqref{f:ap-2*}$ and $\eqref{f:ap-3}$, we obtain
\begin{equation}\label{f:ap-4*}
  \|(\nabla z,\frac{z}{\sqrt{T}})\|_{L^q(\frac{1}{2}B\cap\Omega)}
  \lesssim_{\lambda,d,M_0,q}
  \|(\nabla z,\frac{z}{\sqrt{T}})\|_{L^{\underline{q}}(B\cap\Omega)}
\quad\text{with}~\underline{q}\geq 2,
\end{equation}
and this is the iteration formula for the case of $T\geq 1$. In terms of
the case of $0<T<1$, combining the estimates $\eqref{f:ap-1}$ and
$\eqref{f:ap-2}$ leads to
\begin{equation}\label{f:ap-4}
  \|\nabla z\|_{L^q(\frac{1}{2}B\cap\Omega)}
  \lesssim_{\lambda,d,M_0,q}
  \frac{1}{T}\|\nabla z\|_{L^{\underline{q}}(B\cap\Omega)}
  + \|z\|_{L^{2}(B\cap\Omega)}
\quad\text{with}~\underline{q}\geq 2.
\end{equation}

In view of $\eqref{f:ap-4*}$ and $\eqref{f:ap-4}$,
we can employ a bootstrap method for our purpose. To do so, we first introduce a critical number: $q_{k_0}:=\frac{2d}{d-2k_0}$
where $k_0\geq 0$ is an integer satisfying $k_0<(d/2)\leq k_0+1$.
Then, the demonstration can be split into two cases: (1) $2\leq q\leq q_{k_0}$; (2) $q>q_{k_0}$. Therefore, after at most $k_0+1$ iterations
of using $\eqref{f:ap-4*}$, we can show the desired estimate $\eqref{pri:ap-3}$ in the case of $T\geq 1$. For the case $0<T<1$, it follows from the estimate $\eqref{f:ap-4}$ and the bootstrap process that
\begin{equation}\label{f:ap-12}
\|\nabla z\|_{L^q(\frac{1}{4}B\cap\Omega)}
\lesssim_{\lambda,d,M_0,q} (\frac{1}{T^{k_0}}+1)\|z\|_{L^{2}(\frac{1}{2}B\cap\Omega)}
\lesssim_{\lambda,d,M_0,q} \|z\|_{L^{2}(B\cap\Omega)},
\end{equation}
where we employ Caccioppoli's inequality by $2k_0$ times. This together with Sobolev embedding theorem implies
\begin{equation}\label{f:ap-13}
\|z\|_{L^q(\frac{1}{4}B\cap\Omega)}
\lesssim \|z\|_{L^{2}(B\cap\Omega)}.
\end{equation}
Thus, collecting the estimates $\eqref{f:ap-12}$ and $\eqref{f:ap-13}$ we
can derive that
\begin{equation*}
\|(\nabla z,\frac{z}{\sqrt{T}})\|_{L^q(\frac{1}{4}B\cap\Omega)}
\lesssim \|\frac{z}{\sqrt{T}}\|_{L^{2}(B\cap\Omega)}
+ \|z\|_{L^{2}(B\cap\Omega)}
\lesssim \|\frac{z}{\sqrt{T}}\|_{L^{2}(B\cap\Omega)}
+ \|\nabla z\|_{L^{2}(B\cap\Omega)}
\end{equation*}
where we also employ the condition $0<T<1$ in the last inequality,
and this gives the desired estimate $\eqref{pri:ap-3}$ for the case $0<T<1$.
%

\medskip
\noindent
\textbf{Part II. Arguments for $\eqref{pri:A-2}$.} The proof will be finished by
three steps.

\noindent
\textbf{Step 1}. Reduction and outline of the proof. Let $\bar{q}>p$
be arbitrarily given, and $1<p<\infty$.
We continue to follow the decomposition above, and introduce the following
notation for the ease of statement.
\begin{equation*}
\begin{aligned}
& V:= \big\langle|(\nabla v,\frac{v}{\sqrt{T}})|^p\big\rangle^{\frac{1}{p}};
 &\quad& F:= \big\langle|(f,\frac{g}{\sqrt{T}})|^p\big\rangle^{\frac{1}{p}};\\
& Z_B:=\big\langle|(\nabla z,\frac{z}{\sqrt{T}})|^p\big\rangle^{\frac{1}{p}};
 &\quad& W_B:= \big\langle|(\nabla w,\frac{w}{\sqrt{T}})|^p\big\rangle^{\frac{1}{p}}.
\end{aligned}
\end{equation*}
Due to the relationship $v=w+z$ in $B\cap\Omega$, it is not hard to see that $V\leq W_B + Z_B$. Appealing to
Shen's real argument again (see \cite[Theorem 4.2.6]{Shen18} or
\cite[Theorem 3.3]{Shen05}), it suffices to establish the following estimates:
\begin{subequations}
\begin{align}
& \dashint_{\frac{1}{2}B\cap\Omega} W_B^{p}
\lesssim_{\lambda,d,M_0,p}
\dashint_{B\cap\Omega} F^{p};
\label{pri:ap-6} \\
& \Big(\dashint_{\frac{1}{4}B\cap\Omega} Z_B^{\bar{q}}\Big)^{\frac{p}{\bar{q}}}
\lesssim_{\lambda,d,M_0,p}
\dashint_{B\cap\Omega} (V^{p}+F^{p}).
\label{pri:ap-5}
\end{align}
\end{subequations}
Admitting the above two estimates for a while, for any
$p<q<\bar{q}$, we can derive that
\begin{equation*}
\Big(\dashint_{\Omega}V^q\Big)^{\frac{1}{q}}
\lesssim \Big(\dashint_{\Omega}V^p\Big)^{\frac{1}{p}}
+ \Big(\dashint_{\Omega}F^q\Big)^{\frac{1}{q}}
\lesssim^{\eqref{pri:A-1}}
\Big(\dashint_{\Omega}F^p\Big)^{\frac{1}{p}}
+\Big(\dashint_{\Omega}F^q\Big)^{\frac{1}{q}}
\lesssim \Big(\dashint_{\Omega}F^q\Big)^{\frac{1}{q}}.
\end{equation*}
Noting $\bar{q}>p$ is arbitrary, the case $1<q<p$ follows from
a duality argument.

\medskip
\noindent
\textbf{Step 2.} Show the estimate $\eqref{pri:ap-6}$. Based upon the
estimate $\eqref{pri:A-1}$, we have
\begin{equation*}
\dashint_{\frac{1}{2}B\cap\Omega} W_B^{p}
\lesssim_d \frac{1}{|B|}\Big\langle\int_{\Omega}
|(\nabla w,\frac{w}{\sqrt{T}})|^p\Big\rangle
\lesssim
\frac{1}{|B|}\Big\langle\int_{\Omega}
|I_{B}(f,\frac{g}{\sqrt{T}})|^p\Big\rangle
\lesssim \dashint_{B\cap\Omega} F^{p},
\end{equation*}
which gives the stated estimate $\eqref{pri:ap-6}$.

\medskip
\noindent
\textbf{Step 3.} Show the estimate $\eqref{pri:ap-5}$. On account of
the improved Meyer's inequality\footnote{It actually follows
from a convexity argument independent of PDEs, or
see \cite[pp.184-186]{Giaquinta-Martinazzi12}.} (see e.g.\cite[Lemma 5.1]{Wang-Xu22}), we
upgrade the estimate $\eqref{pri:ap-3}$ to
\begin{equation}\label{pri:ap-3*}
\Big(\dashint_{\frac{1}{4}B\cap\Omega}
 |(\nabla z,\frac{z}{\sqrt{T}})|^q\Big)^{\frac{1}{q}}
 \lesssim_{\lambda,d,\partial\Omega,q,q_0}  \Big(\dashint_{B\cap\Omega}
 |(\nabla z,\frac{z}{\sqrt{T}})|^{q_0}\Big)^{\frac{1}{q_0}}
\end{equation}
for any $q_0>0$.
By Minkowski's inequality, we have
\begin{equation*}
\begin{aligned}
&\Big(\dashint_{\frac{1}{4}B\cap\Omega} Z_B^{\bar{q}}\Big)^{\frac{p}{\bar{q}}}
= \Big(\dashint_{\frac{1}{4}B\cap\Omega} \big\langle|(\nabla z,\frac{z}{\sqrt{T}})|^p\big\rangle^{\frac{\bar{q}}{p}}
\Big)^{\frac{p}{\bar{q}}}\\
&\leq \Big\langle
\big(\dashint_{\frac{1}{4}B\cap\Omega}
|(\nabla z,\frac{z}{\sqrt{T}})|^{\bar{q}}\big)^{\frac{p}{\bar{q}}}\Big\rangle
\lesssim^{\eqref{pri:ap-3*}}
\Big\langle
\dashint_{\frac{1}{2}B\cap\Omega}
|(\nabla z,\frac{z}{\sqrt{T}})|^{p}\Big\rangle
\lesssim^{\eqref{pri:ap-6}}
\dashint_{B\cap\Omega} (V^{p}+F^{p}),
\end{aligned}
\end{equation*}
where we also use the fact $z=v-w$ in $B\cap\Omega$ in the last inequality.

\medskip
\noindent
\textbf{Part III. Arguments for $\eqref{pri:A-3}$.} The proof will be finished by three steps. Assume $\Omega$ is bounded, and $B_0$ and $B$ are  given as in Lemma $\ref{shen's lemma2}$.

\noindent
\textbf{Step 1.} Reduction and outline of the proof.
Let $0<\underline{q}-1\ll 1$, and $\bar{q}\in(1,\infty)$
is arbitrarily large. For any $\underline{q}<q<\bar{q}$ and $\omega\in A_{q/\underline{q}}$, it suffices to show
\begin{subequations}
\begin{align}\label{}
& \dashint_{\frac{1}{2}B\cap\Omega}W_B^{\underline{q}}
 \leq \dashint_{B\cap\Omega}F^{\underline{q}};
\label{f:ap-8} \\
& \Big(\dashint_{\frac{1}{2}B\cap\Omega}Z_B^{\bar{p}}\omega
\Big)^{\underline{q}/\bar{q}}
 \leq \dashint_{B\cap\Omega}(V^{\underline{q}}+F^{\underline{q}})
 \Big(\dashint_{B}\omega\Big)^{\underline{q}/\bar{q}}.
\label{f:ap-9}
\end{align}
\end{subequations}

Once we have the above two estimates, it follows from the estimate $\eqref{shen-3}$ that
\begin{equation*}
\Big(\dashint_{B_0\cap\Omega}V^q\omega\Big)^{\frac{1}{q}}
\lesssim \bigg\{\Big(\dashint_{4B_0\cap\Omega}|V|^{\underline{q}}\Big)^{\frac{1}{\underline{q}}}
\Big(\dashint_{B_0}\omega\Big)^{1/{q}}
+\Big(\dashint_{4B_0\cap\Omega}|F|^q\omega\Big)^{\frac{1}{q}}\bigg\}.
\end{equation*}
Since the above estimate is translation-invariant alone $\partial\Omega$,
together with its interior counterpart, a covering argument leads to
the following estimate
\begin{equation}\label{f:ap-11}
\begin{aligned}
\dashint_{\Omega}V^q\omega
&\lesssim
\Big(\dashint_{\Omega}V^{\underline{q}}\Big)^{q/\underline{q}}
\Big(\dashint_{B_0}\omega\Big)
+\dashint_{\Omega}|F|^{q}\omega\\
&\lesssim^{\eqref{pri:A-2}}
\Big(\dashint_{\Omega}F^{\underline{q}}\Big)^{q/\underline{q}}
\Big(\dashint_{\Omega}\omega\Big)
+\dashint_{\Omega}|F|^{q}\omega
\lesssim^{\eqref{f:w-1}} \dashint_{\Omega}|F|^{q}\omega,
\end{aligned}
\end{equation}
where we also employ H\"older's inequality in the last inequality. By noting the condition $0<\underline{q}-1\ll 1$,
we have already proved the stated estimate $\eqref{pri:A-3}$ for any $\omega\in A_{q}$.

\medskip
\noindent
\textbf{Step 2.} Show the estimate $\eqref{f:ap-8}$, which is parallel to $\eqref{pri:ap-6}$ but based upon the estimate $\eqref{pri:A-2}$. Thus,
a similar computation leads to
\begin{equation*}
\dashint_{\frac{1}{2}B\cap\Omega} W_B^{\underline{q}}
\lesssim_d \frac{1}{|B|}
\int_{\Omega}\big\langle
|(\nabla w,\frac{w}{\sqrt{T}})|^p\big\rangle^{\underline{q}/p}
\lesssim
\frac{1}{|B|}\int_{\Omega}\big\langle
|I_{B}(f,\frac{g}{\sqrt{T}})|^p\big\rangle^{\underline{q}/p}
\lesssim \dashint_{B\cap\Omega} F^{\underline{q}},
\end{equation*}
This completes the proof for the estimate $\eqref{f:ap-8}$.

\medskip
\noindent
\textbf{Step 3.} Show the estimate $\eqref{f:ap-9}$, which is parallel to $\eqref{pri:ap-5}$. By using Minkowski's inequality, Holder's inequality
and the reverse Holder's inequality for $z$ and the weight $\omega$ in
the order, we obtain
\begin{equation*}
\begin{aligned}
&\Big(\dashint_{\frac{1}{4}B\cap\Omega} Z_B^{\bar{q}}\omega\Big)^{\underline{q}/\bar{q}}
= \Big(\dashint_{\frac{1}{4}B\cap\Omega} \big\langle|(\nabla z,\frac{z}{\sqrt{T}})|^p\big\rangle^{\frac{\bar{q}}{p}}\omega
\Big)^{\underline{q}/\bar{q}}\\
&\leq \Big\langle
\Big(\dashint_{\frac{1}{4}B\cap\Omega}
|(\nabla z,\frac{z}{\sqrt{T}})|^{\bar{q}}\omega\Big)^{\underline{q}/\bar{q}}\Big\rangle
\lesssim
\Big\langle
\Big(\dashint_{\frac{1}{4}B\cap\Omega}
|(\nabla z,\frac{z}{\sqrt{T}})|^{\bar{q}\gamma}
\Big)^{\frac{\underline{q}}{\gamma\bar{q}}}\Big\rangle
\Big(\dashint_{B}\omega^{\gamma'}\Big)^{\frac{\underline{q}}{\gamma'\bar{q}}}\\
&\lesssim^{\eqref{pri:ap-3}}
\Big\langle
\dashint_{\frac{1}{2}B\cap\Omega}
|(\nabla z,\frac{z}{\sqrt{T}})|^{\underline{q}}\Big\rangle
\Big(\dashint_{B}\omega\Big)^{\underline{q}/\bar{q}}
\lesssim^{\eqref{f:ap-8}}
\dashint_{B\cap\Omega} (V^{\underline{q}}+F^{\underline{q}})
\Big(\dashint_{B}\omega\Big)^{\underline{q}/\bar{q}},
\end{aligned}
\end{equation*}

\medskip
\noindent
\textbf{Part IV. Arguments for $\eqref{pri:A-3}$ for unbounded domain.}
In such the case, the estimates $\eqref{f:ap-8}$ and $\eqref{f:ap-9}$
are still valid. Let $B_0\subset\Omega$ be arbitrary with $x_{B_0}\in\partial\Omega$. We merely modify the estimate $\eqref{f:ap-11}$
as follows:
\begin{equation*}\label{}
\begin{aligned}
\int_{B_0\cap\Omega} V^q\omega
\lesssim r_{B_0}^{d(1-q/\underline{q})}\Big(
\dashint_{B_0}\omega\Big)\Big(
\int_{\Omega}V^{\underline{q}}\Big)^{\frac{q}{\underline{q}}}
+\int_{\Omega}F^q\omega
\lesssim^{\eqref{pri:A-2}}r_{B_0}^{d(1-q/\underline{q})}\Big(
\dashint_{B_0}\omega\Big)
\Big(
\int_{\Omega}F^{\underline{q}}\Big)^{\frac{q}{\underline{q}}}
+\int_{\Omega}F^q\omega.
\end{aligned}
\end{equation*}
To see the desired estimate, it suffices to show
\begin{equation}\label{f:ap-14}
\lim_{r_{B_0}\to\infty}r_{B_0}^{d(1-q/\underline{q})}\Big(
\dashint_{B_0}\omega\Big) = 0.
\end{equation}
On account of Lemma $\ref{weight}$, we can infer that
that $\omega\in A_{q/\underline{q}}$ and further implies $\omega\in A_{p}$
with $1<p<q/\underline{q}$. By definition, we obtain
\begin{equation*}
r_{B_0}^{d(1-q/\underline{q})}\Big(
\dashint_{B_0}\omega\Big)
\lesssim^{\eqref{f:w-1}}_{[\omega]_{A_p}}
r_{B_0}^{d(p-q/\underline{q})}\Big(
\int_{B_0}\omega^{-\frac{1}{p-1}}\Big)^{1-p}
\end{equation*}
where we note that the integral on the right-hand side above is not increasing as $r_{B_0}\to\infty$. Consequently,
let $r_{B_0}$ go to infinity, and we have $\eqref{f:ap-14}$
and this ends the whole proof.
\end{proof}


\begin{lemma}[Meyer type estimates]\label{lemma:2.2}
Let $\Omega$ be a (bounded) Lipschitz domain and $T>0$. Suppose that the admissible coefficient $a$ satisfies the uniform ellipticity condition $\eqref{a:1}$.
Assume that $v\in H_0^1(\Omega)$ is associated with the given data $(f,g)$ by the following equations:
\begin{equation}\label{pde:2}
\left\{\begin{aligned}
\frac{1}{T}v-\nabla\cdot a\nabla v&= \nabla\cdot f + \frac{1}{T}g
&~&\text{in}~~\Omega;\\
v&= 0 &~&\text{on}~\partial\Omega.
\end{aligned}\right.
\end{equation}
Then, for any $0<|p-2|\ll 1$ and $0<|q-2|\ll 1$, there holds
the following  estimates
\begin{subequations}
\begin{align}
&\int_{\Omega} \Big(\dashint_{U_1(x)}|(\frac{v}{\sqrt{T}},\nabla v)|^2\Big)^{\frac{p}{2}}dx
 \lesssim
 \int_{\Omega} \Big(\dashint_{U_1(x)}|(\frac{g}{\sqrt{T}},f)|^2\Big)^{\frac{p}{2}}dx;
\label{pri:2.4} \\
&\int_{\Omega}
\Big\langle\Big(\dashint_{U_1(x)}\big|(\frac{v}{\sqrt{T'}},\nabla v)\big|^2\Big)^{\frac{p}{2}}
\Big\rangle^{\frac{q}{p}}dx
\lesssim
\int_{\Omega}
\Big\langle\Big(\dashint_{U_1(x)}
\big|(\frac{g}{\sqrt{T'}},f)\big|^2\Big)^{\frac{p}{2}}
\Big\rangle^{\frac{q}{p}}dx.
\label{pri:2.3}
\end{align}
\end{subequations}
\end{lemma}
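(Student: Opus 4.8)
The plan is to reproduce the two-layer architecture of the proof of Lemma~\ref{lemma:ap-2}: first the deterministic estimate \eqref{pri:2.4}, then the annealed estimate \eqref{pri:2.3} on top of it. The two features that distinguish the present situation from Lemma~\ref{lemma:ap-2} are that the coefficient $a$ is now only bounded measurable, so that the full Calder\'on--Zygmund range must shrink to the Meyers range (which is exactly why the hypotheses force $0<|p-2|\ll1$ and $0<|q-2|\ll1$), and that every quantity carries the unit-scale average $\big(\dashint_{U_1(x)}|(\tfrac{v}{\sqrt T},\nabla v)|^2\big)^{1/2}$ in place of a bare pointwise gradient; accordingly Shen's real argument will be applied directly to such mollified objects.

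For \eqref{pri:2.4} I would first dispose of the endpoint $p=2$: since $|U_1(x)|\gtrsim_{d,M_0}1$ by the measure density of Lipschitz domains, Fubini turns the left-hand side into $\int_\Omega|(\tfrac{v}{\sqrt T},\nabla v)|^2$ up to constants, and testing \eqref{pde:2} with $v$ bounds this by $\int_\Omega|(f,\tfrac{g}{\sqrt T})|^2\sim\int_\Omega\dashint_{U_1(x)}|(f,\tfrac{g}{\sqrt T})|^2$. For $p$ with $0<p-2\ll1$ I would invoke Shen's real argument (Lemma~\ref{shen's lemma2}) with $V(x):=\big(\dashint_{U_1(x)}|(\tfrac{v}{\sqrt T},\nabla v)|^2\big)^{1/2}$: on a ball $B$ of radius $r_B\gtrsim1$ with $x_B\in\partial\Omega$ or $2B\subset\Omega$, split $v=w+z$ in $B\cap\Omega$ with $w$ solving \eqref{pde:2} with data $(fI_B,gI_B)$ and $z$ the homogeneous massive remainder. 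The two inputs required by Shen's lemma are the energy bound $\dashint_{\frac12 B\cap\Omega}|(\tfrac{w}{\sqrt T},\nabla w)|^2\lesssim\dashint_{B\cap\Omega}|(f,\tfrac{g}{\sqrt T})|^2$ and a weak reverse H\"older inequality $\big(\dashint_{\frac14 B\cap\Omega}|(\tfrac{z}{\sqrt T},\nabla z)|^{q_0}\big)^{1/q_0}\lesssim\big(\dashint_{B\cap\Omega}|(\tfrac{z}{\sqrt T},\nabla z)|^2\big)^{1/2}$ for some $q_0=q_0(\lambda,d,M_0)>2$; the latter is the interior/boundary Meyers estimate for the massive operator, obtained from Caccioppoli's and Sobolev--Poincar\'e's inequalities followed by Gehring's lemma, with the massive term and the absence of scale-invariance treated by the same $T\geq1$ versus $0<T<1$ dichotomy as in the derivation of \eqref{pri:ap-3}. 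The passage from these pointwise PDE estimates to the corresponding estimates for $V$ and its $w$- and $z$-analogues uses Fubini together with the nestedness of the mixed norm (later isolated in Lemma~\ref{lemma:3.1}) and the improved Meyers inequality of \cite[Lemma~5.1]{Wang-Xu22}. Shen's lemma then delivers \eqref{pri:2.4} for $2\le p<q_0$, and the complementary regime $0<2-p\ll1$ follows by duality, since $\tfrac1T-\nabla\cdot a^*\nabla$ obeys the same ellipticity and the mixed norm dualizes in the usual fashion.

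For \eqref{pri:2.3} I would note that \eqref{pri:2.4} holds $\langle\cdot\rangle$-almost surely, so taking $\langle\cdot\rangle$ and applying Fubini--Tonelli immediately yields the diagonal case $q=p$. For $q\neq p$ with both exponents within a small neighbourhood of $2$ I would run Shen's real argument a second time for the stationary random field, mirroring Part~II of the proof of Lemma~\ref{lemma:ap-2}: with $\mathsf V(x):=\langle(\dashint_{U_1(x)}|(\tfrac{v}{\sqrt T},\nabla v)|^2)^{p/2}\rangle^{1/p}$ and $\mathsf F,\mathsf Z_B,\mathsf W_B$ defined analogously, one has $\mathsf V\le\mathsf W_B+\mathsf Z_B$ in $B\cap\Omega$ by Minkowski's inequality, and the two Shen conditions $\dashint_{\frac12 B\cap\Omega}\mathsf W_B^{p}\lesssim\dashint_{B\cap\Omega}\mathsf F^{p}$ and $\big(\dashint_{\frac14 B\cap\Omega}\mathsf Z_B^{\bar q}\big)^{p/\bar q}\lesssim\dashint_{B\cap\Omega}(\mathsf V^{p}+\mathsf F^{p})$ follow respectively from the diagonal case just proved and from the weak reverse H\"older inequality above combined with Minkowski's inequality in $\langle\cdot\rangle$ and the relation $z=v-w$. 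Shen's lemma then gives \eqref{pri:2.3} for $p<q<q_0$, and the case $q<p$ follows once more by duality.

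The step I expect to be the genuine obstacle is keeping every multiplicative constant uniform in $T>0$ while threading the fixed unit-scale average $\dashint_{U_1(x)}$ through Shen's machinery twice: the reverse H\"older inequality for the homogeneous massive equation is scale-invariant only after the $T\geq1$ versus $0<T<1$ split, and one must check that this split stays compatible with the mollification radius being frozen at $1$ (the two scales do agree, so the transfer via Fubini, Jensen and Minkowski goes through, but the bookkeeping is fiddly), and that the reduction of \eqref{pri:2.4} and \eqref{pri:2.3} to the local estimates demanded by Lemma~\ref{shen's lemma2} — the covering and duality steps suppressed in the proof of Lemma~\ref{lemma:ap-2} — is carried out verbatim in the present mixed-norm context.
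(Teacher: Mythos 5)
Your architecture matches the paper's: prove the deterministic estimate~\eqref{pri:2.4} first via Shen's real argument applied to the unit-scale mollified quantity $V(x)=\bigl(\dashint_{U_1(x)}|(\tfrac{v}{\sqrt T},\nabla v)|^2\bigr)^{1/2}$, with a decomposition $v=w+z$ and an energy estimate plus a Meyers reverse H\"older estimate as the two Shen inputs, then obtain the annealed estimate~\eqref{pri:2.3} by running Shen a second time on $\mathsf V(x)=\bigl\langle\bigl(\dashint_{U_1(x)}|(\tfrac{v}{\sqrt T},\nabla v)|^2\bigr)^{p/2}\bigr\rangle^{1/p}$, with the Meyers-range restriction $0<|p-2|\ll1$, $0<|q-2|\ll1$, and duality closing the lower exponent ranges. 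That much coincides with Part~I and Part~II of the paper's proof.

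However, you restrict the verification of the Shen hypotheses to balls with $r_B\gtrsim 1$, and that leaves a real gap. Lemma~\ref{shen's lemma2} requires the energy/reverse-H\"older pair to hold for \emph{every} ball $B$ with $|B|\leq c_1|B_0|$, hence for arbitrarily small balls. For $r_B<1/4$ the PDE decomposition with $w$ driven by data cut off on a ball comparable to $B$ does not produce a usable bound on the mollified quantities: the mollification radius $1$ is larger than the ball, so the passage from pointwise Caccioppoli/Meyers to estimates on $V, W_B, Z_B$ via the geometric inequalities in Lemma~\ref{integral-geometry} breaks down (those are only valid for $r\geq 1/4$). The paper handles this case by an entirely non-PDE choice: set $W_B\equiv 0$ and $Z_B\equiv V$, and observe that the required reverse H\"older inequality $\bigl(\dashint_D V^{p_1}\bigr)^{1/p_1}\lesssim\bigl(\dashint_{5D}V^2\bigr)^{1/2}$ holds automatically because the unit-scale average $V$ is essentially constant at scales below $1$, which is exactly estimate~\eqref{g-0}. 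The same $r_B\gtrless 1/4$ dichotomy reappears in Step~1 of Part~II when proving~\eqref{f:6.10}. Your closing paragraph correctly senses that the frozen mollification radius creates a scale tension, but attributes the difficulty to uniformity in $T$ (which is in fact handled the same way as in Lemma~\ref{lemma:ap-2}, via the $T\geq 1$ versus $0<T<1$ split); the actual missing piece is the small-ball regime, which your decomposition simply does not address. Once that case split is added, your argument lines up with the paper's.
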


\begin{proof}
The main idea of the proof can be found in \cite{Wang-Xu22}, which is based upon Lemma $\ref{shen's lemma2}$ (by simply taking $\omega=1$ therein).
How to utilize Shen's real arguments has been well stated in \cite[Subsection 3.1]{Wang-Xu22}. Since the interior estimates can be similarly derived, the present demonstration focuses on boundary estimates.

\noindent
\textbf{Part I. Arguments for $\eqref{pri:2.4}$.}
Let $0<p_1-2\ll 1$, and $2<p<p_1$, and the case of $0<2-p\ll 1$ follows from a duality argument (see e.g. \cite[Subsection 3.1]{Wang-Xu22}).
Let $B_0$ be fixed with $x_{B_0}\in\partial\Omega$, and we introduce the following notations:
\begin{equation*}
 V(x):= \Big(\dashint_{U_1(x)}|(\frac{v}{\sqrt{T}},\nabla v)|^2\Big)^{\frac{1}{2}};
 \qquad
 G(x):= \Big(\dashint_{U_1(x)}|(\frac{g}{\sqrt{T}},f)|^2\Big)^{\frac{1}{2}}.
\end{equation*}
Then, to see the stated estimate $\eqref{pri:2.4}$, it is reduced to show
\begin{equation}\label{f:6.3}
\dashint_{B_0\cap\Omega} V^p
 \lesssim
\Big(\dashint_{4B_0\cap\Omega} V^2\Big)^{\frac{p}{2}}
+ \dashint_{4B_0\cap\Omega} G^p.
\end{equation}
This together with a covering argument (see e.g. \cite[Subsection 3.1]{Wang-Xu22}) and
\begin{equation}\label{f:6.2}
 \int_{\Omega} V^2
 \lesssim^{\eqref{g-1*}}
 \int_{\Omega} |(\frac{v}{\sqrt{T}},\nabla v)|^2
 \lesssim_{d,\lambda}
 \int_{\Omega} |(\frac{g}{\sqrt{T}},f)|^2
 \lesssim^{\eqref{g-1*}}\int_{\Omega} G^2
\end{equation}
leads to the desired estimate $\eqref{pri:2.4}$, and the estimate $\eqref{f:6.2}$ is due to the energy estimate.
The proof of $\eqref{f:6.3}$ is divided into
three steps.

\noindent
\textbf{Step 1.}
Reduction and decomposition of the field $(\frac{v}{\sqrt{T}},\nabla v)$.
Let $B$ be arbitrary ball
with the properties: $r_B\leq (1/100)r_{B_0}$ and
either $x_B\in 2B_0\cap\partial\Omega$ or
$4B\subset 2B_0\cap\Omega$. Here, we merely
fucus on the case $x_B\in 2B_0\cap \partial\Omega$ and
set $D:=B\cap\Omega$ and $\Delta:=B\cap\partial\Omega$.
\begin{equation*}
\left\{\begin{aligned}
 \frac{1}{T}w - \nabla\cdot a\nabla w &=
 \nabla\cdot fI_{10D}+\frac{1}{T}gI_{10D}
 &\quad& \text{in}\quad \Omega;\\
 w &= 0
 &\quad& \text{on}\quad \partial\Omega,
\end{aligned}\right.
\end{equation*}
with $z:=v-w$ in $10D$ satisfying
\begin{equation*}
\left\{\begin{aligned}
\nabla\cdot a\nabla z &= \frac{1}{T}z &\quad&\text{in}\quad 10D;\\
z &=0 &\quad&\text{on}\quad 10\Delta.
\end{aligned}\right.
\end{equation*}
We also introduce
the following notations:
\begin{equation*}
 Z_B(x):= \Big(\dashint_{U_1(x)}|(\frac{z}{\sqrt{T}},\nabla z)|^2\Big)^{\frac{1}{2}};
 \qquad
 W_B(x):= \Big(\dashint_{U_1(x)}|(\frac{w}{\sqrt{T}},\nabla w)|^2\Big)^{\frac{1}{2}},
\end{equation*}
and one can observe that
$V\leq W_{B}+Z_{B}$
on $2D$. We claim that:
\begin{subequations}\label{}
\begin{align}
&\Big(\dashint_{D}
Z_B^{p_1}
\Big)^{\frac{1}{p_1}} \lesssim
\Big(\dashint_{20D}
(V^{2}+G^{2})\Big)^{\frac{1}{2}};
\label{f:6.4-1}\\
&\dashint_{D}
W_B^{2}
\lesssim \dashint_{20D}G^{2}.
\label{f:6.4-2}
\end{align}
\end{subequations}

Then, admitting the estimates $\eqref{f:6.4-1}$ and $\eqref{f:6.4-2}$ for a while, by using Lemma $\ref{shen's lemma2}$, we obtain the desired estimate
$\eqref{f:6.3}$. Now, the proof of the above estimates $\eqref{f:6.4-1}$ and $\eqref{f:6.4-2}$ will be divided into two cases:
(I). $0<r_B<(1/4)$;
(II). $r_B\geq (1/4)$.

\noindent
\textbf{Step 2.}
Establish the
estimates $\eqref{f:6.4-1}$ and $\eqref{f:6.4-2}$
in the case of (I). In such the case, we merely choose
$Z_B = V$
and $W_B = 0$, and
the estimate $\eqref{f:6.4-2}$ is trivial while the
estimate $\eqref{f:6.4-1}$ is reduced to show
\begin{equation}\label{f:6.5}
\Big(\dashint_{D}
V^{p_1}\Big)^{\frac{1}{p_1}}
\lesssim \Big(\dashint_{5D}
V^{2}\Big)^{\frac{1}{2}}.
\end{equation}
To see this, for any $x_0\in D$, it follows from the estimate $\eqref{g-0}$ that
\begin{equation*}
\big[V(x_0)\big]^2\lesssim \dashint_{5D}V^{2}
\quad \Longrightarrow \quad
\big[V(x_0)\big]^{p_1}\lesssim \Big(\dashint_{5D}V^{2}\Big)^{\frac{p_1}{2}}.
\end{equation*}
Integrating both sides above with respective to
$x_0\in D$, we obtain the stated estimate $\eqref{f:6.5}$ and therefore
implies the desired estimate $\eqref{f:6.4-1}$ in such the case.

\noindent
\textbf{Step 3.} Show the estimates
$\eqref{f:6.4-1}$ and $\eqref{f:6.4-2}$ in the case of (II). We
first handle the estimate $\eqref{f:6.4-2}$, and start from
\begin{equation}\label{f:6.6}
\begin{aligned}
\dashint_{D}
W_B^{2}
\lesssim^{\eqref{g-1}}
\dashint_{4D}|(\frac{w}{\sqrt{T}},\nabla w)|^2
\lesssim_{\lambda,d}
\dashint_{10D}|(\frac{g}{\sqrt{T}},f)|^2
\lesssim^{\eqref{g-1}}
\dashint_{20D}G^2,
\end{aligned}
\end{equation}
where we employ energy estimates in the second inequality.
Then, it follows that
\begin{equation*}
\begin{aligned}
\dashint_{D}Z_B^{p_1}
&\lesssim^{\eqref{g-1}}
\dashint_{4D}\big|(\frac{z}{\sqrt{T}},\nabla z)\big|^{p_1}\\
&\lesssim^{(\text{Meyers})}
\Big(\dashint_{5D}\big|(\frac{z}{\sqrt{T}},\nabla z)\big|^{2}\Big)^{\frac{p_1}{2}}
\lesssim^{\eqref{f:6.6},\eqref{g-1}}
\Big(\dashint_{10D} V^2 +
\dashint_{20D}G^2\Big)^{\frac{p_1}{2}},
\end{aligned}
\end{equation*}
and this implies the desired estimate $\eqref{f:6.4-1}$. Moreover,
for any $s>0$, there holds
\begin{equation}\label{f:6.13}
\Big(\dashint_{D}Z_B^{p_1}\Big)^{\frac{1}{p_1}}
\lesssim \Big(\dashint_{\alpha D}Z_B^{s}\Big)^{\frac{1}{s}},
\end{equation}
which follows from a convexity argument (see e.g. \cite[pp.173]{Fefferman-Stein72}), where $\alpha>1$.

\medskip
\noindent
\textbf{Part II. Arguments for $\eqref{pri:2.3}$.}
Let $0<q_1-2\ll 1$ and $0<|2-p|\ll 1$, and $p\leq q<q_1$, and the case of $q<p$ follows from the duality argument.
For any ball $B$ with
the property that either $x_B\in\partial\Omega$ or
$4B\subset\Omega$ (and here we just focus on the case $x_B\in\partial\Omega$), let $w,z\in H_0^1(\Omega)$ satisfy the following equations:
\begin{equation}
 \frac{1}{T}w-\nabla\cdot a\nabla w =
 \nabla \cdot fI_{2D}+\frac{1}{T}gI_{2D},
 \quad \text{and}\quad
 \frac{1}{T}w-\nabla\cdot a\nabla z
 =\nabla \cdot fI_{\Omega\setminus2D}
 +\frac{1}{T}gI_{\Omega\setminus 2D}\quad\text{in}\quad \Omega,
\end{equation}
and we introduce the following notations:
\begin{equation*}
\begin{aligned}
& V(x)
 :=\Big\langle\Big(\dashint_{U_{1}(x)}|(\frac{v}{\sqrt{T}},\nabla v)|^2\Big)^{\frac{p}{2}}\Big\rangle^{\frac{1}{p}};
 &\qquad&
 G(x)
 :=\Big\langle\Big(\dashint_{U_{1}(x)}|(\frac{g}{\sqrt{T}},
 f)|^2\Big)^{\frac{p}{2}}\Big\rangle^{\frac{1}{p}};\\
& W_B(x):=\Big\langle
 \Big(\dashint_{U_{1}(x)}
 |(\frac{w}{\sqrt{T}},\nabla w)|^2\Big)^{\frac{p}{2}}\Big\rangle^{\frac{1}{p}};
 &\qquad&
 Z_B(x)
 :=\Big\langle
 \Big(\dashint_{U_{1}(x)}
 |(\frac{z}{\sqrt{T}},\nabla z)|^2\Big)^{\frac{p}{2}}\Big\rangle^{\frac{1}{p}}.
\end{aligned}
\end{equation*}

Thus, we have $F\leq W_B+V_B$ due to the relationship
$u_\varepsilon = w_\varepsilon + v_\varepsilon$ in $\Omega$.
According to the preconditions stated in
Lemma $\ref{shen's lemma2}$, to see $\eqref{pri:2.3}$, it is reduced to establish
the following two estimates:
\begin{subequations}
\begin{align}
& \dashint_{\frac{1}{2}D}W_B^{p}
 \lesssim \dashint_{7D}
 G^{p}; \label{f:6.7}\\
& \Big(\dashint_{\frac{1}{8}D}Z_B^{q_1}\Big)^{\frac{p}{q_1}}
 \lesssim \dashint_{7D}(V^{p}+g^{p}).
 \label{f:6.8}
\end{align}
\end{subequations}
Therefore, the remainder of the proof is mainly to show the estimates $\eqref{f:6.7}$ and $\eqref{f:6.8}$ by the following two steps.

\noindent
\textbf{Step 1.} Show the estimate $\eqref{f:6.7}$.
It is reduced to showing
\begin{equation}\label{f:6.10}
\dashint_{\frac{1}{2}D}
\Big(\dashint_{U_{1}(x)}
|(\frac{w}{\sqrt{T}},\nabla w)|^2\Big)^{\frac{p}{2}}
 \lesssim \dashint_{7D}
\Big(\dashint_{U_{1}(x)}
|(f,\frac{g}{\sqrt{T}})|^{2}\Big)^{\frac{p}{\bar{2}}}.
\end{equation}
We discuss it by two cases: (1) $r_B\geq (1/4)$;
(2) $r_B < (1/4)$. For the case (1), we first observe that for any $x\in\Omega$ being such that
\begin{equation}\label{f:6.9}
 |x-x_B|\leq 2r_B + 1 \leq 6r_B,
\end{equation}
there holds $\dashint_{U_1(x)}|(fI_{2D},\frac{gI_{2D}}{\sqrt{T}})|^2\not=0$.
Then, we have
\begin{equation}\label{f:6.12}
\begin{aligned}
\dashint_{D}
\Big(\dashint_{U_{1}(x)}
|(\frac{w}{\sqrt{T}}
&,\nabla w)|^2\Big)^{\frac{p}{2}}
\leq \frac{1}{|D|}
\int_{\Omega}
\Big(\dashint_{U_{1}(x)}
|(\frac{w}{\sqrt{T}},\nabla w)|^2\Big)^{\frac{p}{2}}\\
&\lesssim^{\eqref{pri:2.4}}
\frac{1}{|D|}
\int_{\Omega}
\Big(\dashint_{U_{1}(x)}
|(fI_{2D},\frac{gI_{2D}}{\sqrt{T}})|^2\Big)^{\frac{p}{2}}
\lesssim^{\eqref{f:6.9}} \dashint_{6D}
\Big(\dashint_{U_{1}(x)}
|(f,\frac{g}{\sqrt{T}})|^2\Big)^{\frac{p}{2}}.
\end{aligned}
\end{equation}
We now address the case (2).
For any $x_0\in\frac{1}{2}D$,
by noting $2B\subset B_{1}(x_0)$ and using
the energy estimate, we have
\begin{equation}\label{f:6.11}
 \dashint_{U_{1}(x_0)} |(\frac{w}{\sqrt{T}}
,\nabla w)|^2
 \lesssim
 \dashint_{U_{1}(x_0)} |(f,\frac{g}{\sqrt{T}})|^{2}.
\end{equation}
Then, appealing to Lemma $\ref{integral-geometry}$,
it follows that
\begin{equation*}
\Big(\dashint_{U_{1}(x_0)} |(\frac{w}{\sqrt{T}}
,\nabla w)|^2\Big)^{\frac{p}{2}}
\lesssim^{\eqref{f:6.11}}
\Big(\dashint_{U_{1}(x_0)} |f|^{2}
\Big)^{\frac{p}{2}}
\lesssim^{\eqref{g-0}}
\dashint_{5D}
\Big(\dashint_{U_{1}(x)} |(f,\frac{g}{\sqrt{T}})|^{2}\Big)^{\frac{p}{2}}.
\end{equation*}
Integrating both sides above with respect to $x_0\in\frac{1}{2}D$ yields the estimate $\eqref{f:6.10}$ in such the case. This coupled with
$\eqref{f:6.12}$ leads to the whole argument of the estimate $\eqref{f:6.10}$, and finally taking expectation on both
sides of $\eqref{f:6.10}$ provides us the desired estimate $\eqref{f:6.7}$.

\noindent
\textbf{Step 2.} Show the estimate $\eqref{f:6.8}$.
By using Minkowski's inequality, we have
\begin{equation*}
\begin{aligned}
\Big(\dashint_{\frac{1}{8}D}
Z_B^{q_1}\Big)^{\frac{p}{q_1}}
&\leq
\bigg\langle
\Big(\dashint_{\frac{1}{8}D}
\Big(\dashint_{
U_{1}(x)}|(\frac{z}{\sqrt{T}},\nabla z)|^2
\Big)^{\frac{q_1}{2}}dx\Big)^{\frac{p}{q_1}}\bigg\rangle dx\\
&\lesssim^{\eqref{f:6.13}}
\dashint_{\frac{1}{2}D}\Big\langle
\Big(\dashint_{U_{1}(x)}
|(\frac{z}{\sqrt{T}},\nabla z)|^2\Big)^{\frac{p}{2}}\Big\rangle dx
\lesssim^{\eqref{f:6.7}} \dashint_{\frac{1}{2}D}V^{p}
+ \dashint_{7D}G^{p},
\end{aligned}
\end{equation*}
which implies the desired estimate $\eqref{f:6.8}$, and this ends
the whole proof.
\end{proof}

\section{Suboptimal annealed Calder\'on-Zygmund estimates}\label{sec:3}

\begin{theorem}\label{thm:2}
Let $\Omega$ be a (bounded) $C^1$ domain and $T\geq 1$.
Suppose that the ensemble $\langle\cdot\rangle$
is stationary and satisfies the spectral gap condition $\eqref{a:2}$,
as well as $\eqref{a:3}$. Let $u$ be associated with
$\tilde{f}$ and $\tilde{g}$ by the following equations:
\begin{equation}\label{pde:3.2x}
\left\{\begin{aligned}
\frac{1}{T}u-\nabla\cdot a\nabla u
&= \nabla\cdot f +\frac{1}{T}g
&\quad&\text{in}~~\Omega;\\
u &= 0
&\quad&\text{on}~~\partial\Omega.
\end{aligned}\right.
\end{equation}
Then, for any $1<p,r<\infty$ with $\bar{r}>r$, there holds
the following annealed estimate
\begin{equation}\label{pri:5.0}
\begin{aligned}
\Big(\int_{\Omega} \big\langle|(\nabla u,\frac{u}{\sqrt{T}} )|^{r}\big\rangle^{\frac{p}{r}}\Big)^{\frac{1}{p}}
&\leq C_{p,r,\bar{r}}(T)
\Big(\int_{\Omega}
\big\langle|(f,\frac{g}{\sqrt{T}})|^{\bar{r}}
\big\rangle^{\frac{p}{\bar{r}}}\Big)^{\frac{1}{p}}.
\end{aligned}
\end{equation}
Moreover,  for any $\omega\in A_p$, we obtain that
\begin{equation}\label{pri:5.0-w}
\begin{aligned}
\Big(\int_{\Omega} \big\langle|(\nabla u,\frac{u}{\sqrt{T}} )|^{r}\big\rangle^{\frac{p}{r}}\omega\Big)^{\frac{1}{p}}
&\leq \tilde{C}_{p,r,\bar{r}}(T)
\Big(\int_{\Omega}
\big\langle|(f,\frac{g}{\sqrt{T}})|^{\bar{r}}
\big\rangle^{\frac{p}{\bar{r}}}\omega\Big)^{\frac{1}{p}},
\end{aligned}
\end{equation}
where the multiplicative constants are estimated by
\begin{equation}\label{rv-bound-1}
C_{p,r,\bar{r}}(T)\lesssim_{\lambda,d,M_0,p,q}
T^{\frac{d}{2}};\quad
\tilde{C}_{p,r,\bar{r}}(T)\lesssim_{
\lambda,d,M_0,p,q,[\omega]_{A_q}}
T^{\frac{3d}{4}}.
\end{equation}
\end{theorem}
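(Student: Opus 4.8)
The plan is to split the analysis at the unit scale: a small-scale regime governed by the local regularity hypothesis $\eqref{a:3}$, and the complementary regime governed by a real-variable argument for the massive operator $\tfrac1T-\nabla\cdot a\nabla$, the two being bridged by a mixed norm in which the pointwise field is replaced by its local $L^2$-average. Writing $\mathcal V(x):=\big(\dashint_{U_1(x)}|(\nabla u,u/\sqrt T)|^2\big)^{1/2}$ and likewise $\mathcal G(x)$ for the data $(f,g/\sqrt T)$, the goal is the chain
\[
\Big\|\langle|(\nabla u,u/\sqrt T)|^r\rangle^{1/r}\Big\|_{L^p_\omega}\ \lesssim\ \big\|\langle\mathcal V^r\rangle^{1/r}\big\|_{L^p_\omega}\ \lesssim\ \big\|\langle\mathcal G^{\bar r}\rangle^{1/\bar r}\big\|_{L^p_\omega}\ \lesssim\ \Big\|\langle|(f,g/\sqrt T)|^{\bar r}\rangle^{1/\bar r}\Big\|_{L^p_\omega},
\]
where the two outer inequalities are ``small-scale conversions'' (for the solution and for the data) and the middle one is the ``mixed-norm suboptimal estimate''.

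First I would prove the middle, mixed-norm estimate. The base case $|p-2|,|r-2|\ll1$ is exactly the Meyer-type bound of Lemma~\ref{lemma:2.2} (with weight $\equiv1$), and its being stated for $T>0$ is harmless since $T\ge1$ here. To reach all $1<p,r<\infty$ I would apply Shen's real-variable lemma (Lemma~\ref{shen's lemma2}) to the decomposition $u=w+z$ on a surface or interior ball $B$, where $w$ solves $\eqref{pde:3.2x}$ with data restricted to a fixed dilate of $D=B\cap\Omega$ and $z=u-w$ solves the homogeneous massive equation on that dilate. The ``bad'' part $w$ is controlled in the mixed norm by the localized data via the energy estimate together with Lemma~\ref{lemma:2.2}, as in the proof of Lemma~\ref{lemma:ap-2}; the ``good'' part $z$ needs a reverse-Hölder inequality in the mixed norm, which for balls of radius $\lesssim1$ is furnished by the interior and boundary $C^{1,\sigma}$-estimates made available by $\eqref{a:3}$ (the massive term $\tfrac1Tz$ being lower order on such balls). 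Crucially, the all-moment content of $\eqref{a:3}$ promotes these deterministic estimates to every stochastic moment, so that the reverse-Hölder inequality survives the ensemble average and the interchange of $L^r_{\langle\cdot\rangle}$ with $L^s_x$ (here the nestedness of the mixed norm is used). Because this real-variable iteration is scale-invariant only down to the intrinsic scale $\sqrt T$ of the massive operator, whereas the coefficient regularity lives at the unit scale, bridging the window $[1,\sqrt T]$ costs of order $d/2$ crude local (Sobolev-type) summations — each over $\sim\sqrt T$ unit cells — and bookkeeping them yields the factor $T^{d/2}$. In the weighted case the reverse-Hölder step for $z$ must be run jointly with a reverse-Hölder inequality for the $A_q$-weight (self-improvement of $A_q$, cf. Lemma~\ref{weight}); since the exponent supplied by Meyers lies only slightly above $2$, the weight exponent must be chosen correspondingly close to $1$, which inflates the constant to $T^{3d/4}$. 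The unweighted estimate $\eqref{pri:5.0}$ comes from the same scheme with $\omega\equiv1$, where this weight step is absent and one recovers $T^{d/2}$.

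Next I would prove the small-scale conversions. Since $\eqref{a:3}$ makes $a$ uniformly $C^{0,\sigma}$ on unit balls with all stochastic moments, $u$ is $C^{1,\sigma}$ there, and classical interior/boundary Schauder (or $W^{1,s}$) estimates give, moment by moment,
\[
\big\langle|(\nabla u,u/\sqrt T)(x)|^r\big\rangle^{1/r}\ \lesssim\ \big\langle\mathcal V(x)^r\big\rangle^{1/r}+\Big\langle\Big(\dashint_{U_c(x)}|(f,g/\sqrt T)|^{\bar r}\Big)^{r/\bar r}\Big\rangle^{1/r};
\]
taking the $L^p(\omega\,dx)$-norm of both sides, and absorbing the local average on the data side by Jensen and Minkowski (where the gap $\bar r>r$ provides exactly the room needed), reduces matters to the mixed-norm estimate already in hand; the constant here is $T$-independent, being purely local at scale $\le1\le\sqrt T$. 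Combining the three displayed inequalities gives $\eqref{pri:5.0-w}$.

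The main obstacle I anticipate is in the mixed-norm step: running Shen's iteration while keeping the spatial index $p$ and the stochastic index $r$ fully decoupled (so that no size relation between them is imposed), and, at the same time, extracting the sharp power of $T$. The decoupling relies on the nestedness and self-improvement of the mixed norm together with the all-moments content of $\eqref{a:3}$; obtaining only the extra $T^{d/4}$ in the weighted case — rather than a worse power — requires ordering and balancing the two reverse-Hölder exponents (for $z$ and for $\omega$) against the Meyer exponent with some care. The remaining bookkeeping — assembling the chain and passing to $\eqref{pri:5.0}$ — is routine.
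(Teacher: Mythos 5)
The overall architecture you describe — a mixed‑norm suboptimal estimate bracketed by small‑scale conversions (pointwise field $\leftrightarrow$ local $L^2$‑average, costing a drop in stochastic integrability) — is indeed the architecture of the paper: Proposition~\ref{P:3} plays the role of your middle inequality, Proposition~\ref{P:6} plays the role of your outer inequalities, and Theorem~\ref{thm:2} is obtained by gluing them. But your treatment of the middle step is not how the paper proceeds, and it has a real gap.

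You propose to obtain the mixed‑norm estimate for arbitrary $(p,r)$ by Shen iteration starting from the Meyer‑type base case of Lemma~\ref{lemma:2.2}, using $C^{1,\sigma}$ estimates (via~\eqref{a:3}) to supply the reverse H\"older inequality for the homogeneous part $z$ on balls with $r_B\lesssim 1$. The problem is balls with $r_B\gg 1$: there the coefficient regularity at unit scale provides no improved reverse H\"older inequality, and the only available gain is Meyer's, which confines the reverse‑H\"older exponent to a neighbourhood of $2$. This is exactly why Lemma~\ref{lemma:2.2} is stated only for $|p-2|,|q-2|\ll 1$: in the case $r_B\geq 1/4$ the proof falls back on the Meyers estimate for $z$, and the gap cannot be pushed without quenched (large‑scale) regularity. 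Your remark about ``bridging the window $[1,\sqrt T]$ via crude local Sobolev‑type summations'' does not repair this — Shen's lemma requires the reverse‑H\"older inequality uniformly over all qualifying balls, and a brute‑force count of unit cells does not yield such an inequality; it also does not account for the fact that $\sqrt T$ can be arbitrarily large. Relying on large‑scale regularity here would put you in the regime the paper's Remark on approach (i) describes, which the authors explicitly elect not to follow (precisely because the combination with the small‑scale estimate then forces a size relation between the spatial and stochastic exponents).

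The paper instead proves Proposition~\ref{P:3} by a functional‑analytic, kernel‑free route: it works with the exponential weight $\omega_T(x)=\exp(-|x|/C\sqrt T)$, exploits shift invariance in $z\in\mathbb{R}^d$, uses the nestedness properties of the mixed norm from Lemma~\ref{lemma:3.1}, a complex interpolation step, and duality — no Shen iteration, no reverse H\"older for $z$ on large balls. In particular the factors $T^{d/2}$ and $T^{3d/4}$ are not the result of crude bookkeeping; they are the exact values of the H\"older‑dual integrals of $\omega_T$ (see the computations in~\eqref{f:3.8} and~\eqref{f:2.4}, the latter requiring the annular decomposition and reverse H\"older inequality for the $A_p$ weight). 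Your narrative for the exponent $3d/4$ (``the weight exponent must be chosen close to $1$'') is not what happens: the extra $T^{d/4}$ comes from the product $\big(\int\omega_T^p(\omega)_{B_1}\big)^{1/p}\big(\int\omega_{4T}^{pp_*/(p-p_*)}\omega^{-p_*/(p-p_*)}\big)^{(p-p_*)/(pp_*)}$, controlled by the $A_p$‑machinery of Lemma~\ref{weight}.

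Two smaller points. First, your displayed small‑scale inequality has the same stochastic exponent $r$ on the pointwise left‑hand side and on the $\mathcal V$ term; since the local regularity constant is the random quantity $\|a\|_{C^{0,\sigma}(B_1(x))}$, separating it via H\"older forces a strict drop in the stochastic exponent there — this is built into the paper's Lemma~\ref{lemma:4.1} through the condition $p_1>p$. Second, the paper's small‑scale step does not invoke Schauder directly: Proposition~\ref{P:6} perturbs around a frozen coefficient $\tilde a_x$ on a random ball $B_{\rho_*(x)}(x)$ and uses the constant‑coefficient Calder\'on–Zygmund estimate of Lemma~\ref{lemma:ap-2}, together with the integral‑geometry Lemma~\ref{lemma:4.1}; this is more robust than pointwise $C^{1,\sigma}$ estimates and is what makes the argument stay at the level of $L^q$‑type inequalities rather than pointwise regularity.
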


\begin{remark}
\emph{For the case of $0<T<1$, the estimates $\eqref{pri:5.0}$ and
$\eqref{pri:5.0-w}$ in Theorem $\ref{thm:2}$ are true for the bounded multiplicative constants independent of $T$. In fact, Section $\ref{sec:4}$
is devoted to showing that this is also true for the case of $T\geq 1$.}
\end{remark}

\subsection{Suboptimal estimates on the mixed norm}\label{subsec:3.1}

\begin{proposition}\label{P:3}
Let $\Omega$ be a (bounded) Lipschitz domain and $T\geq 1$.
Suppose that the ensemble $\langle\cdot\rangle$
is stationary.
Let $u$ be associated with
$f$ and $g$ by the equations $\eqref{pde:3.2x}$.
Then, for any $1<p,q<\infty$ and $1<r'<r<\infty$, there holds
the following annealed estimate
\begin{equation}\label{pri:2.1}
\bigg(\int_{\Omega}
\Big\langle\Big(\dashint_{U_1(x)}\big|(\frac{u}{\sqrt{T}},\nabla u)\big|^q\Big)^{\frac{r'}{q}}
\Big\rangle^{\frac{p}{r'}}dx\bigg)^{\frac{1}{p}}
\lesssim \sqrt{T}^{d}
\bigg(\int_{\Omega}
\Big\langle\Big(\dashint_{U_1(x)}
\big|(\frac{g}{\sqrt{T}},f)\big|^q\Big)^{\frac{r}{q}}
\Big\rangle^{\frac{p}{r}}dx\bigg)^{\frac{1}{p}}.
\end{equation}
Moreover, for any $\omega\in A_p$, we have
\begin{equation}\label{pri:2.1-w}
\begin{aligned}
\bigg(\int_{\Omega}
\Big\langle\Big(\dashint_{U_1(x)}
\big|(\frac{u}{\sqrt{T}},\nabla u)\big|^q\Big)^{\frac{r'}{q}}
&\Big\rangle^{\frac{p}{r'}}
\Big(\dashint_{B_1(x)}\omega\Big)dx\bigg)^{\frac{1}{p}}\\
&\lesssim T^{\frac{3d}{4}}
\bigg(\int_{\Omega}
\Big\langle\Big(\dashint_{U_1(x)}
\big|(\frac{g}{\sqrt{T}},f)\big|^q\Big)^{\frac{r}{q}}
\Big\rangle^{\frac{p}{r}}
\omega dx\bigg)^{\frac{1}{p}}.
\end{aligned}
\end{equation}
In particular, for the case of $T\in(0,1)$, the estimates $\eqref{pri:2.1}$ and $\eqref{pri:2.1-w}$ hold for the multiplicative constants
independent of $T$.
\end{proposition}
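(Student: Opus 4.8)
The plan is to run Shen's real-variable argument (Lemma \ref{shen's lemma2}) on the \emph{annealed mixed-norm} quantities, in the same spirit as Part III of the proof of Lemma \ref{lemma:ap-2} and as Lemma \ref{lemma:2.2}, but now allowing the interior exponent $q$ and the stochastic exponents $r',r$ to be essentially arbitrary. First I would dispose of the unbounded case by the truncation/limiting device of Part IV of Lemma \ref{lemma:ap-2}, and reduce the weighted estimate \eqref{pri:2.1-w} to the unweighted one \eqref{pri:2.1} by splitting $|\cdot|^{\bar p}\omega$ with H\"older's inequality and inserting the reverse H\"older inequality for the $A_p$-weight (exactly as in \eqref{f:ap-9}); this is what promotes the factor $\sqrt T^{d}$ of \eqref{pri:2.1} into $T^{3d/4}$ in \eqref{pri:2.1-w}, the extra $T^{d/4}$ coming from applying the unweighted bound with the slightly enlarged exponent $\bar q\gamma$ while controlling $\big(\dashint\omega^{\gamma'}\big)^{1/\gamma'}$ by $\dashint\omega$. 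So the core is \eqref{pri:2.1}.

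For \eqref{pri:2.1}, put $\mathcal V(x):=\big\langle(\dashint_{U_1(x)}|(\tfrac{u}{\sqrt T},\nabla u)|^q)^{r'/q}\big\rangle^{1/r'}$ and $\mathcal G(x):=\big\langle(\dashint_{U_1(x)}|(\tfrac{g}{\sqrt T},f)|^q)^{r/q}\big\rangle^{1/r}$, so the claim is $\|\mathcal V\|_{L^p(\Omega)}\lesssim\sqrt T^{d}\,\|\mathcal G\|_{L^p(\Omega)}$. The base $L^2$-inequality is the energy estimate $\int_\Omega|(\tfrac{u}{\sqrt T},\nabla u)|^2\lesssim\int_\Omega|(\tfrac{g}{\sqrt T},f)|^2$, which is $T$-uniform; combined with stationarity of $\langle\cdot\rangle$ and Fubini it gives the mixed-norm $L^2$-bound with a $T$-independent constant. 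For a ball $B$ with $x_B\in\partial\Omega$ (or $4B\subset\Omega$), I would localise the data at the decay length $\sqrt T$ of $\tfrac1T-\nabla\cdot a\nabla$, writing $u=w+z$ with $w$ the Dirichlet solution with right-hand side $(fI_{c\sqrt T B},\tfrac1T gI_{c\sqrt T B})$ and $z=u-w$ solving the homogeneous massive equation on $c\sqrt T B\cap\Omega$. For $w$: the energy estimate, Fubini and stationarity give $\dashint_{\frac12B\cap\Omega}W_B^p\lesssim\dashint_{cB\cap\Omega}\mathcal G^p$ with $W_B$ the annealed mixed-norm quantity of $w$. For $z$: since it is a homogeneous solution, the interior/boundary Meyer estimates of Lemma \ref{lemma:2.2} together with the local regularity assumption \eqref{a:3} (which upgrades the $L^2$-average in the definition of the mixed norm to an arbitrary $L^q$-average at the cost of a random constant absorbed by H\"older in $\langle\cdot\rangle$, cf. the improved Meyer inequality in Step 3 of Part II of Lemma \ref{lemma:ap-2}) yield the reverse H\"older bound $\big(\dashint_{\frac14B\cap\Omega}Z_B^{\bar p}\big)^{p/\bar p}\lesssim\dashint_{cB\cap\Omega}(\mathcal V^p+\mathcal G^p)$ for some $\bar p>p$. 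Feeding these into Lemma \ref{shen's lemma2}, a covering argument along $\partial\Omega$ together with the nestedness of the mixed norm (Lemma \ref{lemma:3.1}), and finally H\"older's inequality, produce $\|\mathcal V\|_{L^p(\Omega)}\lesssim\sqrt T^{d}\|\mathcal G\|_{L^p(\Omega)}$, the power $\sqrt T^{d}\sim|B_{\sqrt T}|/|B_1|$ being the overcounting incurred because the averaging scale of the mixed norm is $1$ whereas the influence scale of the massive equation is $\sqrt T$; for $0<T<1$ the same localisation at the \emph{fixed} unit scale gives $T$-independent constants.

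Two places are where the argument is least routine. The first is the bookkeeping of the two ``gaps'': the stochastic gap $r'<r$ has to absorb the loss from the (small) Meyer self-improvement window supplied by Lemma \ref{lemma:2.2}, while the interior exponent $q$ is made arbitrary only through \eqref{a:3}; keeping these independent of the size relations among $p,q,r,r'$ is exactly why the proposition is phrased with the strict inequality $r'<r$. The second is tracking the $T$-dependence uniformly through the localisation at scale $\sqrt T$ and the Caccioppoli/Poincar\'e iterations for the massive operator (as in \eqref{f:ap-12}), so that the only surviving power of $T$ is $\sqrt T^{d}$ (respectively $T^{3d/4}$ in the weighted case). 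I expect the hardest step to be the reverse H\"older bound for $Z_B$: juggling Minkowski's inequality in $\langle\cdot\rangle$, H\"older for the weight, and the Meyer improvement, all in the presence of the interior $\dashint_{U_1}$-average, while preserving both the exponent gap and the sharp $T$-power.
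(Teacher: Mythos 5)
Your proposal takes a genuinely different route from the paper, and — as it stands — it has real gaps. The paper does not run Shen's real argument on the annealed mixed-norm quantities for this proposition. Instead it follows the Josien--Otto non-perturbative scheme: one first proves the exponentially-weighted inequality $\|\omega_{T/4}(\tfrac{u}{\sqrt T},\nabla u)\|_{p,r',q}\lesssim\sqrt T^{d}\|\omega_{4T}(\tfrac{g}{\sqrt T},f)\|_{p,r,q}$ with $\omega_T(x)=\exp(-|x|/(C\sqrt T))$, then exploits shift-invariance and takes an $L^p$-norm in the shift $z$ to deduce the unweighted estimate $\eqref{pri:2.1}$; the $\sqrt T^{d}$ factor arises purely from explicit integrals of the exponential weight, as in $\eqref{f:3.8}$. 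The pivot $\|\omega_T(\cdot)\|_{\infty,r',q}\lesssim\|\omega_T(\cdot)\|_{2,r,q}$ (estimate $\eqref{f:2.11}$) is obtained by a test-function/duality argument and complex interpolation between the weighted energy estimate $\eqref{f:3.2}$ and the local stochastic-gain bound $\eqref{f:3.3}$. No covering argument, no reverse H\"older iteration, and no appeal to Lemma~\ref{shen's lemma2} enters the proof of Proposition~\ref{P:3}; those tools are used only in the preliminary Lemmas~\ref{lemma:ap-2} and~\ref{lemma:2.2}. The weighted case $\eqref{pri:2.1-w}$ is likewise proved not by reducing to the unweighted one via reverse H\"older on $\omega$, but by rerunning the exponential-weight argument with $\omega$ present and estimating the two weight integrals $\int\omega_T^p(\omega)_{B_1}$ and $\int\omega_{4T}^{pp_*/(p-p_*)}\omega^{-p_*/(p-p_*)}$ directly, where the extra $T^{d/4}$ above $\sqrt T^{d}$ is read off from $\eqref{f:2.4}$.

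Beyond being a different route, your sketch has gaps that would not close with routine effort. First, the chicken-and-egg: Lemma~\ref{shen's lemma2} requires an a priori $L^{p_0}$-bound on the map $\mathcal G\mapsto\mathcal V$ before any good-$\lambda$ iteration starts. You propose $p_0=2$, but the energy estimate plus Fubini and stationarity only give $\|u\|_{2,2,2}\lesssim\|(g,f)\|_{2,2,2}$; the full $L^2$-base estimate $\|\mathcal V\|_{L^2(\Omega)}\lesssim\|\mathcal G\|_{L^2(\Omega)}$ with \emph{arbitrary} interior exponent $q$ and stochastic gap $r'<r$ is precisely the content that the paper's complex-interpolation step is responsible for, not a free input. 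Second, your decomposition with right-hand side supported on $c\sqrt T B$ (i.e.\ the ball $B$ dilated by $c\sqrt T$) does not fit the hypotheses $\eqref{shen-2}$/$\eqref{shen-4}$: Shen's scheme only allows the good part $W_B$ to be controlled by $\sup_{4B_0\supseteq B'\supseteq B}\dashint_{B'}|g|^{p_0}$, which confines the enlarged ball to $4B_0$; for $r_B$ comparable to $r_{B_0}$ and $T\gg1$ this fails, and the ensuing covering argument cannot repair it. Third, the reverse H\"older estimate for $Z_B$ at an arbitrary gain $p_1>p$ uniformly over all ball scales $r_B$ is not a consequence of $\eqref{a:3}$ plus Meyer: $\eqref{a:3}$ controls the coefficient only at unit scale, and Meyer gives a small exponent gain, whereas for $p$ far from $2$ and for $r_B\gg1$ one needs a scale-$r_B$ regularity input that this set-up does not supply. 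One could in principle try to bootstrap these pieces, but that would reconstruct a variant of the paper's Section~\ref{sec:4} rather than prove Proposition~\ref{P:3} directly.
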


For the ease of the statements,
we impose the following notation for the mixed norms:
\begin{equation}\label{mnorm}
\begin{aligned}
&\big\|h\big\|_{p,r,q}
:=\bigg(\int_{\mathbb{R}^d}
\Big\langle\Big(\dashint_{B_1(x)}|h|^q\Big)^{\frac{r}{q}}
\Big\rangle^{\frac{p}{r}}dx\bigg)^{\frac{1}{p}},
\qquad
\big\|h\big\|_{p,r}
:=\Big(\int_{\mathbb{R}^d}\big\langle
|h|^r\big\rangle^{\frac{p}{r}}\Big);\\
&\big\|h;\omega^{\frac{1}{p}}\big\|_{p,r,q}
:=\bigg(\int_{\mathbb{R}^d}
\Big\langle\Big(\dashint_{B_1(x)}|h|^q\Big)^{\frac{r}{q}}
\Big\rangle^{\frac{p}{r}} \omega(x) dx\bigg)^{\frac{1}{p}},
\end{aligned}
\end{equation}

\begin{lemma}\label{lemma:3.1}
Let $\|\cdot\|_{p,r,q}$ be the mixed norm defined above, then there holds
the following properties:
\begin{subequations}
\begin{align}
& \|h\|_{\bar{p},r,q}\lesssim \|h\|_{p,r,q}\qquad  &~&\text{provided}~p\leq\bar{p}; \label{pri:3.1-a}\\
& \|h\|_{p,r'}\leq \|h\|_{p,r} \quad\text{and}\quad
\|h\|_{p,r',q}\leq \|h\|_{p,r,q}
\qquad
&~&\text{provided}~r'\leq r;  \\
& \|h\|_{p,r,q} \leq \|h\|_{p,r}
\qquad
&~&\text{provided}~q\leq \min\{p,r\}.
\label{pri:3.1-c}
\end{align}
\end{subequations}
\end{lemma}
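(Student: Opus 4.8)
I would prove the three items separately, since only \eqref{pri:3.1-a} is delicate. The two inequalities under the hypothesis $r'\le r$ and the inequality \eqref{pri:3.1-c} are ``fix $x$, estimate pointwise, then integrate'' statements using only monotonicity of the $L^{s}(\langle\cdot\rangle)$-norms in $s$ (recall $\langle\cdot\rangle$ is a probability measure), Minkowski's integral inequality, and Jensen's inequality for the normalized average $\dashint_{B_{1}(x)}$. Indeed, for the $r'\le r$ part, with $x$ fixed put $Y:=(\dashint_{B_{1}(x)}|h|^{q})^{1/q}$, a nonnegative random variable; monotonicity gives $\langle Y^{r'}\rangle^{1/r'}\le\langle Y^{r}\rangle^{1/r}$, which is exactly the pointwise domination of the integrand of $\|h\|_{p,r',q}$ by that of $\|h\|_{p,r,q}$, and integrating in $x$ finishes it; the inequality $\|h\|_{p,r'}\le\|h\|_{p,r}$ is the same argument with $Y$ replaced by $|h(x)|$.

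For \eqref{pri:3.1-c} assume $q\le\min\{p,r\}$ and write $G(x):=\langle(\dashint_{B_{1}(x)}|h|^{q})^{r/q}\rangle^{1/r}$, so that $\|h\|_{p,r,q}^{p}=\int_{\mathbb{R}^{d}}G^{p}$. Since $r/q\ge1$, Minkowski's integral inequality applied with the average $\dashint_{B_{1}(x)}\,dz$ and the space $L^{r/q}(\langle\cdot\rangle)$ gives $G(x)\le\big(\dashint_{B_{1}(x)}\langle|h|^{r}\rangle^{q/r}\,dz\big)^{1/q}$; since moreover $p/q\ge1$, Jensen for $\dashint_{B_{1}(x)}$ promotes this to $G(x)^{p}\le\dashint_{B_{1}(x)}\langle|h|^{r}\rangle^{p/r}\,dz$. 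Integrating in $x$ and interchanging the order of integration (Fubini) yields $\int_{\mathbb{R}^{d}}G^{p}\le\int_{\mathbb{R}^{d}}\langle|h|^{r}\rangle^{p/r}$, which is \eqref{pri:3.1-c}.

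For \eqref{pri:3.1-a} with $p\le\bar p$ the point is that the same $G$ is a local average at unit scale, hence controlled in $L^{\infty}$ by itself; this is needed because on $\mathbb{R}^{d}$ one does not have $L^{\bar p}\hookrightarrow L^{p}$. Tile $\mathbb{R}^{d}$ by cubes $\{Q_{j}\}$ of a fixed, sufficiently small dimensional side length and set $a_{j}:=\langle(\dashint_{Q_{j}}|h|^{q})^{r/q}\rangle^{p/r}$. On one hand, $Q_{j}\subset B_{1}(x)$ whenever $x\in Q_{j}$, so $\dashint_{B_{1}(x)}|h|^{q}\gtrsim\dashint_{Q_{j}}|h|^{q}$, hence $\int_{Q_{j}}G^{p}\gtrsim a_{j}$ and $\sum_{j}a_{j}\lesssim\int_{\mathbb{R}^{d}}G^{p}=\|h\|_{p,r,q}^{p}$. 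On the other hand, for $x\in Q_{j}$ the ball $B_{1}(x)$ is covered by the boundedly many cubes $Q_{i}$ adjacent to $Q_{j}$, so $\dashint_{B_{1}(x)}|h|^{q}\lesssim\sum_{i\sim j}\dashint_{Q_{i}}|h|^{q}$; raising to the power $r/q$ (subadditivity of $t\mapsto t^{r/q}$ if $r\le q$, finiteness of the sum otherwise), applying the linear functional $\langle\cdot\rangle$, and raising to the power $p/r$, one gets $G(x)^{p}\lesssim\sum_{i\sim j}a_{i}\le\sum_{i}a_{i}\lesssim\|h\|_{p,r,q}^{p}$ for every $x$. Hence $\|G\|_{L^{\infty}(\mathbb{R}^{d})}\lesssim\|h\|_{p,r,q}$, and then
\begin{equation*}
\|h\|_{\bar p,r,q}^{\bar p}=\int_{\mathbb{R}^{d}}G^{\bar p}\le\|G\|_{L^{\infty}}^{\bar p-p}\int_{\mathbb{R}^{d}}G^{p}\lesssim\|h\|_{p,r,q}^{\bar p-p}\,\|h\|_{p,r,q}^{p}=\|h\|_{p,r,q}^{\bar p},
\end{equation*}
which is \eqref{pri:3.1-a}.

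The only step I expect to require genuine care is \eqref{pri:3.1-a}: one must carry the three exponents $q$, $r$, $p$ through the covering estimates without sign errors, and check that all comparison constants depend only on $d$ (and on $p,r,q$) — which is precisely what the symbol $\lesssim$ in \eqref{pri:3.1-a} permits; the remaining two items are routine pointwise arguments.
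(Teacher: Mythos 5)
Your proposal is correct, and worth noting is that the paper itself gives no proof of this lemma: its ``proof'' is the citation to \cite[pp.~43--44]{Josien-Otto22}, so your write-up supplies details the paper outsources. The three items are handled exactly as one would expect. For $r'\le r$, the pointwise domination $\langle Y^{r'}\rangle^{p/r'}\le\langle Y^{r}\rangle^{p/r}$ is just H\"older on the probability space (total mass $1$) applied to either $Y=|h(x)|$ or $Y=(\dashint_{B_1(x)}|h|^q)^{1/q}$. For \eqref{pri:3.1-c}, the Minkowski step needs $r/q\ge1$ and the Jensen step needs $p/q\ge1$, which is precisely where the hypothesis $q\le\min\{p,r\}$ is used, and the Fubini/convolution identity $\int_{\mathbb{R}^d}\dashint_{B_1(x)}\varphi(z)\,dz\,dx=\int_{\mathbb{R}^d}\varphi$ finishes it. For \eqref{pri:3.1-a}, you correctly isolate the key structural fact: $G(x)=\langle(\dashint_{B_1(x)}|h|^q)^{r/q}\rangle^{1/r}$ is a unit-scale average, so $\|G\|_{L^\infty}\lesssim_d\|G\|_{L^p}$, and then $\|G\|_{L^{\bar p}}^{\bar p}\le\|G\|_{L^\infty}^{\bar p-p}\|G\|_{L^p}^{p}$ closes the argument. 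Your cube-tiling implementation of the $L^\infty$ bound is a clean elementary route; one small point is that the constant $\max\{1,N^{\alpha-1}\}$ (with $N$ the number of adjacent cubes) is needed \emph{both} when raising to the power $r/q$ and when raising to the power $p/r$, whereas your parenthetical comment mentions it only for the first exponent, though the same observation applies verbatim to the second and the constant remains dimensional. This is cosmetic; the argument is sound.
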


\begin{proof}
See \cite[pp.43-44]{Josien-Otto22}.
\end{proof}

\noindent
\textbf{Proof of Proposition $\ref{P:3}$.}
The main idea and ingredients are similar to
those given for \cite[Proposition 7.3]{Josien-Otto22}.
We mention that once the energy estimate and local regularity estimate hold correspondingly, the rest part of the proof is actually independent of PDEs, and this is the reason why the original proof can be easily applied to the current (bounded) domain.
To deal with weighted estimate $\eqref{pri:2.1-w}$,
we mostly keep the same idea as in the estimate $\eqref{pri:2.1}$.
Using the properties of the $A_p$-weight class, we surprisingly obtain a similar suboptimal estimate. The whole proof is divided into 9 steps.
The first five steps are to prove the estimate $\eqref{pri:2.1}$,
while the last four steps are to show the weighted one $\eqref{pri:2.1-w}$. By zero extension of $u$, $g$, and $f$ from $\Omega$ to $\mathbb{R}^d$,
we still adopt the same notation here.

\medskip
\noindent
\textbf{Step 1.} Outline the proof and reduction. Let
$p,q\in(1,\infty)$ and $2< r'<r$.
To show the estimate $\eqref{pri:2.1}$ in such the case, it suffices to establish
\begin{equation}\label{f:2.2*}
\begin{aligned}
\|\omega_{\frac{T}{4}}\big(\frac{u}{\sqrt{T}},\nabla u\big)\|_{p,r',q}
\lesssim \sqrt{T}^{d}
\|\omega_{4T}\big(\frac{g}{\sqrt{T}},f\big)\|_{p,r',q},
\end{aligned}
\end{equation}
since the above estimate is shift-invariant and therefore one can rewrite it as
\begin{equation}\label{f:2.1*}
\begin{aligned}
\|\omega_{\frac{T}{4}}(\cdot-z)\big(\frac{u}{\sqrt{T}},\nabla u\big)\|_{p,r',q}
\lesssim \sqrt{T}^{d}
\|\omega_{4T}(\cdot-z)\big(\frac{g}{\sqrt{T}},f\big)\|_{p,r',q}.
\end{aligned}
\end{equation}
Then taking $L^p$-norm on the both sides of $\eqref{f:2.1*}$ with respect to $z\in\mathbb{R}^d$, there holds
\begin{equation*}
\|\big(\frac{u}{\sqrt{T}},\nabla u\big)\|_{p,r',q}
\lesssim \sqrt{T}^{d}
\|\big(\frac{g}{\sqrt{T}},f\big)\|_{p,r',q},
\end{equation*}
which implies the stated estimate $\eqref{pri:2.1}$ by noting the zero-extension of $u$, $g$, and $f$.

To see $\eqref{f:2.2*}$, we need to establish
\begin{equation}\label{f:2.11}
 \|\omega_{T}\big(\frac{u}{\sqrt{T}},\nabla u\big)\|_{\infty,r',q}
 \lesssim \|\omega_{T}\big(\frac{u}{\sqrt{T}},\nabla u\big)\|_{2,r,q}.
\end{equation}
Then, by H\"older's inequality and $\omega_{T}(x):=\exp(-\frac{|x|}{C\sqrt{T}})$, one can derive that
\begin{equation*}
\begin{aligned}
&\|\omega_{\frac{T}{4}}\big(\frac{u}{\sqrt{T}},\nabla u\big)\|_{p,r',q}\\
&\leq \Big(\int_{\mathbb{R}^d}\omega_{T}^p\Big)^{\frac{1}{p}}
\|\omega_{T}\big(\frac{u}{\sqrt{T}},\nabla u\big)\|_{\infty,r',q}
\lesssim^{\eqref{f:2.11}}
 \Big(\int_{\mathbb{R}^d}\omega_{T}^p\Big)^{\frac{1}{p}}
\|\omega_{T}\big(\frac{g}{\sqrt{T}},f\big)\|_{2,r,q}
\end{aligned}
\end{equation*}
and this together with
\begin{equation*}
\|\omega_{T}\big(\frac{g}{\sqrt{T}},f\big)\|_{2,r,q}
\leq \left\{\begin{aligned}
&\Big(\int_{\mathbb{R}^d}\omega_{4T}^{\frac{2p}{p-2}}\Big)^{\frac{p-2}{2p}}
\|\omega_{4T}\big(\frac{g}{\sqrt{T}},f\big)\|_{p,r,q},
&\quad&\text{if}~p> 2;\\
&\|\omega_{T}\big(\frac{g}{\sqrt{T}},f\big)\|_{p,r,q},
&\quad&\text{if}~p\in(1,2],
\end{aligned}\right.
\end{equation*}
where the second line is due to $\eqref{pri:3.1-a}$, gives
\begin{equation}\label{f:3.7}
\|\omega_{\frac{T}{4}}\big(\frac{u}{\sqrt{T}},\nabla u\big)\|_{p,r',q}
\lesssim
\left\{\begin{aligned}
&\Big(\int_{\mathbb{R}^d}\omega_{T}^p\Big)^{\frac{1}{p}}
\Big(\int_{\mathbb{R}^d}\omega_{4T}^{\frac{2p}{p-2}}\Big)^{\frac{p-2}{2p}} \|\omega_{4T}\big(\frac{g}{\sqrt{T}},f\big)\|_{p,r,q},
&\quad&\text{if}~p> 2;\\
&\Big(\int_{\mathbb{R}^d}\omega_{T}^p\Big)^{\frac{1}{p}}
\|\omega_{T}\big(\frac{g}{\sqrt{T}},f\big)\|_{p,r,q}, &\quad&\text{if}~p\in(1,2].
\end{aligned}\right.
\end{equation}
As a result, plugging the following computations
\begin{equation}\label{f:3.8}
\Big(\int_{\mathbb{R}^d}\omega_{T}^p\Big)^{\frac{1}{p}}
\Big(\int_{\mathbb{R}^d}\omega_{4T}^{\frac{2p}{p-2}}\Big)^{\frac{p-2}{2p}}
\lesssim_{p,d} \sqrt{T}^{\frac{d}{2}}
~~\text{for}~p> 2;
\qquad
\Big(\int_{\mathbb{R}^d}\omega_{T}^p\Big)^{\frac{1}{p}}
\lesssim_{p,d} \sqrt{T}^{\frac{d}{p}}
\end{equation}
back into $\eqref{f:3.7}$
leads to the stated estimate $\eqref{f:2.2*}$, where we note that $T\geq 1$.  Consequently,
for the case of $1<r'<r\leq 2$, the estimate $\eqref{pri:2.1}$
follows from the duality argument. Also, we mention that
the estimates in $\eqref{f:3.8}$ holds true for $T>0$.

\medskip
\noindent
\textbf{Step 2.} Show the estimate $\eqref{f:2.11}$. In view of
Lemma $\ref{lemma:3.1}$, we first have
\begin{equation}\label{f:2.12}
\|\omega_{T}\big(\frac{u}{\sqrt{T}},\nabla u\big)\|_{\infty,r',q}
 \lesssim^{\eqref{pri:3.1-a}} \|\omega_{T}\big(\frac{u}{\sqrt{T}},\nabla u\big)\|_{2,r',q}.
\end{equation}
Once we established
\begin{equation}\label{f:2.13}
\|\omega_{T}\big(\frac{u}{\sqrt{T}},\nabla u\big)\|_{2,2,q}
\lesssim \|\omega_{T}\big(\frac{u}{\sqrt{T}},\nabla u\big)\|_{2,s',q},
\end{equation}
where $s'>2$ will be fixed later on,  we would obtain
\begin{equation}\label{f:2.14}
\|\omega_{T}\big(\frac{u}{\sqrt{T}},\nabla u\big)\|_{2,r',q}
\lesssim \|\omega_{T}\big(\frac{u}{\sqrt{T}},\nabla u\big)\|_{2,r,q}.
\end{equation}
Admitting it for a while, plugging the estimate $\eqref{f:2.14}$
back into $\eqref{f:2.12}$ gives the stated estimate $\eqref{f:2.11}$.
To see estimate $\eqref{f:2.14}$, we employ a duality argument. Let
$F$ be arbitrary random variable as a test function. Therefore, it follows from $\eqref{f:2.13}$ that
\begin{equation*}
\|\omega_{T}F\big(\frac{u}{\sqrt{T}},\nabla u\big)\|_{2,2,q}
\lesssim \|\omega_{T}F\big(\frac{u}{\sqrt{T}},\nabla u\big)\|_{2,s',q}
\lesssim \|\omega_{T}\big(\frac{u}{\sqrt{T}},\nabla u\big)\|_{2,r,q}
\langle|F|^{\frac{2r'}{r'-2}}\rangle^{\frac{r'-2}{2r'}},
\end{equation*}
where $s'>2$ satisfies $1/s':=(r'-2)/(2r')+1/r$. By noting $1/2=(r'-2)/(2r')+1/r'$,
the above estimate implies the desired one $\eqref{f:2.14}$.

\medskip
\noindent
\textbf{Step 3.} Arguments for $\eqref{f:2.13}$. It suffices to establish
\begin{equation}\label{f:3.2}
 \|\omega_{T}(\frac{u}{\sqrt{T}},\nabla u)\|_{2,2,2}
 \lesssim
 \|\omega_{T}(\frac{g}{\sqrt{T}},f)\|_{2,2,2},
\end{equation}
and for any $q'\in(1,\infty)$ and $s>2$ (to be fixed later on),
\begin{equation}\label{f:3.3}
 \|(\frac{u}{\sqrt{T}},\nabla u)\|_{2,2,q'}
 \lesssim \|(\frac{g}{\sqrt{T}},f)\|_{2,s,q'}.
\end{equation}

Then, it follows from
a complex interpolation argument that
\begin{equation*}
\|\omega_{T}^{\theta}\big(\frac{u}{\sqrt{T}},\nabla u\big)\|_{2,2,q}
\lesssim \|\omega_{T}^{\theta}\big(\frac{u}{\sqrt{T}},\nabla u\big)\|_{2,s',q},
\end{equation*}
where one can fix $\theta\in(0,1)$ and then define
$q\in(1,\infty)$ and $s'\in(2,s)$ by
\begin{equation*}
1/q=\theta/2+(1-\theta)/q',
\qquad
1/s' = \theta/2 + (1-\theta)/s.
\end{equation*}
Consequently, one can derive $\eqref{f:2.13}$ by
merely modifying the constant $C$ in the definition of $\omega_{T}$.

To see $\eqref{f:3.2}$, we start from the estimate
\begin{equation}\label{f:3.0}
\Big\langle \int_{\Omega}\big|\omega_{T}(\frac{u}{\sqrt{T}},\nabla u)\big|^2\Big\rangle
\lesssim \Big\langle\int_{\Omega}
|\omega_{T}(\frac{g}{\sqrt{T}},f)|^2\Big\rangle,
\end{equation}
where $\omega_{T}:=\exp(-\frac{|x|}{C\sqrt{T}})$ and $C=C(\lambda,d)$
is sufficiently large.
By zero extension of $u$, $g$ and $f$, we have
\begin{equation*}
 \|\omega_{T}(\frac{u}{\sqrt{T}},\nabla u)\|_{2,2}
 \lesssim^{\eqref{f:3.0}}
 \|\omega_{T}(\frac{g}{\sqrt{T}},f)\|_{2,2},
\end{equation*}
which, together with the geometry property of integrals (see \cite[formula (55)]{Josien-Otto22}), leads to $\eqref{f:3.2}$.

To see $\eqref{f:3.3}$, by the same token,
it is reduced to show
\begin{equation}\label{f:3.1}
\int_{\Omega}
\Big\langle\big(\dashint_{U_1(x)}
|(\frac{u}{\sqrt{T}},\nabla u)|^{q'}\big)^{\frac{2}{q'}}\Big\rangle
\lesssim \int_{\Omega}
\Big\langle\big(\dashint_{U_1}
|(\frac{g}{\sqrt{T}},f)|^{q'}
\big)^{\frac{s}{q'}}\Big\rangle^{\frac{2}{s}}.
\end{equation}


\medskip
\noindent
\textbf{Step 4.} Show the estimate $\eqref{f:3.0}$. Let $v\in H_0^1(\Omega)$ be a test function, and it follows from
the equation $\eqref{pde:3.2x}$ that
\begin{equation}\label{f:3.5}
\int_{\Omega}\frac{1}{T}uv
+ a\nabla u\cdot\nabla v
= \int_{\Omega}\frac{1}{T}gv - f\cdot\nabla v.
\end{equation}
One may prefer $v=u\omega_{T}^2$ in $\eqref{f:3.5}$, and there holds
\begin{equation*}
\int_{\Omega}\frac{1}{T}u^2\omega_{T}^2
+ \lambda
\int_{\Omega}|\nabla u|^2 \omega_{T}^2
\lesssim_{\lambda,d} \int_{\Omega} \big(\frac{1}{T}g^2+f^2\big)\omega_{T}^2
+ \frac{1}{C\sqrt{T}}\int_{\Omega}|u||\nabla u|\omega_{T}^2.
\end{equation*}
By using Young's inequality again (we also employ the condition $C\gg 1$), we have that
\begin{equation*}
 \int_{\Omega}\big(\frac{1}{T}u^2 + |\nabla u|^2\big)\omega_{T}^2
 \lesssim_{\lambda,d}
 \int_{\Omega}\big(\frac{1}{T}g^2 + |f|^2\big)\omega_{T}^2,
\end{equation*}
and then taking $\langle\cdot\rangle$ on the both sides above, we consequently derive the stated estimate $\eqref{f:3.0}$.

\medskip
\noindent
\textbf{Step 5.} Show the estimate $\eqref{f:3.1}$.
It suffices to consider the case $q'\in(1,2]$ since we can
make a few modification in the proof below to cover the case of $q'\in(2,\infty)$.
By a duality argument, it suffices to establish for any $q\geq 2$ and $s'<2$ that
\begin{equation}\label{f:3.6}
\int_{\Omega}
\Big\langle\big(\dashint_{U_1(x)}
|(\frac{u}{\sqrt{T}},\nabla u)|^{q}\big)^{\frac{s'}{q}}\Big\rangle^{\frac{2}{s'}}dx
\lesssim \int_{\Omega}
\Big\langle\big(\dashint_{U_1(x)}
|(\frac{g}{\sqrt{T}},f)
|^{q}\big)^{\frac{2}{q}}\Big\rangle dx.
\end{equation}
By classical $L^q$ estimates with $q\geq 2$, for any $x\in\tilde{\Omega}$, we have
\begin{equation}\label{f:2.17}
\|(\frac{u}{\sqrt{T}},\nabla u)\|_{L^q(U_1(x))}
\lesssim \|a\|_{C^{0,\sigma}(B_1(x))}^{\frac{1}{\sigma}(\frac{d}{2}-\frac{d}{q})}
\|(\frac{u}{\sqrt{T}},\nabla u)\|_{L^2(B_4(x))}
+\|(\frac{g}{\sqrt{T}},f)\|_{L^q(B_4(x))},
\end{equation}
where we denote the zero-extension of
$u,f,g$ by themselves in the right-hand side above.
Then, taking $\langle|\cdot|^{s'}\rangle^{\frac{1}{s'}}$ and $\|\cdot\|_{L^2(\tilde{\Omega})}$ in the order, we derive that
\begin{equation}\label{f:2.16}
\begin{aligned}
\int_{\Omega}
\Big\langle\big(\dashint_{U_1(x)}
|(\frac{u}{\sqrt{T}},\nabla u)|^{q}\big)^{\frac{s'}{q}}\Big\rangle^{\frac{2}{s'}}
&\lesssim \int_{\mathbb{R}^d}
\Big\langle \dashint_{B_4(x)}
|(\frac{u}{\sqrt{T}},\nabla u)|^{2}\Big\rangle
+ \int_{\mathbb{R}^d}
\Big\langle\big(\dashint_{B_4(x)}
|(\frac{g}{\sqrt{T}},
f)|^{q}\big)^{\frac{2}{q}}\Big\rangle\\
&\lesssim^{\eqref{w-1}}
\|(\frac{u}{\sqrt{T}},\nabla u)\|_{2,2,2}^2
+ \|(\frac{g}{\sqrt{T}},f)\|_{2,2,q}^2.
\end{aligned}
\end{equation}
This together with
\begin{equation*}
\begin{aligned}
&\|(\frac{u}{\sqrt{T}},\nabla u)\|_{2,2,2}
= \|(\frac{u}{\sqrt{T}},\nabla u)\|_{2,2}\\
&\lesssim \Big(\int_{\tilde{\Omega}}
\big\langle|(\frac{u}{\sqrt{T}},\nabla u)|^2
\big\rangle\Big)^{\frac{1}{2}}
\lesssim
\Big(\int_{\tilde{\Omega}}
\big\langle|(\frac{g}{\sqrt{T}},f)|^2
\big\rangle\Big)^{\frac{1}{2}}
= \|(\frac{g}{\sqrt{T}},f)\|_{2,2,2}
\leq \|(\frac{g}{\sqrt{T}},f)\|_{2,2,q}
\end{aligned}
\end{equation*}
leads to the stated estimate $\eqref{f:3.6}$ for the case of $q'\in(1,2]$. Now, we turn to the case
$q'\in(2,\infty)$. Let $2<s_0<s$ and $0<s_0-2\ll 1$. In view of
$\eqref{f:2.17}$, we modify the estimate $\eqref{f:2.16}$ as follows:
\begin{equation*}
\begin{aligned}
\int_{\Omega}
\Big\langle\big(\dashint_{U_1(x)}
|(\frac{u}{\sqrt{T}},\nabla u)|^{q'}\big)^{\frac{2}{q'}}\Big\rangle
&\lesssim \int_{\Omega}
\Big\langle \big(\dashint_{U_4(x)}
|(\frac{u}{\sqrt{T}},\nabla u)|^{2}\big)^{\frac{s_0}{2}}\Big\rangle^{\frac{2}{s_0}}
+ \int_{\Omega}
\Big\langle\big(\dashint_{U_4(x)}
|(\frac{g}{\sqrt{T}},
f)|^{q'}\big)^{\frac{s}{q'}}\Big\rangle^{\frac{2}{s}}\\
&\lesssim^{\eqref{w-1},\eqref{pri:2.3}}
\int_{\Omega}
\Big\langle\big(\dashint_{U_1(x)}
|(\frac{g}{\sqrt{T}},
f)|^{q'}\big)^{\frac{s}{q'}}\Big\rangle^{\frac{2}{s}},
\end{aligned}
\end{equation*}
where we also employ H\"older's inequality for the second inequality above.

\medskip
\noindent
\textbf{Step 6.}
Reduction of the estimate $\eqref{pri:2.1-w}$. Let $1<r'<r<\infty$,
$q,q'\in(1,\infty)$, and $1<p_0<p_*<p<\infty$.
Here, we adapt the strategy
analogous to $\eqref{pri:2.1}$, and  therefore,
for any $\omega\in A_p$, it suffices to establish
the following estimate:
\begin{equation}\label{f:2.2}
\begin{aligned}
\|\omega_{\frac{T}{4}}\big(\frac{u}{\sqrt{T}},\nabla u\big);(\omega)_{B_1}^{\frac{1}{p}}\|_{p,r',q}
\lesssim T^{\frac{3d}{4}}
\|\omega_{4T}\big(\frac{g}{\sqrt{T}},f\big);
\omega^{\frac{1}{p}}\|_{p,r',q}.
\end{aligned}
\end{equation}
Since the above estimate is shift-invariant and,
similar to $\eqref{f:2.1*}$, one can rewrite $\eqref{f:2.2}$ as
\begin{equation*}
\begin{aligned}
\|\omega_{\frac{T}{4}}(\cdot-z)\big(\frac{u}{\sqrt{T}},\nabla u\big);(\omega)_{B_1}^{\frac{1}{p}}\|_{p,r',q}
\lesssim T^{\frac{3d}{4}}
\|\omega_{4T}(\cdot-z)\big(\frac{g}{\sqrt{T}},f\big);
\omega^{\frac{1}{p}}\|_{p,r',q},
\end{aligned}
\end{equation*}
then taking $L^p$-norm on the both sides above with respect to
$z\in\mathbb{R}^d$, there holds
\begin{equation*}
\|\big(\frac{u}{\sqrt{T}},\nabla u\big);(\omega)_{B_1}^{\frac{1}{p}}\|_{p,r',q}
\lesssim T^{\frac{3d}{4}}
\|\big(\frac{g}{\sqrt{T}},f\big);
\omega^{\frac{1}{p}}\|_{p,r',q},
\end{equation*}
which implies the stated estimate $\eqref{pri:2.1-w}$.

\medskip
\noindent
\textbf{Step 7.}  Arguments for the estimate $\eqref{f:2.2}$.
Let $p_*\in(1,p)$ satisfy $0<p_*-1\ll 1$, and $\theta\in(0,1)$ satisfies
$0<\theta\ll 1$ (to be fixed later on), and
it suffices to establish
\begin{equation}\label{f:2.18}
\|\omega_{T}\big(\frac{u}{\sqrt{T}},\nabla u\big)\|_{\infty,r',q}
\lesssim
\sqrt{T}^{d(1-\theta)}
\|\omega_{T}\big(\frac{g}{\sqrt{T}},f\big)\|_{p_*,r,q}.
\lesssim
\sqrt{T}^{d}
\|\omega_{T}\big(\frac{g}{\sqrt{T}},f\big)\|_{p_*,r,q}.
\end{equation}
By H\"older's inequality, we have
\begin{equation}\label{f:2.3}
\begin{aligned}
&\|\omega_{\frac{T}{4}}\big(\frac{u}{\sqrt{T}},\nabla u\big);(\omega)_{B_1}^{\frac{1}{p}}\|_{p,r',q}
\leq \Big(\int_{\mathbb{R}^d}\omega_{T}^p(x) (\omega)_{B_1(x)}dx\Big)^{\frac{1}{p}}
\|\omega_{T}\big(\frac{u}{\sqrt{T}},\nabla u\big)\|_{\infty,r',q}\\
&\lesssim^{\eqref{f:2.18}} \sqrt{T}^{d}
 \Big(\int_{\mathbb{R}^d}\omega_{T}^p(x) (\omega)_{B_1(x)}dx\Big)^{\frac{1}{p}}
\|\omega_{T}\big(\frac{g}{\sqrt{T}},f\big)\|_{p_*,r,q}\\
&\lesssim \sqrt{T}^{d}
 \Big(\int_{\mathbb{R}^d}\omega_{T}^p(x) (\omega)_{B_1(x)}dx\Big)^{\frac{1}{p}}
\Big(\int_{\mathbb{R}^d}\omega_{4T}^{\frac{pp_*}{p-p_*}} \omega^{-\frac{p_*}{p-p_*}}dx\Big)^{\frac{p-p_*}{pp_*}}
\|\omega_{4T}\big(\frac{g}{\sqrt{T}},f\big);
\omega^{\frac{1}{p}}\|_{p,r,q},
\end{aligned}
\end{equation}
where we note that $\omega_{\frac{T}{4}}=\omega_{T}^2$ and
$\omega_{T}=\omega_{4T}^2$.
Then, we claim that
\begin{equation}\label{f:2.4}
\Big(\int_{\mathbb{R}^d}\omega_{T}^p(x) (\omega)_{B_1(x)}dx\Big)^{\frac{1}{p}}
\Big(\int_{\mathbb{R}^d}\omega_{4T}^{\frac{pp_*}{p-p_*}} \omega^{-\frac{p_*}{p-p_*}}dx\Big)^{\frac{p-p_*}{pp_*}}
\lesssim_{[\omega]_{A_p}}\sqrt{T}^{\frac{d}{2}}.
\end{equation}
Plugging the estimate $\eqref{f:2.4}$ back into
$\eqref{f:2.3}$, we have the stated estimate $\eqref{f:2.2}$.

\medskip
\noindent
\textbf{Step 8.} Arguments for the first inequality in the estimate $\eqref{f:2.18}$.
Adapting the same strategy similar to Steps 2 and 3, we first obtain
\begin{equation}\label{f:2.19}
\|\omega_{T}\big(\frac{u}{\sqrt{T}},\nabla u\big)\|_{\infty,r',q}
 \lesssim^{\eqref{pri:3.1-a}} \|\omega_{T}\big(\frac{u}{\sqrt{T}},\nabla u\big)\|_{p_*,r',q}.
\end{equation}
Once we established
\begin{equation}\label{f:2.20}
\|\omega_{T}^{\theta}\big(\frac{u}{\sqrt{T}},\nabla u\big)\|_{p_*,r',q}
 \lesssim \sqrt{T}^{d(1-\theta)}\|\omega_{T}^{\theta}\big(\frac{u}{\sqrt{T}},\nabla u\big)\|_{p_*,r,q},
\end{equation}
modifying the constant $C$ in the definition of $\omega_{T}$, we would derive that
\begin{equation*}
\|\omega_{T}\big(\frac{u}{\sqrt{T}},\nabla u\big)\|_{p_*,r',q}
\lesssim
\sqrt{T}^{d(1-\theta)}
\|\omega_{T}\big(\frac{u}{\sqrt{T}},\nabla u\big)\|_{p_*,r,q},
\end{equation*}
and inserting this back into $\eqref{f:2.19}$ leads to
the stated estimate $\eqref{f:2.18}$. Now, we turn to study the estimate
$\eqref{f:2.20}$, which follows from a complex interpolation argument between
$\eqref{f:3.2}$ and
\begin{equation}\label{f:3.3*}
 \|(\frac{u}{\sqrt{T}},\nabla u)\|_{p_0,s',q'}
 \lesssim \sqrt{T}^{d}\|(\frac{g}{\sqrt{T}},f)\|_{p_0,s,q'},
\end{equation}
where $1<s'<s<\infty$ and $q'\in(1,\infty)$,
with $\theta\in(0,1)$ satisfying
\begin{equation*}
\frac{1}{p_*}=\frac{\theta}{2}+\frac{1-\theta}{p_0};\quad
\frac{1}{r'}=\frac{\theta}{2} + \frac{1-\theta}{s'};\quad
\frac{1}{r}=\frac{\theta}{2} + \frac{1-\theta}{s};\quad
\frac{1}{q}=\frac{\theta}{2} + \frac{1-\theta}{q'}.
\end{equation*}
By zero extension of $u$, $g$, and $f$,
the desired estimate $\eqref{f:3.3*}$ follows from $\eqref{pri:2.1}$.

\medskip
\noindent
\textbf{Step 9.} Arguments for the estimate $\eqref{f:2.4}$.
For any $R>0$ (to be fixed later on), set $B_R^c:=\mathbb{R}^d\setminus B_R$, and we make the annular decomposition as follows:
$B_R^{c}=\cup_{k=1}^{\infty} B_{2^kR}\setminus B_{2^{k-1}R}$.
Let $\beta,\beta'\in(1,\infty)$ satisfy $1/\beta+1/\beta'=1$ and
$0<\beta'-1\ll 1$.
Then one can first obtain
\begin{equation*}
\begin{aligned}
\int_{\mathbb{R}^d}\omega_T^p(x) (\omega)_{B_1(x)}dx
&= \int_{B_R}\omega_T^p(x) (\omega)_{B_1(x)}dx
+ \int_{B_R^{c}}\omega_T^p(x) (\omega)_{B_1(x)}dx\\
&\lesssim_d |B_R|\dashint_{B_R}(\omega)_{B_1(x)}dx
+ \sum_{k=1}^{\infty}(2^kR)^d
\Big(\dashint_{B_{2^kR}\setminus B_{2^{k-1}R}}
\omega_T^{p\beta}\Big)^{\frac{1}{\beta}}
\Big(\dashint_{B_{2^kR}}(\omega)_{B_1}^{\beta'}\Big)^{\frac{1}{\beta'}}\\
&\lesssim |B_R|\dashint_{B_{2R}}\omega
+ \sum_{k=1}^{\infty}(2^kR)^d
\Big(\dashint_{B_{2^kR}\setminus B_{2^{k-1}R}}
\omega_T^{p\beta}\Big)^{\frac{1}{\beta}}
\Big(\dashint_{B_{2^{k+1}R}}\omega^{\beta'}\Big)^{\frac{1}{\beta'}}
=:I,
\end{aligned}
\end{equation*}
where we also use the geometry property of integrals. In view of
Lemma $\ref{weight}$, we derive that
\begin{equation}\label{f:2.5}
\begin{aligned}
I
& \lesssim_{[\omega]_{A_p},d}^{\eqref{pri:R-1}} |B_R|\dashint_{B_{2R}}\omega
+ \sum_{k=1}^{\infty}(2^kR)^d
\Big(\dashint_{B_{2^kR}\setminus B_{2^{k-1}R}}
\omega_T^{p\beta}\Big)^{\frac{1}{\beta}}
\Big(\dashint_{B_{2^{k+1}R}}\omega \Big)\\
& \lesssim_{[\omega]_{A_p},d}^{\eqref{f:18}}
\Big(\dashint_{B_{R}}\omega\Big) \bigg\{R^d
+\sum_{k=1}^{\infty}(2^kR)^{d(1-\frac{1}{\beta})}
\Big(\int_{B_{2^{k-1}R}^c}\omega_T^{p\beta}\Big)^{\frac{1}{\beta}}
2^{kp}
\bigg\}.
\end{aligned}
\end{equation}
On the other hand, a routine computation leads to
\begin{equation}\label{f:2.6}
\Big(\int_{B_{2^{k-1}R}^c}\omega_T^{p\beta}\Big)^{\frac{1}{\beta}}
\lesssim  \sqrt{T}^{\frac{d}{\beta}}e^{-\frac{c_1 2^{k}R}{\sqrt{T}}}
\lesssim  \sqrt{T}^{\frac{d}{\beta}} \big(\frac{2^kR}{\sqrt{T}}\big)^{-\gamma},
\end{equation}
where $\gamma>1$ is arbitrary to be fixed later on.
Plugging the estimate $\eqref{f:2.6}$ back into $\eqref{f:2.5}$, we have
\begin{equation}\label{f:2.9}
\begin{aligned}
\int_{\mathbb{R}^d}\omega_T^p(x) (\omega)_{B_1(x)}dx
&\lesssim_{[\omega]_{A_p},d}
\Big(\dashint_{B_{R}}\omega\Big) \bigg\{R^d
+\sum_{k=1}^{\infty}(2^kR)^{d(1-\frac{1}{\beta})}
2^{kp}\sqrt{T}^{\frac{d}{\beta}}\big(\frac{2^kR}{\sqrt{T}}\big)^{-\gamma}
\bigg\}\\
&\lesssim
\Big(\dashint_{B_{R}}\omega\Big) \bigg\{R^d
+ c_0 R^{d-\frac{d}{\beta}-\gamma}\sqrt{T}^{\frac{d}{\beta}+\gamma}
\bigg\}\\
&\lesssim \Big(\dashint_{B_{R}}\omega\Big) \sqrt{T}^d,
\end{aligned}
\end{equation}
where we can choose $\gamma>d-\frac{d}{\beta}+p$, and $R:=\tilde{c}\sqrt{T}$ with $\tilde{c}:=c_0^{\frac{\beta}{d+\beta\gamma}}$.

By the same token, we have
\begin{equation*}
\begin{aligned}
&\int_{\mathbb{R}^d}\omega_{4T}^{\frac{pp_*}{p-p_*}} \omega^{-\frac{p_*}{p-p_*}}dx
\lesssim \Big(\dashint_{B_R}\omega^{-\frac{p_*}{p-p_*}}\Big)|B_R|
+ \sum_{k=1}^{\infty}(2^kR)^d
\Big(\dashint_{B_{2^kR}\setminus B_{2^{k-1}R}}\omega_{4T}^{\frac{pp_*\beta}{p-p_*}}\Big)^{\frac{1}{\beta}}
\Big(\dashint_{B_{2^kR}}
\omega^{-\frac{p_*\beta'}{p-p_*}}\Big)^{\frac{1}{\beta'}}.
\end{aligned}
\end{equation*}
By Lemma $\ref{weight}$, since $\omega\in A_p$ one can infer
from $\eqref{f:19-1}$ that $\omega\in A_{\frac{p}{p_*}}$.
Let $\tilde{\omega}:=\omega^{-\frac{p_*}{p-p_*}}$. Moreover,  $\tilde{\omega}\in A_{(\frac{p}{p_*})'}$ with $1=1/(\frac{p}{p_*})'
+ 1/(\frac{p}{p_*})$. Thus, we have
\begin{equation}\label{f:2.7}
\begin{aligned}
&\int_{\mathbb{R}^d}\omega_{4T}^{\frac{pp_*}{p-p_*}} \omega^{-\frac{p_*}{p-p_*}}dx\\
&\lesssim^{\eqref{pri:R-1}} \Big(\dashint_{B_R}\omega^{-\frac{p_*}{p-p_*}}\Big)|B_R|
+ \sum_{k=1}^{\infty}(2^kR)^d
\Big(\dashint_{B_{2^kR}\setminus B_{2^{k-1}R}}\omega_{4T}^{\frac{pp_*\beta}{p-p_*}}\Big)^{\frac{1}{\beta}}
\Big(\dashint_{B_{2^kR}}\omega^{-\frac{p_*}{p-p_*}}\Big).
\end{aligned}
\end{equation}
On account of $\eqref{f:18}$, we have
\begin{equation}\label{f:2.8}
 \tilde{\omega}(B_{2^{k}R})
 \lesssim \Big(\frac{|B_R|}{|B_{2^{k}R}|}\Big)^{-\frac{p}{p-p_*}}\tilde{\omega}(B_R).
\end{equation}
Combining the estimates $\eqref{f:2.7}$ and $\eqref{f:2.8}$, we have
\begin{equation}\label{f:2.10}
\begin{aligned}
\int_{\mathbb{R}^d}\omega_{4T}^{\frac{pp_*}{p-p_*}} \omega^{-\frac{p_*}{p-p_*}}dx
&\lesssim \Big(\dashint_{B_R}\omega^{-\frac{p_*}{p-p_*}}\Big)
\bigg\{|B_R|+\sum_{k=1}^{\infty}
(2^kR)^{d(1-\frac{1}{\beta})}\Big(\int_{B_{2^{k-1}R}^c}
\omega_{4T}^{\frac{pp_*\beta}{p-p_*}}\Big)^{\frac{1}{\beta}}
2^{\frac{kp}{p-p_*}}\bigg\}\\
&\lesssim^{\eqref{f:2.6}}
\Big(\dashint_{B_R}\omega^{-\frac{p_*}{p-p_*}}\Big)
\bigg\{R^d+\sum_{k=1}^{\infty}
(2^kR)^{d(1-\frac{1}{\beta})}
2^{\frac{kp}{p-p_*}}
\sqrt{T}^{\frac{d}{\beta}}\big(\frac{2^kR}{\sqrt{T}}\big)^{-\gamma}\bigg\}\\
&\lesssim \Big(\dashint_{B_R}\omega^{-\frac{2}{p-2}}\Big)
\sqrt{T}^d,
\end{aligned}
\end{equation}
where we choose $\gamma>d-\frac{d}{\beta}+\frac{p}{p-p_*}$ and
$R =\tilde{c}\sqrt{T}$. Hence, combining the estimates $\eqref{f:2.9}$ and
$\eqref{f:2.10}$, as well as $\eqref{f:18}$, one can derive that
\begin{equation*}
\begin{aligned}
\Big(\int_{\mathbb{R}^d}\omega_T^p(x) (\omega)_{B_1(x)}dx\Big)^{\frac{1}{p}}
\Big(\int_{\mathbb{R}^d}\omega_{4T}^{\frac{2p}{p-2}} \omega^{-\frac{2}{p-2}}dx\Big)^{\frac{p-2}{2p}}
\lesssim \bigg[
\Big(\dashint_{B_{\sqrt{T}}}\omega\Big)
\Big(\dashint_{B_{\sqrt{T}}}\omega^{-\frac{p_*}{p-p_*}}
\Big)^{\frac{p-p_*}{p_*}}\bigg]^{\frac{1}{p}}\sqrt{T}^{\frac{d}{2}}
\lesssim_{[\omega]_{A_p}}\sqrt{T}^{\frac{d}{2}},
\end{aligned}
\end{equation*}
which implies the stated estimate $\eqref{f:2.4}$. The whole proof
is complete.
\qed

\subsection{Annealed estimates at small scales}

\begin{proposition}\label{P:6}
Let $\Omega$ be a (bounded) $C^1$ domain and $T\geq 1$.
Suppose that the ensemble $\langle\cdot\rangle$
is stationary and satisfies local smoothness condition $\eqref{a:3}$.
Let $\tilde{u}$ be associated with
$\tilde{f}$ and $\tilde{g}$ by the equations $\eqref{pde:3.2x}$.
Then, for any $1<s,p,q<\infty$ with $\bar{p}>p$, there holds
the following annealed estimate
\begin{equation}\label{pri:4.9x}
\begin{aligned}
\int_{\Omega} \big\langle|(\nabla u,\frac{u}{\sqrt{T'}} )|^{p}\big\rangle^{\frac{q}{p}}
&\lesssim_{\lambda,d,\partial\Omega,p,q}
\int_{\Omega}
\big\langle|(f,\frac{g}{\sqrt{T'}})|^{\bar{p}}
\big\rangle^{\frac{q}{\bar{p}}}
+ \int_{\Omega}
\Big\langle\big(\dashint_{U_1(z)}
|(\nabla u,\frac{u}{\sqrt{T'}})|^s
\big)^{\frac{\bar{p}}{s}}\Big\rangle^{\frac{q}{\bar{p}}}dz.
\end{aligned}
\end{equation}
Moreover, for any $\omega\in A_q$, we have
\begin{equation}\label{pri:4.10x}
\begin{aligned}
&\int_{\Omega} \big\langle|(\nabla u,\frac{u}{\sqrt{T'}} )|^{p}\big\rangle^{\frac{q}{p}}\omega\\
&\lesssim_{\lambda,d,M_0,p,q}
\int_{\Omega}
\big\langle|(f,\frac{g}{\sqrt{T'}})|^{\bar{p}}
\big\rangle^{\frac{q}{\bar{p}}}\omega
+ \int_{\Omega}
\Big\langle\Big(\dashint_{U_1(z)}
|(\nabla u,\frac{u}{\sqrt{T'}})|^s\Big)^{\frac{\bar{p}}{s}}
\Big\rangle^{\frac{q}{\bar{p}}}\Big(\dashint_{B_1(z)}\omega\Big)dz.
\end{aligned}
\end{equation}
\end{proposition}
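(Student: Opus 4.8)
The statement is an a priori, unit-scale estimate: since $T\ge 1$ the massive term $\tfrac{1}{T}u$ enters only as a lower-order source (as in Lemma $\ref{lemma:ap-2}$), so no $T$-dependence is lost. The plan is to combine scale-$\lesssim 1$ deterministic elliptic regularity --- now at the \emph{full} range of exponents, which is exactly where the local smoothness assumption $\eqref{a:3}$ is used --- with Shen's real argument (Lemma $\ref{shen's lemma2}$), applied to the \emph{deterministic} function $x\mapsto\langle|(\nabla u,\tfrac{u}{\sqrt{T}})(x)|^{p}\rangle^{1/p}$. As in the proofs of Lemmas $\ref{lemma:ap-2}$ and $\ref{lemma:2.2}$, a covering argument and a duality argument first reduce $\eqref{pri:4.9x}$ and $\eqref{pri:4.10x}$ to local estimates on balls $B$ with $r_{B}\lesssim 1$ and either $x_{B}\in\partial\Omega$ or $4B\subset\Omega$; I present the weighted case $\eqref{pri:4.10x}$, the unweighted one corresponding to $\omega\equiv 1$.

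Fix such a boundary ball $B$, write $D=B\cap\Omega$, and split $u=w+z$: let $w\in H^{1}_{0}(\Omega)$ solve $\tfrac{1}{T}w-\nabla\cdot a\nabla w=\nabla\cdot(f\mathds{1}_{cD})+\tfrac1T g\mathds{1}_{cD}$ in $\Omega$ with $w=0$ on $\partial\Omega$, so that $z=u-w$ solves the homogeneous massive equation $\tfrac1T z-\nabla\cdot a\nabla z=0$ in $cD$ with $z=0$ on $c\Delta$. Two deterministic ingredients are needed. First, a full-range local Calder\'on--Zygmund bound for the massive operator with $C^{0,\sigma}$ coefficients,
\[
\Big(\dashint_{\frac12 D}\big|(\nabla w,\tfrac{w}{\sqrt T})\big|^{\bar p}\Big)^{\frac{1}{\bar p}}\lesssim \|a\|_{C^{0,\sigma}(B_{1}(x_{B}))}^{\,c(d,\sigma,\bar p)}\Big(\dashint_{cD}\big|(f,\tfrac{g}{\sqrt T})\big|^{\bar p}\Big)^{\frac{1}{\bar p}},
\]
which upgrades $\eqref{f:2.17}$ from an $L^{2}$ to an $L^{\bar p}$ right-hand side and, at the boundary, relies on $W^{1,\bar p}$-solvability of the Dirichlet problem on $C^{1}$ domains (no Schauder/$C^{1,\alpha}$ theory is available here). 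Second, arbitrary higher integrability of the homogeneous part: for every finite $\bar q$ and every $s>0$,
\[
\Big(\dashint_{\frac14 D}\big|(\nabla z,\tfrac{z}{\sqrt T})\big|^{\bar q}\Big)^{\frac1{\bar q}}\lesssim \|a\|_{C^{0,\sigma}(B_{1}(x_{B}))}^{\,c}\Big(\dashint_{D}\big|(\nabla z,\tfrac{z}{\sqrt T})\big|^{s}\Big)^{\frac1s},
\]
obtained from interior and boundary higher integrability for the homogeneous equation on $C^{1}$ domains, followed by the reverse-H\"older (convexity) step lowering the right-hand exponent, as in Lemma $\ref{lemma:2.2}$.

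Next one passes to the probabilistic mixed norm: take $\langle\cdot\rangle$ of the $p$-th powers of the two displays and use H\"older's inequality in probability to peel off the random factor $\|a\|_{C^{0,\sigma}(B_{1}(x_{B}))}^{c}$, every moment of which is finite by $\eqref{a:3}$; the bill is paid by the gap $\bar p>p$, and this is precisely the origin of the loss from $p$ to $\bar p$ in $\eqref{pri:4.9x}$--$\eqref{pri:4.10x}$. After a Minkowski interchange this yields the two hypotheses required by Lemma $\ref{shen's lemma2}$ (with a small $\underline q>1$ and a $\bar q$ that may be chosen larger than any prescribed $q$): a forcing bound
\[
\dashint_{\frac12 D}\big\langle|(\nabla w,\tfrac{w}{\sqrt T})|^{p}\big\rangle^{\frac{\underline q}{p}}\lesssim \dashint_{cD}\big\langle|(f,\tfrac{g}{\sqrt T})|^{\bar p}\big\rangle^{\frac{\underline q}{\bar p}},
\]
and a reverse-H\"older bound for $\langle|(\nabla z,\tfrac{z}{\sqrt T})|^{p}\rangle^{1/p}$ whose right-hand side is controlled, using the reverse-H\"older / openness property of the $A_{q}$ class (Lemma $\ref{weight}$), by $\dashint_{cD}\big(\langle(\dashint_{U_{1}(\cdot)}|(\nabla u,\tfrac{u}{\sqrt T})|^{s})^{\bar p/s}\rangle^{\underline q/\bar p}+\langle|(f,\tfrac{g}{\sqrt T})|^{\bar p}\rangle^{\underline q/\bar p}\big)(\dashint_{B}\omega)^{\underline q/\bar q}$. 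The nestedness properties of the mixed norm (Lemma $\ref{lemma:3.1}$) are used throughout to rearrange the exponents $p,q,\bar p,s$ into the stated form, and the average $\dashint_{B_{1}(z)}\omega$ on the right of $\eqref{pri:4.10x}$ appears exactly as in Proposition $\ref{P:3}$ and Lemma $\ref{lemma:ap-2}$. Lemma $\ref{shen's lemma2}$ then delivers the boundary estimate; combined with its interior counterpart, a covering argument along $\partial\Omega$ and, for bounded $\Omega$, H\"older's inequality for the weight give $\eqref{pri:4.9x}$ and $\eqref{pri:4.10x}$.

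The main obstacle is the deterministic regularity input at a \emph{merely} $C^{1}$ boundary: since no $C^{1,\alpha}$/Schauder estimate holds there, the homogeneous gradient need not be pointwise bounded up to $\partial\Omega$, so one cannot argue pointwise and must route everything through finite --- but arbitrarily large --- exponents; this is exactly why Shen's real argument (rather than a single local estimate, which on its own would only reach $q\le\bar p$) is the right vehicle. A secondary point is to verify that the random Calder\'on--Zygmund constant grows at most polynomially in $\|a\|_{C^{0,\sigma}(B_{1})}$, so that the moment bound $\eqref{a:3}$ and the slack $\bar p>p$ genuinely absorb it.
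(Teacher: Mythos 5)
Your route is genuinely different from the paper's, and it has a gap. The paper does \emph{not} prove Proposition~\ref{P:6} by Shen's real argument. It freezes the coefficient: for each $x$ it sets $\tilde a_x:=a$ on $B_*(x)=B_{\rho_*(x)}(x)$ and $\tilde a_x:=a(x)$ outside, with localized forcing $f\mathds{1}_{B_1(x)}$, so that the auxiliary equation $\eqref{pde:4.2}$ is a $\delta$-perturbation of a \emph{constant}-coefficient problem. Its annealed Calder\'on--Zygmund constant, inherited from Lemma~\ref{lemma:ap-2}, is therefore deterministic; the only random prefactor, $[a]_{C^{0,\sigma}(B_1(x))}^{c}\rho_*(x)^{-d/s}$, sits on the error $u-\tilde u_x$, which solves a homogeneous massive equation on $B_*(x)$. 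These pieces are then glued by Lemma~\ref{lemma:4.1} (integration over the frozen-ball centers $x$, using moments of $\rho_*$), followed by Fubini and Minkowski. No Calder\'on--Zygmund--Shen decomposition $u=w+z$ on covering balls is used.

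The gap in your argument sits exactly where you invoke Lemma~\ref{shen's lemma2}. You need the forcing bound $\eqref{shen-2}$ for the deterministic function $W_B=\langle|(\nabla w,w/\sqrt T)|^p\rangle^{1/p}$ with a \emph{small} exponent $\underline q$ and a right-hand side in $L^{\underline q}$, i.e.\ $\dashint_{\frac12 D}W_B^{\underline q}\lesssim\dashint_{cD}G^{\underline q}$ with $G=\langle|(f,g/\sqrt T)|^{\bar p}\rangle^{1/\bar p}$, so that one may take $\omega\in A_{q/\underline q}\supset A_q$. But the only deterministic $W^{1,\bar p}$ estimate at your disposal carries the random constant $\|a\|_{C^{0,\sigma}(B_1(x_B))}^c$, and once you peel it off by H\"older in $\omega$ (the use of $\bar p>p$) and do the Minkowski interchange, the spatial exponent on the right is forced up to $\bar p$: one obtains
\begin{equation*}
\Big(\dashint_{\frac12 D}W_B^{\bar p}\Big)^{\frac1{\bar p}}\lesssim\Big(\dashint_{cD}G^{\bar p}\Big)^{\frac1{\bar p}},
\quad\text{hence only}\quad
\dashint_{\frac12 D}W_B^{\underline q}\lesssim\Big(\dashint_{cD}G^{\bar p}\Big)^{\underline q/\bar p},
\end{equation*}
which by Jensen is strictly weaker than the display you claim. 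Feeding this into Shen's lemma would require $p_0=\bar p$, hence $\omega\in A_{q/\bar p}$ and $q>\bar p$, which misses the full range $q\in(1,\infty)$, $\omega\in A_q$ of the proposition; duality only recovers $q<\bar p'$, leaving a hole around $q=2$ when $\bar p\ge 2$. The paper's frozen-coefficient device is precisely what avoids this: because $\tilde a_x$ is a $\delta$-perturbation of a constant, the good part of the decomposition has a \emph{deterministic} Calder\'on--Zygmund constant, so no random factor needs to be absorbed there and no exponent leaks onto the right-hand side; the $\bar p>p$ slack is spent only once, in Lemma~\ref{lemma:4.1}, to pay for the moments of $\rho_*^{-1}$. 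A secondary mismatch: the reverse-H\"older you state for $Z_B$ has the mixed-norm term $\langle(\dashint_{U_1}|\nabla u|^s)^{\bar p/s}\rangle^{1/\bar p}$ where Lemma~\ref{shen's lemma2} requires $(\dashint_{\alpha B}|F|^{p_0})^{1/p_0}$ with $F=V$; that substitution is not covered by the lemma as stated.
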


\begin{lemma}\label{lemma:4.1}
Let $\Omega\subset\mathbb{R}^d$ with $d\geq 2$ be a (bounded) Lipschitz domain. Suppose that the ensemble $\langle\cdot\rangle$
is stationary and satisfies local smoothness condition $\eqref{a:3}$.
Then, there exists a stationary random field
$\rho_*$ with $\langle\rho_*^{-\beta}\rangle^{1/\beta}\lesssim_{\lambda_2,\beta} 1$
for any $\beta<\infty$, such that, for any random variable $F$ and
$y\in\Omega$, there holds
\begin{equation}\label{pri:4.11x}
 \langle|F|^{p}\rangle^{\frac{q}{p}}
\lesssim_{d,M_0,\lambda_2,p,p_1} \int_{\Omega} \big\langle
I_{B_{\frac{\rho_*(x)}{2}}(x)}(y)|F|^{p_1}\big\rangle^{\frac{q}{p_1}} dx
\end{equation}
for any $1<p,q<\infty$ with $p_1>p$.
\end{lemma}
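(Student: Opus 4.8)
The plan is to take $\rho_*$ to be the ``regularity radius'' attached to the local smoothness assumption $\eqref{a:3}$, and then to run a Hölder argument that trades the $x$‑integral against the arbitrarily high negative moments of $\rho_*$. First I would construct $\rho_*$: with $c_0$ a small deterministic constant, set
\begin{equation*}
\rho_*(x):=\inf_{z\in\mathbb{R}^d}\Big(|x-z|+c_0\min\big\{1,\|a\|_{C^{0,\sigma}(B_1(z))}^{-1}\big\}\Big).
\end{equation*}
By stationarity of the ensemble $\rho_*$ is a stationary field; it is $1$‑Lipschitz in $x$ and satisfies $0<\rho_*\le c_0$; the Lipschitz bound gives the \emph{doubling} property $\rho_*(x)>\tfrac34\rho_*(y)$ whenever $|x-y|<\tfrac14\rho_*(y)$; and since $\{\rho_*<\varepsilon\}\subseteq\{\|a\|_{C^{0,\sigma}(B_2(x))}>c_0/\varepsilon\}$, a Chebyshev estimate together with $\eqref{a:3}$ (used with exponent $\gamma$ large) yields $\langle\rho_*^{-\beta}\rangle^{1/\beta}\lesssim_{\lambda_2,\beta}1$ for every $\beta<\infty$ (this is the standard Lipschitz regularization of the local smoothness scale, cf. \cite{Gloria-Neukamm-Otto21,Josien-Otto22}).

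The geometric heart is the pointwise‑in‑$a$ inclusion: if $|x-y|<\tfrac14\rho_*(y)$ then by doubling $\rho_*(x)>\tfrac34\rho_*(y)>2|x-y|$, so $y\in B_{\rho_*(x)/2}(x)$; hence $I_{B_{\rho_*(x)/2}(x)}(y)\ge\mathds{1}_{\{\rho_*(y)>4|x-y|\}}$. Integrating in $x$ and using the interior cone condition of the Lipschitz domain, $|B_r(y)\cap\Omega|\gtrsim_{M_0}r^d$ for $0<r\le c_0$ and $y\in\Omega$, yields $\int_\Omega I_{B_{\rho_*(x)/2}(x)}(y)\,dx\gtrsim_{M_0}\rho_*(y)^d$. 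From here the endpoint $q=p_1$ is immediate: by Fubini the right‑hand side of $\eqref{pri:4.11x}$ equals $\big\langle|F|^{p_1}\int_\Omega I_{B_{\rho_*(x)/2}(x)}(y)\,dx\big\rangle\gtrsim_{M_0}\langle|F|^{p_1}\rho_*(y)^d\rangle$, and Hölder with exponents $p_1/p$ and its conjugate applied to $|F|^p=(|F|^{p_1}\rho_*(y)^d)^{p/p_1}\cdot\rho_*(y)^{-dp/p_1}$ gives
\begin{equation*}
\langle|F|^p\rangle^{p_1/p}\ \le\ \langle|F|^{p_1}\rho_*(y)^d\rangle\,\big\langle\rho_*(y)^{-\frac{dp}{p_1-p}}\big\rangle^{\frac{p_1-p}{p}}\ \lesssim_{\lambda_2,d,p,p_1}\ \langle|F|^{p_1}\rho_*(y)^d\rangle,
\end{equation*}
the last step by the negative‑moment bound with the finite exponent $\beta=dp/(p_1-p)$.

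For general $1<q<\infty$ I would reduce to this endpoint. Since $\rho_*\le c_0$, the function $h(x):=\langle I_{B_{\rho_*(x)/2}(x)}(y)|F|^{p_1}\rangle$ is supported in $B_{c_0}(y)\cap\Omega$, a set of measure $\sim_{M_0}1$. If $q\ge p_1$, Jensen's inequality on this finite‑measure set gives $\|h\|_{L^{q/p_1}}\gtrsim\|h\|_{L^1}$, which combined with the endpoint estimate yields $\eqref{pri:4.11x}$. If $p<q<p_1$, one runs the endpoint argument with an intermediate exponent $p_1'\in(p,q)$, so that $q/p_1'>1$, and then raises the exponent back to $p_1$ by $\langle I_{B}(y)|F|^{p_1'}\rangle\le\langle I_{B}(y)|F|^{p_1}\rangle^{p_1'/p_1}$ (Hölder). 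If $q\le p$, so that $q/p_1<1$ and Jensen is no longer available, one instead uses the pointwise domination $h(x)\ge\langle\mathds{1}_{\{\rho_*(y)>4|x-y|\}}|F|^p\rangle^{p_1/p}$ (conditional Jensen on the event $\{\rho_*(y)>4|x-y|\}$), passes to polar coordinates about $y$, and integrates the non‑increasing profile $\Phi(u):=\langle\mathds{1}_{\{\rho_*(y)>u\}}|F|^p\rangle$ against $u^{d-1}\,du$, the fixed power $u^d$ lost in the radial integration being recovered from $\langle\mathds{1}_{\{\rho_*(y)\le u\}}\rangle\lesssim_{\lambda_2,\beta}u^\beta$ with $\beta$ taken large — after splitting into the cases in which the $L^p$‑mass of $F$ does, or does not, sit on $\{\rho_*(y)\le u\}$ at the relevant scale.

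The step I expect to be the main obstacle is exactly this last balancing in the regime $q\le p$: for $F$ concentrated where $\rho_*(y)$ is tiny the profile $\Phi$ may drop abruptly, and one must make quantitative how the superpolynomial decay of the negative tail of $\rho_*$ — the full strength of $\eqref{a:3}$, invoked with $\beta$ large relative to $d,p,p_1,q$ — dominates the polynomial factor produced by the $x$‑integration. (In the applications $F$ is a pointwise value of $\nabla u$ or $u/\sqrt{T'}$, which carries integrability beyond $L^{p_1}$, so this step simplifies; for a general $F\in L^{p_1}$ one first truncates $F$ and passes to the limit by monotone convergence.)
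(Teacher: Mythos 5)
Your construction of $\rho_*$ by the inf-convolution $\rho_*(x)=\inf_z(|x-z|+c_0\min\{1,\|a\|_{C^{0,\sigma}(B_1(z))}^{-1}\})$ is a legitimate alternative to the paper's supremal definition in terms of the oscillation of $a$; both are stationary, bounded, and have all negative moments via $\eqref{a:3}$, and your doubling argument delivers the same geometric estimate
$\int_\Omega I_{B_{\rho_*(x)/2}(x)}(y)\,dx\gtrsim_{M_0}\rho_*(y)^d$ that the paper uses. The $q=p_1$ endpoint computation is correct.

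The genuine gap is the transfer to general $q$ when $q\le p$. You flag this yourself, and the sketch (polar coordinates, a truncation, a case split on where the $L^p$-mass of $F$ sits) does not close: when the mass of $F$ concentrates on $\{\rho_*\lesssim\varepsilon\}$ the radius below which the profile $\Phi(4r)$ stays comparable to $\Phi(0)$ depends on the truncation level $\|F\|_\infty$, and the constant blows up as the truncation is removed. The structural reason is that $q=p_1$ is the wrong endpoint to transfer from: having $\int_\Omega h\,dx\gtrsim\langle|F|^p\rangle^{p_1/p}$ and trying to pass to $\int_\Omega h^{q/p_1}\,dx$ requires $(\int h)^{q/p_1}\lesssim\int h^{q/p_1}$, and for $q/p_1<1$ this runs Jensen backwards on the finite-measure set.

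The paper avoids the case split entirely by working at the ``$q\to1$'' endpoint and using Minkowski's integral inequality rather than Fubini. Instead of integrating out $\langle\cdot\rangle$ at the level of $h:=\langle I_{B_{\rho_*(x)/2}(x)}(y)|F|^{p_1}\rangle$, one estimates
\begin{equation*}
\int_\Omega h^{1/p_1}\,dx
=\int_\Omega \big\langle I_{B_{\rho_*(x)/2}(x)}(y)|F|^{p_1}\big\rangle^{1/p_1}\,dx
\;\ge\;\Big\langle\Big(\int_\Omega I_{B_{\rho_*(x)/2}(x)}(y)\,dx\Big)^{p_1}|F|^{p_1}\Big\rangle^{1/p_1}
\;\gtrsim\;\big\langle\rho_*(y)^{dp_1}|F|^{p_1}\big\rangle^{1/p_1},
\end{equation*}
then uses that $h^{1/p_1}$ is supported in the bounded set $B_{1/2}(y)\cap\Omega$, so the elementary inequality $\|g\|_{L^q(dx)}\gtrsim\|g\|_{L^1(dx)}$ (valid for \emph{any} $q\ge1$ on a set of bounded measure) gives $(\int_\Omega h^{q/p_1}\,dx)^{1/q}\gtrsim\int_\Omega h^{1/p_1}\,dx$ uniformly for all $q>1$. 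A single Hölder in $\langle\cdot\rangle$ with $1/p=1/p_1+1/p_2$ trades $\rho_*^{dp_1}$ against $\langle\rho_*^{-dp_2}\rangle^{1/p_2}\lesssim1$ and closes $\eqref{pri:4.11x}$ in one pass. If you replace your Fubini-at-$q=p_1$ step by this Minkowski-at-$q\to1$ step, the remainder of your proposal (construction of $\rho_*$, moment bound, geometric estimate, final Hölder) goes through unchanged and no case distinction on $q$ is needed.
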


\begin{proof}
The idea of the proof is due to Prof. Otto's arguments in \cite[Lemma A.2]{Josien-Otto22}, and we provide a proof for the sake of completeness.
The proof will be simply divided into two steps.
\medskip

\noindent
\textbf{Step 1.} Define random small scales $\rho_*(x)$ as follows:
\begin{equation}\label{def:3*}
\rho_*(x):=\sup\big\{\rho\leq 1: \sup_{y,y'\in B_{\rho}(x)}
|a(y)- a(y')|\leq \delta \big\},
\end{equation}
where $\delta>0$ is arbitrarily small parameter.
By the definition, we have $0<\rho_*(x)\leq 1$ $\langle\cdot\rangle$-a.s.;
Moreover, $\rho_*$ is stationary and, we claim the following moment estimates
\begin{equation}\label{pri:15}
 \langle\rho_*^{-\beta}\rangle^{\frac{1}{\beta}}
 \lesssim 1 \qquad
 \forall \beta\in[1,\infty).
\end{equation}
To see this, we may define
\begin{equation}\label{def:3}
\tilde{\rho}_*:=
\frac{1}{2}\big(\delta/[a]_{C^{0,\sigma}(B_1)}\big)^{\frac{1}{\sigma}}\wedge 1.
\end{equation}
It is not hard to see that $\tilde{\rho}_*\leq \rho_*$ since, for any $y,y'\in B_{\tilde{\rho}_*}(0)$, we have
\begin{equation*}
|a(y)- a(y')|
\leq [a]_{C^{0,\sigma}(B_1)}(2r)^\sigma
\leq \delta.
\end{equation*}
Thus, the moments estimate of $\rho_*$ is reduced to that of $\tilde{\rho}_*$, which would be bounded by $\eqref{def:3}$.

\medskip

\noindent
\textbf{Step 2.} Establish the estimate $\eqref{pri:4.11x}$.
Let $p<p_1<p_2$.
The key ingredient is that for any fixed $y\in\Omega$,
there holds that if $x\in \frac{1}{3}B_{\rho_*(y)}(y)$,
we can infer $B_{\frac{2}{3}\rho_*(y)}(x)
\subset B_{\rho_*(y)}(y)$.
Moreover, there hold $\rho_*(x)\geq \frac{2}{3}\rho_*(y)$ and
$y\in B_{\frac{1}{2}\rho_*(x)}(x)$. This concludes that
\begin{equation}\label{f:4.23x}
 \int_{\Omega} I_{B_{\frac{1}{2}\rho_*(x)}(y)}dx
 \geq \int_{\Omega} I_{B_{\frac{7}{24}\rho_*(y)}(y)} dx
 \gtrsim_{d,M_0}  \rho_*(y)^d,
\end{equation}
where the constant is also dependent of the character of $\Omega$.
By using H\"older's inequality, the fact that $\rho_*\leq 1$
$\langle\cdot\rangle$-a.s., and Minkowski's inequality, we have
\begin{equation*}\label{}
\begin{aligned}
\Big( \int_{\Omega} \big\langle
 I_{B_{\frac{1}{2}\rho_*(x)}(y)}|F|^{p_1}\big\rangle^{\frac{q}{p_1}}dx
\Big)^{\frac{1}{q}}
&\gtrsim_d
\int_{\Omega} \big\langle
 I_{B_{\frac{1}{2}\rho_*(x)}(y)}|F|^{p_1}\big\rangle^{\frac{1}{p_1}}dx\\
&\geq
\Big\langle \Big(\int_{\Omega} I_{B_{\frac{1}{2}\rho_*(x)}(y)}|F|dx
\Big)^{p_1}\Big\rangle^{\frac{1}{p_1}}
=\Big\langle \Big(\int_{\Omega} I_{B_{\frac{1}{2}\rho_*(x)}(y)}dx
\Big)^{p_1}|F|^{p_1}
\Big\rangle^{\frac{1}{p_1}}.
\end{aligned}
\end{equation*}
This together with $\eqref{f:4.23x}$ leads to
\begin{equation}\label{f:4.24x}
\Big( \int_{\Omega} \big\langle
 I_{B_{\frac{1}{2}\rho_*(x)}(y)}|F|^{p_1}\big\rangle^{\frac{q}{p_1}}dx
\Big)^{\frac{1}{q}}
\gtrsim \big\langle \rho_*(y)^{p_1d} |F|^{p_1} \big\rangle^{\frac{1}{p_1}}.
\end{equation}
On the other hand, by setting $1/p=1/p_1+1/p_2$, we have
\begin{equation}\label{f:4.25x}
\big\langle |F|^{p} \big\rangle^{\frac{1}{p}}
\leq  \big\langle \rho_*(y)^{p_1d}  |F|^{p_1} \big\rangle^{\frac{1}{p_1}}
\big\langle \rho_*(y)^{-p_2d}\big\rangle^{\frac{1}{p_2}}
\lesssim^{\eqref{pri:15}}  \big\langle \rho_*(y)^{p_1d}  |F|^{p_1} \big\rangle^{\frac{1}{p_1}}.
\end{equation}

As a result, the stated estimate $\eqref{pri:4.11x}$ follows from
the estimates $\eqref{f:4.24x}$ and $\eqref{f:4.25x}$.
\end{proof}

\medskip

\noindent
\textbf{Proof of Proposition $\ref{P:6}$.}
The main idea of the proof may be found in \cite[Lemma A.2]{Josien-Otto22},
and we provide a proof for the reader's convenience.
The whole proof is divided into four steps. Let $1<p<p_1<p_2\leq \bar{p}<\infty$, and $1<q<\infty$.

\medskip
\noindent
\textbf{Step 1.} Construct an auxiliary equation. Recalling the definition
of $\rho_*$ in $\eqref{def:3*}$, for any $x\in\Omega$ we set $B_*(x):=B_{\rho_*(x)}(x)$, $\tilde{f}_x:=fI_{B_1(x)}$ and $\tilde{g}_x:=gI_{B_1(x)}$. Then, we define a new coefficient as follows:
\begin{equation}\label{eq:4.1}
  \tilde{a}_x(y):=\left\{\begin{aligned}
  &a(y)\quad \text{if}\quad y\in B_*(x);\\
  &a(x)\quad \text{otherwise}.
  \end{aligned}\right.
\end{equation}
Now, we proceed to construct the auxiliary equations:
\begin{equation}\label{pde:4.2}
\left\{\begin{aligned}
\frac{1}{T}u_x- \nabla \cdot \tilde{a}_x\nabla \tilde{u}_x
 &= \nabla \cdot\tilde{f}_x + \frac{1}{T}\tilde{g}_x
 &\quad &\text{in}\quad \Omega;\\
 \tilde{u}_x &= 0
 &\quad &\text{on}~~ \partial\Omega,
\end{aligned}\right.
\end{equation}
and
\begin{equation}\label{pde:4.3}
  \nabla \cdot a \nabla (\tilde{u}_x - u) = \frac{1}{T}(\tilde{u}_x - u)
\quad \text{in}\quad B_*(x).
\end{equation}

Concerned with the auxiliary equation $\eqref{pde:4.2}$,
in view of the definitions of the new coefficient
$\tilde{a}_x$ in $\eqref{eq:4.1}$ and the random small-scale radius
$\rho_*(x)$ in $\eqref{def:3*}$, for any given small parameter $\delta>0$,
it is not hard to see that
\begin{equation}\label{perturbation}
   \|\tilde{a}_x - a(x)\|_{L^\infty(\Omega)}\leq \delta.
\end{equation}
Based upon this perturbation $\eqref{perturbation}$, we claim that
\begin{subequations}
\begin{align}
 &\int_{\Omega} |(\nabla \tilde{u}_x,\frac{\tilde{u}_x}{\sqrt{T}})|^q
 \lesssim \int_{U_1(x)}
 |(f,\frac{g}{\sqrt{T}})|^q;\label{pri:4.13x-1}\\
 & \int_{\Omega}
 \big\langle|(\nabla \tilde{u}_x,\frac{\tilde{u}_x}{\sqrt{T}})|^p\big\rangle^{\frac{q}{p}}
 \lesssim \int_{U_1(x)}
 \big\langle|(f,\frac{g}{\sqrt{T}})|^p\big\rangle^{\frac{q}{p}}. \label{pri:4.13x-2}\\
 & \int_{\Omega}
\big\langle|(\nabla \tilde{u}_x,\frac{\tilde{u}_x}{\sqrt{T}})|^p\big\rangle^{\frac{q}{p}}\omega
\lesssim
\int_{U_1(x)}
 \big\langle|(f,\frac{g}{\sqrt{T}})|^p\big\rangle^{\frac{q}{p}}
 \omega. \label{pri:4.13x-3}
\end{align}
\end{subequations}

In terms of the equation $\eqref{pde:4.3}$, we claim that
\begin{subequations}
\begin{align}
&\Big\langle\Big(\dashint_{\frac{1}{2}B_{*}(x)\cap\Omega}
\big|\big(\nabla u-\nabla\tilde{u}_x,\frac{u-\tilde{u}_x}{\sqrt{T}}\big)
\big|^{q}\Big)^{\frac{p}{q}}\Big\rangle^{\frac{1}{p}}
\lesssim
\Big\langle\Big(\dashint_{U_1(x)}
\big|\big(\nabla u,\frac{u}{\sqrt{T}},f,\frac{g}{\sqrt{T}}\big)
\big|^s\Big)^{\frac{p_1}{s}}
\Big\rangle^{\frac{1}{p_1}} \label{pri:4.14x}\\
& \Big\langle\Big(\dashint_{\frac{1}{2}B_{*}(x)\cap\Omega}
\big|\big(\nabla u-\nabla\tilde{u}_x,\frac{u-\tilde{u}_x}{\sqrt{T}}\big)
\big|^{q}\omega\Big)^{\frac{p}{q}}\Big\rangle^{\frac{1}{p}}
\nonumber\\
& \qquad\qquad\qquad\qquad\qquad+
\Big\langle\Big(\dashint_{U_1(x)}
\big|\big(\nabla u,\frac{u}{\sqrt{T}},f,\frac{g}{\sqrt{T}}\big)
\big|^s\Big)^{\frac{p_1}{s}}
\Big\rangle^{\frac{1}{p_1}}\Big(\dashint_{B_1(x)}\omega\Big)^{\frac{1}{q}}
\label{pri:4.14x-1}
\end{align}
\end{subequations}

\medskip

\noindent
\textbf{Step 2.}
Show the estimates $\eqref{pri:4.13x-1}$,
$\eqref{pri:4.13x-2}$, and $\eqref{pri:4.13x-3}$. All of the estimates follow from the same  perturbation argument due to the new defined coefficient $\eqref{eq:4.1}$.
To see this, we rewrite the first line of the equations $\eqref{pde:4.2}$ as follows:
\begin{equation*}
\frac{1}{T}\tilde{u}_x- \nabla\cdot a(x)\nabla \tilde{u}_x
 = \nabla \cdot \big[(\tilde{a}_x-a(x))\nabla\tilde{u}_x+ \tilde{f}_x\big] + \frac{1}{T}\tilde{g}_x
 \quad \text{in}\quad \Omega.
\end{equation*}
Appealing to Lemma $\ref{lemma:ap-2}$ and the perturbation condition $\eqref{perturbation}$, we obtain
\begin{equation*}
\begin{aligned}
&\int_{\Omega} |(\nabla \tilde{u}_x,\frac{\tilde{u}_x}{\sqrt{T}})|^q
 \lesssim_{\lambda,d,M_0,q}^{\eqref{pri:A-1}} \delta^{q}
\int_{\Omega} |\nabla \tilde{u}_x|^q
+ \int_{\Omega}
 |(\tilde{f}_x,\frac{\tilde{g}_x}{\sqrt{T}})|^q;\\
& \int_{\Omega}
 \big\langle|(\nabla \tilde{u}_x,\frac{\tilde{u}_x}{\sqrt{T}})|^p\big\rangle^{\frac{q}{p}}
 \lesssim_{\lambda,d,M_0,p,q}^{\eqref{pri:A-2}}
 \delta^q
 \int_{\Omega}\langle|\nabla\tilde{u}_x|^{p}\rangle^{\frac{q}{p}}
 +\int_{\Omega}
 \big\langle\big|(\tilde{f}_x,\frac{\tilde{g}_x}{\sqrt{T}})\big|^p
 \big\rangle^{\frac{q}{p}};\\
& \int_{\Omega}
 \big\langle|(\nabla \tilde{u}_x,\frac{\tilde{u}_x}{\sqrt{T}})|^p\big\rangle^{\frac{q}{p}}
 \omega
 \lesssim_{\lambda,d,M_0,p,q}^{\eqref{pri:A-3}}
 \delta^q
 \int_{\Omega}\langle|\nabla\tilde{u}_x|^{p}\rangle^{\frac{q}{p}}\omega
 +\int_{\Omega}
 \big\langle\big|(\tilde{f}_x,\frac{\tilde{g}_x}{\sqrt{T}})\big|^p
 \big\rangle^{\frac{q}{p}}\omega.
\end{aligned}
\end{equation*}
Then, by preferring $0<\delta\ll 1$, there holds
\begin{equation*}
\begin{aligned}
&\int_{\Omega} |(\nabla \tilde{u}_x,\frac{\tilde{u}_x}{\sqrt{T}})|^q
 \lesssim_{\lambda,d,M_0,q}
 \int_{\Omega}
 |(\tilde{f}_x,\frac{\tilde{g}_x}{\sqrt{T}})|^q
 \lesssim \int_{U_1(x)}
 |(f,\frac{g}{\sqrt{T}})|^q;\\
& \int_{\Omega}
 \big\langle|(\nabla \tilde{u}_x,\frac{\tilde{u}_x}{\sqrt{T}})|^p\big\rangle^{\frac{q}{p}}
 \lesssim_{\lambda,d,M_0,p,q}
 \int_{\Omega}
 \big\langle\big|(\tilde{f}_x,\frac{\tilde{g}_x}{\sqrt{T}})\big|^p
 \big\rangle^{\frac{q}{p}}
 \lesssim \int_{U_1(x)}
 \big\langle\big|(f,\frac{g}{\sqrt{T}})\big|^p
 \big\rangle^{\frac{q}{p}};\\
&\int_{\Omega}
 \big\langle|(\nabla \tilde{u}_x,\frac{\tilde{u}_x}{\sqrt{T}})|^p\big\rangle^{\frac{q}{p}}\omega
 \lesssim_{\lambda,d,M_0,p,q}
 \int_{\Omega}
 \big\langle\big|(\tilde{f}_x,\frac{\tilde{g}_x}{\sqrt{T}})\big|^p
 \big\rangle^{\frac{q}{p}}\omega
 \lesssim \int_{U_1(x)}
 \big\langle\big|(f,\frac{g}{\sqrt{T}})\big|^p
 \big\rangle^{\frac{q}{p}}\omega,
\end{aligned}
\end{equation*}
which give the stated estimates $\eqref{pri:4.13x-1}$,
$\eqref{pri:4.13x-2}$, and $\eqref{pri:4.13x-3}$.

\medskip
\noindent
\textbf{Step 3.}
Arguments for $\eqref{pri:4.14x}$ and $\eqref{pri:4.14x-1}$. Let $1<s\leq \min\{p,q\}$.
In view of the equation $\eqref{pde:4.3}$, it follows from the reverse H\"older's inequality that
\begin{equation*}
\begin{aligned}
\Big(\dashint_{\frac{1}{2}B_*(x)\cap\Omega}
\big|\big(\nabla u
-\nabla\tilde{u}_x,\frac{u-\tilde{u}_x}{\sqrt{T}}\big)\big|^{q}\Big)^{\frac{1}{q}}
&\lesssim [a]_{C^{0,\eta}(B_{1}(x))}^{\frac{d}{\eta}(\frac{1}{s}-\frac{1}{q})}
\Big(\dashint_{B_*(x)\cap\Omega}
\big|\big(\nabla u
-\nabla\tilde{u}_x,\frac{u-\tilde{u}_x}{\sqrt{T}}\big)\big|^s\Big)^{\frac{1}{s}}\\
&\lesssim
[a]_{C^{0,\eta}(B_{1}(x))}^{\frac{d}{\eta}(\frac{1}{s}-\frac{1}{q})}
\rho_*(x)^{-\frac{d}{s}}
\Big(\dashint_{B_1(x)\cap\Omega}|(\nabla u,\frac{u}{\sqrt{T}},f,\frac{g}{\sqrt{T}})|^s
\Big)^{\frac{1}{s}},
\end{aligned}
\end{equation*}
where we also employ the estimate $\eqref{pri:4.13x-1}$ in the second inequality.
Taking $\langle|\cdot|^{p}\rangle^{\frac{1}{p}}$-norm on
the both sides above, and then together with
$\eqref{a:3}$ and $\eqref{pri:15}$
and using H\"older's inequality, we can
derive the stated estimate
$\eqref{pri:4.14x}$. We now proceed to show $\eqref{pri:4.14x-1}$.
To see so, we introduce the multiplicative random variable $C_*(x)$ which
depends on $[a]_{C^{0,\eta}(B_{1}(x))}$ and $\rho_*^{-1}(x)$ polynomially at most, and there holds
\begin{equation*}
\begin{aligned}
&\Big(\dashint_{\frac{1}{2}B_*(x)\cap\Omega}
\big|\big(\nabla u
-\nabla\tilde{u}_x,\frac{u-\tilde{u}_x}{\sqrt{T}}
\big)\big|^{q}\omega\Big)^{\frac{1}{q}}
\lesssim C_*(x)
\Big(\dashint_{B_*(x)\cap\Omega}
\big|\big(\nabla u
-\nabla\tilde{u}_x,\frac{u-\tilde{u}_x}{\sqrt{T}}\big)\big|^s
\Big)^{\frac{1}{s}}\Big(\dashint_{B_1(x)}\omega\Big)^{\frac{1}{q}}\\
&\lesssim^{\eqref{pri:4.13x-1}} C_*(x)
\Big(\dashint_{U_1(x)}
\big|\big(\nabla u,\frac{u}{\sqrt{T}},f,\frac{g}{\sqrt{T}}\big)\big|^s
\Big)^{\frac{1}{s}}\Big(\dashint_{B_1(x)}\omega\Big)^{\frac{1}{q}}.
\end{aligned}
\end{equation*}
Then, taking $\langle|\cdot|^{p}\rangle^{\frac{1}{p}}$-norm on
the both sides above, we have the stated estimate $\eqref{pri:4.14x-1}$.

\medskip
\noindent
\textbf{Step 4.} Show the estimate $\eqref{pri:4.9x}$.
Appealing to Lemma $\ref{lemma:4.1}$ and taking
$F:=(\nabla u,\frac{u}{\sqrt{T}})(y)$ for any fixed $y\in\Omega$
therein, we have
\begin{equation*}
\begin{aligned}
& \big\langle|(\nabla u,\frac{u}{\sqrt{T}})(y)|^{p}\big\rangle^{\frac{q}{p}} \\
& \lesssim^{\eqref{pri:4.11x}}
 \int_{\Omega} \big\langle
I_{\frac{1}{2}B_{*}(x)}(y)
\big|(\nabla\tilde{u}_x,\frac{\tilde{u}_x}{\sqrt{T}})(y)\big|^{p_1}
\big\rangle^{\frac{q}{p_1}} dx
+
\int_{\Omega} \big\langle
I_{\frac{1}{2}B_*(x)}(y)
\big|(\nabla u-\nabla\tilde{u}_x,\frac{u-\tilde{u}_x}{\sqrt{T}})(y)
\big|^{p_1}
\big\rangle^{\frac{q}{p_1}} dx.
\end{aligned}
\end{equation*}
Then, integrating $y\in\Omega$ on the both sides above,
one can obtain
\begin{equation}\label{f:4.26x}
\begin{aligned}
\int_{\Omega}
\big\langle|(\nabla u,\frac{u}{\sqrt{T}})(y)|^{p}\big\rangle^{\frac{q}{p}} dy
&\lesssim
\int_{\Omega}\int_{\Omega} \big\langle
I_{\frac{1}{2}B_{*}(x)}(y)
\big|(\nabla\tilde{u}_x,\frac{\tilde{u}_x}{\sqrt{T}})(y)\big|^{p_1}
\big\rangle^{\frac{q}{p_1}} dxdy\\
&+\int_{\Omega}\int_{\Omega} \big\langle
I_{\frac{1}{2}B_*(x)}(y)
\big|(\nabla u-\nabla\tilde{u}_x,\frac{u-\tilde{u}_x}{\sqrt{T}})(y)\big|^{p_1}
\big\rangle^{\frac{q}{p_1}} dxdy:=I_1+I_2.
\end{aligned}
\end{equation}
Moreover, by Fubini's theorem, there holds
\begin{equation}\label{f:4.27x}
\begin{aligned}
I_1
&\leq \int_{\Omega}\int_{\Omega} \big\langle
\big|(\nabla\tilde{u}_x,\frac{\tilde{u}_x}{\sqrt{T}})(y)\big|^{p_1}
\big\rangle^{\frac{q}{p_1}} dydx
\lesssim^{\eqref{pri:4.13x-2}}
\int_{\Omega}\int_{\Omega}
\big\langle
|(\tilde{f}_x,\frac{\tilde{g}_x}{\sqrt{T}})(y)|^{p_1}
\big\rangle^{\frac{q}{p_1}} dy dx\\
&\lesssim^{\eqref{g-1*}} \int_{\Omega}\dashint_{U_1(x)}
\big\langle
|(f,\frac{g}{\sqrt{T}})|^{p_1}
\big\rangle^{\frac{q}{p_1}}
\lesssim \int_{\Omega}
\big\langle
|(f,\frac{g}{\sqrt{T}})|^{p_1}
\big\rangle^{\frac{q}{p_1}},
\end{aligned}
\end{equation}
and it follows from Minkowski's inequality that
\begin{equation}\label{f:4.28x}
\begin{aligned}
I_2
&\leq \int_{\Omega}\Big\langle
\Big(\int_{\Omega}
I_{\frac{1}{2}B_*(x)}(y)
\big|(\nabla u-\nabla\tilde{u}_x,\frac{u-\tilde{u}_x}{\sqrt{T}})(y)\big|^{q}
dy \Big)^{\frac{p_1}{q}}\Big\rangle^{\frac{q}{p_1}} dx\\
&\lesssim_d
\int_{\Omega}
\Big\langle\Big(\dashint_{\frac{1}{2}B_*(x)\cap\Omega}
|(\nabla u-\nabla\tilde{u}_x,\frac{u-\tilde{u}_x}{\sqrt{T}})
|^q\Big)^{\frac{p_1}{q}}\Big\rangle^{\frac{q}{p_1}}dx\\
&\lesssim^{\eqref{pri:4.14x}}
\int_{\Omega}
\Big\langle\Big(\dashint_{U_1(x)}
\big|\big(\nabla u,\frac{u}{\sqrt{T}},f,\frac{g}{\sqrt{T}}\big)
\big|^s\Big)^{\frac{p_2}{s}}
\Big\rangle^{\frac{q}{p_2}}dx.
\end{aligned}
\end{equation}
Let $s\leq \min\{p_2,q\}$, and similar to
the argument used for $\eqref{pri:3.1-c}$, it follows from H\"older's inequality,
Minkowski's inequality, and the geometrical property of integrals that
\begin{equation}\label{f:4.29x}
\begin{aligned}
\int_{\Omega}
\Big\langle\big(\dashint_{U_1(x)}
|(f,\frac{g}{\sqrt{T}})|^s\big)^{\frac{p_2}{s}}
\Big\rangle^{\frac{q}{p_2}}
&\leq \int_{\Omega}
\Big( \dashint_{U_1(x)}
\big\langle|(f,\frac{g}{\sqrt{T}})|^{p_2}
\big\rangle^{\frac{s}{p_2}}
\Big)^{\frac{q}{s}}\\
&\leq
\int_{\Omega}
\dashint_{U_1(x)}\big\langle|(f,\frac{g}{\sqrt{T}})
|^{p_2}\big\rangle^{\frac{q}{p_2}}
\lesssim^{\eqref{g-1*}}
\int_{\Omega}
\big\langle|(f,\frac{g}{\sqrt{T}})
|^{p_2}\big\rangle^{\frac{q}{p_2}}.
\end{aligned}
\end{equation}
Plugging the estimates $\eqref{f:4.27x}$ and $\eqref{f:4.28x}$
back into $\eqref{f:4.26x}$, we obtain
\begin{equation*}\label{}
\begin{aligned}
\int_{\Omega}
\big\langle|(\nabla u,\frac{u}{\sqrt{T}})|^{p}\big\rangle^{\frac{q}{p}}
&\lesssim
\int_{\Omega}
\big\langle
|(f,\frac{g}{\sqrt{T}})|^{p_1}
\big\rangle^{\frac{q}{p_1}}
+ \int_{\Omega}
\Big\langle\Big(\dashint_{U_1(x)}
\big|\big(\nabla u,\frac{u}{\sqrt{T}},f,\frac{g}{\sqrt{T}}\big)
\big|^s\Big)^{\frac{p_2}{s}}
\Big\rangle^{\frac{q}{p_2}}\\
&\lesssim^{\eqref{f:4.29x}}
\int_{\Omega}
\big\langle
|(f,\frac{g}{\sqrt{T}})|^{p_2}
\big\rangle^{\frac{q}{p_2}}
+ \int_{\Omega}
\Big\langle\Big(\dashint_{U_1(x)}
\big|\big(\nabla u,\frac{u}{\sqrt{T}}\big)
\big|^s\Big)^{\frac{p_2}{s}}
\Big\rangle^{\frac{q}{p_2}},
\end{aligned}
\end{equation*}
which gives the desired estimate $\eqref{pri:4.9x}$.

\medskip
\noindent
\textbf{Step 5.} Show the estimate $\eqref{pri:4.10x}$.
It suffices to modify the estimates $\eqref{f:4.27x}$
and $\eqref{f:4.28x}$, correspondingly, since we can similarly
obtain by revising $\eqref{f:4.26x}$ that
\begin{equation*}
\begin{aligned}
\int_{\Omega}
\big\langle|(\nabla u,\frac{u}{\sqrt{T}})(y)|^{p}\big\rangle^{\frac{q}{p}} \omega(y)dy
&\lesssim
\int_{\Omega}\int_{\Omega} \big\langle
I_{\frac{1}{2}B_{*}(x)}(y)
\big|(\nabla\tilde{u}_x,\frac{\tilde{u}_x}{\sqrt{T}})(y)\big|^{p_1}
\big\rangle^{\frac{q}{p_1}} dx \omega(y)dy\\
&+\int_{\Omega}\int_{\Omega} \big\langle
I_{\frac{1}{2}B_*(x)}(y)
\big|(\nabla u-\nabla\tilde{u}_x,\frac{u-\tilde{u}_x}{\sqrt{T}})(y)\big|^{p_1}
\big\rangle^{\frac{q}{p_1}} dx\omega(y)dy:=I_3+I_4.
\end{aligned}
\end{equation*}
Therefore, we start from
the following estimate
\begin{equation*}
\begin{aligned}
I_3
&\leq \int_{\Omega}\int_{\Omega} \big\langle
\big|(\nabla\tilde{u}_x,\frac{\tilde{u}_x}{\sqrt{T}})(y)\big|^{p_1}
\big\rangle^{\frac{q}{p_1}}\omega(y) dydx
\lesssim^{\eqref{pri:4.13x-3}}
\int_{\Omega}\int_{\Omega}
\big\langle
|(\tilde{f}_x,\frac{\tilde{g}_x}{\sqrt{T}})(y)|^{p_1}
\big\rangle^{\frac{q}{p_1}}\omega(y) dy dx\\
&\lesssim \int_{\Omega}\dashint_{U_1(x)}
\big\langle
|(f,\frac{g}{\sqrt{T}})(y)|^{p_1}
\big\rangle^{\frac{q}{p_1}} \omega(y) dy dx
\lesssim^{\eqref{g-1*}} \int_{\Omega}
\big\langle
|(f,\frac{g}{\sqrt{T}})|^{p_1}
\big\rangle^{\frac{q}{p_1}}\omega,
\end{aligned}
\end{equation*}
and then we have
\begin{equation*}
\begin{aligned}
I_4
&\leq \int_{\Omega}\Big\langle
\Big(\int_{\Omega}
I_{\frac{1}{2}B_*(x)}(y)
\big|(\nabla u-\nabla\tilde{u}_x,\frac{u-\tilde{u}_x}{\sqrt{T}})(y)\big|^{q}
\omega(y)dy \Big)^{\frac{p_1}{q}}\Big\rangle^{\frac{q}{p_1}} dx\\
&\lesssim_d
\int_{\Omega}
\Big\langle\Big(\dashint_{\frac{1}{2}B_*(x)\cap\Omega}
|(\nabla u-\nabla\tilde{u}_x,\frac{u-\tilde{u}_x}{\sqrt{T}})
|^q \omega(y)dy\Big)^{\frac{p_1}{q}}\Big\rangle^{\frac{q}{p_1}}dx\\
&\lesssim^{\eqref{pri:4.14x-1}}
\int_{\Omega}
\Big\langle\Big(\dashint_{U_1(x)}
\big|\big(\nabla u,\frac{u}{\sqrt{T}},f,\frac{g}{\sqrt{T}}\big)
\big|^s\Big)^{\frac{p_2}{s}}
\Big\rangle^{\frac{q}{p_2}}\Big(\dashint_{B_1(x)}\omega\Big)dx.
\end{aligned}
\end{equation*}
This implies the stated estimate $\eqref{pri:4.10x}$, where
we employ H\"older's inequality and the definition of $A_q$-weight,
and we are done.
\qed

\medskip

\noindent
\textbf{Proof of Theorem $\ref{thm:2}$.}
Let $1<p<p_1<\bar{p}<\infty$ and $s,q\in(1,\infty)$.
The stated estimates $\eqref{pri:5.0}$ and $\eqref{pri:5.0-w}$ mainly follows from Propositions $\ref{P:3}$ and $\ref{P:6}$. We start from
\begin{equation*}
\begin{aligned}
\int_{\Omega} \big\langle|(\nabla u,\frac{u}{\sqrt{T}} )|^{p}\big\rangle^{\frac{q}{p}}
&\lesssim_{\lambda,d,M_0,p,q}^{\eqref{pri:4.9x}}
\int_{\Omega}
\big\langle|(f,\frac{g}{\sqrt{T}})|^{p_1}
\big\rangle^{\frac{q}{p_1}}
+ \int_{\Omega}
\Big\langle\big(\dashint_{U_1(x)}
|(\nabla u,\frac{u}{\sqrt{T}})|^s\big)^{\frac{p_1}{s}}
\Big\rangle^{\frac{q}{p_1}}\\
&\lesssim^{\eqref{pri:2.1}}
\int_{\Omega}
\big\langle|(f,\frac{g}{\sqrt{T}})|^{p_1}
\big\rangle^{\frac{q}{p_1}}
+ \sqrt{T}^{dq}
\bigg(\int_{\Omega}
\Big\langle\Big(\dashint_{U_1(x)}
\big|(\frac{g}{\sqrt{T}},f)\big|^s\Big)^{\frac{\bar{p}}{s}}
\Big\rangle^{\frac{q}{\bar{p}}}dx\bigg)^{\frac{1}{q}}\\
&\lesssim^{\eqref{f:4.29x}}
\sqrt{T}^{dq}\int_{\Omega}
\big\langle|(f,\frac{g}{\sqrt{T}})|^{p_1}
\big\rangle^{\frac{q}{p_1}},
\end{aligned}
\end{equation*}
which gives the stated estimates $\eqref{pri:5.0}$
with $\eqref{rv-bound-1}$, where we can take $s\leq\min\{q,\bar{p}\}$.

Now, we proceed to show $\eqref{pri:5.0-w}$. Assume
$s$ additionally satisfies $0<s-1\ll 1$ such that
$\omega\in A_q$ implies $\omega\in A_{q/s}$. Then,
it follows that
\begin{equation}\label{f:2.21}
\begin{aligned}
&\int_{\Omega} \big\langle|(\nabla u,\frac{u}{\sqrt{T}} )|^{p}\big\rangle^{\frac{q}{p}}\omega\\
&\lesssim_{\lambda,d,\partial\Omega,p,q}^{\eqref{pri:4.10x}}
\int_{\Omega}
\big\langle|(f,\frac{g}{\sqrt{T}})|^{p_1}
\big\rangle^{\frac{q}{p_1}}\omega
+ \int_{\Omega}
\Big\langle\Big(\dashint_{U_1(z)}
|(\nabla u,\frac{u}{\sqrt{T}})|^s\Big)^{\frac{p_1}{s}}
\Big\rangle^{\frac{q}{p_1}}\Big(\dashint_{B_1(z)}\omega\Big)dz\\
&\lesssim^{\eqref{pri:2.1-w}}_{\lambda,d,\partial\Omega,p,q,[\omega]_{A_q}}
\int_{\Omega}
\big\langle|(f,\frac{g}{\sqrt{T}})|^{p_1}
\big\rangle^{\frac{q}{p_1}}\omega
+ T^{\frac{3dq}{4}}
\int_{\Omega}
\Big\langle\Big(\dashint_{U_1(z)}
|(f,\frac{g}{\sqrt{T}})|^s\Big)^{\frac{\bar{p}}{s}}
\Big\rangle^{\frac{q}{\bar{p}}}\omega(z) dz.
\end{aligned}
\end{equation}
On the other hand, a routine computation leads to
\begin{equation}\label{f:2.22}
\begin{aligned}
\int_{\Omega}
\Big\langle\Big(\dashint_{U_1(z)}
|(f,\frac{g}{\sqrt{T}})|^s\Big)^{\frac{\bar{p}}{s}}
\Big\rangle^{\frac{q}{\bar{p}}}
&\omega dz
\leq
\int_{\Omega}
\Big(\dashint_{U_1(z)}
\big\langle|(f,\frac{g}{\sqrt{T}})|^{\bar{p}}
\big\rangle^{\frac{s}{\bar{p}}}\Big)^{\frac{q}{s}}
\omega dz\\
&\leq \int_{\mathbb{R}^d}
\big|\mathcal{M}\big(
\big\langle|(f,\frac{g}{\sqrt{T}})|^{\bar{p}}
\big\rangle^{\frac{s}{\bar{p}}}\big)\big|^{\frac{q}{s}}
\omega
\lesssim^{\eqref{f:19-2}} \int_{\mathbb{R}^d}
\big\langle|(f,\frac{g}{\sqrt{T}})|^{\bar{p}}
\big\rangle^{\frac{q}{\bar{p}}}\omega.
\end{aligned}
\end{equation}
Combining the estimates $\eqref{f:2.21}$ and  $\eqref{f:2.22}$, we
have the desired estimate $\eqref{pri:5.0-w}$ and $\eqref{rv-bound-1}$.
\qed

\section{A non-perturbative regime}\label{sec:4}

\begin{theorem}\label{P:5}
Let $\Omega$ be a (bounded) $C^1$ domain
and $T\geq 1$. Let $1<q,p<\infty$ with $p<\bar{p}$.
Suppose that $\langle\cdot\rangle$ satisfies spectral gap condition $\eqref{a:2}$ and
the (admissible) coefficient satisfies
the regularity condition $\eqref{a:3}$.
Let $u$, $f$ and $g$ be associated by the following equations $\eqref{pde:3.2x}$.
Then, we have the optimal annealed Calder\'on-Zygmund estimate
\begin{equation}\label{pri:7.1}
\Big(\int_{\Omega} \big\langle|(\nabla u,\frac{u}{\sqrt{T}} )|^{p}\big\rangle^{\frac{q}{p}}\Big)^{\frac{1}{q}}
\lesssim_{\lambda,d,M_0,p,q}
\Big(\int_{\Omega}
\big\langle|(f,\frac{g}{\sqrt{T}})|^{\bar{p}}
\big\rangle^{\frac{q}{\bar{p}}}\Big)^{\frac{1}{q}}.
\end{equation}
Moreover,  for any $\omega\in A_q$, we then have
weighted annealed Calder\'on-Zygmund estimate
\begin{equation}\label{pri:7.1-w}
\Big(\int_{\Omega} \big\langle|(\nabla u,\frac{u}{\sqrt{T}} )|^{p}\big\rangle^{\frac{q}{p}}\omega\Big)^{\frac{1}{q}}
\lesssim_{\lambda,d,M_0,p,q,[\omega]_{A_q}}
\Big(\int_{\Omega}
\big\langle|(f,\frac{g}{\sqrt{T}})|^{\bar{p}}
\big\rangle^{\frac{q}{\bar{p}}}\omega\Big)^{\frac{1}{q}}.
\end{equation}
\end{theorem}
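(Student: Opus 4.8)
The plan is to implement the non-perturbative scheme of Josien and Otto \cite{Josien-Otto22}, which bootstraps the suboptimal estimates of Theorem \ref{thm:2} — whose multiplicative constants deteriorate like $\sqrt{T}^{d}$, resp.\ $T^{3d/4}$, see \eqref{rv-bound-1} — into the $T$-independent bounds \eqref{pri:7.1} and \eqref{pri:7.1-w}. The backbone is a comparison of $u$ with the solution $\bar{u}\in H_0^1(\Omega)$ of the homogenized massive problem $\frac{1}{T}\bar{u}-\nabla\cdot\bar{a}\nabla\bar{u}=\nabla\cdot f+\frac{1}{T}g$ in $\Omega$, $\bar{u}=0$ on $\partial\Omega$, where $\bar{a}$ is the deterministic, constant, homogenized matrix. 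Since $\bar{a}$ has constant coefficients, Lemma \ref{lemma:ap-2} already furnishes the optimal, $T$-independent, (weighted) annealed Calder\'on--Zygmund bound for $(\nabla\bar{u},\bar{u}/\sqrt{T})$ in terms of $(f,g/\sqrt{T})$, so the whole matter reduces to estimating the two-scale expansion error uniformly in $T$; this is what the three ingredients, organized as the three subsections of Section \ref{sec:4}, accomplish.

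First I would introduce the massive corrector $\phi_{T,i}$ and an associated (skew-symmetric) flux corrector $\sigma_{T,i}$, built on $\mathbb{R}^d$ from the equation $\frac{1}{T}\phi_{T,i}=\nabla\cdot a(\nabla\phi_{T,i}+e_i)$; here $T$ acts as an infra-red cut-off, so that $(\phi_T,\sigma_T)$ is stationary, has finite annealed moments, and enjoys the quantitative sublinearity supplied by the spectral-gap assumption \eqref{a:2} (the infra-red cut-off localizing the relevant bounds at the scale $\sqrt{T}$). Then I would set $z:=u-\bar{u}-\eta\,\phi_{T,i}\,\partial_i(\bar{u}*\chi)$, where $\chi$ is a fixed unit-scale mollifier — inserted so that only first gradients of the homogenized solution enter, since $|\nabla\partial_i(\bar{u}*\chi)(x)|\lesssim\dashint_{B_1(x)}|\nabla\bar{u}|$ — and $\eta$ is the small-scale boundary cut-off of Lemma \ref{lemma:7.0}, equal to $1$ off a unit-scale neighbourhood of $\partial\Omega$ and vanishing on $\partial\Omega$, so that $z\in H_0^1(\Omega)$ and $\eta$ remains compatible with the exponential, infra-red weights $\omega_T$ from Section \ref{sec:3} throughout the regime $T\ge1$. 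Using the corrector equations, $z$ then solves a Dirichlet problem $\frac{1}{T}z-\nabla\cdot a\nabla z=\nabla\cdot h$ in $\Omega$, with $h$ a sum of an interior term $\eta(\phi_{T,i}a+\sigma_{T,i})\nabla\partial_i(\bar{u}*\chi)$, a boundary layer $(1-\eta)(a-\bar{a})\nabla\bar{u}$ supported near $\partial\Omega$, and a boundary layer $(\phi_{T,i}a+\sigma_{T,i})(\nabla\eta)\,\partial_i\bar{u}$, the dangerous zeroth-order contribution $\frac{1}{T}\eta\phi_{T,i}\partial_i\bar{u}$ cancelling against the divergence of the flux part up to lower-order terms.

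I would then apply Theorem \ref{thm:2} to this error equation — this is exactly where the gap $\bar{p}>p$ is used, providing the room to split, in probability, the corrector factors from the local averages of $\nabla\bar{u}$ — and estimate $\|h\|$ term by term in the mixed norm \eqref{mnorm}: the averages of $\nabla\bar{u}$ (resp.\ $\bar{u}$) are pulled out and controlled by the $T$-independent estimate for $\bar{u}$ already in hand, together with the boundedness of the maximal operator and the nestedness properties of Lemma \ref{lemma:3.1}, while the corrector factors are estimated by their annealed moments. The crucial arithmetic is that the sublinearity of the massive correctors, localized at the scale $\sqrt{T}$ dictated by the infra-red cut-off, contributes precisely the compensating power $\sqrt{T}^{-d}$ (resp.\ $T^{-3d/4}$) that neutralizes the loss \eqref{rv-bound-1} in Theorem \ref{thm:2}. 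Reassembling $\nabla u=\nabla\bar{u}+\eta(\nabla\phi_{T,i})\partial_i(\bar{u}*\chi)+\eta\phi_{T,i}\nabla\partial_i(\bar{u}*\chi)+(\nabla\eta)\phi_{T,i}\partial_i(\bar{u}*\chi)+\nabla z$ and using the bounded moments of $\nabla\phi_T$ then yields \eqref{pri:7.1}; the weighted estimate \eqref{pri:7.1-w} follows along the same lines, with \eqref{pri:5.0} replaced by its weighted counterpart \eqref{pri:5.0-w} and the self-improvement and reverse-H\"older properties of $A_q$-weights invoked to carry the weight through the corrector moments and the boundary layers.

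The main obstacle is the boundary discrepancy: the plain two-scale expansion does not preserve the homogeneous Dirichlet condition, so the cut-off $\eta$ is unavoidable, yet the ensuing boundary-layer term $(\phi_{T}a+\sigma_{T})(\nabla\eta)\nabla\bar{u}$ must be controlled in the annealed mixed norm with a constant that does \emph{not} blow up as $T\to\infty$; this forces one to place $\eta$ at unit scale — not at scale $\sqrt{T}$ — and then to verify that such a cut-off interacts well with the exponential weights $\omega_T$ that transport the $L^2$-based estimates of Section \ref{sec:3} to general $p,q$, which is precisely the content of Lemma \ref{lemma:7.0}. A secondary difficulty, in the weighted case, is that the correctors are no longer independent of the $A_q$-weight, so the reverse-H\"older inequality for $\omega$ must be dovetailed with the stochastic moment bounds so that the gain in $T$ is not destroyed.
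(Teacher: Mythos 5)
Your proposal misses the central mechanism of the paper's proof and, as written, does not close. The claim on which everything hinges — that ``the sublinearity of the massive correctors, localized at the scale $\sqrt{T}$, contributes precisely the compensating power $\sqrt{T}^{-d}$'' — is incorrect. By \eqref{pri:*3*}, the annealed moments of $(\phi_{T},\sigma_{T},\psi_{T}/\sqrt{T})$ are bounded by $\mu_d(\sqrt{T})$, which is $O(1)$ for $d>2$ and at worst $\log^{1/2}T$ for $d=2$; there is no decay of order $\sqrt{T}^{-d}$. Consequently, if you set $z=u-\bar u-\eta\phi_{T,i}\partial_i(\bar u *\chi)$ with $\bar u$ the $T$-massive homogenized solution driven by $(f,g/T)$, the source $h$ in the error equation is of size $O(\mu_d(\sqrt T))\,\|(f,g/\sqrt T)\|$, and feeding it into Theorem \ref{thm:2} returns the bound $T^{d/2}\mu_d(\sqrt T)\,\|(f,g/\sqrt T)\|$. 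The loss $T^{d/2}$ is \emph{not} neutralized; your argument actually makes the constant worse. The boundary-layer term $(1-\eta)(a-\bar a)\nabla\bar u$, supported on a unit-scale collar of $\partial\Omega$, is likewise not small in $T$ — its size is governed by $\|\nabla\bar u\|$ restricted to that collar, with no $T$-gain.

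The paper obtains the $T$-uniform bound by a fundamentally different route, which your proposal does not reproduce: a high/low pass splitting $u=u_{>}+u_{<}$ at an \emph{intermediate} massive scale $1\le\tau\le T$ (see \eqref{pde:7.1}--\eqref{pde:7.2}), where the homogenized comparison problem \eqref{pde:7.2h-u} is massive at scale $\tau$, not $T$, and is driven by $u$ itself rather than by $f$. The low pass picks up a structural $\tau^{-1/2}$ gain (Lemma \ref{lemma:7.0}, specifically \eqref{pri:7.2b}, via Lemma \ref{lemma:7.1}), while the high pass loses only $C_{q,\cdot,\cdot}(\tau)$. The loop is then closed by the real-interpolation (K-method) Lemma \ref{lemma:7.2}, which optimizes over $\tau$ and converts a postulated $\tau^{\theta/2}$-growth with $\theta<1/3$ into a $T$-independent $C_{q,s_1,r}(T)\lesssim1$; and the stochastic exponents are propagated to the full range $p\in(1,\infty)$ by the complex-interpolation Lemma \ref{lemma:5.1} together with a finite-step bootstrap. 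None of these ingredients — the intermediate parameter $\tau$, the self-referential homogenized equation with source $u$, the K-method balance, or the complex-interpolation bootstrap — appears in your plan, and without the K-method there is no mechanism to trade a power of $\tau$ for a $T$-uniform bound. The single two-scale expansion with parameter $T$ that you propose is the wrong decomposition to start from: it forces you to ask Theorem \ref{thm:2} for more than it can deliver.
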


In the case of unbounded domains, the main difficulty is that we need
the stationary correctors to characterize the homogenization error
that we will employ in Subsection $\ref{subsec:4.1}$. To avoid the restriction of $d>2$, we introduce the massive extended corrector which had been used in \cite{Josien-Otto22} for the same consideration. In the case of bounded domains, the massive extended corrector seem to be unnecessary
(since we can use extended corrector directly).
However, we have to additionally assume $\Omega\ni\{0\}$ and $\sqrt{T}\geq R_0$ in Theorem $\ref{P:5}$. Therefore, to unify the proof, we adopt
the massive extended corrector instead of
the extended corrector even in the case of bounded domains.

\begin{lemma}[massive extended correctors \cite{Gloria-Neukamm-Otto20,Josien-Otto22}]\label{lemma:*3*}
Let $d\geq 1$ and $\tau\geq 1$. Assume that $\langle\cdot\rangle$ satisfies stationary and ergodic\footnote{In terms of ``ergodic'', we refer the reader to \cite[pp.105]{Gloria-Neukamm-Otto20} or \cite[pp.222-225]{Jikov-Kozlov-Oleinik94} for the definition.} conditions.
Then, there exist three stationary random  fields $\{\phi_{\tau,i}\}_{i=1,\cdots,d}$,
$\{\sigma_{\tau,ijk}\}_{i,j,k=1,\cdots,d}$,
and $\{\psi_{\tau,i}\}_{i=1,\cdots,d}$
in the sense of
$\phi_{i}(a;x+z)=\phi_i(a(\cdot+z);x)$ for any shift
vector $z\in\mathbb{R}^d$ (and likewise for $\sigma_{\tau,ijk}$
and $\psi_{\tau,i}$), which are called extended correctors, satisfying
\begin{equation}\label{eq:*2.1*}
\begin{aligned}
\frac{1}{\tau}\phi_{\tau,i}-\nabla\cdot a(\nabla\phi_i+e_i) &= 0;\\
(\frac{1}{\tau}-\Delta)\sigma_{\tau,ijk} &=
\partial_j q_{\tau,ik} - \partial_k q_{\tau,ij}\\
(\frac{1}{\tau}-\Delta)\psi_{\tau,i}&=
q_{\tau,i}-\bar{a}_{\tau}e_i-\nabla\phi_{\tau,i},
\end{aligned}
\end{equation}
in the distributional sense on $\mathbb{R}^d$ with
$q_{\tau,ij}:=e_j\cdot a(e_i+\nabla\phi_{\tau,i})$ and
$\bar{a}_{\tau}e_i:=\langle q_{\tau,i}\rangle$, and
$e_i$ is the canonical basis of $\mathbb{R}^d$. Also,
the field $\sigma_{\tau}$ is skew-symmetric in its last indices, that is
$\sigma_{\tau,ijk} = -\sigma_{\tau,ikj}$.
Furthermore, if $\langle\cdot\rangle$ additionally satisfies
the spectral gap condition $\eqref{a:2}$, as well as $\eqref{a:3}$.
For any $p\in[1,\infty)$, there holds
\begin{equation}\label{pri:*2*}
\big\langle |(\nabla\phi_{\tau},\nabla\sigma_{\tau},\frac{\nabla\psi_{\tau}}{\sqrt{\tau}})|^p\big\rangle
\lesssim_{\lambda,\lambda_1,\lambda_2,d,p} 1,
\end{equation}
and
\begin{equation}\label{pri:*3*}
 \Big\langle\big|(\phi_{\tau},\sigma_{\tau},\frac{\psi_{\tau}}{\sqrt{\tau}})\big|^p \Big\rangle^{\frac{1}{p}}
 \lesssim_{\lambda,\lambda_1,\lambda_2,d,p} \mu_d(\sqrt{\tau}):=\left\{\begin{aligned}
 &\sqrt{1+\sqrt{\tau}}; &~&d=1;\\
 &\ln^{\frac{1}{2}} (2+\sqrt{\tau}); &~& d=2;\\
 & 1 &~& d>2.
 \end{aligned}\right.
\end{equation}
\end{lemma}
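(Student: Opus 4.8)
\emph{Overall strategy.} The plan is to obtain the three fields as the unique stationary solutions of the massive equations \eqref{eq:*2.1*}, solved \emph{in the probability space}, where the zeroth-order term $1/\tau$ makes the associated bilinear forms coercive without invoking ergodicity or a Poincar\'e inequality, and then to prove the two bounds \eqref{pri:*2*} and \eqref{pri:*3*} separately. All of this is carried out in \cite{Gloria-Neukamm-Otto20,Josien-Otto22}; the exposition below only indicates how their argument is organized in the present notation, since the sole new point is bookkeeping the uniformity in $\tau$.

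\emph{Construction and stationarity.} First I would solve $\frac1\tau\phi_{\tau,i}-\nabla\cdot a(\nabla\phi_{\tau,i}+e_i)=0$ by Lax--Milgram on the Hilbert space of square-integrable stationary fields: the form $(\eta,\xi)\mapsto\frac1\tau\langle\eta\,\xi\rangle+\langle\nabla\xi\cdot a\nabla\eta\rangle$ is bounded and, thanks to the mass term, coercive on the \emph{whole} space, so a unique stationary $\phi_{\tau,i}$ exists; taking the expectation of the equation and using that the expectation of the stationary divergence $\nabla\cdot a(e_i+\nabla\phi_{\tau,i})$ vanishes gives $\langle\phi_{\tau,i}\rangle=0$. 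Setting $q_{\tau,i}:=a(e_i+\nabla\phi_{\tau,i})$ and $\bar a_\tau e_i:=\langle q_{\tau,i}\rangle$, I would then define $\sigma_{\tau,ijk}$ and $\psi_{\tau,i}$ as the stationary solutions of $(\frac1\tau-\Delta)\sigma_{\tau,ijk}=\partial_j q_{\tau,ik}-\partial_k q_{\tau,ij}$ and $(\frac1\tau-\Delta)\psi_{\tau,i}=q_{\tau,i}-\bar a_\tau e_i-\nabla\phi_{\tau,i}$, by the same argument with $-\Delta$ replaced by its stationary realization (the right-hand sides being stationary, with the second one moreover of vanishing expectation); the skew-symmetry $\sigma_{\tau,ijk}=-\sigma_{\tau,ikj}$ is inherited from the antisymmetry of $\partial_j q_{\tau,ik}-\partial_k q_{\tau,ij}$, and the algebraic identities tying the three fields together are obtained by applying $(\frac1\tau-\Delta)^{-1}$ and using $\nabla\cdot q_{\tau,i}=\frac1\tau\phi_{\tau,i}$.

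\emph{Proof of \eqref{pri:*2*}.} For $\nabla\phi_{\tau,i}$ I would differentiate the corrector equation with respect to the coefficient field, insert the resulting sensitivity into the spectral gap inequality \eqref{a:2}, and bound the arising localized energy averages, uniformly in $\tau$, by the deterministic energy estimate $\langle|\nabla\phi_\tau|^2\rangle\lesssim1$ (which follows by testing the equation with $\phi_\tau$) together with the pointwise Schauder/Meyers bounds supplied by \eqref{a:3}; this is the classical argument of \cite{Gloria-Neukamm-Otto20}, insensitive to the mass term. For $\nabla\sigma_\tau$ and $\nabla\psi_\tau/\sqrt\tau$ nothing stochastic is left: they are produced from the stationary field $q_{\tau,i}-\bar a_\tau e_i$ (equivalently from $\nabla\phi_{\tau,i}$) by applying $\nabla\nabla(\frac1\tau-\Delta)^{-1}$, respectively $\frac1{\sqrt\tau}\nabla(\frac1\tau-\Delta)^{-1}$, which are bounded operators on stationary fields \emph{uniformly in} $\tau$ --- the first by the Calder\'on--Zygmund bound for the second-order massive operator, the second because the multiplier $|\xi|/(\sqrt\tau(\frac1\tau+|\xi|^2))\le\frac12$ by arithmetic--geometric mean (equivalently $\|\nabla(\frac1\tau-\Delta)^{-1}\|_{L^1}\lesssim\sqrt\tau$) --- so that $\langle|(\nabla\sigma_\tau,\tfrac{\nabla\psi_\tau}{\sqrt\tau})|^p\rangle\lesssim1+\langle|\nabla\phi_\tau|^p\rangle\lesssim1$.

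\emph{Proof of \eqref{pri:*3*} --- the main obstacle.} Controlling the correctors themselves, rather than their gradients, is the delicate dimension-dependent step: in $d=1,2$ the massless correctors need not even be stationary, and the $1/\tau$-regularized ones genuinely grow as $\tau\to\infty$, which is precisely the source of the factor $\mu_d(\sqrt\tau)$. Here I would represent $\phi_{\tau,i}$ through the resolvent $(\frac1\tau+\mathcal A)^{-1}$ of the generator $\mathcal A$ of the translation action applied to the flux $\nabla\cdot(ae_i)$, and estimate $\langle\phi_{\tau,i}^2\rangle$ by the near-zero behavior of the corresponding spectral measure: the spectral gap assumption \eqref{a:2} furnishes exactly the decay of the associated semigroup that, upon integrating the resolvent identity, yields $\langle\phi_{\tau,i}^2\rangle\lesssim\mu_d(\sqrt\tau)^2$, and the higher stochastic moments follow again from \eqref{a:2} and \eqref{a:3} by the sensitivity argument. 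The bounds for $\sigma_\tau$ and $\psi_\tau/\sqrt\tau$ are deduced the same way, since they are generated from $q_{\tau,i}-\bar a_\tau e_i-\nabla\phi_{\tau,i}$ by $(\frac1\tau-\Delta)^{-1}$, respectively $\frac1{\sqrt\tau}(\frac1\tau-\Delta)^{-1}$, whose symbols carry the same behavior near the origin. I would import these estimates from \cite{Gloria-Neukamm-Otto20,Josien-Otto22} and merely transcribe them; the genuine work --- and the reason this lemma is quoted rather than reproved in full --- lies entirely in this quantified-ergodicity input.
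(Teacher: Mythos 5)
The paper itself offers \emph{no proof} of this lemma: it is stated with the citation ``massive extended correctors \cite{Gloria-Neukamm-Otto20,Josien-Otto22}'' and simply imported from those references (in particular from \cite[Lemma~2.7]{Gloria-Neukamm-Otto20} and the corresponding statements in \cite{Josien-Otto22}, where the bounds including the $\mu_d(\sqrt\tau)$ factor are derived). Your proposal correctly recognizes this and gives a reasonable high-level reconstruction of the argument in those papers: Lax--Milgram on the Hilbert space of stationary fields (the $1/\tau$ mass term making the form coercive without a Poincar\'e inequality), sensitivity calculus combined with the spectral gap \eqref{a:2} and the local smoothness \eqref{a:3} for the gradient bound \eqref{pri:*2*}, and a low-frequency/resolvent analysis for the corrector bound \eqref{pri:*3*} where the dimension-dependent factor $\mu_d(\sqrt\tau)$ originates. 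Since the paper does not prove the lemma, there is no ``paper proof'' to compare against, and your sketch is faithful to the standard route.

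Two small imprecisions worth fixing if you were to write this out in full. First, your phrase ``the resolvent $(\tfrac1\tau+\mathcal A)^{-1}$ of the generator $\mathcal A$ of the translation action'' conflates two different objects: the relevant resolvent is that of the (nonnegative, self-adjoint) operator $L=\nabla^*a\nabla$ acting on $L^2$ of the probability space via the horizontal calculus, not of the translation generator itself; one has $\phi_{\tau,i}=(\tfrac1\tau+L)^{-1}\mathfrak d_i$ with $\mathfrak d_i:=\nabla^*(ae_i)$, and it is the spectral measure of $L$ near $0$ that is controlled by \eqref{a:2}. Second, for $\nabla\psi_\tau/\sqrt\tau$ the pointwise Fourier-multiplier bound $|\xi|/(\sqrt\tau(\tfrac1\tau+|\xi|^2))\le\tfrac12$ is correct, but by itself it only gives the $p=2$ case; to reach all $p<\infty$ one needs a Mikhlin-type bound (or the massive Calder\'on--Zygmund estimate, e.g.\ Lemma~\ref{lemma:ap-2} of the paper) together with the already established moment bounds on $\nabla\phi_\tau$. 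Neither issue affects the soundness of your overall plan.
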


\subsection{High $\&$ low pass}\label{subsec:4.1}
Let $u$ be the solution of $\eqref{pde:3.2x}$ and $u:=u_{>}+u_{<}$, where $u_{>}$ is called as the high pass and $u_{<}$ is the low pass.
Let $1\leq\tau\leq T$.
For the high pass part, $u_>\in H^{1}_0(\Omega)$ satisfies
\begin{equation}\label{pde:7.1}
\left\{\begin{aligned}
\frac{1}{\tau}u_{>}-\nabla\cdot a\nabla u_{>}
&= \nabla\cdot f +\frac{1}{T}g
&\quad&\text{in}~~\Omega;\\
u_{>} &= 0
&\quad&\text{on}~~\partial\Omega.
\end{aligned}\right.
\end{equation}
For the low pass part,  $u_{<}\in H_0^{1}(\Omega)$ satisfies
\begin{equation}\label{pde:7.2}
\left\{\begin{aligned}
\frac{1}{\tau}u_{<}-\nabla\cdot a\nabla u_{<}
&= \big(\frac{1}{\tau}-\frac{1}{T}\big)u
&\quad&\text{in}~~\Omega;\\
u_{<} &= 0
&\quad&\text{on}~~\partial\Omega.
\end{aligned}\right.
\end{equation}

For the equations $\eqref{pde:7.2}$, we now consider the corresponding homogenized equation, i.e.,
\begin{equation}\label{pde:7.2h-u}
\left\{\begin{aligned}
\frac{1}{\tau}\bar{u}-\nabla\cdot \bar{a}_{\tau}\nabla \bar{u}
&= \big(\frac{1}{\tau}-\frac{1}{T}\big)u
&\quad&\text{in}~~\Omega;\\
\bar{u} &= 0
&\quad&\text{on}~~\partial\Omega,
\end{aligned}\right.
\end{equation}
where we recall $\bar{a}_{\tau}e_i:=\langle a(\nabla\phi_{\tau,i}+e_i)\rangle$ in Lemma $\ref{lemma:*3*}$, and
the massive corrector $\phi_{\tau,i}$ satisfies $\eqref{eq:*2.1*}$.

We decompose the effective solution of
$\eqref{pde:7.2h-u}$ into the related high and low pass, respectively. Let $\bar{u} = \bar{u}_{>}+\bar{u}_{<}$. For the high pass part,
$\bar{u}_{>}$ satisfies
\begin{equation}\label{pde:7.2h-1-u}
\left\{\begin{aligned}
\frac{1}{T}\bar{u}_{>}-\nabla\cdot \bar{a}_{\tau}\nabla \bar{u}_{>}
&= \big(\frac{1}{\tau}-\frac{1}{T}\big)u_{>}
&\quad&\text{in}~~\Omega;\\
\bar{u}_{>} &= 0
&\quad&\text{on}~~\partial\Omega,
\end{aligned}\right.
\end{equation}
while the low pass $\bar{u}_{<}$ satisfies
\begin{equation}\label{pde:7.2h-2-u}
\left\{\begin{aligned}
\frac{1}{T}\bar{u}_{<}-\nabla\cdot \bar{a}_{\tau}\nabla \bar{u}_{<}
&= \big(\frac{1}{\tau}-\frac{1}{T}\big)(u_{<}-\bar{u})
&\quad&\text{in}~~\Omega;\\
\bar{u}_{<} &= 0
&\quad&\text{on}~~\partial\Omega,
\end{aligned}\right.
\end{equation}

Let $\varphi_i\in H_0^1(\Omega)$ be given later on. Appealing to massive correctors, we can define the error of the first-order expansion as follows:
\begin{equation}\label{app-corrector-u}
 w = u_{<} - \bar{u} - \phi_{\tau,i}\varphi_i,
\end{equation}
and there holds $w=0$ on $\partial\Omega$, satisfying
\begin{equation}\label{pde:7.3-u}
\frac{1}{\tau}w-\nabla\cdot a\nabla w
=\nabla\cdot\Big[\big(a\phi_{\tau,i}-\sigma_{\tau,i}\big)\nabla\varphi_i
+(a-\bar{a}_\tau)(1-\eta)\nabla\bar{u}+\frac{1}{\tau}\psi_{\tau,i}\varphi_i\Big]
-\frac{1}{\tau}\phi_{\tau,i}\varphi_i
~\text{in}~ \Omega,
\end{equation}
where the skew symmetric tensor field $\sigma_{\tau,i}$ with the auxiliary field $\psi_{\tau,i}$ is defined in $\eqref{eq:*2.1*}$.
Now, let $\eta\in C^1(\Omega)$ be a cut-off function, satisfying
\begin{equation}\label{cut-off}
\eta=1 ~\text{in}~\Omega\setminus \tilde{O}_{1};
\quad
\eta=0 ~\text{in}~\tilde{O}_{1/2};
\quad |\nabla\eta|\lesssim 1,
\end{equation}
where $O_r:=\{x\in\Omega:
\text{dist}(x,\partial\Omega)\leq r\}$.
Then, we can set $\varphi_i := \eta\partial_i\bar{u}$,
$\varphi_{i,>} := \eta\partial_i\bar{u}_{>}$ and
$\varphi_{i,<} := \eta\partial_i\bar{u}_{<}$.
With the help of the error $\eqref{app-corrector-u}$,
we proceed to decompose the pair $(u,\nabla u)$ into the high pass part $(v_{>},h_{>})$ and the low pass part $(v_{<},h_{<})$, and they are given  as follows:
\begin{itemize}
  \item High pass part:
  \begin{equation}\label{high-pass*-u}
  \left\{\begin{aligned}
  v_{>} &: = u_{>} + \bar{u}_{>};\\
  h_{>} &:= \nabla u_{>} + (e_i+\nabla\phi_{\tau,i})\varphi_{i,>},
  \end{aligned}\right.
  \end{equation}
  \item Low pass part:
  \begin{equation}\label{low-pass*-u}
  \left\{\begin{aligned}
  v_{<} &: = w + \bar{u}_{<} + \phi_{\tau,i}\varphi_i;\\
  h_{<} &:=
  \nabla w + (e_i+\nabla\phi_{\tau,i})
  \varphi_{i,<} +
  \phi_{\tau,i}\nabla\varphi_i + (1-\eta)\nabla\bar{u}.
  \end{aligned}\right.
  \end{equation}
\end{itemize}

For the ease of the statement, we introduce the annealed norms, defined by
\begin{equation}\label{norm-1}
\|f\|_{q,r,\Omega}:=
\Big(\int_{\Omega}\langle|f|^r\rangle^{\frac{q}{r}}\Big)^{\frac{1}{q}};
\qquad
\|f;\omega^{\frac{1}{q}}\|_{q,r,\Omega}:=
\Big(\int_{\Omega}\langle|f|^r
\rangle^{\frac{q}{r}}\omega\Big)^{\frac{1}{q}}.
\end{equation}

\begin{lemma}\label{lemma:7.0}
Let $\Omega$ be a (bounded) $C^1$ domain
and $T>\tau\geq 1$.
For any given $q,r\in(1,\infty)$, assume that $1<s_4<s_3<s_2<s_1<r$ and $1<r_2<r_1<r$.
Let $(v_{>},h_{>})$ and $(v_{<},h_{<})$ be given by $\eqref{high-pass*-u}$ and
$\eqref{low-pass*-u}$, respectively. Then, there hold the following estimates:
\begin{subequations}
\begin{align}
 & \big\|(\frac{v_{>}}{\sqrt{T}},h_{>})\big
 \|_{q,r_2,\Omega}
 \lesssim
 C_{q,r_1,r}(\tau)
 \big\|(\frac{g}{\sqrt{T}},f)\big\|_{q,r,\Omega};
 \label{pri:7.2a}\\
 &
\big\|(\frac{v_{<}}{\sqrt{T}},h_{<})\big\|_{q,s_4,\Omega}
\lesssim \mu_d(\sqrt{T})
C_{q,s_3,s_2}(\tau)
\tau^{-\frac{1}{2}}
C_{q,s_1,r}^2(T)
\big\|(\frac{g}{\sqrt{T}},f)\big\|_{q,r,\Omega},
\label{pri:7.2b}
\end{align}
\end{subequations}
and for any $\omega\in A_q$ one can obtain weighted estimates:
\begin{subequations}
\begin{align}
 & \big\|(\frac{v_{>}}{\sqrt{T}},h_{>});\omega^{\frac{1}{q}}\big
 \|_{q,r_2,\Omega}
 \lesssim
 \tilde{C}_{q,r_1,r}(\tau)
 \big\|(\frac{g}{\sqrt{T}},f);
 \omega^{\frac{1}{q}}\big\|_{q,r,\Omega};  \label{pri:7.3a}\\
&\big\|(\frac{v_{<}}{\sqrt{T}},h_{<});\omega^{\frac{1}{q}}\big\|_{q,s_4,\Omega}
\lesssim \mu_d^2(\sqrt{T})
\tilde{C}_{q,s_3,s_2}(\tau)
\tau^{-\frac{1}{2}}
\tilde{C}_{q,s_1,r}^2(T)
\big\|(\frac{g}{\sqrt{T}},f);
\omega^{\frac{1}{q}}\big\|_{q,r,\Omega},
\label{pri:7.3b}
\end{align}
\end{subequations}
where the multiplicative constants depends on $\lambda,d,M_0$.
\end{lemma}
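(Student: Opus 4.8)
The proof of Lemma~\ref{lemma:7.0} assembles the decompositions \eqref{high-pass*-u}--\eqref{low-pass*-u} term by term, estimating each piece by one of three established ingredients: the suboptimal annealed Calder\'on--Zygmund estimate of Theorem~\ref{thm:2}, in both its plain form \eqref{pri:5.0} and weighted form \eqref{pri:5.0-w}, applied with the infra-red mass $\tau$ to $u_{>}$ and to the corrector error $w$ and with mass $T$ to $u$ itself; the constant-coefficient estimate of Lemma~\ref{lemma:ap-2} applied to the homogenized profiles $\bar u_{>},\bar u_{<}$ and to the auxiliary difference $\bar v:=\bar u-u$; and the corrector bounds of Lemma~\ref{lemma:*3*}, namely the stationary gradient moments \eqref{pri:*2*} and the sublinear bounds \eqref{pri:*3*} with growth $\mu_d(\sqrt\tau)\le\mu_d(\sqrt T)$ (since $\tau\le T$ and $\mu_d$ is nondecreasing). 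Throughout, the products of a corrector with a solution gradient are handled by H\"older's inequality in the probability variable, choosing auxiliary exponents inside the chain $1<s_4<s_3<s_2<s_1<r$ and $1<r_2<r_1<r$ so that each splitting closes; the probabilistic H\"older step never touches the spatial weight, which is why the weighted variants go through verbatim with the established weighted inequalities \eqref{pri:5.0-w} and \eqref{pri:A-3}.

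First I would treat the high-pass bound \eqref{pri:7.2a}. Applying Theorem~\ref{thm:2} to \eqref{pde:7.1} (mass $\tau$), after rewriting $\tfrac1T g=\tfrac1\tau(\tfrac\tau T g)$ and using $\sqrt\tau/T\le 1/\sqrt T$, controls $\|(\nabla u_{>},\tfrac{u_{>}}{\sqrt\tau})\|_{q,r_1,\Omega}$ by $C_{q,r_1,r}(\tau)\|(\tfrac g{\sqrt T},f)\|_{q,r,\Omega}$, and a further $\sqrt\tau/\sqrt T\le 1$ handles $\tfrac{u_{>}}{\sqrt T}$. For $\bar u_{>}$ I would \emph{not} use the crude energy bound (which costs a spurious $\sqrt{T/\tau}$) but instead rewrite the bulk datum via \eqref{pde:7.1} as $(\tfrac1\tau-\tfrac1T)u_{>}=\nabla\!\cdot(a\nabla u_{>}+f)+\tfrac1T(g-u_{>})$, so that \eqref{pde:7.2h-1-u} is a constant-coefficient problem with genuine $\nabla\!\cdot F+\tfrac1T G$ data; Lemma~\ref{lemma:ap-2} then gives $\|(\nabla\bar u_{>},\tfrac{\bar u_{>}}{\sqrt T})\|_{q,r_1}\lesssim\|\nabla u_{>}\|_{q,r_1}+\|f\|_{q,r_1}+\tfrac1{\sqrt T}\|g\|_{q,r_1}+\tfrac1{\sqrt T}\|u_{>}\|_{q,r_1}$, each term already estimated. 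Since $v_{>}=u_{>}+\bar u_{>}$ and $h_{>}=\nabla u_{>}+\eta(e_i+\nabla\phi_{\tau,i})\partial_i\bar u_{>}$, splitting off the stationary factor $\eta(e_i+\nabla\phi_{\tau,i})$ by H\"older in probability and using $|\eta|\le1$ together with \eqref{pri:*2*} leaves exactly $\|\nabla\bar u_{>}\|_{q,r_1}$, which yields \eqref{pri:7.2a}; the same steps with weighted norms give \eqref{pri:7.3a}.

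The low-pass bounds \eqref{pri:7.2b}, \eqref{pri:7.3b} are where the genuine gain $\tau^{-1/2}$ must be extracted. The chain of estimates is: apply Theorem~\ref{thm:2} with mass $T$ to $u$ (first factor $C_{q,s_1,r}(T)$); resolve $\bar u$ through $\bar v=\bar u-u$, which by the $u$-equation satisfies $\tfrac1\tau\bar v-\nabla\!\cdot\bar a_\tau\nabla\bar v=\nabla\!\cdot\big((\bar a_\tau-a)\nabla u-f\big)-\tfrac1T g$, again a pure $\nabla\!\cdot F+\tfrac1T G$ datum, so Lemma~\ref{lemma:ap-2} controls $(\nabla\bar v,\tfrac{\bar v}{\sqrt\tau})$ by $C_{q,s_1,r}(T)\|(\tfrac g{\sqrt T},f)\|$ (second factor $C(T)$, with $\nabla\bar u=\nabla u+\nabla\bar v$); then feed this into the corrector-error equation \eqref{pde:7.3-u} via Theorem~\ref{thm:2} with mass $\tau$ (factor $C_{q,s_3,s_2}(\tau)$). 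The source of \eqref{pde:7.3-u} falls into two groups: the ``microscopic'' pieces $(a\phi_{\tau,i}-\sigma_{\tau,i})\nabla\varphi_i$ and $(a-\bar a_\tau)(1-\eta)\nabla\bar u$, whose probabilistic moments are $\lesssim\mu_d(\sqrt T)$ by \eqref{pri:*3*} and whose deterministic factors are $\nabla\bar u$ and $\eta\nabla^2\bar u$ --- the latter controlled by interior constant-coefficient regularity for $\bar u$, which is legitimate precisely because $\operatorname{supp}\eta\subset\Omega\setminus\tilde O_{1/2}$ keeps us at distance $\gtrsim 1/2$ from $\partial\Omega$, so that no $C^1$-regularity of $\Omega$ is used --- and the ``already-small'' pieces $\tfrac1\tau\psi_{\tau,i}\varphi_i$ inside the divergence and $-\tfrac1\tau\phi_{\tau,i}\varphi_i$ outside it, which by $\|\tfrac{\psi_\tau}{\sqrt\tau}\|,\|\phi_\tau\|\lesssim\mu_d(\sqrt\tau)$ and the mass-$\tau$ normalization built into Theorem~\ref{thm:2} carry an explicit $\tau^{-1/2}$. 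Re-assembling $v_{<}=w+\bar u_{<}+\phi_{\tau,i}\varphi_i$ and $h_{<}$ as in \eqref{low-pass*-u}, and passing from the natural normalizations $\tfrac{w}{\sqrt\tau}$, $\tfrac{\bar u_{<}}{\sqrt T}$ to $\tfrac{v_{<}}{\sqrt T}$, the scaling mismatch $\sqrt\tau/\sqrt T$ combines with the $|\tfrac1\tau-\tfrac1T|\lesssim\tfrac1\tau$ in the sources of $\bar u_{<}$ (note $u_{<}-\bar u=w+\phi_{\tau,i}\varphi_i$) and, through $\bar v$, of $\bar u$, to produce the remaining $\tau^{-1/2}$; the weight enters only via the weighted forms of Theorem~\ref{thm:2} and Lemma~\ref{lemma:ap-2}, accounting for the extra $\mu_d$-power recorded in \eqref{pri:7.3b}.

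The main obstacle is exactly this last bookkeeping: one must pick the auxiliary exponents $s_4<s_3<s_2<s_1$ and $r_2<r_1$ so that every H\"older-in-probability splitting of a corrector against a solution gradient closes, \emph{and} simultaneously check that each factor $|\tfrac1\tau-\tfrac1T|$, each scaling ratio $\sqrt\tau/\sqrt T$, and each corrector growth $\mu_d$ lands so as to reproduce precisely the advertised product $\mu_d(\sqrt T)\,C_{q,s_3,s_2}(\tau)\,\tau^{-1/2}\,C_{q,s_1,r}^2(T)$. The delicate point is the $\eta\nabla^2\bar u$ term: it is the only place where second derivatives of the homogenized profile occur, it must be estimated by interior regularity alone (spending no boundary regularity of $\Omega$), and after the $\sqrt\tau/\sqrt T$ renormalization it must still be seen to respect the $\tau^{-1/2}$ gain. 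By contrast the high-pass step \eqref{pri:7.2a}/\eqref{pri:7.3a} is comparatively routine once the source of $\bar u_{>}$ has been put in divergence form.
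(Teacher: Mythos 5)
Your high--pass argument is essentially the paper's: the divergence--form rewriting of the source for $\bar u_{>}$ that you propose is exactly \eqref{pde:7.6}, and the remaining steps (Lemma~\ref{lemma:ap-2}, the stationary H\"older split against $e_i+\nabla\phi_{\tau,i}$ via \eqref{pri:*2*}) match \eqref{f:7.1}--\eqref{f:7.3}. The low--pass part, however, takes a genuinely different route. The paper isolates the control of $(\eta\nabla\bar u/\sqrt\tau,\eta\nabla^2\bar u,(1-\eta)\nabla\bar u)$ as a standalone ingredient, Lemma~\ref{lemma:7.1}, which it proves by writing $\bar u=\bar u^{(1)}+\bar u^{(2)}$ with $\bar u^{(1)}$ solving a \emph{whole-space} problem with a zero-extension of $u$ (so that $\nabla^2\bar u^{(1)}$ is reached by Mikhlin's theorem in UMD spaces, \eqref{f:7.5}--\eqref{f:7.6}) and $\bar u^{(2)}$ handled by local boundedness for the boundary-value correction. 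You instead introduce $\bar v=\bar u-u$, estimate $\nabla\bar u=\nabla u+\nabla\bar v$ from Lemma~\ref{lemma:ap-2} applied to the $\bar v$-equation, and reach $\eta\nabla^2\bar u$ by interior constant-coefficient regularity directly on $\bar u$ (legitimate because of $\operatorname{supp}\eta$). This is a reasonable alternative that avoids the zero-extension and the Fourier-multiplier machinery, and if carried through carefully it does bound the same quantity by $\sqrt{T/\tau}\,\|(u/\sqrt T,\nabla u)\|$ or better.

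Where the proposal falls short is the bookkeeping that you yourself flag as ``the main obstacle.'' Your attribution of the factor $C^2_{q,s_1,r}(T)$ to ``one $C(T)$ from $u$ and a second from $\bar v$'' does not hold up: to estimate $\|(\nabla\bar v,\bar v/\sqrt\tau)\|$ one bounds $\|(\bar a_\tau-a)\nabla u\|$ by $\|\nabla u\|$ and then invokes \eqref{pri:5.0} for $\nabla u$, so this \emph{re-uses} the same $C_{q,s_1,r}(T)$ already charged to $u$, producing one factor $C(T)$, not two. In the paper's chain \eqref{f:7.12} the second factor emerges from combining Lemma~\ref{lemma:7.1}'s $\sqrt{T/\tau}$ with \eqref{pri:5.0}: $\sqrt{T/\tau}\,C_{q,s_1,r}(T)=\tau^{-1/2}\sqrt T\,C_{q,s_1,r}(T)$, which is then identified with $\tau^{-1/2}C^2_{q,s_1,r}(T)$. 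Your route skips this identification entirely and so does not arrive at the advertised product $\mu_d(\sqrt T)C_{q,s_3,s_2}(\tau)\tau^{-1/2}C^2_{q,s_1,r}(T)$; since the bootstrap in Lemma~\ref{lemma:7.2} depends on exactly this algebraic form (the $C^2(T)$ cancels against the $\mu^{1-\theta}$ weight and the residual $\tau^{-1/2}$ balances $\tau^{(1-\theta)/2}$), this is not a cosmetic issue. A second, related gap: the low-pass side $(v_{<},h_{<})$ consists of three groups ($w$ and $\nabla w$; $\bar u_{<}$ and $(e_i+\nabla\phi_{\tau,i})\varphi_{i,<}$; the corrector products $\phi_{\tau,i}\varphi_i$, $\phi_{\tau,i}\nabla\varphi_i$, $(1-\eta)\nabla\bar u$). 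The paper estimates each group separately in \eqref{f:7.12}--\eqref{f:7.14}, in particular re-expressing the datum of $\bar u_{<}$ in divergence form via \eqref{pde:7.7} before applying Lemma~\ref{lemma:ap-2}. Your sketch only really addresses $w$ through \eqref{pde:7.3-u} and then waves at ``re-assembling $v_{<}$'' and a ``scaling mismatch $\sqrt\tau/\sqrt T$''; the estimate for $\bar u_{<}$ (which requires \eqref{pde:7.7}, not a scaling argument) and for the corrector products against $\varphi_i,\nabla\varphi_i$ are missing. You would need to run these explicitly to check that your alternative bound on $(\eta\nabla\bar u/\sqrt\tau,\eta\nabla^2\bar u,(1-\eta)\nabla\bar u)$ actually reproduces the stated constant, and not a variant of it with a different $\tau,T$ dependence.
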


\begin{lemma}\label{lemma:7.1}
Let $\Omega$ be a (bounded) $C^1$ domain and $T\geq 1$. Suppose that
$\bar{u}$ is the weak solution to $\eqref{pde:7.2h-u}$, and
$\eta$ is the cut-off function satisfying $\eqref{cut-off}$.  Then,
for any $1<p,q<\infty$,
we have
\begin{equation}\label{pri:7.4}
\big\|
(\frac{\eta\nabla\bar{u}}{\sqrt{\tau}},
\eta\nabla^2\bar{u},
(1-\eta)\nabla\bar{u}
)
\big\|_{q,p,\Omega}
\lesssim \sqrt{T/\tau}
\big\|(\frac{u}{\sqrt{T}},\nabla u)\big\|_{q,p,\Omega}.
\end{equation}
Moreover, for any $\omega\in A_q$, there holds
\begin{equation}\label{pri:7.4-w}
\big\|
(\frac{\eta\nabla\bar{u}}{\sqrt{\tau}},
\eta\nabla^2\bar{u},
(1-\eta)\nabla\bar{u}
);\omega^{\frac{1}{q}}
\big\|_{q,p,\Omega}
\lesssim \sqrt{T/\tau}
\big\|\big(\frac{u}{\sqrt{T}},\nabla u\big);\omega^{\frac{1}{q}}\big\|_{q,p,\Omega}.
\end{equation}
\end{lemma}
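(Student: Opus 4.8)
The plan is to treat the three components of $\big(\tfrac{\eta\nabla\bar u}{\sqrt\tau},\eta\nabla^2\bar u,(1-\eta)\nabla\bar u\big)$ separately, reducing $\tfrac{\eta\nabla\bar u}{\sqrt\tau}$ and $(1-\eta)\nabla\bar u$ to the constant-coefficient annealed Calder\'on--Zygmund theory of Lemma~\ref{lemma:ap-2}, and $\eta\nabla^2\bar u$ to classical \emph{interior} $W^{2,p}$-regularity (the cut-off $\eta$ localizes the Hessian term away from $\partial\Omega$, so the absence of global second-order estimates on a mere $C^1$ domain never enters).

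First I would rewrite the source in $\eqref{pde:7.2h-u}$ as $\big(\tfrac1\tau-\tfrac1T\big)u=\tfrac1\tau g'$ with $g':=\big(1-\tfrac\tau T\big)u$, so that $|g'|\le|u|$ because $1\le\tau\le T$. Applying $\eqref{pri:A-2}$ (and its weighted form $\eqref{pri:A-3}$) with $\bar a_\tau$ for $\bar a$, the massive parameter $\tau$ for $T$, $f=0$, $g=g'$ and the same exponents $(p,q)$ yields
\begin{equation*}
\big\|\big(\nabla\bar u,\tfrac{\bar u}{\sqrt\tau}\big);\omega^{1/q}\big\|_{q,p,\Omega}
\lesssim\big\|\tfrac{g'}{\sqrt\tau};\omega^{1/q}\big\|_{q,p,\Omega}
\le\tfrac1{\sqrt\tau}\big\|u;\omega^{1/q}\big\|_{q,p,\Omega}
=\sqrt{T/\tau}\,\big\|\tfrac{u}{\sqrt T};\omega^{1/q}\big\|_{q,p,\Omega},
\end{equation*}
the unweighted case being $\omega\equiv1$. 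Since $\tau\ge1$, this immediately controls $\tfrac{\eta\nabla\bar u}{\sqrt\tau}$ and $(1-\eta)\nabla\bar u$ (both dominated by $\|\nabla\bar u;\omega^{1/q}\|_{q,p,\Omega}$, up to the harmless factor $\tfrac1{\sqrt\tau}\le1$), hence by the right-hand sides of $\eqref{pri:7.4}$ and $\eqref{pri:7.4-w}$.

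For the Hessian term, the key observation is that by $\eqref{cut-off}$ both $\mathrm{supp}\,\eta$ and $\mathrm{supp}\,\nabla\eta$ lie in $\Omega_{1/2}:=\{x\in\Omega:\mathrm{dist}(x,\partial\Omega)\ge\tfrac12\}$. For $x_0\in\Omega_{1/2}$ we have $B_{1/2}(x_0)\subset\Omega$, and there $\bar u$ solves the constant-coefficient equation $-\nabla\cdot\bar a_\tau\nabla\bar u=g''$, $g'':=\big(\tfrac1\tau-\tfrac1T\big)u-\tfrac1\tau\bar u$. The classical interior Calder\'on--Zygmund estimate for $-\nabla\cdot\bar a_\tau\nabla$ (e.g.\ \cite[Chapter~7]{Giaquinta-Martinazzi12}) gives, $\langle\cdot\rangle$-a.s.,
\begin{equation*}
\Big(\dashint_{B_{1/4}(x_0)}|\nabla^2\bar u|^p\Big)^{1/p}
\lesssim_{\lambda,d,p}\Big(\dashint_{B_{1/2}(x_0)}|g''|^p\Big)^{1/p}+\Big(\dashint_{B_{1/2}(x_0)}|\nabla\bar u|^p\Big)^{1/p}
\lesssim\Big(\dashint_{B_{1/2}(x_0)}\big|\big(\tfrac{u}{\tau},\nabla\bar u,\tfrac{\bar u}{\sqrt\tau}\big)\big|^p\Big)^{1/p},
\end{equation*}
using $|g''|\le\tfrac1\tau|u|+\tfrac1\tau|\bar u|$ and $\tfrac1\tau\le\tfrac1{\sqrt\tau}\le1$. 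Covering $\Omega_{1/2}$ by finitely-overlapping balls $\{B_{1/8}(x_j)\}$ with $B_{1/2}(x_j)\subset\Omega$, taking $\langle\cdot\rangle^{q/p}$, integrating in the base point and summing, one passes from these pointwise-in-$\omega$, local-$L^p_x$ estimates to the annealed norm exactly as in the proofs of Lemma~\ref{lemma:2.2} and Proposition~\ref{P:3} (nestedness of mixed norms, Lemma~\ref{lemma:3.1}; for the weighted version the $A_q$-maximal bound $\eqref{f:19-2}$ absorbs the averages $\dashint_{B_{1/2}(x_j)}$ against $\omega\,dx$). Together with the displayed bound for $(\nabla\bar u,\tfrac{\bar u}{\sqrt\tau})$ this gives $\|\eta\nabla^2\bar u;\omega^{1/q}\|_{q,p,\Omega}\lesssim\tfrac1{\sqrt\tau}\|u;\omega^{1/q}\|_{q,p,\Omega}=\sqrt{T/\tau}\,\|\tfrac{u}{\sqrt T};\omega^{1/q}\|_{q,p,\Omega}$, and adding the three pieces proves $\eqref{pri:7.4}$ and $\eqref{pri:7.4-w}$.

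The genuinely delicate step is the last conversion: since $\nabla^2\bar u$ is controlled only in an $L^p$-average over interior balls (the source $g''$ is merely $L^p$), the case $q\neq p$ cannot be handled by a plain Minkowski inequality and must be routed through the mixed-norm reductions of Section~\ref{sec:3} --- equivalently, through $L^p(\langle\cdot\rangle)$-valued (hence UMD) Calder\'on--Zygmund estimates for the \emph{deterministic} operator $-\nabla\cdot\bar a_\tau\nabla$. Everything else is bookkeeping: uniform ellipticity of $\bar a_\tau$, the inequalities $\tfrac1\tau\le\tfrac1{\sqrt\tau}\le1$, and the support properties of $\eta$ in $\eqref{cut-off}$.
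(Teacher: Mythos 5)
There is a genuine gap in the Hessian term $\eta\nabla^2\bar u$. The pathwise interior $W^{2,p}$ estimate you invoke controls only a spatial \emph{average} of $|\nabla^2\bar u|^p$, because the source $g''$ is merely $L^p$, and for $q\neq p$ such a local-$L^p_x$ bound does \emph{not} convert to the annealed norm $\big(\int_\Omega\langle|\nabla^2\bar u|^p\rangle^{q/p}\big)^{1/q}$ by covering and Minkowski. In fact, the nestedness estimates in Lemma~\ref{lemma:3.1} go the other way: $\eqref{pri:3.1-c}$ states $\|h\|_{p,r,q}\leq\|h\|_{p,r}$, whereas passing from your local-average bound to the annealed norm would require $\|h\|_{q,p}\lesssim\|h\|_{q,p,p}$, which fails in general (consider $h$ supported on a small set of measure $\alpha$: then $\|h\|_{q,p}^q\sim\alpha$ while $\|h\|_{q,p,p}^q\sim\alpha^{q/p}$, so the ratio blows up as $\alpha\to 0$ when $q>p$). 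Lemma~\ref{lemma:2.2} and Proposition~\ref{P:3} do not provide the missing conversion either --- they produce estimates in the mixed norm $\|\cdot\|_{p,r,q}$, not a passage from a mixed-norm bound back to $\|\cdot\|_{q,p,\Omega}$. You do acknowledge that the step ``must be routed through $L^p(\langle\cdot\rangle)$-valued CZ estimates,'' and that acknowledgment is exactly right, but the scalar pathwise estimate you write down is \emph{not} that, and invoking the vector-valued (UMD) interior estimate is essentially reintroducing the Mikhlin ingredient that your approach was trying to avoid.

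This is precisely what the paper's decomposition $\bar u=\bar u^{(1)}+\bar u^{(2)}$ is designed to handle, and it is not cosmetic. Because $\bar u^{(1)}$ is defined on the whole space by $\eqref{pde:7.4}$, Mikhlin's theorem in the UMD space $L^p_{\langle\cdot\rangle}$ applies directly and yields the vector-valued Hessian bound $\eqref{f:7.5}$ (and its weighted version $\eqref{f:7.5-w}$) without any localization; and because $\bar u^{(2)}$ solves the \emph{homogeneous} equation $\eqref{pde:7.5}$, the interior estimate is genuinely pointwise, $|\nabla^2\bar u^{(2)}(x)|\lesssim\delta(x)^{-1}\dashint_{B_{\delta(x)}(x)}|\nabla\bar u^{(2)}|$, which \emph{does} commute with $\langle\cdot\rangle^{1/p}$ via Minkowski's integral inequality and thence passes cleanly to the $q$-norm in $x$ --- exactly $\eqref{f:7.8}$--$\eqref{f:7.9}$. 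Your undecomposed $\bar u$ has a source, so neither mechanism is available. Your treatment of $\tfrac{\eta\nabla\bar u}{\sqrt\tau}$ and $(1-\eta)\nabla\bar u$ via Lemma~\ref{lemma:ap-2} is correct and is in fact a mild simplification of the paper's argument (the paper uses $\eqref{pri:A-2}$ only for the $\bar u^{(2)}$ piece and Mikhlin for $\bar u^{(1)}$), but for the Hessian the decomposition into a whole-space part and a harmonic boundary-layer part appears to be what actually makes the lemma go through.
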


\begin{proof}
The proof is divided into three steps.

\medskip
\noindent
\textbf{Step 1.} As a preparation, decompose the equations $\eqref{pde:7.2h-u}$. Due to the linearity of $\eqref{pde:7.2h-u}$,
one can set $\bar{u}=\bar{u}^{(1)}+ \bar{u}^{(2)}$, and $\bar{u}^{(1)}$,
$\bar{u}^{(2)}$ satisfy
\begin{equation}\label{pde:7.4}
\frac{1}{\tau}\bar{u}^{(1)} -\nabla\cdot\bar{a}_{\tau}\nabla\bar{u}^{(1)}
= \big(\frac{1}{\tau}-\frac{1}{T}\big)\tilde{u}^{0} \quad \text{in}\quad \mathbb{R}^d,
\end{equation}
and
\begin{equation}\label{pde:7.5}
\left\{\begin{aligned}
\frac{1}{\tau}\bar{u}^{(2)}-\nabla\cdot\bar{a}_{\tau}\nabla\bar{u}^{(2)} &= 0
\quad&\text{in}&\quad \Omega;\\
\bar{u}^{(2)} &= -\bar{u}^{(1)}
\quad&\text{on}&\quad \partial\Omega,
\end{aligned}\right.
\end{equation}
respectively, where $\tilde{u}^0$ is a suitable 0-extension such that
\begin{equation}\label{f:7.4}
\begin{aligned}
&\|\nabla\tilde{u}^0\|_{q,p}\lesssim \|\nabla u\|_{q,p,\Omega};\\
&\|\nabla\tilde{u}^0;\omega^{\frac{1}{q}}\|_{q,p}\lesssim \|\nabla u;\omega^{\frac{1}{q}}\|_{q,p,\Omega}.
\end{aligned}
\end{equation}

In terms of the equation $\eqref{pde:7.4}$, it follows from Mikhlin theorem in UMD spaces that
\begin{subequations}
\begin{align}
&\|(\frac{\bar{u}^{(1)}}{\sqrt{\tau}},
 \nabla \bar{u}^{(1)})\|_{q,p}
 \lesssim_{p,q}
 \|\frac{\tilde{u}^{0}}{\sqrt{\tau}}\|_{q,p}
 \lesssim
 \|\frac{u}{\sqrt{\tau}}\|_{q,p,\Omega};
 \label{f:7.6}\\
&\|(\frac{\bar{u}^{(1)}}{\sqrt{\tau}},
 \nabla \bar{u}^{(1)});\omega^{\frac{1}{q}}\|_{q,p}
 \lesssim_{p,q}
 \|\frac{\tilde{u}^{0}}{\sqrt{\tau}};\omega^{\frac{1}{q}}\|_{q,p}
 \lesssim
 \|\frac{u}{\sqrt{\tau}};\omega^{\frac{1}{q}}\|_{q,p,\Omega};
 \label{f:7.6-w}
\end{align}
\end{subequations}
and
\begin{subequations}
\begin{align}
&\|(\frac{\nabla\bar{u}^{(1)}}{\sqrt{\tau}},
 \nabla^2 \bar{u}^{(1)})\|_{q,p}
 \lesssim_{p,q}
 \|\frac{\nabla \tilde{u}^{0}}{\sqrt{\tau}}\|_{q,p}
 \lesssim^{\eqref{f:7.4}}
 \|\frac{\nabla u}{\sqrt{\tau}}\|_{q,p,\Omega};
 \label{f:7.5}\\
&\|(\frac{\nabla\bar{u}^{(1)}}{\sqrt{\tau}},
 \nabla^2 \bar{u}^{(1)});\omega^{\frac{1}{q}}\|_{q,p}
 \lesssim_{p,q}
 \|\frac{\nabla \tilde{u}^{0}}{\sqrt{\tau}};\omega^{\frac{1}{q}}\|_{q,p}
 \lesssim^{\eqref{f:7.4}}
 \|\frac{\nabla u}{\sqrt{\tau}};\omega^{\frac{1}{q}}\|_{q,p,\Omega};
 \label{f:7.5-w}
\end{align}
\end{subequations}

In terms of the equation $\eqref{pde:7.5}$. For any $x\in\Omega\setminus \tilde{O}_{1/2}$,
we start from
\begin{equation*}
  \big|\big(\frac{\nabla\bar{u}^{(2)}}{\sqrt{\tau}},\nabla^2\bar{u}^{(2)}\big)(x)\big|
  \lesssim_{\lambda,d} \frac{1}{\delta(x)}
  \dashint_{B_{\delta(x)}(x)}|\nabla\bar{u}^{(2)}|,
\end{equation*}
and this implies
\begin{equation}\label{f:7.8}
  \big\|\big(\frac{\nabla\bar{u}^{(2)}}{\sqrt{\tau}},\nabla^2\bar{u}^{(2)}\big)(x)
  \big\|_{p}
  \lesssim_{\lambda,d}
  \dashint_{B_{1/2}(x)}\|\nabla\bar{u}^{(2)}\|_{p},
\end{equation}
where we notice that $\delta(x)\geq 1/2$. On the one hand, we have
\begin{equation}\label{f:7.9}
\begin{aligned}
\int_{\Omega}\big\|\big(\frac{\nabla\bar{u}^{(2)}}{\sqrt{\tau}},
\nabla^2\bar{u}^{(2)}\big)
  \big\|_{p}^{q}
&\lesssim^{\eqref{f:7.8}}
\int_{\Omega}
\dashint_{B_{1/2}(x)}\|\nabla\bar{u}^{(2)}\|_{p}^q dx
\lesssim^{\eqref{g-1*}} \int_{\Omega}\|\nabla\bar{u}^{(2)}\|_{p}^q \\
&\lesssim^{\eqref{pri:A-2}}
\int_{\Omega}\big\|\big(\frac{\bar{u}^{(1)}}{\sqrt{\tau}},
\nabla\bar{u}^{(1)}\big)
  \big\|_{p}^{q}
\lesssim^{\eqref{f:7.6}} \|\frac{u}{\sqrt{\tau}}\|_{q,p,\Omega}^{q}.
\end{aligned}
\end{equation}
On the other hand, for any $\omega\in A_q$,
\begin{equation}\label{f:7.9-w}
\begin{aligned}
\int_{\Omega}\big\|\big(\frac{\nabla\bar{u}^{(2)}}{\sqrt{\tau}},
\nabla^2\bar{u}^{(2)}\big)
  \big\|_{p}^{q}\omega
&\lesssim^{\eqref{f:7.8}}
\int_{\Omega}
\mathcal{M}\big(\|\nabla\bar{u}^{(2)}\|_{p}\big)^q\omega
\lesssim \int_{\Omega}\|\nabla\bar{u}^{(2)}\|_{p}^q\omega \\
&\lesssim^{\eqref{pri:A-3}}
\int_{\Omega}\big\|\big(\frac{\bar{u}^{(1)}}{\sqrt{\tau}},
\nabla\bar{u}^{(1)}\big);\omega^{\frac{1}{q}}
  \big\|_{p}^{q}
\lesssim^{\eqref{f:7.6-w}} \|\frac{u}{\sqrt{\tau}};\omega^{\frac{1}{q}}\|_{q,p,\Omega}^{q}.
\end{aligned}
\end{equation}

\medskip
\noindent
\textbf{Step 2.} Show the estimate $\eqref{pri:7.4}$. We start from
\begin{equation}\label{f:7.10}
\begin{aligned}
\big\|
(\frac{\eta\nabla\bar{u}}{\sqrt{\tau}},
\eta\nabla^2\bar{u})
\big\|_{q,p,\Omega}
&\leq \big\|
(\frac{\nabla\bar{u}^{(1)}}{\sqrt{\tau}},
\nabla^2\bar{u}^{(1)})
\big\|_{q,p} +
\big\|
(\frac{\nabla\bar{u}^{(2)}}{\sqrt{\tau}},
\nabla^2\bar{u}^{(2)})
\big\|_{q,p,\Omega}\\
&\lesssim^{\eqref{f:7.5},\eqref{f:7.9}}
\|\frac{\nabla u}{\sqrt{\tau}}\|_{q,p,\Omega}
+ \|\frac{u}{\sqrt{\tau}}\|_{q,p,\Omega}
\lesssim
\sqrt{\frac{T}{\tau}}\|(\frac{ u}{\sqrt{T}},\nabla u)\|_{q,p,\Omega}.
\end{aligned}
\end{equation}
Then, it follows from Lemma $\ref{lemma:ap-2}$ and Poincar\'e's inequality that
\begin{equation}\label{f:7.11}
\big\|(1-\eta)\nabla\bar{u}
\big\|_{q,p,\Omega}
\leq
\big\|\nabla\bar{u}
\big\|_{q,p,\Omega}
\lesssim^{\eqref{pri:A-2}}
\big\|\frac{u}{\sqrt{\tau}}\big\|_{q,p,\Omega}.
\end{equation}

Consequently, the stated estimate $\eqref{pri:7.4}$
follows from $\eqref{f:7.10}$ and $\eqref{f:7.11}$.

\medskip
\noindent
\textbf{Step 3.} Show the estimate $\eqref{pri:7.4-w}$. By the same token,
we have
\begin{equation*}
\begin{aligned}
\big\|
(\frac{\eta\nabla\bar{u}}{\sqrt{\tau}},
\eta\nabla^2\bar{u});\omega^{\frac{1}{q}}
\big\|_{q,p,\Omega}
&\leq \big\|
(\frac{\nabla\bar{u}^{(1)}}{\sqrt{\tau}},
\nabla^2\bar{u}^{(1)});\omega^{\frac{1}{q}}
\big\|_{q,p} +
\big\|
(\frac{\nabla\bar{u}^{(2)}}{\sqrt{\tau}},
\nabla^2\bar{u}^{(2)});\omega^{\frac{1}{q}}
\big\|_{q,p,\Omega}\\
&\lesssim^{\eqref{f:7.5-w},\eqref{f:7.9-w}}
\|\frac{\nabla u}{\sqrt{\tau}};\omega^{\frac{1}{q}}\|_{q,p,\Omega}
+ \|\frac{u}{\sqrt{\tau}};\omega^{\frac{1}{q}}
\|_{q,p,\Omega}
\end{aligned}
\end{equation*}
This together with
\begin{equation*}
\big\|(1-\eta)\nabla\bar{u}
;\omega^{\frac{1}{q}}\big\|_{q,p,\Omega}
\leq
\big\|\nabla\bar{u}
;\omega^{\frac{1}{q}}\big\|_{q,p,\Omega}
\lesssim^{\eqref{pri:A-3}}
\big\|\frac{u}{\sqrt{\tau}};\omega^{\frac{1}{q}}\big\|_{q,p,\Omega}
\end{equation*}
leads to the desired estimate $\eqref{pri:7.4-w}$.
\end{proof}

\medskip
\noindent
\textbf{Proof of Lemma $\ref{lemma:7.0}$.}
The main idea is similar to that given in \cite[Proposition 7.3]{Josien-Otto22}, while we employ extended correctors $(\phi,\sigma)$ instead of the massive extended correctors $(\phi_{\tau},\sigma_{\tau},\psi_{\tau})$ therein. The whole proof is divided into three parts. The first part is
devoted to studying $\eqref{pri:7.2a}$, and the second one is
to show  $\eqref{pri:7.2b}$. The last part is to establish
$\eqref{pri:7.3b}$, since the estimate $\eqref{pri:7.3a}$ can be
established by the same proof as that given in the first part and
we don't reproduce it here.

\medskip
\noindent
\textbf{Part I. Arguments for the estimate $\eqref{pri:7.2a}$.}
Recall that $1<r_2<r_1<r$.
In view of the equation $\eqref{pde:7.1}$, it follows the suboptimal annealed Calder\'on-Zygmund estimate that
\begin{equation*}
\big\|(\frac{u_{>}}{\sqrt{\tau}},\nabla u_{>})\big\|_{q,r_1,\Omega}
\leq^{\eqref{pri:5.0}}
C_{q,r_1,r}(\tau)
\big\|(\frac{\sqrt{\tau}g}{T},f)
\big\|_{q,r,\Omega}
\leq
C_{q,r_1,r}(\tau)
\big\|(\frac{g}{\sqrt{T}},f)
\big\|_{q,r,\Omega},
\end{equation*}
where we use the fact that $T\geq \tau$ in the last inequality. Also, we further have
\begin{equation}\label{f:7.1}
\big\|(\frac{u_{>}}{\sqrt{T}},\nabla u_{>})\big\|_{q,r_1,\Omega}
\leq
C_{q,r_1,r}(\tau)
\big\|(\frac{g}{\sqrt{T}},f)
\big\|_{q,r,\Omega}.
\end{equation}
In view of the equations $\eqref{pde:7.1}$ and $\eqref{pde:7.2h-1-u}$, there holds that
\begin{equation}\label{pde:7.6}
\frac{1}{T}\bar{u}_{>}-\nabla\cdot \bar{a}_{\tau}\nabla \bar{u}_{>}
=\big(1-\frac{\tau}{T}\big)\Big[\nabla\cdot\big(a\nabla u_{>}+f\big)+\frac{g}{T}\Big].
\end{equation}

Then, it follows from Lemma $\ref{lemma:ap-2}$ that
\begin{equation}\label{f:7.2}
\big\|(\frac{\bar{u}_{>}}{\sqrt{T}},\nabla \bar{u}_{>})\big\|_{q,r_{1},\Omega}
\lesssim_{\lambda,d,M_0,q,r_{1}}^{\eqref{pri:A-2}}
\big\|(\frac{g}{\sqrt{T}},f,\nabla u_{>})\big\|_{q,r_{1},\Omega}
 \lesssim^{\eqref{f:7.1}} C_{q,r_1,r}(\tau)
\big\|(\frac{g}{\sqrt{T}},f)\big\|_{q,r,\Omega}.
\end{equation}
Moreover, we have
\begin{equation}\label{f:7.3}
\big\|(\frac{\bar{u}_{>}}{\sqrt{T}},(e_i+\nabla\phi_{\tau,i})
\varphi_{i,>})
\big\|_{q,r_2,\Omega}
\lesssim^{\eqref{pri:*2*}}
\big\|(\frac{\bar{u}_{>}}{\sqrt{T}},\eta\nabla \bar{u}_{>})\big\|_{q,r_1,\Omega}
\lesssim^{\eqref{f:7.2}} C_{q,r_1,r}(\tau)
\big\|(\frac{g}{\sqrt{T}},f)\big\|_{q,r,\Omega}.
\end{equation}

Combining the estimates $\eqref{f:7.1}$ and $\eqref{f:7.3}$ leads to
\begin{equation*}
\big\|(\frac{v_{>}}{\sqrt{T}},h_{>})
\big\|_{q,r_2,\Omega}
\lesssim_{\lambda,\lambda_1,\lambda_2,d,M_0}
 C_{q,r_1,r}(\tau)
\big\|(\frac{g}{\sqrt{T}},f)\big\|_{q,r,\Omega}.
\end{equation*}
which provides us the stated estimate $\eqref{pri:7.2a}$.

\medskip
\noindent
\textbf{Part II. Arguments for the estimate $\eqref{pri:7.2b}$.}
Recall $1<s_4<s_3<s_2<s_1<r$.
In this subsection, we will separately show the estimates for the low pass part $(v_{<},h_{<})$, i.e.,
\begin{equation*}
\big\|(\frac{w}{\sqrt{T}},\nabla w)
\big\|_{q,s_3,\Omega},
~
\big\|(\frac{\bar{u}_{<}}{\sqrt{T}},
(e_i+\nabla\phi_{\tau,i})\varphi_{i,<}
\big\|_{q,s_4,\Omega},
~
\big\|(\frac{\phi_{\tau,i}\varphi_i}{\sqrt{T}},
\phi_{\tau,i}\eta\nabla\partial_i\bar{u})
\big\|_{q,s_2,\Omega},
~\big\|\phi_{\tau,i}(1-\eta)\partial_i\bar{u}
\big\|_{q,s_2,\Omega},
\end{equation*}
and we will talk about them in each of steps.

\noindent
\textbf{Step 1.} Arguments for $\|(\frac{w}{\sqrt{T}},\nabla w)
\|_{q,s_3,\Omega}$. In view of Theorem $\ref{thm:2}$,
Lemmas $\ref{lemma:*3*}$ and $\ref{lemma:7.1}$, we obtain
\begin{equation}\label{f:7.12}
\begin{aligned}
\|(\frac{w}{\sqrt{T}},\nabla w)
\|_{q,s_3,\Omega}
&\lesssim^{\eqref{pri:5.0}} C_{q,s_3,s_2}(\tau)
\Big\{\|(\phi_{\tau,i},\sigma_{\tau,i},\frac{\psi_{\tau,i}}{\sqrt{\tau}})(\frac{\varphi_i}{\sqrt{\tau}},\nabla\varphi_i)
\|_{q,s_2,\Omega}
+ \|(1-\eta)\nabla\bar{u}
\|_{q,s_2,\Omega}\Big\}\\
&\lesssim^{\eqref{pri:*3*}} \mu_d(\sqrt{\tau})C_{q,s_3,s_2}(\tau)
\big\|
(\frac{\eta\nabla\bar{u}}{\sqrt{\tau}},
\eta\nabla^2\bar{u},
(1-\eta)\nabla\bar{u}
)
\big\|_{q,s_1,\Omega}\\
&\lesssim^{\eqref{pri:7.4}}
\mu_d(\sqrt{T})C_{q,s_3,s_2}(\tau)\sqrt{T/\tau}\big\|(\frac{u}{\sqrt{T}},\nabla u)\big\|_{q,s_1,\Omega}\\
&\lesssim^{\eqref{pri:5.0}}
\mu_d(\sqrt{T})C_{q,s_3,s_2}(\tau)\tau^{-\frac{1}{2}}
C_{q,s_1,r}^2(T)
\|(f,\frac{g}{\sqrt{T}})\|_{q,r,\Omega},
\end{aligned}
\end{equation}
where we also use the condition $T\geq\tau$ in the third inequality.

\noindent
\textbf{Step 2.}
Noting the right-hand side of $\eqref{pde:7.2h-2-u}$,
and with the help of the equality $\eqref{app-corrector-u}$,
we have the representation
\begin{equation*}
 u_{<}-\bar{u} = w+\phi_{\tau,i}\varphi_i.
\end{equation*}
Plugging this back into $\eqref{pde:7.2h-2-u}$, we derive that
\begin{equation}\label{pde:7.7}
\begin{aligned}
\frac{1}{T}\bar{u}_{<}-\nabla\cdot \bar{a}\nabla \bar{u}_{<}
&= \big(1-\frac{\tau}{T}\big)
\nabla\cdot\Big[a\nabla w +(a\phi_{\tau,i}-\sigma_{\tau,i})
\nabla\varphi_i+(a-\bar{a})(1-\eta)\nabla\bar{u}+\frac{1}{\tau}\psi_{\tau,i}\varphi_i \Big].
\end{aligned}
\end{equation}
Then, we obtain that
\begin{equation}\label{f:7.13}
\begin{aligned}
\|(\frac{\bar{u}_{<}}{\sqrt{T}},
(e_i&+\nabla\phi_{\tau,i})
\varphi_{i,<})
\|_{q,s_4,\Omega}
\lesssim^{\eqref{pri:*2*}}
\|(\frac{\bar{u}_{<}}{\sqrt{T}},
\nabla\bar{u}_{<})
\|_{q,s_3,\Omega} \\
&\lesssim^{\eqref{pri:A-2}}
\|\nabla w\|_{q,s_3,\Omega} + \|(\phi_{\tau,i},\sigma_{\tau,i},\frac{\psi_{\tau,i}}{\sqrt{\tau}})
\big(\frac{\eta\partial_i\bar{u}}{\sqrt{\tau}},
\eta\nabla\partial_i\bar{u}\big)\|_{q,s_3,\Omega}
 + \|(1-\eta)\nabla\bar{u}\|_{q,s_3,\Omega}\\
&\lesssim^{\eqref{pri:*3*}}
\|\nabla w\|_{q,s_3,\Omega} + \mu_d(\sqrt{T})
\|(\frac{\eta\nabla\bar{u}}{\sqrt{\tau}},
\eta\nabla^2\bar{u},
(1-\eta)\nabla\bar{u}
)\|_{q,s_1,\Omega}\\
&\lesssim^{\eqref{f:7.12},\eqref{pri:7.4},\eqref{pri:5.0}}
\mu_d(\sqrt{T})C_{q,s_3,s_2}(\tau)\tau^{-\frac{1}{2}}
C_{q,s_1,r}^2(T)
\|(f,\frac{g}{\sqrt{T}})\|_{q,r,\Omega}.
\end{aligned}
\end{equation}

\noindent
\textbf{Step 3.} In view of Lemmas $\ref{lemma:*3*}$ and $\ref{lemma:7.0}$,  we obtain that
\begin{equation}\label{f:7.14}
\begin{aligned}
&\big\|\phi_{\tau,i}(\frac{\eta\partial_i\bar{u}}{\sqrt{T}},
\eta\nabla\partial_i\bar{u})
\big\|_{q,s_2,\Omega} +
\big\|\phi_{\tau,i}(1-\eta)\partial_i\bar{u}
\big\|_{q,s_2,\Omega}\\
&\lesssim^{\eqref{pri:*3*}}
\mu_d(\sqrt{T})
\|(\frac{\eta\nabla\bar{u}}{\sqrt{\tau}},
\eta\nabla^2\bar{u},
\nabla\bar{u}
)\|_{q,s_1,\Omega}
\lesssim^{\eqref{pri:7.4},\eqref{pri:5.0}}
\mu_d(\sqrt{T})\tau^{-\frac{1}{2}}
C_{q,s_1,r}^2(T)
\|(f,\frac{g}{\sqrt{T}})\|_{q,r,\Omega}.
\end{aligned}
\end{equation}

Consequently, the estimates $\eqref{f:7.12}$, $\eqref{f:7.13}$, and
$\eqref{f:7.14}$ leads to the desired estimate $\eqref{pri:7.2b}$.

\medskip
\noindent
\textbf{Part III. Arguments for the estimate $\eqref{pri:7.3a}$.}
Recall that $1<r_2<r_1<r$. For any $\omega\in A_q$, it suffices to show the estimates on
\begin{equation*}
\big\|(\frac{u_{>}}{\sqrt{T}},\nabla u_{>});\omega^{\frac{1}{q}}\big\|_{q,r_1,\Omega},
\qquad
\big\|(\frac{\bar{u}_{>}}{\sqrt{T}},(e_i+\nabla\phi_{\tau,i})
\varphi_{i,>});\omega^{\frac{1}{q}}
\big\|_{q,r_2,\Omega},
\end{equation*}
respectively. The arguments is in parallel with those given for $\eqref{pri:7.2a}$, and we start from
\begin{equation}\label{f:7.15}
\big\|(\frac{u_{>}}{\sqrt{T}},\nabla u_{>});\omega^{\frac{1}{q}}\big\|_{q,r_1,\Omega}
\leq \big\|(\frac{u_{>}}{\sqrt{\tau}},\nabla u_{>});\omega^{\frac{1}{q}}\big\|_{q,r_1,\Omega}
\leq^{\eqref{pri:5.0-w}}
\tilde{C}_{q,r_1,r}(\tau)
\big\|(\frac{g}{\sqrt{T}},f);\omega^{\frac{1}{q}}
\big\|_{q,r,\Omega}.
\end{equation}

Then, in view of $\eqref{pde:7.6}$, we obtain that
\begin{equation}\label{f:7.16}
\begin{aligned}
\big\|(\frac{\bar{u}_{>}}{\sqrt{T}},(e_i
&+\nabla\phi_{\tau,i})
\varphi_{i,>});\omega^{\frac{1}{q}}
\big\|_{q,r_2,\Omega}
\lesssim^{\eqref{pri:*2*}}
\big\|(\frac{\bar{u}_{>}}{\sqrt{T}},\eta\nabla \bar{u}_{>});\omega^{\frac{1}{q}}\big\|_{q,r_1,\Omega}\\
&\lesssim_{\lambda,d,\partial\Omega,q,r_{1},[\omega]_{A_q}}^{\eqref{pri:A-3}}
\big\|(\frac{g}{\sqrt{T}},f,\nabla u_{>});\omega^{\frac{1}{q}}\big\|_{q,r_{1},\Omega}
 \lesssim^{\eqref{f:7.15}} \tilde{C}_{q,r_1,r}(\tau)
\big\|(\frac{g}{\sqrt{T}},f);\omega^{\frac{1}{q}}\big\|_{q,r,\Omega}.
\end{aligned}
\end{equation}

Hence, the desired estimate $\eqref{pri:7.3a}$ follows from
$\eqref{f:7.15}$ and $\eqref{f:7.16}$.

\medskip
\noindent
\textbf{Part IV. Arguments for the estimate $\eqref{pri:7.3b}$.}
Similar to the computations given in Part II, we
recall $1<s_4<s_3<s_2<s_1<r$. For any $\omega\in A_q$, it suffices to show the estimates on
\begin{equation*}
\big\|(\frac{w}{\sqrt{T}},\nabla w);\omega^{\frac{1}{q}}
\big\|_{q,s_3,\Omega},
~
\big\|(\frac{\bar{u}_{<}}{\sqrt{T}},
(e_i+\nabla\phi_{\tau,i})\varphi_{i,<});\omega^{\frac{1}{q}}
\big\|_{q,s_4,\Omega},
~
\big\|\phi_{\tau,i}(\frac{\varphi_i}{\sqrt{T}},
\nabla\varphi_{i});\omega^{\frac{1}{q}}
\big\|_{q,s_2,\Omega},
\end{equation*}
respectively. By the same token (i.e., the corresponding weighted estimate replacing the original one), we start from
\begin{equation}\label{f:7.17}
\begin{aligned}
&\|(\frac{w}{\sqrt{T}},\nabla w);\omega^{\frac{1}{q}}
\|_{q,s_3,\Omega}\\
&\lesssim^{\eqref{pri:5.0-w}} \tilde{C}_{q,s_3,s_2}(\tau)
\Big\{\|(\phi_{\tau,i},\sigma_{\tau,i},\frac{\psi_{\tau,i}}{\sqrt{\tau}})(\frac{\varphi_i}{\sqrt{\tau}},\nabla\varphi_i)
;\omega^{\frac{1}{q}}\|_{q,s_2,\Omega}
+ \|(1-\eta)\nabla\bar{u}
;\omega^{\frac{1}{q}}\|_{q,s_2,\Omega}\Big\}\\
&\lesssim^{\eqref{pri:*3*}} \mu_d(\sqrt{T})C_{q,s_3,s_2}(\tau)
\big\|
(\frac{\eta\nabla\bar{u}}{\sqrt{\tau}},
\eta\nabla^2\bar{u},
(1-\eta)\nabla\bar{u}
);\omega^{\frac{1}{q}}
\big\|_{q,s_1,\Omega}\\
&\lesssim^{\eqref{pri:7.4-w},\eqref{pri:5.0-w}}
\mu_d(\sqrt{T})\tilde{C}_{q,s_3,s_2}(\tau)\tau^{-\frac{1}{2}}
\tilde{C}_{q,s_1,r}^2(T)
\|(f,\frac{g}{\sqrt{T}});\omega^{\frac{1}{q}}\|_{q,r,\Omega}.
\end{aligned}
\end{equation}

Then, in view of $\eqref{pde:7.7}$, we further obtain that
\begin{equation}\label{f:7.18}
\begin{aligned}
&\|(\frac{\bar{u}_{<}}{\sqrt{T}},
(e_i
+\nabla\phi_{i})
\varphi_{i,<});\omega^{\frac{1}{q}}
\|_{q,s_4,\Omega}
\lesssim^{\eqref{pri:*2*}}
\|(\frac{\bar{u}_{<}}{\sqrt{T}},
\nabla\bar{u}_{<});\omega^{\frac{1}{q}}
\|_{q,s_3,\Omega} \\
&\lesssim^{\eqref{pri:A-3}}
\|\nabla w;\omega^{\frac{1}{q}}\|_{q,s_3,\Omega} + \|(\phi_{\tau,i},\sigma_{\tau,i},\frac{\psi_{\tau,i}}{\sqrt{\tau}})\big(\frac{\eta\partial_i\bar{u}}{\sqrt{\tau}},
\eta\nabla\partial_i\bar{u}\big);\omega^{\frac{1}{q}}\|_{q,s_3,\Omega}
 + \|(1-\eta)\nabla\bar{u};\omega^{\frac{1}{q}}\|_{q,s_3,\Omega}\\
&\lesssim^{\eqref{pri:*3*}}
\|\nabla w;\omega^{\frac{1}{q}}\|_{q,s_3,\Omega} + \mu_d(\sqrt{T})
\|(\frac{\eta\nabla\bar{u}}{\sqrt{\tau}},
\eta\nabla^2\bar{u},
(1-\eta)\nabla\bar{u}
);\omega^{\frac{1}{q}}\|_{q,s_1,\Omega}\\
&\lesssim^{\eqref{f:7.17},\eqref{pri:7.4-w},\eqref{pri:5.0-w}}
\mu_d(\sqrt{T})\tilde{C}_{q,s_3,s_2}(\tau)\tau^{-\frac{1}{2}}
\tilde{C}_{q,s_1,r}^2(T)
\|(f,\frac{g}{\sqrt{T}})
;\omega^{\frac{1}{q}}\|_{q,r,\Omega}.
\end{aligned}
\end{equation}

Finally, we can derive that
\begin{equation}\label{f:7.19}
\begin{aligned}
\big\|\phi_{\tau,i}(\frac{\varphi_i}{\sqrt{T}},
\nabla\varphi_i);\omega^{\frac{1}{q}}
\big\|_{q,s_2,\Omega}
&\lesssim^{\eqref{pri:*3*}}
\mu_d(\sqrt{T})
\|(\frac{\eta\nabla\bar{u}}{\sqrt{\tau}},
\eta\nabla^2\bar{u},
(1-\eta)\nabla\bar{u}
);\omega^{\frac{1}{q}}\|_{q,s_1,\Omega}\\
&\lesssim^{\eqref{pri:7.4-w},\eqref{pri:5.0-w}}
\mu_d(\sqrt{T})\tau^{-\frac{1}{2}}
C_{q,s_1,r}^2(T)
\|(f,\frac{g}{\sqrt{T}});\omega^{\frac{1}{q}}
\|_{q,r,\Omega}.
\end{aligned}
\end{equation}

Hence, the desired estimate $\eqref{pri:7.3b}$ follows from
the estimates $\eqref{f:7.17}$, $\eqref{f:7.18}$, and $\eqref{f:7.19}$.
The proof is complete.
\qed

\subsection{Real $\&$ complex interpolations}\label{subsec:4.2}

\begin{lemma}\label{lemma:7.2}
Let $1<s_4<s_3<s_2<s_1<r$, $1<r_2<r_1<r$, and $\theta\in(0,1/3)$.
Assume that there exists a constant $\Lambda$ such that
\begin{equation}\label{pre-1}
C_{q,r_1,r}(\tau)
+ C_{q,s_3,s_2}(\tau)
\leq \Lambda \tau^{\frac{1}{2}\theta}
\qquad \forall\tau\geq 1.
\end{equation}
Then, we derive that
\begin{equation}\label{pri:7.5}
C_{q,s_1,r}(T)\lesssim_{\lambda,d,M_0} 1
\qquad\forall T\geq 1.
\end{equation}
Similarly, if we replace the assumption $\eqref{pre-1}$ with
\begin{equation}\label{pre-1-w}
\tilde{C}_{q,r_1,r}(\tau)
+ \tilde{C}_{q,s_3,s_2}(\tau)
\leq \Lambda \tau^{\frac{1}{2}\theta}
\qquad \forall\tau\geq 1,
\end{equation}
one can also infer that
\begin{equation}\label{pri:7.5-w}
\tilde{C}_{q,s_1,r}(T)\lesssim_{\lambda,d,M_0} 1
\qquad\forall T\geq 1.
\end{equation}
\end{lemma}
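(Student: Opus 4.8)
The plan is to close the estimates of Lemma~\ref{lemma:7.0} on themselves and then run a self-improvement in the scale $T$. Recall from \eqref{high-pass*-u}--\eqref{low-pass*-u} that $(u,\nabla u)=(v_{>}+v_{<},\,h_{>}+h_{<})$. Hence the triangle inequality for the annealed norms $\|\cdot\|_{q,\cdot,\Omega}$ and the nestedness \eqref{pri:3.1-a} of Lemma~\ref{lemma:3.1} --- which is what lets one align the stochastic integrabilities across the ladders $s_4<s_3<s_2<s_1<r$ and $r_2<r_1<r$, and is the sole reason those ladders are carried in the hypotheses --- turn \eqref{pri:7.2a} and \eqref{pri:7.2b}, applied with the free indices of Lemma~\ref{lemma:7.0} chosen suitably, into a self-referential inequality: there is $C_\ast=C_\ast(\lambda,d,M_0)$ such that, for all $1\le\tau\le T$,
\[
C_{q,s_1,r}(T)\ \le\ C_\ast\Big(\,C_{q,r_1,r}(\tau)\ +\ \mu_d(\sqrt{\tau})\,\tau^{-1/2}\,C_{q,s_3,s_2}(\tau)\,C_{q,s_1,r}(T)\,\Big),
\]
where the factor $\mu_d(\sqrt\tau)$ comes from the massive correctors at the cut-off scale $\tau$; the weighted analogue, with $\tilde C$ and $\mu_d^2$ in place of $C$ and $\mu_d$, follows the same way from \eqref{pri:7.3a}--\eqref{pri:7.3b}.

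Insert now the hypothesis \eqref{pre-1}: $C_{q,r_1,r}(\tau)+C_{q,s_3,s_2}(\tau)\le\Lambda\tau^{\theta/2}$ for all $\tau\ge1$. Since $\theta<1$, the prefactor $\tau^{\theta/2-1/2}$ decays faster in $\tau$ than $\mu_d(\sqrt\tau)$ grows, so one may fix a constant $\tau_\ast=\tau_\ast(\lambda,d,M_0,\Lambda,\theta)\ge1$ with $C_\ast\,\mu_d(\sqrt{\tau_\ast})\,\Lambda\,\tau_\ast^{\theta/2-1/2}\le\tfrac12$. Taking $\tau=\tau_\ast$ and absorbing the last term into the left side (legitimate because $C_{q,s_1,r}(T)<\infty$ by Theorem~\ref{thm:2}) gives $C_{q,s_1,r}(T)\le 2C_\ast\Lambda\,\tau_\ast^{\theta/2}$ for every $T\ge\tau_\ast$, while for $1\le T<\tau_\ast$ the crude bound $C_{q,s_1,r}(T)\lesssim T^{d/2}\le\tau_\ast^{d/2}$ of \eqref{rv-bound-1} already suffices. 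This proves $C_{q,s_1,r}(T)\lesssim_{\lambda,d,M_0}1$, i.e.\ \eqref{pri:7.5}; the weighted estimate \eqref{pri:7.5-w} follows verbatim from \eqref{pre-1-w}, the extra constant $\mu_d^2(\sqrt{\tau_\ast})$ being harmless.

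If instead one keeps the quadratic factor $C_{q,s_1,r}^2(T)$ exactly as displayed in \eqref{pri:7.2b}, the final absorption is replaced by a bootstrap on the growth exponent: writing $X(T):=C_{q,s_1,r}(T)$, the inequality reads $X(T)\lesssim\Lambda\tau^{\theta/2}+\mu_d(\sqrt\tau)\,\Lambda\,\tau^{\theta/2-1/2}\,X(T)^{2}$; one chooses $\tau$ as a power of $T$ balancing the two terms to pass from a standing bound $X(T)\lesssim T^{\beta}$ to one with a strictly smaller exponent (the relevant iteration map is a contraction precisely because $\theta<1/3$), iterates down to $X(T)\lesssim_\varepsilon T^{\varepsilon}$ for all $\varepsilon>0$, and finally removes the residual $T^{\varepsilon}$ using the elementary fact that $y\le a+by^{2}$ with $4ab<1$ forces $y\le 2a$, together with a continuity argument in $T$ anchored at $T=O(1)$ (where $X\lesssim1$ by the energy estimate). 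The delicate points are the bookkeeping of the integrability ladders needed to make the self-referential inequality genuinely close on $C_{q,s_1,r}(T)$, and the simultaneous choice of $\tau=\tau(T)\in[1,T]$ satisfying all the constraints; in this second scenario the borderline dimension $d=2$, where $\mu_2$ brings in a logarithm, requires one extra round of the iteration to reach the clean bound $\lesssim1$.
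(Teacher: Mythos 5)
There is a genuine gap, and it is structural: you assert a ``self-referential inequality'' in the norm-index $s_1$, but you do not derive it, and it is not obtainable by the triangle inequality plus nestedness as you claim. Lemma~\ref{lemma:7.0} bounds the high pass in $\|\cdot\|_{q,r_2,\Omega}$ and the low pass in $\|\cdot\|_{q,s_4,\Omega}$, with $s_4<s_1$ and $r_2<r_1<r$. Since the second index in $\|\cdot\|_{q,\cdot,\Omega}$ is the \emph{stochastic} integrability and H\"older/Jensen only lets you pass to a \emph{smaller} second index, having bounds at level $s_4$ (and $r_2$) gives nothing at level $s_1$; indeed \eqref{pri:3.1-a}, which you invoke, governs the \emph{spatial} exponent, not the stochastic one. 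So the triangle inequality applied to $u=v_>+v_<$ with those two estimates yields at best a bound on $\|(u/\sqrt T,\nabla u)\|_{q,\min(r_2,s_4),\Omega}$, and you cannot conclude anything about $C_{q,s_1,r}(T)$. The paper's proof closes this gap with a genuinely interpolation-theoretic device: the $K$-method in the form $\|\cdot\|_{\theta,\infty,K}=\|\cdot\|_{L^{s,\infty}}$ with $1/s=(1-\theta)/r_2+\theta/s_4$, followed by $\|\cdot\|_{L^{s_1}}\lesssim\|\cdot\|_{L^{s,\infty}}$ on the finite-measure space $\Omega$, so that a \emph{splitting} $(u,\nabla u)=(v_>,h_>)+(v_<,h_<)$ bounded respectively at levels $r_2$ and $s_4$ produces a bound at the strictly larger level $s_1$. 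That step is the entire reason the ladders $s_4<s_3<s_2<s_1<r$ and $r_2<r_1<r$ are carried in the hypotheses, and it is absent from your proposal.

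There are two secondary problems. First, you replace the factor $\mu_d(\sqrt{T})$ appearing in \eqref{pri:7.2b} with $\mu_d(\sqrt{\tau})$; they are not the same, and the distinction is exactly what makes your ``fix $\tau=\tau_\ast$ and absorb'' scheme fail in $d=2$, where $\mu_d(\sqrt T)$ grows unboundedly in $T$ no matter how $\tau_\ast$ is chosen. Second, your ``first approach'' linearizes the quadratic factor $C^2_{q,s_1,r}(T)$ in \eqref{pri:7.2b} with no justification; your ``second scenario'' acknowledges the square but then replaces the paper's precise choice $\mu=\sqrt\tau/C^2_{q,s_1,r}(T)$ --- which is exactly calibrated so that the quadratic factor cancels and one lands on $C_{q,s_1,r}(T)\lesssim\Lambda\,C^{3\theta}_{q,s_1,r}(T)$, closing immediately via $\theta<1/3$ --- with an unquantified bootstrap on growth exponents whose contraction property you assert but do not prove. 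Even granting that sketch, it would still rest on the self-referential inequality that, without the $K$-method, you have not established.
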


\begin{proof}
The proof relies on a real interpolation argument known as K-method
\cite[pp.209]{Adams-Fournier03}, and we will employ an important fact that
$\|\cdot\|_{\theta,\infty,K}=\|\cdot\|_{L^{s,\infty}}$ with
$\frac{1}{s}=\frac{1-\theta}{r_2}+\frac{\theta}{s_4}$
and $\theta\in(0,1)$ (see \cite[Corollary 7.27]{Adams-Fournier03}),
as well as, the relationship $\|\cdot\|_{L^{s_1}}
\lesssim \|\cdot\|_{L^{s,\infty}}\lesssim \|\cdot\|_{L^{s}}$
(see \cite[pp.13]{Grafakos08}) with respect to a finite measure.
(See the relationship between the exponents in Figure $\ref{fig}$.)
Once we have established the proof of $\eqref{pri:7.5}$, the proof of $\eqref{pri:7.5-w}$ is obvious, and we don't reproduce it here.
The following is devoted to showing the estimate $\eqref{pri:7.5}$,
and its proof is divided into two steps.

\begin{figure}[htbp] 
\centering 
\includegraphics[width=0.8\textwidth]{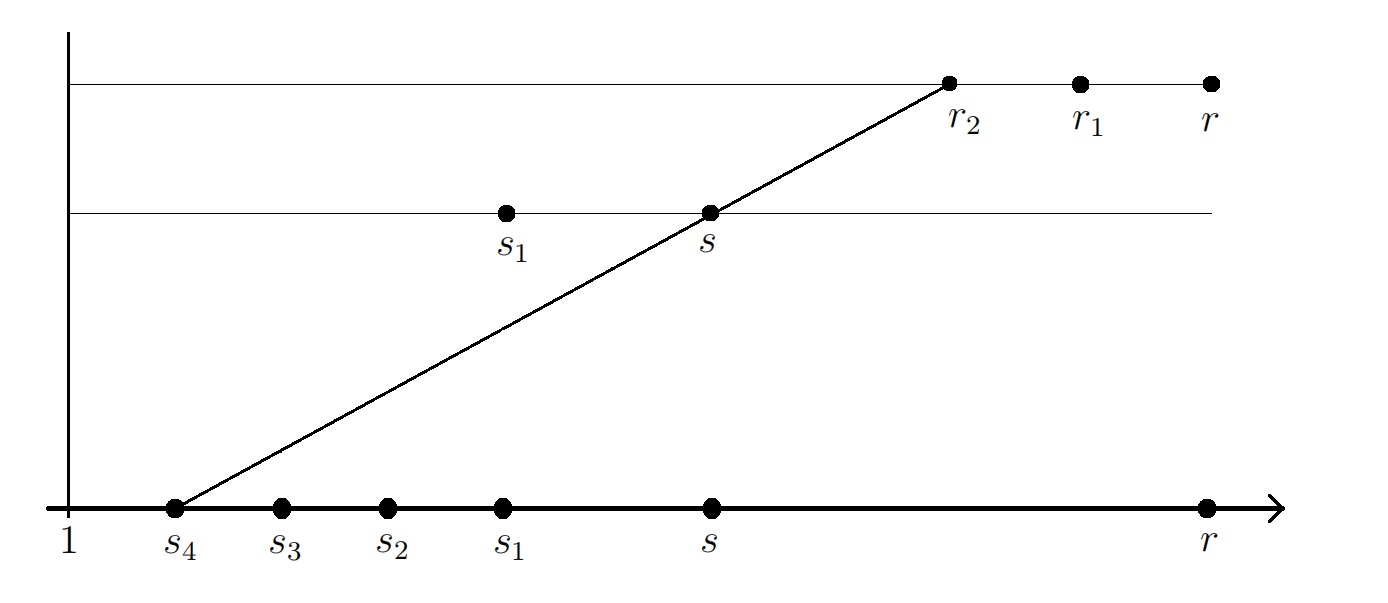} 
\caption{Exponents in the proof} 
\label{fig} 
\end{figure}

\noindent
\textbf{Step 1}. Show the estimate $\eqref{pri:7.5}$.
By K-method, for any $\mu>0$ and some $\theta\in(0,1)$,
we can obtain
\begin{equation}\label{f:7.21}
\big\|(\frac{u}{\sqrt{T}},\nabla u)\big\|_{q,s_1,\Omega}
\lesssim \sup_{\mu>0}\inf_{u=v_{>}+v_{<}\atop
\nabla u = h_{>}+h_{<}}\Big\{
\mu^{-\theta}\big\|(\frac{v_{>}}{\sqrt{T}},h_{>})
\big\|_{q,r_2,\Omega}
+\mu^{1-\theta}
\big\|(\frac{v_<}{\sqrt{T}},h_{<})\big\|_{q,s_4,\Omega}\Big\},
\end{equation}
by a splitting $(u,\nabla u)=(v_{>},h_{>})+(v_{<},h_{<})$ such that
\begin{equation}\label{f:7.20}
\big\|(\frac{u}{\sqrt{T}},\nabla u)\big\|_{q,s_1,\Omega}
\leq  \Lambda C_{q,s_1,r}^{3\theta}(T)
\big\|(\frac{g}{\sqrt{T}},f)
\big\|_{q,r,\Omega}.
\end{equation}
Admitting the above estimate holds for a while, this further implies that
\begin{equation*}
C_{q,s_1,r}(T)
\leq \Lambda C_{q,s_1,r}^{3\theta}(T),
\end{equation*}
which finally leads to the stated estimate $\eqref{pri:7.5}$,
provided $\theta\in(0,1/3)$. Thus,
it is reduced to establish the estimate $\eqref{f:7.20}$.

\medskip
\noindent
\textbf{Step 2}. Show the estimate $\eqref{f:7.20}$. Define
\begin{equation*}
  \mu:=\frac{\sqrt{\tau}}{C_{q,s_1,r}^{2}(T)}.
\end{equation*}
Then the right-hand side of $\eqref{f:7.21}$ turns into
\begin{equation}\label{f:7.22}
 C_{q,s_1,r}^{2\theta}(T)
 \sup_{\tau>0}\Big\{\underbrace{\tau^{-\frac{\theta}{2}}
 \big\|(\frac{v_{>}}{\sqrt{T}},h_{>})\big\|_{q,r_2,\Omega}
 +\frac{\tau^{\frac{1}{2}(1-\theta)}}{C_{q,s_1,r}^2(T)}
 \big\|(\frac{v_{<}}{\sqrt{T}}, h_{<})\big\|_{q,s_4,\Omega}}_{R_{\tau}}
 \Big\},
\end{equation}
and quantifying the term $R_{\tau}$ is divided into three cases.

(1). For the case $0<\tau<1$. We take $(v_{>},h_{>})=(0,0)$ and
$(v_{<},h_{<}) = (u,\nabla u)$. Then we have
\begin{equation*}
 \|(\frac{v_{<}}{\sqrt{T}},h_{<})\|_{q,s_4,\Omega}
 \leq
 \|(\frac{u}{\sqrt{T}},\nabla u)\|_{q,s_1,\Omega}
 \leq^{\eqref{pri:5.0}} C_{q,s_1,r}(T)
 \|(\frac{g}{\sqrt{T}},f)\|_{q,r,\Omega}.
\end{equation*}
Plugging this into back into the right-hand side of $\eqref{f:7.22}$, we have
\begin{equation}\label{f:7.23}
R_{\tau}\leq \|(\frac{g}{\sqrt{T}},f)\|_{q,r,\Omega},
\end{equation}
where we employ the fact that $C_{q,s_1,r}(T)\geq 1$.

(2). For the case $\tau\geq T$. We take $(v_{>},h_{>})=(u,\nabla u)$ and
$(v_{<},h_{<})=(0,0)$, and
\begin{equation*}
\begin{aligned}
 \|(\frac{v_{>}}{\sqrt{T}},h_{>})\|_{q,r_2,\Omega}
& \leq
 \|(\frac{u}{\sqrt{T}},\nabla u)\|_{q,s_1,\Omega} \\
&  \leq^{\eqref{pri:5.0}} C_{q,s_1,r}(T)
 \|(\frac{g}{\sqrt{T}},f)\|_{q,r,\Omega}
\leq^{\eqref{pre-1}}\Lambda\tau^{\frac{1}{2}\theta}
\|(\frac{g}{\sqrt{T}},f)
\|_{q,r,\Omega},
\end{aligned}
\end{equation*}
where we employ the fact that $C_{q,s_1,r}(T)\leq  C_{q,s_1,r}(\tau)$
in the case of $\tau\geq T$.
Then, plugging this into back into the right-hand side of $\eqref{f:7.22}$, we have
\begin{equation}\label{f:7.24}
  R_{\tau}\leq  \Lambda
  \|(\frac{g}{\sqrt{T}},f)\|_{q,r,\Omega}.
\end{equation}

(3). For the case $1\leq \tau< T$. On account of Lemma $\ref{lemma:7.0}$, we have
\begin{equation*}
\begin{aligned}
\|(\frac{v_{>}}{\sqrt{T}},h_{>})\|_{q,r_2,\Omega}
&\lesssim^{\eqref{pri:7.2a}}
 C_{q,r_1,r}(\tau)
\big\|(\frac{g}{\sqrt{T}},f)\big\|_{q,r,\Omega}
\lesssim^{\eqref{pre-1}}
\Lambda\tau^{\frac{1}{2}\theta}
\big\|(\frac{g}{\sqrt{T}},f)\big\|_{q,r,\Omega},
\end{aligned}
\end{equation*}
and
\begin{equation*}
\begin{aligned}
\big\|(\frac{v_{<}}{\sqrt{T}},h_{<})\big\|_{q,s_4,\Omega}
& \lesssim^{\eqref{pri:7.2b}}
\mu_d(\sqrt{T})
C_{q,s_3,s_2}(\tau)
\tau^{-\frac{1}{2}}
C_{q,s_1,r}^2(T)
\big\|(\frac{g}{\sqrt{T}},f)\big\|_{q,r,\Omega}\\
& \lesssim^{\eqref{pre-1}}
\mu_d(\sqrt{T})
\Lambda\tau^{-\frac{1}{2}(1-\theta)}
C_{q,s_1,r}^2(T)
 \big\|(f,\frac{g}{\sqrt{T}})
 \big\|_{q,r,\Omega}.
\end{aligned}
\end{equation*}
Plugging the above two estimates
back into the right-hand side of $\eqref{f:7.22}$, we have
\begin{equation}\label{f:7.25}
  R_{\tau}\lesssim_{\lambda,d,M_0}
\mu_d(\sqrt{T})
\Lambda
 \big\|(f,\frac{g}{\sqrt{T}})
 \big\|_{q,r,\Omega}.
\end{equation}

Combining the three cases $\eqref{f:7.23}$, $\eqref{f:7.24}$ and $\eqref{f:7.25}$, we consequently derive that
\begin{equation*}
\big\|(\frac{u}{\sqrt{T}},\nabla u)\big\|_{L^{s,\infty}(\Omega)}
\lesssim
\mu_d(\sqrt{T})\Lambda
C_{q,s_1,r}^{2\theta}(T)
\big\|(\frac{g}{\sqrt{T}},f)
\big\|_{q,r,\Omega}
\lesssim \Lambda
C_{q,s_1,r}^{3\theta}(T)
\big\|(\frac{g}{\sqrt{T}},f)
\big\|_{q,r,\Omega}.
\end{equation*}
which gives the stated estimate $\eqref{f:7.20}$.
This completes the whole proof.
\end{proof}

\begin{lemma}\label{lemma:5.1}
Let $\tau\geq 1$.
Assume $(s_1,s),(r_1,r)\in[2,\infty)^2$, satisfying
\begin{equation}\label{precondition-2}
(1).~\left\{\begin{aligned}
&s_1\leq r_1;\\
&s\leq r,
\end{aligned}\right.
\quad\text{and}\quad (2).~
\frac{1}{r_1}-\frac{1}{r}>\frac{1}{s_1}-\frac{1}{s}>0.
\end{equation}
Then, for any $\vartheta\in (0,1)$ satisfying
\begin{equation}\label{precondition-3}
\vartheta>\max\big\{1-\frac{s}{r},1-\frac{s_1}{r_1}\big\},
\end{equation}
there exists a constant
$\Lambda$
such that
\begin{equation}\label{pri:5.7}
C_{q,r_1,r}(\tau)\leq
\Lambda C_{q,s_1,s}^{1-\vartheta}(\tau)
\tau^{\frac{\vartheta d}{2}}.
\end{equation}
Also, there analogously exists a constant
$\tilde{\Lambda}$
such that
\begin{equation}\label{pri:5.7-w}
\tilde{C}_{q,r_1,r}(\tau)\leq
\tilde{\Lambda} \tilde{C}_{q,s_1,s}^{1-\vartheta}(\tau)
\tau^{\frac{\vartheta d}{2}}.
\end{equation}
\end{lemma}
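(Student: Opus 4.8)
The plan is to read $C_{q,r_1,r}(\tau)$ as the norm of the linear solution operator $\mathcal{T}_\tau\colon(f,\tfrac{g}{\sqrt\tau})\mapsto(\nabla u,\tfrac{u}{\sqrt\tau})$, regarded as a map between the Lebesgue--Bochner spaces $L^q\big(\Omega;L^{r}(\langle\cdot\rangle)\big)$ normed by $\|\cdot\|_{q,r,\Omega}$ of $\eqref{norm-1}$ (inner exponent $r_1$ on the solution side, $r$ on the data side, cf. $\eqref{pri:5.0}$), and then to derive $\eqref{pri:5.7}$ from a single application of the complex (Calder\'on) interpolation method. Theorem $\ref{thm:2}$ supplies two bounds for the \emph{same} operator $\mathcal{T}_\tau$ at the \emph{same} outer exponent $q$: the fine one, $\mathcal{T}_\tau\colon L^q(\Omega;L^{s})\to L^q(\Omega;L^{s_1})$ with norm $C_{q,s_1,s}(\tau)$; and, by the crude estimate in $\eqref{rv-bound-1}$, for \emph{every} admissible pair $1<\bar r_1<\bar r<\infty$, the bound $\mathcal{T}_\tau\colon L^q(\Omega;L^{\bar r})\to L^q(\Omega;L^{\bar r_1})$ with norm $\lesssim\tau^{d/2}$.

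Concretely I would fix the auxiliary exponents $\bar r_1,\bar r$ by $\tfrac{1}{\bar r_1}:=\tfrac{1}{\vartheta}\big(\tfrac{1}{r_1}-\tfrac{1-\vartheta}{s_1}\big)$ and $\tfrac{1}{\bar r}:=\tfrac{1}{\vartheta}\big(\tfrac{1}{r}-\tfrac{1-\vartheta}{s}\big)$. Since complex interpolation commutes with $L^q(\Omega;\cdot)$ and $[L^{a}(\langle\cdot\rangle),L^{b}(\langle\cdot\rangle)]_{\vartheta}=L^{c}(\langle\cdot\rangle)$ with $\tfrac1c=\tfrac{1-\vartheta}{a}+\tfrac{\vartheta}{b}$, the two pairs above interpolate, at parameter $\vartheta$, precisely to $\mathcal{T}_\tau\colon L^q(\Omega;L^{r})\to L^q(\Omega;L^{r_1})$; the Calder\'on interpolation theorem then yields $C_{q,r_1,r}(\tau)\le C_{q,s_1,s}(\tau)^{1-\vartheta}(c\,\tau^{d/2})^{\vartheta}$, which is $\eqref{pri:5.7}$ with $\Lambda\sim c^{\vartheta}$.

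The point of the hypotheses is exactly to make this choice admissible, and checking it is the one delicate step. Positivity $\bar r_1,\bar r\in(1,\infty)$ forces $\tfrac{1}{r_1}>\tfrac{1-\vartheta}{s_1}$ and $\tfrac{1}{r}>\tfrac{1-\vartheta}{s}$, i.e. $\vartheta>1-\tfrac{s_1}{r_1}$ and $\vartheta>1-\tfrac{s}{r}$, which is $\eqref{precondition-3}$; the bounds $\tfrac{1}{\bar r_1}<\tfrac{1}{r_1}\le\tfrac12$ and $\tfrac{1}{\bar r}<\tfrac{1}{r}\le\tfrac12$ are automatic from $s_1\le r_1$, $s\le r$ in $\eqref{precondition-2}$; and the crude pair satisfies $\bar r_1<\bar r$ iff $\tfrac{1}{r_1}-\tfrac{1}{r}>(1-\vartheta)\big(\tfrac{1}{s_1}-\tfrac{1}{s}\big)$, which holds by $\eqref{precondition-2}$ and $0<1-\vartheta<1$. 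One must also record that $\mathcal{T}_\tau$ is a bounded operator on the intermediate space (e.g. from the $L^2$-theory together with density of smooth data) so the interpolation theorem applies. For the weighted statement $\eqref{pri:5.7-w}$ I would run the identical argument with the weighted spaces $L^q(\omega\,dx;L^{r}(\langle\cdot\rangle))$ — the interpolation identity being insensitive to the fixed $A_q$ weight $\omega$ — using the fine weighted bound $\tilde C_{q,s_1,s}(\tau)$ and the crude weighted bound of $\eqref{rv-bound-1}$; here one should be slightly careful about how the power of $\tau$ accumulates under interpolation (and, if the sharper exponent is wanted, recover it by interpolating against the \emph{unweighted} crude bound with a change of measure and the reverse-H\"older self-improvement of $A_q$). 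The main obstacle is therefore purely the exponent bookkeeping just described; everything else is routine functional analysis.
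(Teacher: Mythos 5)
Your argument is essentially the paper's: both pick the auxiliary pair $(\bar r_1,\bar r)$ so that $\tfrac{1}{r_1}=\tfrac{1-\vartheta}{s_1}+\tfrac{\vartheta}{\bar r_1}$ and $\tfrac{1}{r}=\tfrac{1-\vartheta}{s}+\tfrac{\vartheta}{\bar r}$, verify admissibility from \eqref{precondition-2}--\eqref{precondition-3}, feed the crude bound of Theorem~\ref{thm:2} into the $\vartheta$-slot, and apply complex interpolation in the inner exponent; the bookkeeping you carry out matches the paper's line by line. One small remark: you are right to flag the $\tau$-power in the weighted case — the paper simply writes $\tilde C_{q,\tilde r_1,\tilde r}(\tau)\lesssim\tau^{d/2}$ although \eqref{rv-bound-1} only gives $\tau^{3d/4}$ for $\tilde C$, so either one tolerates the weaker $\tau^{3\vartheta d/4}$ in \eqref{pri:5.7-w} (harmless for the bootstrap in Lemma~\ref{lemma:7.2}, since one is free to shrink $\vartheta$) or one improves the weighted crude bound as you suggest; the published proof glosses over this, so your caution is warranted, not a flaw.
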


\begin{proof}
In view of $\eqref{precondition-2}$, it concludes that
$1-\frac{s}{r},1-\frac{s_1}{r_1}\in(0,1)$. Therefore, the set of $\vartheta$ satisfying $\eqref{precondition-3}$ is not empty.
Then, one can find a pair $(\tilde{r}_1,\tilde{r})\in[2,\infty)^2$ such that
\begin{equation*}
 \frac{1}{r_1} = \frac{1-\vartheta}{s_1} + \frac{\vartheta}{\tilde{r}_1};
 \qquad
 \frac{1}{r} = \frac{1-\vartheta}{s} + \frac{\vartheta}{\tilde{r}},
\end{equation*}
and we can get that $\frac{1}{\tilde{r}}=\frac{1}{\vartheta}(
\frac{1}{r}-\frac{1}{s})+\frac{1}{s}$ and
$\frac{1}{\tilde{r}_1}=\frac{1}{\vartheta}(
\frac{1}{r_1}-\frac{1}{s_1})+\frac{1}{s_1}$.
Moreover, it is not hard to see that
\begin{equation}\label{f:7.26}
\left\{\begin{aligned}
&r_1<\tilde{r}_1;\\
&r<\tilde{r},
\end{aligned}\right.
\quad\text{and}\quad\tilde{r}_1<\tilde{r}.
\end{equation}
On account of Theorem $\ref{thm:2}$,
the later one in $\eqref{f:7.26}$ further implies that
\begin{equation*}
 C_{q,\tilde{r}_1,\tilde{r}}(\tau)\leq^{\eqref{rv-bound-1}}
 C_{\lambda,d,M_0,q,\tilde{r}}
\tau^{\frac{d}{4}}
\leq
 C_{\lambda,d,M_0,q,\tilde{r}}
\tau^{\frac{d}{2}};
\qquad
\tilde{C}_{q,\tilde{r}_1,\tilde{r}}(\tau)
\leq^{\eqref{rv-bound-1}}
\tilde{C}_{\lambda,d,M_0,p,q,[\omega]_{A_q}}
\tau^{\frac{d}{2}}.
\end{equation*}

Thus, by using the complex interpolation, we consequently have
\begin{equation*}
C_{q,r_1,r}(\tau)\leq
C_{q,s_1,s}^{1-\vartheta}(\tau)
C_{q,\tilde{r}_1,\tilde{r}}^{\vartheta}(\tau)
\leq C_{\lambda,d,M_0,q,\tilde{r}}^{\vartheta}
C_{q,s_1,s}^{1-\vartheta}(\tau)
\tau^{\frac{\vartheta d}{2}},
\end{equation*}
which gives the desired estimate $\eqref{pri:5.7}$ by setting
$\Lambda:=C_{\lambda,d,M_0,q,\tilde{r}}^{\vartheta}$.
By the same token, one can obtain the stated estimate
$\eqref{pri:5.7-w}$.
\end{proof}


\subsection{Bootstrap arguments}

\begin{figure}[htbp] 
\centering 
\includegraphics[width=0.8\textwidth]{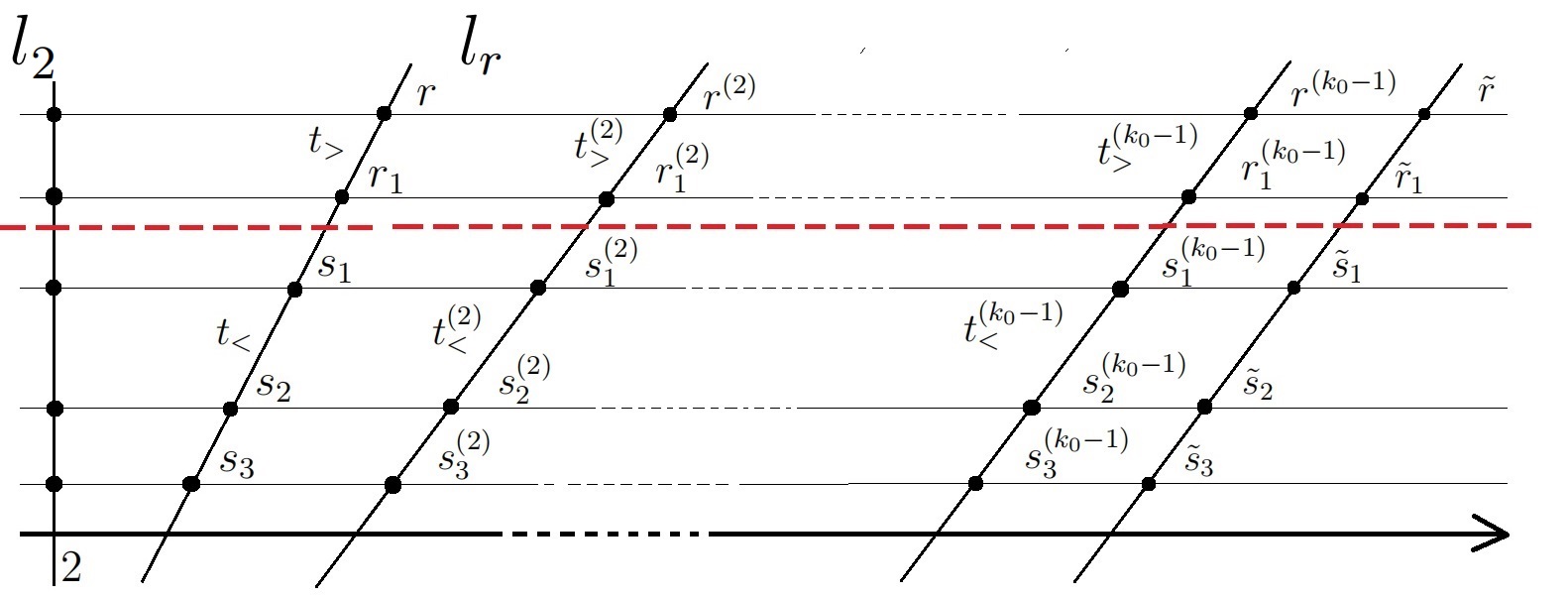} 
\caption{Finite-step iteration} 
\label{figure} 
\end{figure}

\noindent
\textbf{Proof of Theorem $\ref{P:5}$.}
Since one can repeat the same arguments for $\eqref{pri:7.1}$ to get $\eqref{pri:7.1-w}$, it suffices to show the demonstration of
$\eqref{pri:7.1}$, and
the proof is divided into two steps by induction. The main idea is the bootstrap
argument (based upon the real and complex interpolations stated in Subsection $\ref{subsec:4.2}$),
which is shown in Figure $\ref{figure}$.

\noindent
\textbf{Step 1.}
For any given $\tilde{r}\in[2,\infty)$\footnote{It is fine
to find the corresponding $\tilde{r}_1$, $\tilde{s}_3$, $\tilde{s}_2$,
and $\tilde{s}_1$ according to Figure $\ref{fig}$.},
and $\theta\in(0,1/3)$ given as in Lemma $\ref{lemma:7.2}$,  let $\vartheta=\theta/d$. For the ease of the statement, we introduce the following notation:
\begin{equation*}
\left\{\begin{aligned}
&  t_{>}:=(r_1,r); \\
& \tilde{t}_{>}:=(\tilde{r}_1,\tilde{r}).
\end{aligned}\right.
\qquad
\left\{\begin{aligned}
&t_{<}:=(s_3,s_2);\\
&\tilde{t}_{<}:=(\tilde{s}_3,\tilde{s}_2),
\end{aligned}\right.
\end{equation*}
and they formally satisfy the following relationship:
\begin{equation}\label{formula-1}
\frac{1}{t_{>}}
=\frac{1-\vartheta}{m} + \frac{\vartheta}{\tilde{t}_{>}};
 \quad\text{and}\quad
\frac{1}{t_{<}}=
\frac{1-\vartheta}{n}+\frac{\vartheta}{\tilde{t}_{<}},
\end{equation}
which means that their components obey the corresponding formula.

Now, we can fix $(t_{>},t_{<})$ by taking
$(m,n)=(2,2)$, i.e.,
\begin{equation*}
\frac{1}{t_{>}}
=\frac{1-\vartheta}{2} + \frac{\vartheta}{\tilde{t}_{>}};
 \quad\text{and}\quad
\frac{1}{t_{<}}=
\frac{1-\vartheta}{2}+\frac{\vartheta}{\tilde{t}_{<}},
\end{equation*}
In view of Lemma $\ref{lemma:5.1}$, we obtain
\begin{equation*}
   C_{q,r_1,r}(\tau)
   +C_{q,s_3,s_2}(\tau)
   \lesssim \tau^{\frac{1}{2}\theta},
\end{equation*}
and therefore it follows from Lemma $\ref{lemma:7.2}$ that
\begin{equation*}
 C_{q,s_1,r}(T')\lesssim 1
\qquad\forall T'\geq 1.
\end{equation*}


\noindent
\textbf{Step 2.} Let $(s_1,r)$ be fixed as in Step 1.
Let $k\geq 0$ be any integer, and we set
\begin{equation}\label{iteration-1}
  (m_{k},n_{k}) := (s_1^{(k)},r^{(k)});
  \quad (m_{1},n_{1}) := (s_1,r);
  \quad (m_{0},n_{0}) := (2,2).
\end{equation}
By the relationship $\eqref{formula-1}$, we have the following iteration formula:
\begin{equation}\label{iteration-2}
\frac{1}{t_{>}^{(k)}}
=\frac{1-\vartheta}{m_{k-1}} + \frac{\vartheta}{\tilde{t}_{>}};
 \quad\text{and}\quad
\frac{1}{t_{<}^{(k)}}=
\frac{1-\vartheta}{n_{k-1}}+\frac{\vartheta}{\tilde{t}_{<}},
\end{equation}
where $t_{>}^{(k)}:=((r_1^{(k)},r^{(k)}))$ and
$t_{<}^{(k)}:=((s_3^{(k)},s_2^{(k)}))$ with
\begin{equation*}
(r_1^{(1)},r^{(1)}):= (r_1,r);
\quad
(s_3^{(1)},s_2^{(1)})
:= (s_3,s_2).
\end{equation*}

Thus,
if admitting $(m_{k-1},n_{k-1})$
has been derived for any $k\geq 2$,  by the formula $\eqref{iteration-2}$ and
Lemma $\ref{lemma:5.1}$, we have
\begin{equation}\label{formula-k}
   C_{q,r_1^{(k)},r^{(k)}}(\tau)
   +C_{q,s_3^{(k)},s_2^{(k)}}(\tau)
   \lesssim \tau^{\frac{1}{2}\theta}.
\end{equation}
Then, using $\eqref{formula-k}$ as the input in Lemma $\ref{lemma:7.2}$, we derive that
\begin{equation}\label{pri:3.15x}
 C_{q,s_1^{(k)},r^{(k)}}(T')
 \lesssim 1
\qquad\forall T'\geq 1.
\end{equation}
As a result, plugging the above result back into the notation $\eqref{iteration-1}$, we obtain $(m_{k},n_{k})$, which, as a new input in the formula $\eqref{iteration-2}$,
further leads to $(m_{k+1},n_{k+1})$ by repeating the above procedure.

Now, we can claim that there exists $k_0<\infty$ such  that $r^{(k_0)}>\tilde{r}$. By noting the iteration formula $\eqref{iteration-2}$, for any integer $k\geq 1$, we have
\begin{equation*}
 \frac{1}{r^{(k)}}\geq \frac{1-\vartheta}{s_1^{(k-1)}}
 > \frac{1-\vartheta}{r^{(k-1)}}>\frac{1}{2}(1-\vartheta)^{k}.
\end{equation*}
One may find it by
\begin{equation*}
  k_0:=\inf\Big\{k\in\mathbb{N}: k>\frac{\ln(1/\tilde{r})}{\ln((1-\vartheta)/2)}\Big\}.
\end{equation*}
That means for any $(\tilde{s}_1,\tilde{r})\in [2,\infty)^2$, we can get
the estimate
\begin{equation}\label{f:7.27}
 C_{q,\tilde{s}_1,\tilde{r}}(T')
 \lesssim 1
\qquad\forall T'\geq 1,
\end{equation}
merely by finite steps. For the case of $\tilde{r}\in(1,2]$, the corresponding estimate $\eqref{f:7.27}$ follows from a duality argument, and we have completed the whole arguments.
\qed

\begin{remark}\label{remark:4.1}
\emph{Although $k_0$ in the above proof depends on $\tilde{r}$ itself, we should mention that it is independent of $\tilde{r}-\tilde{s}_1$.
Speaking a bit more, based on Meyer-type estimates, the line $l_r$ in Fig.$\ref{figure}$ can be obtained by translating $l_2$ whenever $0<r-2\ll1$, whereas it is essential to discuss $l_r$ with
a slope strictly less than $\pi/2$ when $r$ is far from 2, and
the most important is that the above demonstration does not relies on the degree of the slope.}
\end{remark}

\bigskip
\noindent
\textbf{Proof of Theorem $\ref{thm:1}$ and $\ref{thm:1-w}$.}
Let $f$ be square-integrable.
To handle the equations $\eqref{pde:0}$, for any $T\geq 1$, we consider the massive version of the elliptic operator $\mathcal{L}$
as follows:
\begin{equation}\label{pde:0*}
\left\{\begin{aligned}
\frac{1}{T}u_{T}-\nabla\cdot a \nabla u_{T}
&= \nabla\cdot f
&\quad&\text{in}~~\Omega;\\
u &= 0
&\quad&\text{on}~~\partial\Omega.
\end{aligned}\right.
\end{equation}
Then, in view of Theorem $\ref{P:5}$, we obtain
\begin{subequations}
\begin{align}
& \Big(\int_{\tilde{\Omega}} \big\langle|(\nabla u_{T},\frac{u_{T}}{\sqrt{T}} )|^{p}\big\rangle^{\frac{q}{p}}\Big)^{\frac{1}{q}}
\lesssim_{\lambda,d,M_0,p,\bar{p},q}^{\eqref{pri:7.1}}
\Big(\int_{\tilde{\Omega}}
\big\langle|f|^{\bar{p}}
\big\rangle^{\frac{q}{\bar{p}}}\Big)^{\frac{1}{q}};
\label{f:6.1}\\
&\Big(\int_{\tilde{\Omega}} \big\langle|(\nabla u_{T},\frac{u_{T}}{\sqrt{T}} )|^{p}\big\rangle^{\frac{q}{p}}\omega\Big)^{\frac{1}{q}}
\lesssim_{\lambda,d,M_0,p,\bar{p},q,[\omega]_{A_q}}^{\eqref{pri:7.1-w}}
\Big(\int_{\tilde{\Omega}}
\big\langle|f|^{\bar{p}}
\big\rangle^{\frac{q}{\bar{p}}}\omega\Big)^{\frac{1}{q}}.
\label{f:6.1-w}
\end{align}
\end{subequations}
Since the multiplicative constant is independent of $T$, letting $T\to\infty$ (up to a subsequence),
there holds $\nabla u_{T} \rightharpoonup \nabla u$ weakly in $L^q(\Omega;L^p_{\langle\cdot\rangle})$, and $\nabla u$ satisfies
\begin{equation*}
 -\nabla\cdot a\nabla u = \nabla\cdot f \quad \text{in} \quad\Omega,
\end{equation*}
Furthermore, by weakly lower semi-continuity of the norm, there holds
\begin{equation*}
\begin{aligned}
&\Big(\int_{\tilde{\Omega}} \big\langle|\nabla u|^{p}\big\rangle^{\frac{q}{p}}\Big)^{\frac{1}{q}}
\lesssim_{\lambda,d,M_0,p,q}^{\eqref{f:6.1}}
\Big(\int_{\tilde{\Omega}}
\big\langle|f|^{\bar{p}}
\big\rangle^{\frac{q}{\bar{p}}}\Big)^{\frac{1}{q}};
\label{}\\
&\Big(\int_{\tilde{\Omega}} \big\langle|\nabla u|^{p}\big\rangle^{\frac{q}{p}}\omega\Big)^{\frac{1}{q}}
\lesssim_{\lambda,d,M_0,p,q}^{\eqref{f:6.1-w}}
\Big(\int_{\tilde{\Omega}}
\big\langle|f|^{\bar{p}}
\big\rangle^{\frac{q}{\bar{p}}}\omega\Big)^{\frac{1}{q}};
\end{aligned}
\end{equation*}
Since $u_{T}\in H_0^1(\Omega)$ is uniformly bounded with respect to $T$,
we can infer that $u_{T} \to 0$ strongly in $L^2(\partial\Omega)$
whenever $\Omega$ is bounded domain. By the uniqueness of
the weak limit, it concludes that $u = 0$ on $\partial\Omega$ in the sense of the trace. If $\Omega$ is an unbounded domain with a local compactness of $\partial\Omega$, one may find any ball $B_0$ with $x_{B_0}\in\partial\Omega$. Then, by the same token we can have that $u=0$ on $\partial\Omega\cap B_0$, and we can infer that
$u=0$ on $\partial\Omega$ due to the local compactness and
the diagonal process.
\qed

\section{Resolvent estimates}\label{sec:6}

\begin{theorem}[resolvent estimates in $L^2$]\label{thm:5.0}
Let $\Omega\subset\mathbb{R}^d$ with $d\geq 2$
be a Lipschitz domain. Let $\mu\in\Sigma_{\theta}$.
Suppose that the coefficient $a$ satisfies the uniform ellipticity condition $\eqref{a:1}$ and $\eqref{a:4}$.
Then, for any  $(\mathbf{f},\mathbf{g})\in L^2(\Omega;\mathbb{C}^d)\times L^2(\Omega;\mathbb{C})$, there exists the unique solution
$\mathbf{v}\in H_0^1(\Omega;\mathbb{C})$ satisfying the equation
$\eqref{pde:0-r}$. Moreover, if $\Omega\ni\{0\}$ is bounded, then we have
\begin{equation}\label{pri:5.4}
\begin{aligned}
(|\mu|+R_0^{-2})\|\mathbf{v}\|_{L^2(\Omega)}
+ (|\mu|+R_0^{-2})^{1/2}\|\nabla\mathbf{v}\|_{L^2(\Omega)}
\lesssim_{\lambda,\theta,d} \|\mathbf{g}\|_{L^2(\Omega)}
+(|\mu|+R_0^{-2})^{1/2}
\|\mathbf{f}\|_{L^2(\Omega)},
\end{aligned}
\end{equation}
and by setting $\omega_\mu:=\exp(-C_{d,\lambda,\theta}\sqrt{|\mu|}|x|)$
with sufficiently large constant $C_{d,\lambda,\theta}$, we have
\begin{equation}\label{pri:5.4-w}
|\mu|\|\mathbf{v}\omega_{\mu}\|_{L^2(\Omega)}
+ |\mu|^{1/2}\|\nabla\mathbf{v}\omega_{\mu}\|_{L^2(\Omega)}
\lesssim_{\lambda,\theta,d} \|\mathbf{g}\omega_{\mu}\|_{L^2(\Omega)}
+|\mu|^{1/2}
\|\mathbf{f}\omega_{\mu}\|_{L^2(\Omega)}.
\end{equation}
If $\Omega$ is unbounded, there still holds $\eqref{pri:5.4-w}$, as well as,
\begin{equation}\label{pri:5.5}
\begin{aligned}
&|\mu|\|\mathbf{v}\|_{L^2(\Omega)}
+ |\mu|^{1/2}\|\nabla\mathbf{v}\|_{L^2(\Omega)}
\lesssim_{\lambda,\theta,d} \|\mathbf{g}\|_{L^2(\Omega)}
+|\mu|^{1/2}
\|\mathbf{f}\|_{L^2(\Omega)}.
\end{aligned}
\end{equation}
In particular, if $\mathbf{g}$ and $\mathbf{f}$ are vanished, for any
ball with the properties: either
$B\subset\mathbb{R}^d$ with $x_B\in\partial\Omega$ or $3B\subset\Omega$,  the local solution $\mathbf{v}$ satisfies the Caccioppoli's inequality:
\begin{equation}\label{pri:5.6}
\left\{\begin{aligned}
&\|\mathbf{v}\|_{L^2(B\cap\Omega)}
\lesssim_{\lambda,\theta,d,k}(\sqrt{|\mu|}r_B+1)^{-k}
\|\mathbf{v}\|_{L^2(2B\cap\Omega)};\\
&\|\nabla\mathbf{v}\|_{L^2(B\cap\Omega)}
\lesssim_{\lambda,\theta,d}r_B^{-1}
\|\mathbf{v}\|_{L^2(2B\cap\Omega)}.
\end{aligned}\right.
\end{equation}
\end{theorem}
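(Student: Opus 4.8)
The plan is to read \eqref{pde:0-r} as a coercive sesquilinear problem on $H^1_0(\Omega;\mathbb{C})$ and to extract every bound from one elementary sectoriality lemma combined with the energy identity. The lemma is purely geometric: if $z_1\in\overline{\Sigma_\theta}\cup\{0\}$ and $z_2\in[0,\infty)$, then $|z_1+z_2|\ge\sin(\theta/2)\,(|z_1|+z_2)$, which one checks by treating the regimes $|\arg z_1|\le\pi/2$ and $\pi/2<|\arg z_1|\le\pi-\theta$ separately. Since $a=a^*$ is real, $\int_\Omega a\nabla\mathbf{v}\cdot\overline{\nabla\mathbf{v}}$ is a nonnegative real number that is $\ge\lambda\|\nabla\mathbf{v}\|_{L^2(\Omega)}^2$, while $\mu\int_\Omega|\mathbf{v}|^2$ lies in $\overline{\Sigma_\theta}\cup\{0\}$; hence the lemma gives
\begin{equation*}
\Big|\,\mu\!\int_\Omega|\mathbf{v}|^2+\!\int_\Omega a\nabla\mathbf{v}\cdot\overline{\nabla\mathbf{v}}\,\Big|\;\gtrsim_{\theta}\;|\mu|\,\|\mathbf{v}\|_{L^2(\Omega)}^2+\|\nabla\mathbf{v}\|_{L^2(\Omega)}^2 .
\end{equation*}
In particular the form $B(\mathbf{u},\mathbf{v}):=\mu\int_\Omega\mathbf{u}\,\overline{\mathbf{v}}+\int_\Omega a\nabla\mathbf{u}\cdot\overline{\nabla\mathbf{v}}$ is bounded and coercive on $H^1_0(\Omega;\mathbb{C})$ (using Poincar\'e's inequality when $\Omega$ is bounded, and merely $|\mu|>0$ otherwise), so the Lax--Milgram theorem for sesquilinear forms yields the unique $\mathbf{v}\in H^1_0(\Omega;\mathbb{C})$ solving \eqref{pde:0-r}.

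For the resolvent bounds \eqref{pri:5.4} and \eqref{pri:5.5} I would test \eqref{pde:0-r} against $\mathbf{v}$ itself. Bounding the left side from below by the last display and the right side by $\|\mathbf{g}\|_{L^2(\Omega)}\|\mathbf{v}\|_{L^2(\Omega)}+\|\mathbf{f}\|_{L^2(\Omega)}\|\nabla\mathbf{v}\|_{L^2(\Omega)}$, then absorbing $\|\nabla\mathbf{v}\|_{L^2(\Omega)}^2$ by Young's inequality, gives $|\mu|\,\|\mathbf{v}\|_{L^2}^2+\|\nabla\mathbf{v}\|_{L^2}^2\lesssim_{\theta}\|\mathbf{g}\|_{L^2}\|\mathbf{v}\|_{L^2}+\|\mathbf{f}\|_{L^2}^2$. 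When $\Omega$ is bounded (with $\{0\}\in\Omega$), Poincar\'e's inequality shows the left side is $\gtrsim_{d}(|\mu|+R_0^{-2})\|\mathbf{v}\|_{L^2}^2+\|\nabla\mathbf{v}\|_{L^2}^2$; one more use of Young's inequality on $\|\mathbf{g}\|_{L^2}\|\mathbf{v}\|_{L^2}$ followed by a rearrangement then produces \eqref{pri:5.4}. Omitting the Poincar\'e step and keeping only the factor $|\mu|$ gives \eqref{pri:5.5} in the unbounded case.

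The weighted estimate \eqref{pri:5.4-w} follows the same scheme with the test function $\mathbf{v}\,\omega_\mu^2$, the sole property of $\omega_\mu=\exp(-C_{d,\lambda,\theta}\sqrt{|\mu|}\,|x|)$ that is used being $|\nabla\omega_\mu|=C_{d,\lambda,\theta}\sqrt{|\mu|}\,\omega_\mu$. Writing $\nabla(\mathbf{v}\omega_\mu^2)=\omega_\mu^2\nabla\mathbf{v}+2\mathbf{v}\,\omega_\mu\nabla\omega_\mu$ splits off a ``diagonal'' part $\mu\int|\mathbf{v}|^2\omega_\mu^2+\int a\nabla\mathbf{v}\cdot\overline{\nabla\mathbf{v}}\,\omega_\mu^2$, to which the sectoriality lemma applies verbatim with $\omega_\mu^2\,dx$ in place of $dx$, plus a cross term $2\int(a\nabla\mathbf{v}\cdot\nabla\omega_\mu)\,\overline{\mathbf{v}}\,\omega_\mu$ and the analogous right-hand contributions $\int\mathbf{g}\,\overline{\mathbf{v}}\,\omega_\mu^2$, $\int\mathbf{f}\cdot\omega_\mu^2\,\overline{\nabla\mathbf{v}}$ and $2\int\mathbf{f}\cdot\nabla\omega_\mu\,\overline{\mathbf{v}}\,\omega_\mu$. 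Applying Young's inequality with a parameter of size $|\mu|^{-1/2}$ to the terms carrying an extra $\sqrt{|\mu|}$ distributes each of them as a small multiple of $\int|\nabla\mathbf{v}|^2\omega_\mu^2$ plus a multiple of $|\mu|\int|\mathbf{v}|^2\omega_\mu^2$ (and of $\|\mathbf{f}\omega_\mu\|_{L^2}^2$), so that for a suitably calibrated $C_{d,\lambda,\theta}$ they are absorbed into the diagonal part. One is left with $|\mu|\,\|\mathbf{v}\omega_\mu\|_{L^2}^2+\|\nabla\mathbf{v}\,\omega_\mu\|_{L^2}^2\lesssim_{\theta}\|\mathbf{g}\,\omega_\mu\|_{L^2}\|\mathbf{v}\,\omega_\mu\|_{L^2}+\|\mathbf{f}\,\omega_\mu\|_{L^2}^2$, and a final Young's inequality yields \eqref{pri:5.4-w}; since no Poincar\'e step intervenes, the argument is the same for bounded and for unbounded $\Omega$.

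Finally, the Caccioppoli-type bounds \eqref{pri:5.6} in the case $\mathbf{g}=\mathbf{f}=0$ come from testing against $\mathbf{v}\chi^2$, where $\chi$ is a cutoff with $\chi\equiv1$ on $B\cap\Omega$, $\operatorname{supp}\chi\subset 2B$ and $|\nabla\chi|\lesssim r_B^{-1}$ (the hypothesis on $B$ guarantees $\chi\mathbf{v}\in H^1_0(\Omega;\mathbb{C})$). The sectoriality lemma together with absorption of the $\chi$-gradient cross term gives $|\mu|\int_{B\cap\Omega}|\mathbf{v}|^2+\int_{B\cap\Omega}|\nabla\mathbf{v}|^2\lesssim_{\theta}r_B^{-2}\int_{2B\cap\Omega}|\mathbf{v}|^2$, which is simultaneously the gradient bound in \eqref{pri:5.6} and, when $\sqrt{|\mu|}\,r_B\ge1$, the one-step gain $\|\mathbf{v}\|_{L^2(B\cap\Omega)}\lesssim(\sqrt{|\mu|}\,r_B)^{-1}\|\mathbf{v}\|_{L^2(2B\cap\Omega)}$; iterating this over $k$ concentric balls with radii spaced by $r_B/k$ between $B$ and $2B$ yields the factor $(\sqrt{|\mu|}\,r_B+1)^{-k}$ (with an implied constant depending on $k$), the regime $\sqrt{|\mu|}\,r_B<1$ being trivial. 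The step I expect to be the main obstacle is the absorption in the weighted estimate: one must verify that, after moving every weight-gradient term to the right, the remaining diagonal part is still governed by the sectoriality lemma, and throughout one must be careful to keep all multiplicative constants independent of $R_0$ and of $\mu$.
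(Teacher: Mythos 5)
Your proposal is correct and follows essentially the same route as the paper's (brief) proof: the energy identity \eqref{energy} with test functions $\mathbf{v}$, $\mathbf{v}\omega_\mu^2$, and $\mathbf{v}\chi^2$, with your explicit sectoriality estimate supplying the role the paper attributes to conditions $\eqref{a:4}$--$\eqref{a:5}$ and outsources to the cited references. One small caveat on the weighted bound: for the cross terms produced by $\nabla\omega_\mu$ to be absorbed into the diagonal part, the constant in the exponent of $\omega_\mu$ must be taken \emph{small}, so your ``suitably calibrated'' is the correct instinct even though the theorem's wording says ``sufficiently large''.
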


\begin{proof}
The idea of the proof is standard, and all the above estimates start from
the following energy identity $\eqref{energy}$. Except for Poincar\'e's inequality,
the case of the unbounded domain indeed follows from the same idea as
that given for the case of bounded domains, so we omit the details. For any $\mathbf{z}\in H_0^1(\Omega;\mathbb{C})$ being arbitrary test function, it follows that
\begin{equation}\label{energy}
\int_{\Omega}\mu\mathbf{v}\overline{\mathbf{z}}
+\int_{\Omega}a\nabla\mathbf{v}\cdot\overline{\nabla\mathbf{z}}
=\int_{\Omega}
\big(\mathbf{g}\overline{\mathbf{z}}
-\mathbf{f}\cdot\overline{\nabla\mathbf{z}}\big).
\end{equation}
Taking $\mathbf{z}=\mathbf{v}$ leads to the stated estimate $\eqref{pri:5.4}$ (and \eqref{pri:5.5}) by using the additional conditions $\eqref{a:4}$ and
$\eqref{a:5}$ (see e.g. \cite{Wei-Zhang14}), while taking $\mathbf{z}=\mathbf{v}\omega_\mu^2$
gives the desired estimate $\eqref{pri:5.4-w}$, where we take
$\omega_\mu:=\exp(-C_{d,\lambda,\theta}\sqrt{|\mu|}|x|)$ with
$C_{d,\lambda,\theta}$ being sufficiently large. Let $\eta\in C_0^1(2B)$
be a cut-off function with $\eta=1$ in $B$.
Then, the estimates $\eqref{pri:5.6}$ follow from
the standard arguments for Caccioppoli's inequality by additionally considering the conditions
$\eqref{a:4}$ and $\eqref{a:5}$, and we refer the reader to \cite[Lemma 2.1]{Shen95}
for some details. This completes the proof.
\end{proof}

\begin{remark}
\emph{To the authors' best knowledge, it's unclear whether
$\eqref{pri:5.4}$ can be extended to the weighted estimate
involving $\omega_\mu=\exp(-C_{d,\lambda,\theta}\sqrt{|\mu|}|x|)$
in the above proof for $\Omega\in\{0\}$, since
we don't have the desired form of Poincar\'e inequality with $\omega_\mu$ as the weight.}
\end{remark}

\begin{theorem}[quenched-type Calder\'on-Zygmund estimates]\label{thm:5.1}
Let $\hat{\Omega}\subset\mathbb{R}^d$ with $d\geq 2$ and $\hat{\Omega}\ni\{0\}$
be a bounded $C^1$ domain, and $\varepsilon\in(0,1]$.
Suppose that $\langle\cdot\rangle$ is stationary and satisfies spectral gap condition $\eqref{a:2}$,
and the (admissible) coefficient additionally satisfies the local regularity condition $\eqref{a:3}$.
Let $1<q<\bar{q}<\infty$ and $1/\bar{p}=1/\bar{q}+1/d$ if $\bar{q}>\frac{d}{d-1}$; $\bar{p}>1$ if $\bar{q}=\frac{d}{d-1}$;
$\bar{p}=1$ if $1<\bar{q}<\frac{d}{d-1}$.
Then, there exists a random variable $\mathcal{C}_{*,\hat{R}_0/\varepsilon}(0)$
(which is scaling-invariant) such that
\begin{equation}\label{f:5.5}
\big\langle\mathcal{C}_{*,\hat{R}_0/\varepsilon}(0)^{\beta}\big\rangle^{1/\beta}
\lesssim_{\lambda,\lambda_1,\lambda_2,d,M_0,\beta} 1
\qquad \forall\beta\in[1,\infty),
\end{equation}
and there holds the following estimate:
\begin{equation}\label{pri:5.1}
\Big(\dashint_{\hat{\Omega}}|\nabla u_\varepsilon|^q\Big)^{\frac{1}{q}}
\leq \mathcal{C}_{*,\hat{R}_0/\varepsilon}(0)\bigg\{
\Big(\dashint_{\hat{\Omega}}|f|^{\bar{q}}\Big)^{\frac{1}{\bar{q}}}
+\hat{R}_0\Big(\dashint_{\hat{\Omega}}|g|^{\bar{p}}\Big)^{\frac{1}{\bar{p}}}\bigg\},
\end{equation}
provided that $u_\varepsilon$, $f$ and $g$ are associated with
the equations
\begin{equation}\label{pde:5.1}
\left\{\begin{aligned}
-\nabla\cdot a(\cdot/\varepsilon)\nabla u_\varepsilon
&=\nabla\cdot f+g &\quad&\text{in}\quad\hat{\Omega};\\
u&=0&\quad&\text{on}~~\partial\hat{\Omega}.
\end{aligned}\right.
\end{equation}
\end{theorem}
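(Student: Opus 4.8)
The plan is to combine the annealed Calder\'on-Zygmund estimates (now available in the $\varepsilon$-rescaled form through Theorems \ref{thm:1}, \ref{thm:1-w} and \ref{P:5}, which hold with constants independent of the microscopic scale) with a localization-and-averaging device analogous to the random-radius argument of Lemma \ref{lemma:4.1}, so as to convert the deterministic (quenched) $L^q$ bound on a single realization into a pointwise-in-$\omega$ statement governed by a stationary random variable with all stochastic moments finite. First I would normalize: by the scaling $x\mapsto x/\varepsilon$ and the scaling-invariance of all the estimates involved (stressed in the second Remark of the introduction), it suffices to treat $\varepsilon=1$ and a $C^1$ domain $\Omega=\hat\Omega/\varepsilon$ of diameter $\hat R_0/\varepsilon$; the factor $\hat R_0$ in front of the $g$-term is exactly the Poincar\'e/scaling weight that makes the right-hand side dimensionally homogeneous, and I would absorb the $g$-contribution by solving $-\nabla\cdot a\nabla u_1 = \nabla\cdot f + g$ and using the Sobolev/Poincar\'e pairing $\|g\|_{L^{\bar p}}\hat R_0 \gtrsim \|(-\Delta)^{-1}\nabla g\|_{L^{\bar q}}$ with the stated relation $1/\bar p = 1/\bar q + 1/d$ (and the two borderline cases handled by the usual endpoint modifications).

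Next, the core step: fix the deterministic exponents $1<q<\bar q<\infty$ and run the annealed estimate \eqref{A} (equivalently the massive version \eqref{pri:7.1} after letting $T\to\infty$) with a \emph{single} stochastic integrability exponent, say $p=\bar q$, and a very large outer Lebesgue exponent $Q$; this yields $\big(\int_{\hat\Omega}\langle|\nabla u_\varepsilon|^{\bar q}\rangle^{Q/\bar q}\big)^{1/Q}\lesssim \big(\int_{\hat\Omega}\langle|f|^{\bar q}\rangle^{Q/\bar q}\big)^{1/Q}$ (plus the $g$-term), with constant depending only on $\lambda,\lambda_1,\lambda_2,d,M_0$ and the exponents, but crucially \emph{not} on $\hat R_0/\varepsilon$. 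Now I would define the candidate random variable as a local average of the Calder\'on-Zygmund density,
\begin{equation*}
\mathcal{C}_{*,\hat R_0/\varepsilon}(0):=\Big(\dashint_{\hat\Omega}\Big(\dashint_{U_{1}(x)}|\nabla u_\varepsilon|^{\bar q}\Big)^{\frac{Q}{\bar q}}dx\Big)^{\frac{1}{Q}}\Big/\Big(\dashint_{\hat\Omega}\langle|f|^{\bar q}\rangle^{\cdots}+\cdots\Big)
\end{equation*}
— more precisely, the supremum over admissible data of the ratio of the quenched mixed norm to the annealed right-hand side — and verify \eqref{f:5.5} by taking $\langle(\cdot)^\beta\rangle^{1/\beta}$, using Minkowski's inequality to pass the expectation inside, invoking the annealed bound above for the numerator and absorbing the normalizing denominator. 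Scaling-invariance of $\mathcal{C}_{*,\hat R_0/\varepsilon}(0)$ follows because both numerator and denominator rescale identically. Finally, \eqref{pri:5.1} is the tautological statement that $\|\nabla u_\varepsilon\|_{L^q(\hat\Omega)}\le \mathcal{C}_{*,\hat R_0/\varepsilon}(0)\cdot(\text{RHS})$ once one notes, via Jensen/H\"older and $q<\bar q$, that the $L^q$ norm is controlled by the mixed $(\,Q,\bar q\,)$-norm appearing in the definition.

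The main obstacle I anticipate is making the random variable genuinely \emph{scaling-invariant} and its moment bound \eqref{f:5.5} \emph{uniform in $\hat R_0/\varepsilon$} simultaneously: a naive definition via $\|\nabla u_\varepsilon\|_{L^q(\hat\Omega)}/\|\text{data}\|$ over all data would not obviously have finite high moments, because the worst-case data can concentrate and the quenched constant can fluctuate like a local norm of the coefficient field (as in the random small-scale $\rho_*$ of Lemma \ref{lemma:4.1}). The resolution is to build $\mathcal{C}_*$ from an \emph{annealed} quantity raised to a high power — so that Minkowski lets us trade the expectation for the already-proven $L^Q_xL^{\bar q}_{\langle\cdot\rangle}$ bound — rather than from a pointwise-in-$\omega$ operator norm; one must then check carefully that this still dominates $\|\nabla u_\varepsilon\|_{L^q(\hat\Omega)}$ for \emph{every} realization, which is where the mixed-norm nestedness (Lemma \ref{lemma:3.1}) and Lemma \ref{lemma:4.1}-type localization re-enter to bridge the gap between the $U_1(x)$-averaged density and the genuine pointwise gradient. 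The endpoint cases $\bar q = d/(d-1)$ and $1<\bar q<d/(d-1)$ for the $g$-term require the usual care with $L^{\bar p}$ for $\bar p$ close to or equal to $1$, but these are routine modifications of the Sobolev inequality and do not affect the stochastic part of the argument.
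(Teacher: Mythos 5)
There is a genuine gap in the moment-bound step, and it concerns the sole difficulty the theorem is actually about: passing from an \emph{annealed} estimate (expectation first, then supremum over data) to a \emph{quenched} estimate with a single universal random constant (supremum over data first, then moments). Your candidate random variable is ``the supremum over admissible data of the ratio of the quenched mixed norm to the annealed right-hand side,'' i.e.\ a random operator norm. You then propose to ``take $\langle(\cdot)^\beta\rangle^{1/\beta}$, use Minkowski's inequality to pass the expectation inside, invoke the annealed bound.'' Minkowski does let you trade $\big\langle\|\nabla u\|_{L^q_x}^\beta\big\rangle^{1/\beta}$ for $\big(\int\langle|\nabla u|^\beta\rangle^{q/\beta}\big)^{1/q}$ \emph{for a fixed deterministic $f$}, and Theorem~\ref{P:5} then controls this by $\|f\|_{L^q}$. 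But Minkowski gives you no way to move the expectation past the supremum over $f$; the map $a\mapsto\sup_f\,\|\nabla u(a;f)\|_{L^q}/\|f\|_{L^q}$ may have unbounded $\beta$-moments even though each fixed-$f$ ratio does not. The alternative you offer -- ``build $\mathcal{C}_*$ from an annealed quantity raised to a high power'' -- is internally inconsistent: an annealed quantity is a deterministic number, while (\ref{f:5.5}) and (\ref{pri:5.1}) require $\mathcal{C}_{*,\hat R_0/\varepsilon}(0)$ to be a genuine random variable that works pathwise for every realization and every $(f,g)$ simultaneously.

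The paper's route is structurally different precisely because this sup/expectation exchange is not available for free. What is actually proved is a \emph{quenched} non-perturbation bootstrap, mirroring Section~\ref{sec:4} but now pathwise: first a suboptimal quenched Calder\'on--Zygmund bound carrying an extra factor $(\hat R_0/\varepsilon)^{\kappa_d}$ and a polynomially-moment-bounded random prefactor $\mathbb{X}_{*,\hat R_0/\varepsilon}$ (built from small-scale and large-scale regularity and the Lipschitzness of the minimal radius, in the spirit of the $\rho_*$ of Lemma~\ref{lemma:4.1} and of \cite{Gloria-Neukamm-Otto20}); and second a quenched real/complex interpolation bootstrap that removes the $(\hat R_0/\varepsilon)^{\kappa_d}$ loss and produces the stationary random field $\mathcal{C}_{*,\hat R_0/\varepsilon}$ with all moments bounded uniformly in $\hat R_0/\varepsilon$. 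The random constant is \emph{constructed} (so the scaling invariance and moment estimates are checked directly on the construction), not defined implicitly as an operator-norm ratio and then estimated. Your reduction of the $g$-term via the Sobolev pairing $1/\bar p=1/\bar q+1/d$ is fine, and the rescaling to $\varepsilon=1$ is fine, but the heart of the argument -- how a single pathwise random constant can dominate a worst-case over data -- is not addressed by the Minkowski step and needs the two-stage quenched bootstrap instead.
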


\begin{proof}
To see the estimate $\eqref{pri:5.1}$, it is fine to establish the
corresponding estimate for $g=0$ at first (since the case of $g\not=0$
follows from the Sobolev embedding theorem).
Different from the quenched estimates as those established in the form of  $L^2$-norm, where the multiplicative constant is deterministic and
the diameter of the integral region involves randomness and therefore known as the \emph{minimum radius} in \cite{Gloria-Neukamm-Otto20} which indeed corresponds to the large-scale estimates,
the multiplicative constant in $\eqref{pri:5.1}$ is quite involved.

Roughly speaking, we can not obtain the multiplicative
constant $\mathcal{C}_{*,\hat{R}_0/\varepsilon}$ of $\eqref{pri:5.1}$
by one step, while we have to adopt the non-perturbation argument in the quenched sense to achieve it, analogical to that presented in Section $\ref{sec:4}$. Therefore, not surprising, we still appeal to the massive equation
$\eqref{pde:2}$ with $T>0$ as an auxiliary equation:
\begin{equation*}
\left\{\begin{aligned}
\frac{1}{T}u_{\varepsilon,T}-\nabla\cdot a(\cdot/\varepsilon)\nabla u_{\varepsilon,T}
&=\nabla\cdot f+g &\quad&\text{in}\quad\hat{\Omega};\\
u&=0&\quad&\text{on}~~\partial\hat{\Omega}.
\end{aligned}\right.
\end{equation*}
We outline the proof by two key steps, whose details will be given in a separated work.

\noindent
\textbf{Step 1.} Establish the suboptimal estimate as follows:
\begin{equation}\label{f:5.9}
\Big(\dashint_{\hat{\Omega}}\big|(\frac{u_{\varepsilon,T}}{\sqrt{T}},\nabla u_{\varepsilon,T})\big|^q\Big)^{\frac{1}{q}}
\lesssim \mathbb{X}_{*,\hat{R}_0/\varepsilon}(0)\big(\frac{\hat{R}_0}{\varepsilon}
\big)^{\kappa_d}
\Big(\dashint_{\hat{\Omega}}\big|(f,\frac{g}{\sqrt{T}})\big|^{q}\Big)^{\frac{1}{q}},
\end{equation}
which follows from small and large scale estimates, as well as,
the Lipschitz property of the minimal radius, where $\kappa_d$ is a constant depending only on $d$, and $\mathbb{X}_{*,\hat{R}_0/\varepsilon}$ relies on the coefficient and minimal radius polynomially at most.

\noindent
\textbf{Step 2.} From $\eqref{f:5.9}$, one can establish the optimal estimate
as follows:
\begin{equation}\label{f:5.10}
\Big(\dashint_{\hat{\Omega}}\big|(\frac{u_{\varepsilon,T}}{\sqrt{T}},\nabla u_{\varepsilon,T})\big|^q\Big)^{\frac{1}{q}}
\lesssim \mathcal{C}_{*,\hat{R}_0/\varepsilon}(0)
\Big(\dashint_{\hat{\Omega}}|(f,\frac{g}{\sqrt{T}})|^{\bar{q}}
\Big)^{\frac{1}{\bar{q}}}.
\end{equation}
By the non-perturbation argument,
there exists a stationary
random field
$\mathcal{C}_{*,\hat{R}_0/\varepsilon}$ such that
the desired estimate $\eqref{pri:5.1}$ finally holds,
where $\bar{q}>q$ is arbitrary. As we mentioned in Remark $\ref{remark:4.1}$, to carry out the non-perturbation argument,
the slope (determined by $q$ and $\bar{q}$)
strictly less than $\pi/2$ (when $q$ is far from 2) is
very important, which is indeed guaranteed by the condition $\bar{q}>q$.
On the other hand, the multiplicative constant never depends on the quantity $\bar{q}-q$. Therefore, we indeed start from $\eqref{f:5.9}$ by modifying its right-hand side into $L^{\bar{q}}$-integrability with $\bar{q}>q$ arbitrary.
Consequently, one can derive the estimate $\eqref{pri:5.1}$
by letting $T\to\infty$ as we did in Theorem $\ref{thm:1}$. This ends the
whole arguments.
\end{proof}

\begin{remark}\label{remark:5.1}
\emph{Let $\varepsilon\in(0,1]$.
Let $\rho\in(0,1/2)$ and $q>2$, and we set $\bar{q}:=q/(1-q\rho)$
and $1/\bar{p}=1/\bar{q}+1/d$. It follows from the estimate $\eqref{pri:5.1}$ that
\begin{equation*}
 \|\nabla u_\varepsilon\|_{L^q(\hat{\Omega})}
\leq
\mathcal{C}_{*,\hat{R}_0/\varepsilon}(0)
\big(\frac{\hat{R}_0}{\varepsilon}
\big)^{d\rho}
\Big\{\|f\|_{L^{\bar{q}}(\hat{\Omega})}
+\|g\|_{L^{\bar{p}}(\hat{\Omega})}\Big\}.
\end{equation*}
Moreover, for $q\gg 2$, one can prefer $\rho = 1/q^2
\wedge\log_{(\hat{R}_0/\varepsilon)^d}2$ such that
\begin{equation}\label{pri:5.1*}
\|\nabla u_\varepsilon\|_{L^q(\hat{\Omega})}
\leq 2\mathcal{C}_{*,\hat{R}_0/\varepsilon}(0)
\Big\{\|f\|_{L^{\bar{q}}(\hat{\Omega})}
+\|g\|_{L^{\bar{p}}(\hat{\Omega})}\Big\},
\end{equation}
which is a good replacement of $\eqref{pri:5.1}$ to be applied later on.
In such the case, we can infer that $0<\bar{q}-q\ll 1$, which is indeed convenient
to product the same bootstrap process as we did in Lemma $\ref{lemma:ap-2}$.}
\end{remark}

\begin{corollary}[local estimates]
Let $q\gg 2$ and $\varepsilon\in(0,1]$,
and $\rho$ with $\bar{q}$ is given as in Remark $\ref{remark:5.1}$ and $1/\bar{p}=1/\bar{q}+1/d$.
Assume the same conditions as in Theorem $\ref{thm:5.1}$.
Let $u_\varepsilon$ and $f$ with $g$ be
associated by $\eqref{pde:5.1}$.
Then, for any ball $B$ with the property: $3B\subset\Omega$ or
$x_B\in\partial\Omega$, there exists a constant $\beta_d<\infty$, depending only on $d$, such that
we have the following local estimates:
\begin{equation}\label{pri:5.3}
 \Big(\dashint_{B\cap\Omega}|\nabla u_\varepsilon|^q\Big)^{\frac{1}{q}}
 \lesssim \mathcal{C}_{*,\hat{R}_0/\varepsilon}^{\beta_d}(0)\bigg\{
 \Big(\dashint_{2B\cap\Omega}|\nabla u_\varepsilon|^2\Big)^{\frac{1}{2}}
 + \Big(\dashint_{2B\cap\Omega}|f|^{\bar{q}}
 \Big)^{\frac{1}{\bar{q}}}
 + r_B\Big(\dashint_{2B\cap\Omega}|g|^{\bar{p}}
 \Big)^{\frac{1}{\bar{q}}}\bigg\}.
\end{equation}
In particular, assume that the coefficient additionally satisfies
$\eqref{a:4}$, and if $\mu\in\Sigma_{\theta}$ with
$\theta\in(0,\pi/2)$, $p>0$, and $\alpha>1$. Then,  there hold
\begin{equation}\label{pri:5.2}
\begin{aligned}
& \Big(\dashint_{B\cap\Omega}|\textbf{u}_\varepsilon|^{q}\Big)^{1/q}
 \lesssim_{\alpha,p,\theta} \mathcal{C}_{*,\hat{R}_0/\varepsilon}^{2\beta_d}(0)
 \Big(\dashint_{\alpha B\cap\Omega}|\textbf{u}_\varepsilon|^{p}\Big)^{1/p};\\
& \Big(\dashint_{B\cap\Omega}|\nabla\textbf{u}_\varepsilon|^{q}\Big)^{1/q}
 \lesssim_{\alpha,p,\theta} \mathcal{C}_{*,\hat{R}_0/\varepsilon}^{2\beta_d}(0)
 \Big(\dashint_{\alpha B\cap\Omega}|\nabla\textbf{u}_\varepsilon|^{p}\Big)^{1/p},
\end{aligned}
\end{equation}
provided that $\textbf{u}_\varepsilon$ is complex valued and
satisfies the equations
$\nabla\cdot a(\cdot/\varepsilon)\nabla \textbf{u}_\varepsilon=
\mu\textbf{u}_\varepsilon$ in $2B\cap\Omega$ with $\textbf{u}_\varepsilon
=0$ on $2B\cap\partial\Omega$.
Note that we can prefer $\beta_d:=[\frac{d}{2}]+1$ in $\eqref{pri:5.3}$, which indeed
represents the number of steps in bootstrap iterations.
\end{corollary}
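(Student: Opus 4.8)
The plan is to derive both families of estimates from the quenched Calder\'on--Zygmund estimate in Theorem \ref{thm:5.1} by a localization plus a finite-step bootstrap in the integrability exponent, and, for the resolvent estimates, by combining this with the $\mu$-dependent Caccioppoli inequality \eqref{pri:5.6}.

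\noindent\textbf{Step 1 (estimate \eqref{pri:5.3}).}
First I would treat the interior case $3B\subset\Omega$ and the boundary case $x_B\in\partial\Omega$ at once by the cut-off trick: choose $\zeta\in C_c^1(\tfrac32 B)$ with $\zeta\equiv1$ on $B$ and $|\nabla\zeta|\lesssim r_B^{-1}$, and observe that $w:=\zeta u_\varepsilon$, extended by zero across $\partial\Omega$ (legitimate since $u_\varepsilon=0$ there), solves a problem of the form \eqref{pde:5.1} on $\tfrac32 B\cap\Omega$ --- viewed, after translation and rescaling to unit size, as a new admissible $C^1$ domain --- with a divergence-type datum $\widetilde f=\zeta f+a(\cdot/\varepsilon)u_\varepsilon\nabla\zeta$ and a zeroth-order datum $\widetilde g=\zeta g-\nabla\zeta\cdot f-\nabla\zeta\cdot a(\cdot/\varepsilon)\nabla u_\varepsilon$. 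Applying \eqref{pri:5.1} --- its scaling invariance is precisely what allows us to rescale $B$ to unit size without altering $\mathcal C_{*,\hat R_0/\varepsilon}(0)$ --- and Young's inequality to absorb the cut-off errors, one obtains the self-improving inequality
\[
\Big(\dashint_{B\cap\Omega}|\nabla u_\varepsilon|^q\Big)^{\frac1q}
\lesssim \mathcal C_{*,\hat R_0/\varepsilon}(0)\bigg\{\Big(\dashint_{\frac32 B\cap\Omega}|f|^{\bar q}\Big)^{\frac1{\bar q}}+r_B\Big(\dashint_{\frac32 B\cap\Omega}|g|^{\bar p}\Big)^{\frac1{\bar p}}+\Big(\dashint_{\frac32 B\cap\Omega}\big|\big(\nabla u_\varepsilon,\tfrac{u_\varepsilon}{r_B}\big)\big|^{q'}\Big)^{\frac1{q'}}\bigg\},
\]
where $q'$ is the Sobolev-lowered exponent attached to the $g$-slot of \eqref{pri:5.1}, so that $\tfrac1{q'}-\tfrac1q$ may be taken arbitrarily close to (and no larger than) $\tfrac1d$ by picking $\bar q$ close to $q$. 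Iterating this over a chain of slightly dilated concentric balls consumes one factor of $\tfrac1d$ of integrability per step, hence terminates at the exponent $2$ after at most $\beta_d=[\tfrac d2]+1$ steps, each step contributing exactly one power of $\mathcal C_{*,\hat R_0/\varepsilon}(0)$; a final Caccioppoli inequality converts the $u_\varepsilon/r_B$ term into $\|\nabla u_\varepsilon\|_{L^2(2B\cap\Omega)}$. This gives \eqref{pri:5.3} with the stated power $\beta_d$.

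\noindent\textbf{Step 2 (estimates \eqref{pri:5.2}).}
Here $\mathbf u_\varepsilon$ solves \eqref{pde:5.1} with $f=0$ and $g=\mp\mu\mathbf u_\varepsilon$. I would first establish the two estimates in \eqref{pri:5.2} with the exponent $p=2$ on the right, and then self-improve. Feeding $g=\mu\mathbf u_\varepsilon$ into \eqref{pri:5.3} on a slightly larger ball yields
\[
\Big(\dashint_{B\cap\Omega}|\nabla\mathbf u_\varepsilon|^q\Big)^{\frac1q}\lesssim \mathcal C_{*,\hat R_0/\varepsilon}^{\beta_d}(0)\Big\{\Big(\dashint_{2B\cap\Omega}|\nabla\mathbf u_\varepsilon|^2\Big)^{\frac12}+r_B|\mu|\Big(\dashint_{2B\cap\Omega}|\mathbf u_\varepsilon|^{\bar p}\Big)^{\frac1{\bar p}}\Big\};
\]
the higher-integrability norm of $\mathbf u_\varepsilon$ on the right is then controlled, via Sobolev embedding, by a \emph{second} application of \eqref{pri:5.3} to $\nabla\mathbf u_\varepsilon$ on an even larger ball, which is where the second factor $\mathcal C_{*,\hat R_0/\varepsilon}^{\beta_d}(0)$ enters, producing altogether the power $2\beta_d$. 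To absorb the coefficient $r_B|\mu|$ uniformly in $\mu\in\Sigma_\theta$ I would split: if $\sqrt{|\mu|}\,r_B\lesssim1$ then $r_B|\mu|\,\|\mathbf u_\varepsilon\|\lesssim r_B^{-1}\|\mathbf u_\varepsilon\|$, which Caccioppoli \eqref{pri:5.6} bounds by $\|\nabla\mathbf u_\varepsilon\|$; if $\sqrt{|\mu|}\,r_B\gg1$, the high-order decay factor $(\sqrt{|\mu|}\,r_B+1)^{-k}$ in \eqref{pri:5.6} defeats any fixed polynomial growth. Running this over a finite chain of dyadic balls between $B$ and $2B$ closes the $p=2$ case for $|\nabla\mathbf u_\varepsilon|$, and then for $|\mathbf u_\varepsilon|$ by a further Sobolev/Poincar\'e step. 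Finally, the full range $p>0$ for both quantities follows from the $p=2$ case by the reverse-H\"older self-improvement (the convexity/iteration lemma behind \eqref{f:6.13}), which only costs a dimensional constant.

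\noindent\textbf{Main obstacle.}
The delicate point is Step 2: making the estimate genuinely uniform over the whole sector $\Sigma_\theta$, i.e. reconciling the small-$|\mu|$ and large-$|\mu|$ regimes through the single $\mu$-dependent Caccioppoli inequality \eqref{pri:5.6}, while keeping the power of the random constant pinned at exactly $2\beta_d$ despite having to re-invoke the quenched Calder\'on--Zygmund estimate to upgrade the integrability of $\mathbf u_\varepsilon$ itself; the bookkeeping of which concentric ball one works on at each stage, so that all the dilations stay inside $2B$, is the part that requires the most care.
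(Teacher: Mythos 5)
Your Step 1 (cut-off localization plus a $[\tfrac d2]+1$-step bootstrap reducing from $L^q$ down to $L^2$ at the cost of one power of $\mathcal C_{*,\hat R_0/\varepsilon}(0)$ per step) is essentially what the paper does, except that the paper delegates the localization to the reference \cite[Lemma 2.19]{Xu16} rather than spelling it out. The final self-improvement to $p>0$ via the convexity lemma is also fine and is what the paper's sketch intends.

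There is, however, a genuine gap in Step~2, at the point where you write ``if $\sqrt{|\mu|}\,r_B\lesssim1$ then $r_B|\mu|\,\|\mathbf u_\varepsilon\|\lesssim r_B^{-1}\|\mathbf u_\varepsilon\|$, which Caccioppoli~\eqref{pri:5.6} bounds by $\|\nabla\mathbf u_\varepsilon\|$.'' The inequalities in \eqref{pri:5.6} go the other way: they assert $\|\nabla\mathbf u_\varepsilon\|_{L^2(B)}\lesssim r_B^{-1}\|\mathbf u_\varepsilon\|_{L^2(2B)}$ and $\|\mathbf u_\varepsilon\|_{L^2(B)}\lesssim(\sqrt{|\mu|}r_B+1)^{-k}\|\mathbf u_\varepsilon\|_{L^2(2B)}$, which in the regime $\sqrt{|\mu|}\,r_B\lesssim1$ give no control of $r_B^{-1}\|\mathbf u_\varepsilon\|$ in terms of $\|\nabla\mathbf u_\varepsilon\|$. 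Indeed, for interior balls $3B\subset\Omega$ with $|\mu|\,r_B^2$ small, the solution is close to $a$-harmonic and can be close to a non-zero constant, so the ratio $r_B^{-1}\|\mathbf u_\varepsilon\|_{L^2}/\|\nabla\mathbf u_\varepsilon\|_{L^2}$ is genuinely unbounded as $\sqrt{|\mu|}\,r_B\to0$. The reason the second line of \eqref{pri:5.2} still holds is that the zeroth-order datum $g=\mu\mathbf u_\varepsilon$ must be split, as in Step~2 of the proof of Lemma~\ref{lemma:ap-2}, as $\mu(\mathbf u_\varepsilon-c)+\mu c$ with $c$ the local average: the mean-zero part is handled by Poincar\'e, and the constant part $\mu c$ is controlled by a flux argument (testing the equation against a cut-off and using the divergence theorem), giving $r_B|\mu c|\lesssim\dashint|\nabla\mathbf u_\varepsilon|$ --- not by the Caccioppoli bound~\eqref{pri:5.6}. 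That missing subtraction-plus-flux step is exactly what the paper is alluding to when it says to repeat the bootstrap of Lemma~\ref{lemma:ap-2}, Step~2, ``merely substituting Caccioppoli's inequality with its complex-valued counterpart'': \eqref{pri:5.6} supplies the decay for the regime $\sqrt{|\mu|}\,r_B\gg1$, while the small-$\mu$ regime is absorbed via Poincar\'e and the flux term, not via Caccioppoli. Relatedly, the claim that the power $2\beta_d$ arises from exactly ``two applications'' of \eqref{pri:5.3} does not literally close for large $d$: one Sobolev step lowers the exponent by only $\tfrac1d$, so controlling $\|\mathbf u_\varepsilon\|_{L^{\bar p}}$ after the first application still leaves a non-$L^2$ exponent; the correct bookkeeping is that one full $\beta_d$-step bootstrap is run for $\nabla\mathbf u_\varepsilon$ and a second for $\mathbf u_\varepsilon$ itself.
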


\begin{proof}
Since the arguments have been shown elsewhere, we merely outline the proof.
The moment estimate $\eqref{f:5.5}$ has guaranteed that $\mathcal{C}_{*,\hat{R}_0/\varepsilon}(0)$ is finite almost surely, which made the succeeding computations pathwise. Therefore, the desired estimate
$\eqref{pri:5.3}$ follows from $\eqref{pri:5.1*}$ by a standard localization argument (see e.g. \cite[Lemma 2.19]{Xu16}), which involves the bootstrap process as shown in Lemma $\ref{lemma:ap-2}$. After established the stated
estimate $\eqref{pri:5.3}$, one can apply it to complex valued equations
with $f=0$ and $g=\mu\mathbf{u}$. Repeating the same bootstrap iteration
as in Step 2 of the proof in Lemma $\ref{lemma:ap-2}$, we merely substitute
Caccioppoli's inequality with its complex-valued counterpart (see $\eqref{pri:5.6}$)  therein, and the multiplicative constant additionally involves the parameter $\theta$ given in $\eqref{a:5}$. Finally, we mention that it is fine to use the rescaling argument to start from the case of $r_B=1$ by noting that  $\mathcal{C}_{*,\hat{R}_0/\varepsilon}$ is the  scaling invariant quantity.
\end{proof}

\begin{lemma}\label{shen's lemma}
Let $T$ be a sublinear operator in $\Omega\subset\mathbb{R}^d$. Let $0<q_0<q<q_1<\infty$ and $\omega\in A_{q/q_0}$ weight.
Assume that $N>1$ and $\alpha_2>\alpha_1>1$.
Suppose that
\begin{enumerate}
  \item $T$ is bounded on $L^{q_0}(\Omega;\mathbb{C}^m)$ with $\|T\|_{L^{q_0}\to L^{q_0}}\leq C_0$;
  \item For any ball $B\subset\mathbb{R}^d$
  with the property: either  $\alpha_{2}B\subset\Omega$ or $x_B\in\Omega$,
  and for any $\mathbf{g}\in L_c^\infty(\Omega;\mathbb{C}^m)$ with
  $\text{supp}(\mathbf{g})\subset\Omega\setminus \alpha_2B$,
  one has
  \begin{equation}\label{shen-4}
  \Big(\dashint_{B\cap\Omega}|T(\mathbf{g})|^{q_1}
  \omega\Big)^{1/q_1}
  \leq N\bigg\{\Big(\dashint_{\alpha_1B\cap\Omega}
  |T(\mathbf{g})|^{q_0}\Big)^{1/q_0}
  +\sup_{B'\supseteq B}\Big(
  \dashint_{B'}|\mathbf{g}|^{q_0}\Big)^{1/q_0}\bigg\}
  \Big(\dashint_{B}\omega\Big)^{1/q_1}.
  \end{equation}
\end{enumerate}
Then, $T$ is bounded in weighted-$L^{q}(\Omega;\mathbb{C}^m)$ for any $q_0<q<q_1$. Moreover, there exists $\kappa$, depending on $q_1$ and $q$, such that
\begin{equation}\label{pri:shen-1}
  \Big(\int_{\Omega}|T(\mathbf{g})|^q\omega\Big)^{\frac{1}{q}}
  \leq CN^{\kappa}
  \Big(\int_{\Omega}|\mathbf{g}|^q\omega\Big)^{\frac{1}{q}},
\end{equation}
where $C$ depends only on $d,m,K,q_0,q_1,q,C_0$, and $[\omega]_{q/q_0}$.
\end{lemma}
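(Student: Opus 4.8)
\textbf{Proof plan for Lemma \ref{shen's lemma}.}
The plan is to reduce the weighted real-variable lemma to its known form by carefully bookkeeping the weight through the good-$\lambda$/stopping-time machinery underlying Shen's arguments (see \cite{Shen05,Shen23} and \cite[Theorem 4.2.6]{Shen18}). First I would fix $q_0<q<q_1$ and $\omega\in A_{q/q_0}$, and observe that by the self-improvement property of Muckenhoupt weights (Lemma \ref{weight}, specifically the open-endedness of the $A_s$ classes) there is some $\tilde q_0\in(q_0,q)$ with $\omega\in A_{q/\tilde q_0}$; this slack is what lets the reverse-Hölder inequality for $\omega$ interact with the $L^{q_0}$-boundedness hypothesis (1). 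Then I would pass to the standard dyadic (or Calder\'on–Zygmund) decomposition of $\Omega$ at a threshold level adapted to the quantity $\mathcal{M}(|\mathbf g|^{q_0})$ and $\mathcal{M}(|T\mathbf g|^{q_0})$, where $\mathcal{M}$ is the (localized) Hardy–Littlewood maximal operator; hypothesis (1) controls the bad part, and hypothesis (2), i.e.\ the weighted reverse-type estimate \eqref{shen-4}, supplies the crucial ``higher integrability on a smaller ball'' bound for the contribution of $\mathbf g$ supported away from $\alpha_2 B$.

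Next I would run the weighted level-set estimate: on each stopping-time ball $B$ one splits $\mathbf g = \mathbf g\mathds{1}_{\alpha_2 B} + \mathbf g\mathds{1}_{\Omega\setminus\alpha_2 B}$, handles the local piece by $L^{q_0}\to L^{q_0}$ boundedness (hypothesis (1)) together with the weighted weak-type bound that $A_{q/q_0}\subset A_\infty$ grants, and handles the far piece by \eqref{shen-4}. Multiplying through by $\omega$ and summing over the disjoint stopping-time balls, using that $\omega(B)\lesssim \bigl(\dashint_B\omega\bigr)|B|$ and the doubling property of $A_\infty$ weights to absorb the dilation factors $\alpha_1,\alpha_2$, yields a weighted good-$\lambda$ inequality of the form $\omega\bigl(\{|T\mathbf g|>A\lambda,\ \mathcal{M}(|\mathbf g|^{q_0})^{1/q_0}\le \gamma\lambda\}\bigr)\le C(N)\gamma^{\varepsilon}\,\omega\bigl(\{|T\mathbf g|>\lambda\}\bigr)$ for a small power $\varepsilon=\varepsilon(q_1,q)>0$ coming from the extra integrability exponent $q_1$. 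Integrating this against $q\lambda^{q-1}d\lambda$ and choosing $\gamma$ small, one absorbs the leading term (finiteness of $\|T\mathbf g\|_{L^q(\omega)}$ for nice $\mathbf g$ is first checked by interpolating (1) with a crude $L^{q_1}$ bound on compactly supported data, or by an a priori truncation argument) and arrives at \eqref{pri:shen-1} with $\kappa$ a fixed power of the jump governed by $q$ and $q_1$, and $C$ depending only on the listed data $d,m,K,q_0,q_1,q,C_0,[\omega]_{A_{q/q_0}}$.

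The main obstacle I anticipate is purely the weighted bookkeeping: one must make sure every ball enlargement (the passage from $B$ to $\alpha_1 B$ to $\alpha_2 B$, and from $B$ to the parent in the stopping-time tree) is paid for using only the $A_\infty$/doubling constant of $\omega$, so that the final constant does not secretly depend on the geometry of $\Omega$ beyond its Lipschitz character, and to verify that the constant $\kappa$ in the exponent $N^\kappa$ is the \emph{same} combinatorial power as in the unweighted statement \cite[Theorem 3.3]{Shen05} — this is what makes the lemma usable downstream when $N$ is a random variable with good moments (as in Theorem \ref{thm:5.1}). A secondary but routine point is the a priori finiteness needed to run the absorption; this is dispatched by first proving the estimate for $\mathbf g\in L^\infty_c$ (where $T\mathbf g\in L^{q_0}\cap L^{q_1}_{\mathrm{loc}}$ suffices) and then extending by density, noting that the weight $\omega\in A_{q/q_0}$ guarantees $L^\infty_c$ is dense in $L^q(\omega)$. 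Since all of these are standard manipulations in the Caffarelli–Peral/Shen framework, I would state the proof as a reduction to \cite[Theorem 4.2.6]{Shen18} or \cite{Shen23}, indicating precisely which inputs (1) and (2) feed into that theorem's hypotheses, rather than reproducing the full stopping-time argument.
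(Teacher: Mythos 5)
Your plan is correct and matches the paper's approach: the paper itself proves this lemma by a bare citation to \cite[Theorem 3.3]{Shen05} and \cite[Theorem 3.1]{Shen23}, and your sketch of the weighted good-$\lambda$/stopping-time argument (CZ decomposition against $\mathcal{M}(|\mathbf g|^{q_0})$, local piece via hypothesis (1), far piece via the weighted reverse-H\"older estimate \eqref{shen-4}, absorption using the extra exponent $q_1$) is precisely the content of Shen's proof in those references. Your emphasis on tracking the explicit $N^\kappa$ dependence with $\kappa=\kappa(q,q_1)$ is the right thing to flag, since that uniformity is exactly what the paper later exploits when $N$ becomes the random variable $\mathcal{C}_{*,\hat R_0/\varepsilon}^{2\beta_d}$ in the proof of Theorem \ref{thm:resolvent}.
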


\begin{proof}
The proof can be found in \cite[Theorem 3.3]{Shen05} and \cite[Theorem 3.1]{Shen23}, and the estimate $\eqref{pri:shen-1}$
\end{proof}

\bigskip
\noindent
\textbf{Proof of Theorem $\ref{thm:resolvent}$.}
Before giving the concrete proof, we point out that the estimates $\eqref{C-1}$ and $\eqref{C-2}$ are different. Let us start from
the estimate $\eqref{C-2}$. In fact, we can derive it by merely
modifying Step 4 in the proof of Proposition $\ref{P:3}$
(where the estimate $\eqref{f:3.0}$ will be substituted for
$\eqref{pri:5.4-w}$, correspondingly) and adjusting
the corresponding statement of Theorems $\ref{thm:2}$ and $\ref{P:5}$
(since the approach to go from the suboptimal estimate to optimal one is only related to the size of $|\mu|$.).
Although we can derive the same estimate as $\eqref{C-2}$ for the case of bounded domains,
the estimate $\eqref{C-1}$ can not be simply obtained from it by using Poincar\'e inequality.
That's the main reason why we additionally introduce Lemma $\ref{shen's lemma}$ to be the basic ingredient for the proof of $\eqref{C-1}$.

Therefore, the following proof merely focus on showing the proof
of $\eqref{C-1}$. Roughly speaking, we repeatedly utilize Lemma $\ref{shen's lemma}$ to reach our purpose, whose idea is analogy to that used in Lemma $\ref{lemma:ap-2}$. But, the stochastic integrability had a loss in the annealed Caldero\'n-Zygmund estimates (see e.g. Theorems $\ref{thm:1}$ and $\ref{thm:1-w}$), and we introduce the ``total probability rule'' to deal with this difficulty.

It is fine to assume $\mathbf{f}=0$ to establish
the estimate $\eqref{C-1}$ since one may simply utilize the result established for $\mathbf{g}$ by setting
$\mathbf{g}=(|\mu|+R_0^{-2})^{1/2}\mathbf{f}$ to get the desired estimate.
Due to Theorem $\ref{thm:5.0}$, we can define the linear operators as follows:
\begin{equation*}
T_{\mu}(\mathbf{g}):=(|\mu|+R_0^{-2})\mathbf{u};
\qquad
Q_{\mu}(\mathbf{g}):=(|\mu|+R_0^{-2})^{1/2}\nabla\mathbf{u}.
\end{equation*}
By the above definition, one can infer that
\begin{equation}\label{f:5.1}
Q_{\mu}(\mathbf{g})
=(|\mu|+R_0^{-2})^{-1/2}\nabla T_{\mu}(\mathbf{g}),\quad
T_{\mu}^{*}=T_{\bar{\mu}},\quad
\text{and}\quad
Q_{\mu}^{*}=Q_{\bar{\mu}}.
\end{equation}
Let $\Psi$ be the probability space governed by $\langle\cdot\rangle$, and
we have a decomposition by means of the stationary random field $\mathcal{C}_{*,R_0}$ as follows: for any integer $k\geq 0$, there are
\begin{equation}\label{decomposition}
\Xi^{0}:=\{w\in\Psi:\mathcal{C}_{*,R_0}^{w}(0)\leq 1\};
\quad
 \Xi^{k}:=\{w\in\Psi:2^{k-1}<\mathcal{C}_{*,R_0}^{w}(0)\leq
 2^{k}\}, \quad\text{and}\quad
 \Psi=\cup_{k=0}^{\infty}\Xi^{k},
\end{equation}
where we merely take $R_0=\hat{R}_0/\varepsilon$
(since one can formally obtain the elliptic operators $\mathcal{L}$ by
rescaling the multi-scale one $-\nabla\cdot a(\cdot/\varepsilon)\nabla$
with the price of the dilation of the region). Then, the conditional
version of $T_{\mu}$ and $Q_{\mu}$ can be defined as
\begin{equation*}
T_{\mu}^{k}(\mathbf{g}):=T_{\mu}(\mathbf{g})\mathds{1}_{\Xi^k};
\qquad
Q_{\mu}^{k}(\mathbf{g}):=Q_{\mu}(\mathbf{g})\mathds{1}_{\Xi^k},
\end{equation*}
and $T_{\mu}^{k}$ with $Q_{\mu}^{k}$
satisfies the same property as in $\eqref{f:5.1}$ since we can
multiply $\mathds{1}_{\Xi^k}$ to both sides of the equation $\eqref{pde:0-r}$ directly. Therefore, one can hand over
the corresponding $L^p$-boundedness estimates to their ``sublinear counterparts'' which
are defined as follows:
\begin{equation*}
\begin{aligned}
&T_{\mu,*}^{k}(\mathbf{g}):=|T_{\mu}^{k}(\mathbf{g})|;
&\qquad&
T_{\mu,p}^{k}(\mathbf{g}):=\langle |T_{\mu}^{k}(\mathbf{g})|^{p}\rangle^{1/p},\\
&Q_{\mu,*}^{k}(\mathbf{g}):=|Q_{\mu}^{k}(\mathbf{g})|;
&\qquad&
Q_{\mu,p}^{k}(\mathbf{g}):=\langle |Q_{\mu}^{k}(\mathbf{g})|^{p}\rangle^{1/p}.
\end{aligned}
\end{equation*}

Then, based upon Lemma $\ref{shen's lemma}$, the proof of $\eqref{C-1}$ will  be divided into three steps.

\noindent
\textbf{Step 1.}
Establish $L^p$-boundedness of $(T_{\mu}^{k},
Q_{\mu}^{k})$ for each integer $k\geq 0$.
By multiplying $\mathds{1}_{\Xi^k}$ to both sides of the equation $\eqref{pde:0-r}$, and using Theorem $\ref{thm:5.0}$, we can infer that
$T_{\mu,*}^{k}(\mathbf{g})$ and $Q_{\mu,*}^{k}(\mathbf{g})$ are $L^2$-bounded uniformly with respective to $k$. Thus, in view of Lemma $\ref{shen's lemma}$ (taking $\omega=1$ and $\alpha_1=3$ therein), it suffices to verify that for any $2<p<p_1$, there holds
\begin{equation*}
\begin{aligned}
\Big(\dashint_{B\cap\Omega}\big|\big(T_{\mu,*}^{k}(\mathbf{g}),
Q_{\mu,*}^{k}(\mathbf{g})
&\big)\big|^{p_1}\Big)^{1/p_1}
  \lesssim 2^{2\beta_dk}\\
&\bigg\{\Big(\dashint_{3B\cap\Omega}
  \big|\big(T_{\mu,*}^{k}(\mathbf{g}),
Q_{\mu,*}^{k}(\mathbf{g})\big)\big|^{2}\Big)^{1/2}
  +\sup_{B'\supseteq B}\Big(
  \dashint_{B'}|\mathbf{g}\mathds{1}_{\Xi^k}|^{2}\Big)^{1/2}\bigg\},
\end{aligned}
\end{equation*}
which indeed given by the estimates $\eqref{pri:5.2}$.
As a result, we obtain that
\begin{equation*}
\Big(\int_{\Omega}
\big|\big(T_{\mu}^{k}(\mathbf{g}),
Q_{\mu}^{k}(\mathbf{g})
\big)\big|^{p}\Big)^{\frac{1}{p}}
\leq
\Big(\int_{\Omega}
\big|\big(T_{\mu,*}^{k}(\mathbf{g}),
Q_{\mu,*}^{k}(\mathbf{g})
\big)\big|^{p}\Big)^{\frac{1}{p}}
\lesssim^{\eqref{pri:shen-1}} 2^{2\beta_dk\kappa_1}
\Big(\int_{\Omega}
\big|\mathbf{g}\mathds{1}_{\Xi^k}\big|^{p}\Big)^{\frac{1}{p}},
\end{equation*}
and this implies
\begin{equation}\label{f:5.2}
\Big(\int_{\Omega}
\big|\big(T_{\bar{\mu}}^{k}(\mathbf{g}),
Q_{\bar{\mu}}^{k}(\mathbf{g})
\big)\big|^{p}\Big)^{\frac{1}{p}}
\leq^{\eqref{f:5.1}} \Big(\int_{\Omega}
\big|\big(T_{\mu}^{k}(\mathbf{g}),
Q_{\mu}^{k}(\mathbf{g})
\big)\big|^{p}\Big)^{\frac{1}{p}}
\lesssim 2^{2\beta_dk\kappa_1}
\Big(\int_{\Omega}
\big|\mathbf{g}\mathds{1}_{\Xi^k}\big|^{p}\Big)^{\frac{1}{p}}.
\end{equation}

Finally, we can apply the duality arguments to get
$\eqref{f:5.2}$ for the case of $1<p<2$ and this
established $L^p$-boundedness of $(T_{\mu}^{k},
Q_{\mu}^{k})$ with $1<p<\infty$.

\medskip
\noindent
\textbf{Step 2.} For any $1<p,q<\infty$,
we can establish $L^q(\Omega;L^{p}_{\langle\cdot\rangle})$-boundedness of $(T_{\mu}^{k},
Q_{\mu}^{k})$. Firstly, taking $p$-th power
on the both sides of $\eqref{f:5.2}$ and then taking
the expectation $\langle\cdot\rangle$, using the Fubini theorem, we can derive that
\begin{equation*}
\int_{\Omega}
\Big\langle\big|\big(T_{\mu}^{k}(\mathbf{g}),
Q_{\mu}^{k}(\mathbf{g})
\big)\big|^{p}\Big\rangle
\lesssim 2^{2\beta_dk\kappa_1 p}
\int_{\Omega}
\big\langle\big|\mathbf{g}\mathds{1}_{\Xi^k}\big|^{p}\big\rangle.
\end{equation*}
By taking $p$-th root on the both sides above, we indeed have
$L^p(\Omega;L^{p}_{\langle\cdot\rangle})$-boundedness for
$(T_{\mu}^{k},
Q_{\mu}^{k})$ and,
in view of Lemma $\ref{shen's lemma}$, this is our start point in this step. Now, for any $p<q<q_1$, it suffices to establish
\begin{equation*}
\begin{aligned}
\Big(\dashint_{B\cap\Omega}\big|\big(T_{\mu,p}^{k}(\mathbf{g}),
Q_{\mu,p}^{k}(\mathbf{g})
&\big)\big|^{q_1}\Big)^{1/q_1}
  \leq 2^{2\beta_dk}\\
&\bigg\{\Big(\dashint_{3B\cap\Omega}
  \big|\big(T_{\mu,p}^{k}(\mathbf{g}),
Q_{\mu,p}^{k}(\mathbf{g})\big)\big|^{p}\Big)^{1/p}
  +\sup_{B'\supseteq B}\Big(
  \dashint_{B'}\langle|\textbf{g}\mathds{1}_{\Xi^k}|^{p}
  \rangle\Big)^{1/p}\bigg\},
\end{aligned}
\end{equation*}
and it will be yielded by the following computations:
\begin{equation}\label{f:5.3}
\begin{aligned}
\Big(\dashint_{B\cap\Omega}\Big\langle\big|\big(\mathbf{u},
\nabla\mathbf{u}\big)\mathds{1}_{\Xi^k}\big|^{p}\Big\rangle^{\frac{q_1}{p}}
\Big)^{\frac{p}{q_1}}
&\leq \Big\langle
\Big(\dashint_{B\cap\Omega}
\big|(\mathbf{u},
\nabla\mathbf{u})\mathds{1}_{\Xi^k}\big|^{q_1}\Big)^{\frac{p}{q_1}}\Big\rangle \\
&\lesssim^{\eqref{pri:5.2},\eqref{decomposition}} 2^{2\beta_dkp}
\dashint_{3B\cap\Omega}\Big\langle\big|\big(\mathbf{u},
\nabla\mathbf{u}\big)\mathds{1}_{\Xi^k}\big|^{p}\Big\rangle.
\end{aligned}
\end{equation}
By using Lemma $\ref{shen's lemma}$ again, we derive that
\begin{equation*}
\Big(\int_{\Omega}
\big\langle|(T_{\mu}^{k}(\mathbf{g}),
Q_{\mu}^{k}(\mathbf{g})
)|^{p}\big\rangle^{\frac{q}{p}}\Big)^{\frac{1}{q}}
\leq
\Big(\int_{\Omega}
\big|\big(T_{\mu,p}^{k}(\mathbf{g}),
Q_{\mu,p}^{k}(\mathbf{g})
\big)\big|^{q}\Big)^{\frac{1}{q}}
\lesssim^{\eqref{pri:shen-1}} 2^{2\beta_dk\kappa_2}
\Big(\int_{\Omega}\big\langle
\big|\mathbf{g}\mathds{1}_{\Xi^k}\big|^{p}\big\rangle^{\frac{q}{p}}\Big)^{\frac{1}{q}}.
\end{equation*}

Repeating the argument given for $\eqref{f:5.2}$ and using duality arguments, we can complete the proof of $L^q(\Omega;L^{p}_{\langle\cdot\rangle})$-boundedness of $(T_{\mu}^{k},
Q_{\mu}^{k})$ for any $1<p,q<\infty$.

\medskip
\noindent
\textbf{Step 3.}
Establish weighted $L^q(\Omega;L^{p}_{\langle\cdot\rangle})$-boundedness of $(T_{\mu}^{k}, Q_{\mu}^{k})$. Let $0<1-q_0\ll 1$ and
$q_1\gg 1$ be arbitrarily fixed, and then we set $q_0<p,q<q_1$ and $\gamma,\gamma'\in(1,\infty)$ with $1/\gamma+1/\gamma'=1$. On account of Lemma $\ref{shen's lemma}$ and the result
established in Step 2, to get the target,  it is reduced to show that
for any $\omega\in A_q$ (which implies that $\omega\in A_{q/q_0}$ by $\eqref{f:19-1}$), there holds the estimate $\eqref{shen-4}$
for $(T_{\mu}^{k}, Q_{\mu}^{k})$, up to a factor $\alpha^{2\beta_d(k+\rho)\kappa_2}$. Then, it can be given by modifying
the computations given for $\eqref{f:5.3}$, and there are
\begin{equation*}
\begin{aligned}
\Big(\dashint_{B\cap\Omega}\Big\langle\big|\big(\mathbf{u},
\nabla\mathbf{u}\big)\mathds{1}_{\Xi^k}\big|^{p}\Big\rangle^{\frac{q_1}{p}}
&\omega\Big)^{\frac{p}{q_1}}
\lesssim_{d,M_0} \Big(\dashint_{B\cap\Omega}\Big\langle\big|\big(\mathbf{u},
\nabla\mathbf{u}\big)\mathds{1}_{\Xi^k}\big|^{p}\Big\rangle^{\frac{q_1\gamma}{p}}
\Big)^{\frac{p}{q_1\gamma}}\Big(\dashint_{B}\omega^{\gamma'}
\Big)^{\frac{p}{q_1\gamma'}} \\
&\lesssim^{\eqref{pri:R-1}} \Big\langle
\Big(\dashint_{B\cap\Omega}
\big|(\mathbf{u},
\nabla\mathbf{u})\mathds{1}_{\Xi^k}\big|^{q_1\gamma}\Big)^{\frac{p}{q_1\gamma}}
\Big\rangle
\Big(\dashint_{B}\omega
\Big)^{\frac{p}{q_1}}\\
&\lesssim^{\eqref{pri:5.2},\eqref{decomposition}} 2^{2\beta_dkp}
\Big(\dashint_{3B\cap\Omega}\big\langle\big|\big(\mathbf{u},
\nabla\mathbf{u}\big)\mathds{1}_{\Xi^k}\big|^{p}\big\rangle^{\frac{q_0}{p}}
\Big)^{\frac{p}{q_0}}\Big(\dashint_{B}\omega
\Big)^{\frac{p}{q_1}}.
\end{aligned}
\end{equation*}
Hence, one can derive that
\begin{equation}\label{f:5.4}
\Big(\int_{\Omega}
\big\langle|(T_{\mu}^{k}(\mathbf{g}),
Q_{\mu}^{k}(\mathbf{g})
)|^{p}\big\rangle^{\frac{q}{p}}\omega\Big)^{\frac{1}{q}}
\lesssim^{\eqref{pri:shen-1}} 2^{2\beta_dk\kappa_3}
\Big(\int_{\Omega}\big\langle
\big|\mathbf{g}\mathds{1}_{\Xi^k}\big|^{p}
\big\rangle^{\frac{q}{p}}\omega\Big)^{\frac{1}{q}},
\end{equation}
which further implies the conditional version of $\eqref{C-1}$, i.e.,
\begin{equation}\label{f:5.7}
\begin{aligned}
(|\mu|+R_0^{-2})&\Big(\int_{\Omega}
\big\langle|\mathbf{u}\mathds{1}_{\Xi^k}|^p
\big\rangle^{\frac{q}{p}}\omega\Big)^{\frac{1}{q}}
+ (|\mu|+R_0^{-2})^{\frac{1}{2}}
\Big(\int_{\Omega}\big\langle|\nabla\mathbf{u}\mathds{1}_{\Xi^k}|^p
\big\rangle^{\frac{q}{p}}\omega\Big)^{\frac{1}{q}}\\
&\lesssim^{\eqref{f:5.4}}
2^{2\beta_dk\kappa_3}
\bigg\{\Big(\int_{\Omega}\big\langle|\mathbf{g}\mathds{1}_{\Xi^k}|^{p}
\big\rangle^{\frac{q}{p}}\omega\Big)^{\frac{1}{q}}
+ (|\mu|+R_0^{-2})^{1/2}\Big(\int_{\Omega}\big\langle|\mathbf{f}\mathds{1}_{\Xi^k}|^{p}
\big\rangle^{\frac{q}{p}}\omega\Big)^{\frac{1}{q}}
\bigg\},
\end{aligned}
\end{equation}
where we indeed employ the estimate $\eqref{f:5.4}$ twice,
and one instance involves the setting of $\mathbf{g}=(|\mu|+R_0^{-2})^{1/2}\mathbf{f}$.
Also, for any $k\geq 1$, $N_0>1$, and $\bar{p}>p$, we have
\begin{equation}\label{f:5.8}
\begin{aligned}
 \big\langle|(\mathbf{g},\mathbf{f})\mathds{1}_{\Xi^k}
 |^{p}\big\rangle^{\frac{1}{p}}
 &\leq \big\langle|(\mathbf{g},\mathbf{f})|^{\bar{p}}\big\rangle^{\frac{1}{\bar{p}}}
 \big\langle\mathds{1}_{\{\mathcal{C}_{*,R_0}> 2^{k-1}\}}\big\rangle^{\frac{\bar{p}-p}{p\bar{p}}}\\
&\leq
\Big(\frac{\langle|\mathcal{C}_{*,R_0}(0)|^{N_0}\rangle}{2^{(k-1)N_0}}
\Big)^{\frac{\bar{p}-p}{p\bar{p}}}
\lesssim_{\lambda,\lambda_1,\lambda_2,d,M_0,N_0}^{\eqref{f:5.5}} 2^{-(k-1)N_0(\frac{\bar{p}-p}{\bar{p}p})},
\end{aligned}
\end{equation}
where we employ Chebyshev’s inequality in the second step.

Thus, in view of the decomposition $\eqref{decomposition}$,
it follows from the triangle inequality that
\begin{equation}\label{f:5.6}
\begin{aligned}
&(|\mu|+R_0^{-2})\Big(\int_{\Omega}
\big\langle|\mathbf{u}|^p
\big\rangle^{\frac{q}{p}}\omega\Big)^{\frac{1}{q}}
+ (|\mu|+R_0^{-2})^{\frac{1}{2}}
\Big(\int_{\Omega}\big\langle|\nabla\mathbf{u}|^p
\big\rangle^{\frac{q}{p}}\omega\Big)^{\frac{1}{q}}\\
&\leq
(|\mu|+R_0^{-2})\sum_{k=0}^{\infty}\Big(\int_{\Omega}
\big\langle|\mathbf{u}\mathds{1}_{\Xi^k}|^p
\big\rangle^{\frac{q}{p}}\omega\Big)^{\frac{1}{q}}
+ (|\mu|+R_0^{-2})^{\frac{1}{2}}\sum_{k=0}^{\infty}
\Big(\int_{\Omega}\big\langle|\nabla\mathbf{u}\mathds{1}_{\Xi^k}|^p
\big\rangle^{\frac{q}{p}}\omega\Big)^{\frac{1}{q}}\\
&\lesssim^{\eqref{f:5.7},\eqref{f:5.8}}\sum_{k=0}^{\infty}
2^{k[2\beta_d\kappa_3-N_0(\frac{\bar{p}-p}{\bar{p}p})]}
\bigg\{\Big(\int_{\Omega}\big\langle|\mathbf{g}|^{\bar{p}}
\big\rangle^{\frac{q}{\bar{p}}}\omega\Big)^{\frac{1}{q}}
+ (|\mu|+R_0^{-2})^{1/2}\Big(\int_{\Omega}\big\langle|\mathbf{f}|^{\bar{p}}
\big\rangle^{\frac{q}{\bar{p}}}\omega\Big)^{\frac{1}{q}}
\bigg\},
\end{aligned}
\end{equation}
and we can choose $N_0$ to make the power series convergent in
the last line of $\eqref{f:5.6}$. This gives the desired estimate
$\eqref{C-1}$ and completes the whole proof.
\qed

\section{Appendix}\label{sec:5}

\begin{lemma}[\cite{Duoandikoetxea01,Grafakos08}]\label{weight}
Let $\omega\in A_p$, $1\leq p<\infty$. Then, we have the following properties:
\begin{subequations}
\begin{align}
& [\tilde{\omega}]_{A_p} = [\omega]_{A_p},
~\text{where}~\tilde{\omega} = \omega(\varepsilon\cdot);  \label{f:18-0}\\
& \omega(Q)\big(\frac{|S|}{|Q|}\big)^{p}\leq [\omega]_{A_p}\omega(S)
\quad \forall S\subset Q\subset\mathbb{R}^d. \label{f:18}
\end{align}
\end{subequations}
Also, there exists $\epsilon>0$, depending only on $p$ and $[\omega]_{A_p}$, such that
for any $Q$ we have
\begin{subequations}
\begin{align}
&\Big(\dashint_{Q}\omega^{1+\epsilon}\Big)^{\frac{1}{1+\epsilon}}
\lesssim_{d,p,[\omega]_{A_p}}\dashint_{Q}\omega;\label{pri:R-1} \\
& \omega(S)\lesssim_{d,p,[\omega]_{A_p}}  \big(\frac{|S|}{|Q|}\big)^{\delta_0}\omega(Q),
\qquad \forall S\subset Q,
\label{f:19}
\end{align}
\end{subequations}
where $\delta_0 := \epsilon/(1+\epsilon)$. Moreover, given any $1<p<\infty$ and $c_0>1$
there exist positive constants $C=C(d,p,c_0)$ and $\gamma=\gamma(d,p,c_0)$ such that
for all $\omega\in A_p$ we have
\begin{equation}\label{f:19-1}
  [\omega]_{A_p}\leq c_0
\quad \Rightarrow \quad
  [\omega]_{A_{p-\gamma}}\leq C.
\end{equation}
Let $\mathcal{M}$ be the Hardy-Littlewood maximal function. Then,
\begin{equation}\label{f:19-2}
\int_{\mathbb{R}^d}|\mathcal{M}(f)|^p\omega
\lesssim_p \int_{\mathbb{R}^d}|f|^p\omega
\end{equation}
if and only if $\omega\in A_p$.
\end{lemma}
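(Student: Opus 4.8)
The statement collects classical facts about Muckenhoupt weights, so the plan is simply to indicate the logical dependencies and the mechanism behind each item, deferring the standard details to \cite{Duoandikoetxea01,Grafakos08}. I would begin with the two elementary items. For \eqref{f:18-0} I would perform the change of variables $x\mapsto\varepsilon x$, which maps balls to balls and preserves each of the averages $\dashint_B\omega$ and $\dashint_B\omega^{-1/(p-1)}$ (or $\esssup_B\omega^{-1}$ if $p=1$), so the supremum defining $[\omega]_{A_p}$ is unchanged. For \eqref{f:18} I would combine the $A_p$ condition with H\"older's inequality: given $S\subset Q$, write $|S|=\int_S\omega^{1/p}\omega^{-1/p}\le\omega(S)^{1/p}\big(\int_Q\omega^{-1/(p-1)}\big)^{(p-1)/p}$ and then bound $\big(\int_Q\omega^{-1/(p-1)}\big)^{p-1}\le[\omega]_{A_p}|Q|^p\omega(Q)^{-1}$ from \eqref{f:w-1}; rearranging yields the claim, and the case $p=1$ is treated the same way using \eqref{f:w-2}.

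Next I would establish the reverse H\"older inequality \eqref{pri:R-1}, which is the key self-improvement (Gehring/Coifman--Fefferman) property of $A_p$ weights: there exist $\epsilon>0$ and $C$, depending only on $d$, $p$ and $[\omega]_{A_p}$, with $\big(\dashint_Q\omega^{1+\epsilon}\big)^{1/(1+\epsilon)}\le C\dashint_Q\omega$ for every cube $Q$. The argument I would follow is the standard Calder\'on--Zygmund stopping-time decomposition of $\omega$ at a suitable height, using the doubling inequality \eqref{f:18} to control the redistribution of mass; this is the one genuinely nontrivial step, and I would cite \cite[Ch.~7]{Duoandikoetxea01} for the full computation. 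Granting \eqref{pri:R-1}, the strong doubling estimate \eqref{f:19} is immediate: for $S\subset Q$, H\"older's inequality with exponents $1+\epsilon$ and $(1+\epsilon)/\epsilon$ gives $\omega(S)\le|S|^{\delta_0}\big(\int_Q\omega^{1+\epsilon}\big)^{1/(1+\epsilon)}$ with $\delta_0=\epsilon/(1+\epsilon)$, and \eqref{pri:R-1} bounds the last factor by $\lesssim|Q|^{-\delta_0}\omega(Q)$.

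Finally I would deduce the two remaining items from the same circle of ideas. For the openness property \eqref{f:19-1}, I would apply \eqref{pri:R-1} to the conjugate weight $\sigma:=\omega^{-1/(p-1)}\in A_{p'}$, whose characteristic equals $[\omega]_{A_p}^{1/(p-1)}$ and is hence controlled by $c_0$; the resulting exponent gain translates into $\omega\in A_{p-\gamma}$ with $\gamma=\gamma(d,p,c_0)>0$ and $[\omega]_{A_{p-\gamma}}\le C(d,p,c_0)$, a quantitative form available in \cite{Duoandikoetxea01,Grafakos08}. For the weighted maximal inequality \eqref{f:19-2}, the ``only if'' direction follows by testing against $\mathds{1}_Q$; for the ``if'' direction I would invoke Muckenhoupt's theorem, whose proof rests on \eqref{f:18} together with a Calder\'on--Zygmund covering argument (or, alternatively, on interpolation with change of measure using \eqref{f:19-1}), again referring to \cite{Duoandikoetxea01,Grafakos08}. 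The only obstacle worth flagging is \eqref{pri:R-1}: everything else in the lemma is either elementary or an immediate corollary of it.
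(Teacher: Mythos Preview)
Your outline is correct and follows the standard route found in the cited references. Note that the paper itself does not supply a proof for this lemma: it is stated in the appendix as a collection of classical facts with citations to \cite{Duoandikoetxea01,Grafakos08}, so there is nothing to compare against beyond confirming that your sketch matches the treatment in those textbooks, which it does.
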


\begin{lemma}[Shen's real argument of weighted version \cite{Shen23}]\label{shen's lemma2}
Let $0<p_0<p<p_1$, $\omega\in A_{\frac{p}{p_0}}$weight
and $\Omega$ be a bounded Lipschitz domain,
$F\in L^{p_0}(\Omega)$ and $g\in L^{p_1}(\Omega)$.
Let $B_0:=B_{r_0}(x_0)$, where $x_0\in\partial\Omega$ and
$0<r_0<c_0\text{diam}(\Omega)$. Suppose that for each ball $B$
with the property that $|B|\leq  c_1|B_0|$
and either $4B\subset 2B_0\cap\Omega$ or $x_B\in\partial\Omega\cap 2B_0$, there exist two measurable functions
$F_B$ and $R_B$ on $\Omega\cap2B$,
such that $|F|\leq |W_B| +
|V_B|$ on $\Omega\cap2B$, and
\begin{subequations}
\begin{align}
\Big(\dashint_{2B\cap\Omega}|V_B|^{p_1}\omega
\Big)^{\frac{1}{p_1}}
&\lesssim_{N_1} \bigg\{
\Big(\dashint_{\alpha B\cap\Omega}|F|^{p_0}\Big)^{\frac{1}{p_0}}
+\sup_{4B_0\supseteq B^\prime\supseteq B}
\Big(\dashint_{B^\prime\cap\Omega}|g|^{p_0}
\Big)^{\frac{1}{p_0}}
\bigg\}\Big(\dashint_{B}\omega\Big)^{1/{p_1}},
\label{shen-1}\\
\Big(\dashint_{2B\cap\Omega}|W_B|^{p_0}
\Big)^{\frac{1}{p_0}}
&\lesssim_{N_2}
\sup_{4B_0\supseteq B^\prime\supseteq B}
\Big(\dashint_{B^\prime\cap\Omega}|g|^{p_0}
\Big)^{\frac{1}{p_0}}
+\eta\Big(\dashint_{\alpha B\cap\Omega}
|F|^{p_0}\Big)^{\frac{1}{p_0}},
\label{shen-2}
\end{align}
\end{subequations}
where $N_1, N_2>0$, $\eta\geq0$
and $0<c_1<1$ with $\alpha>2$. Then there exists $\theta_{0}>0$, depending on
$d,p_0,p_1,p,N_1,[\omega]_{A_{p/p_0}}$ and the
Lipschitz character of $\Omega$, with the property that if $0\leq \eta\leq\eta_0$, then
\begin{equation}\label{shen-3}
\Big(\dashint_{B_0\cap\Omega}|F|^p\omega\Big)^{\frac{1}{p}}
\lesssim \bigg\{\Big(\dashint_{4B_0\cap\Omega}|F|^{p_0}\Big)^{\frac{1}{p_0}}
\Big(\dashint_{B_0}\omega\Big)^{1/{p}}
+\Big(\dashint_{4B_0\cap\Omega}|g|^p\omega\Big)^{\frac{1}{p}}\bigg\},
\end{equation}
where the multiplicative constant relies on
$d,p_0,p_1,p,c_1,\alpha,N_1,N_2,[\omega]_{A_{p/p_0}}$ and the Lipschitz
character of $\Omega$.
\end{lemma}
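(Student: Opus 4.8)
The plan is to derive \eqref{shen-3} from a weighted good-$\lambda$ inequality for a localized Hardy--Littlewood maximal operator, following the real-variable scheme of Shen \cite{Shen05,Shen18,Shen23} but carrying the measure $\omega\,dx$ throughout and exploiting the $A_{p/p_0}$-properties of $\omega$ collected in Lemma~\ref{weight}. By the dilation invariance \eqref{f:18-0} of the characteristic constant one may take $r_0=1$, and dividing $F$ and $g$ by a constant one may assume $\dashint_{4B_0\cap\Omega}\bigl(|F|^{p_0}+|g|^{p_0}\bigr)=1$. Write $f:=\mathds{1}_{4B_0\cap\Omega}|F|^{p_0}$, $\tilde g:=\mathds{1}_{4B_0\cap\Omega}|g|^{p_0}$, let $\mathcal{M}_{0}h(x):=\sup\{\dashint_{B'\cap\Omega}|h|: x\in B'\subseteq 8B_0\}$ be the restricted maximal operator, and set $E_\lambda:=\{x\in B_0\cap\Omega:\mathcal{M}_{0}f(x)>\lambda\}$. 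Since $|F|^{p_0}\le\mathcal{M}_{0}f$ a.e.\ on $B_0\cap\Omega$ by Lebesgue differentiation, it suffices to control $\int_0^\infty\lambda^{\frac{p}{p_0}-1}\omega(E_\lambda)\,d\lambda$.

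First I would run a Calder\'on--Zygmund/Vitali decomposition of $E_\lambda$: for $\lambda$ above a threshold $\lambda_0\sim 1$ one has $E_\lambda=\bigcup_i B_i$ up to an $\omega$-null set, with balls $B_i$ of bounded overlap satisfying $|B_i|\le c_1|B_0|$ and the stopping estimate $\dashint_{\alpha B_i\cap\Omega}|F|^{p_0}\lesssim\lambda$, and of one of the two admissible types ($4B_i\subset 2B_0\cap\Omega$ or $x_{B_i}\in\partial\Omega\cap 2B_0$), so that \eqref{shen-1}--\eqref{shen-2} may be invoked on each $B_i$; arranging stopping balls of these geometric types near $\partial\Omega$ uses the Lipschitz character of $\Omega$. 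On a fixed $B_i$, from $|F|\le|W_{B_i}|+|V_{B_i}|$ one gets $|F|^{p_0}\lesssim|W_{B_i}|^{p_0}+|V_{B_i}|^{p_0}$ on $2B_i\cap\Omega$, and (after the standard estimate of the tail of $\mathcal{M}_{0}f$ by $\lambda$) a point $x\in B_i$ with $\mathcal{M}_{0}f(x)>A\lambda$ lies in $\{\mathcal{M}(\mathds{1}_{2B_i\cap\Omega}|W_{B_i}|^{p_0})>cA\lambda\}\cup\{\mathcal{M}(\mathds{1}_{2B_i\cap\Omega}|V_{B_i}|^{p_0})>cA\lambda\}$. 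For the $V_{B_i}$-part I would use the weighted $L^{p_1/p_0}(\omega)$-boundedness of $\mathcal{M}$ (valid since $\omega\in A_{p/p_0}\subset A_{p_1/p_0}$), Chebyshev, \eqref{shen-1} and the stopping estimate to bound the $\omega$-mass of the bad set by $CA^{-p_1/p_0}\omega(B_i)$. For the $W_{B_i}$-part I would use the unweighted weak-$(1,1)$ bound for $\mathcal{M}$ and \eqref{shen-2} to bound its \emph{Lebesgue} mass by $CA^{-1}(\eta^{p_0}+\varepsilon)|B_i|$ on the good set $\{\mathcal{M}_{0}\tilde g\le\varepsilon\lambda\}$, then upgrade it to the $\omega$-mass via \eqref{f:19}, getting $C\bigl(A^{-1}(\eta^{p_0}+\varepsilon)\bigr)^{\delta_0}\omega(B_i)$. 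Summing over $i$ yields $\omega\bigl(E_{A\lambda}\cap\{\mathcal{M}_{0}\tilde g\le\varepsilon\lambda\}\bigr)\le\gamma(A,\varepsilon,\eta)\,\omega(E_\lambda)$ for $\lambda>\lambda_0$, with $\gamma(A,\varepsilon,\eta)\to0$ as $A\to\infty$, $\varepsilon,\eta\to0$.

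To finish, multiply by $\lambda^{\frac{p}{p_0}-1}$ and integrate over $\lambda>\lambda_0$, splitting $\omega(E_{A\lambda})\le\omega\bigl(E_{A\lambda}\cap\{\mathcal{M}_{0}\tilde g\le\varepsilon\lambda\}\bigr)+\omega(\{\mathcal{M}_{0}\tilde g>\varepsilon\lambda\})$. Choosing $A$ large and then $\varepsilon,\eta$ small so that $C\gamma A^{p/p_0}\le\tfrac12$ (this fixes $\eta_0$), the self-referential term is absorbed after a routine truncation; the remaining $\tilde g$-contribution is $\lesssim\int_{B_0\cap\Omega}(\mathcal{M}_{0}\tilde g)^{p/p_0}\omega\lesssim\int_{4B_0\cap\Omega}|g|^{p}\omega$ by \eqref{f:19-2}, and the threshold contributes $\lambda_0^{p/p_0}\omega(B_0)\sim\bigl(\dashint_{4B_0\cap\Omega}|F|^{p_0}\bigr)^{p/p_0}\omega(B_0)$. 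Since the left side dominates $\int_{B_0\cap\Omega}|F|^p\omega$, dividing by $|B_0|$, using $\dashint_{B_0}\omega=\omega(B_0)/|B_0|$ and the doubling of $\omega$ to pass from $\omega(4B_0)$ to $\omega(B_0)$, and taking $p$-th roots gives \eqref{shen-3}.

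The main obstacle is the asymmetry between the two hypotheses in the weighted setting: \eqref{shen-1} already carries $\omega$ (with the higher exponent $p_1>p$), whereas \eqref{shen-2} only supplies an \emph{unweighted} $L^{p_0}$-smallness of $W_B$. Converting that unweighted smallness into a weighted smallness of the bad set, uniformly in $\lambda$ and in the covering ball, forces one through the $A_{p/p_0}$-comparison \eqref{f:19}; this is exactly why the final parameter $\gamma$ acquires the exponent $\delta_0<1$ and why the admissible threshold $\eta_0$ depends on $[\omega]_{A_{p/p_0}}$. A secondary technical point is verifying that the tail of $\mathcal{M}_{0}f$ outside each stopping ball is genuinely $\lesssim\lambda$, which makes the good-$\lambda$ scheme close.
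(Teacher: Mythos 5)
Your proposal is correct and reconstructs, in outline, exactly the good-$\lambda$/Calder\'on--Zygmund stopping-time scheme that Shen uses to establish the cited Theorem 4.1 of \cite{Shen23}; the paper itself gives no proof but simply refers to that source. You have also correctly identified the two places where the weighted version genuinely departs from the unweighted Lemma: the $W_B$-hypothesis \eqref{shen-2} is only unweighted, so its contribution to the bad set must be upgraded from Lebesgue measure to $\omega$-measure through the $A_\infty$ comparison \eqref{f:19} (which is why $\eta_0$ picks up a dependence on $[\omega]_{A_{p/p_0}}$), while the $V_B$-hypothesis \eqref{shen-1} already carries $\omega$ and is dispatched by the weighted $L^{p_1/p_0}(\omega)$-boundedness of the maximal function.
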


\begin{proof}
See \cite[Theorem 4.1]{Shen23}.
\end{proof}

\begin{lemma}[primary geometry on integrals]\label{integral-geometry}
Let $f\in L^1_{\emph{loc}}(\mathbb{R}^d)$, and
$\Omega\subset\mathbb{R}^d$ be a (bounded) Lipschitz domain with
$X\in\partial\Omega$ and $R_0\gg 1$. Then
there hold the following inequalities:
\begin{itemize}
  \item For all $B_r(X)
  \subset\mathbb{R}^d$ and $x_0\in B_r(X)\cap\Omega$ with
   $r<(1/4)$, we have
  \begin{equation}\label{g-0}
  \Big(\dashint_{U_{1}(x_0)}|f|\Big)^{\frac{1}{s}}
  \lesssim
  \dashint_{D_{5r}(X)}
  \Big(
  \dashint_{U_{1}(x)}
  |f|\Big)^{\frac{1}{s}} dx,
  \end{equation}
  where $s>0$.
  \item For all balls $B_r(X)
  \subset\mathbb{R}^d$ with $r\geq (1/4)$,
  we have
\begin{subequations}
\begin{align}
  &\dashint_{D_r(X)} |f|
  \lesssim \dashint_{D_{2r}(X)}
  \Big(\dashint_{U_{1}(x)}|f|\Big)dx
  \lesssim \dashint_{D_{7r}(X)} |f|;
  \label{g-1}\\
  &
\int_{\Omega}|f|\sim\int_{\Omega}
  \Big(\dashint_{U_{1}(x)}|f|\Big)dx,
  \label{g-1*}
\end{align}
\end{subequations}
\end{itemize}
where the multiplicative constant depends only on $d$ and $M_0$.
\end{lemma}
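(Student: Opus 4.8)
These three inequalities are purely geometric — no PDE enters — and the proof rests on Fubini's theorem together with two standard facts about a Lipschitz domain $\Omega$ with character $M_0$: the interior cone condition, which yields $|B_\rho(y)\cap\Omega|\sim_{d,M_0}\rho^d$ uniformly for $y\in\overline{\Omega}$ and $\rho\lesssim R_0$ (this is where $R_0\gg1$ enters), and the volume comparisons $|D_{2r}(X)|\sim|D_{7r}(X)|\sim_{d,M_0}r^d$. I would treat the inequalities in the order $\eqref{g-1*}$, $\eqref{g-1}$, $\eqref{g-0}$.

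\emph{The Fubini identities $\eqref{g-1*}$ and $\eqref{g-1}$.} For $\eqref{g-1*}$, Fubini gives $\int_\Omega(\dashint_{U_1(x)}|f|)\,dx=\tfrac1{|B_1|}\int_\Omega|f(y)|\,|B_1(y)\cap\Omega|\,dy$, and since $|B_1(y)\cap\Omega|\sim_{d,M_0}1$ for every $y\in\Omega$, the right-hand side is $\sim\int_\Omega|f|$. For $\eqref{g-1}$ (where $r\ge1/4$), Fubini turns $\dashint_{D_{2r}(X)}(\dashint_{U_1(x)}|f|)\,dx$ into $\sim|D_{2r}(X)|^{-1}\int_\Omega|f(y)|\,N_{2r}(y)\,dy$, where $N_\rho(y):=|\{x\in D_\rho(X):|x-y|<1\}|$; the lower bound in $\eqref{g-1}$ follows from $N_{2r}(y)\ge|B_{1/4}(y)\cap\Omega|\gtrsim_{d,M_0}1$ whenever $y\in D_r(X)$ (using $B_{1/4}(y)\subset B_1(y)\cap B_{2r}(X)$, valid for $r\ge1/4$), and the upper bound from $N_{2r}(y)\le|B_1|$ together with $N_{2r}(y)=0$ unless $|y-X|<2r+1\le7r$.

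\emph{The small-scale estimate $\eqref{g-0}$.} Set $g(x):=\dashint_{U_1(x)}|f|$. The heart of the matter is the overlap bound
\begin{equation*}
N_{5r}(y)\gtrsim_{d,M_0}r^d\qquad\text{for every }y\in U_1(x_0),\ r<\tfrac14 .
\end{equation*}
Granting it, Fubini gives $\dashint_{D_{5r}(X)}g(x)\,dx\gtrsim|D_{5r}(X)|^{-1}\int_{U_1(x_0)}|f(y)|\,r^d\,dy\sim g(x_0)$, i.e. the unpowered version of $\eqref{g-0}$; the power $1/s$ — with $0<s\le1$, which is the only range used in the applications — is then inserted by Jensen's inequality applied to the convex increasing function $t\mapsto t^{1/s}$, giving $(g(x_0))^{1/s}\lesssim(\dashint_{D_{5r}(X)}g)^{1/s}\le\dashint_{D_{5r}(X)}g^{1/s}$, which is precisely $\eqref{g-0}$.

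\emph{Proof of the overlap bound, and the main obstacle.} If $|y-x_0|<1-6r$ then $B_{4r}(x_0)\subset B_1(y)\cap B_{5r}(X)$, so $N_{5r}(y)\ge|B_{4r}(x_0)\cap\Omega|\gtrsim_{d,M_0}r^d$. In the complementary regime $x_0$ lies within $6r$ of $\partial B_1(y)$ (or $r\sim1$), so for $x$ at distance $\lesssim r$ from $x_0$ the condition $|x-y|<1$ becomes, up to an error of order $r$, the half-space condition $\langle x-x_0,\hat v\rangle>0$ with $\hat v:=(y-x_0)/|y-x_0|$. When the interior cone of $\Omega$ at $x_0$ meets that half-space — which happens whenever $\hat v$ is not too strongly "downhill" relative to the cone axis — one finds a cone-shaped subset of $\Omega\cap B_{5r}(X)$ of measure $\gtrsim_{d,M_0}r^d$ inside $B_1(y)$, and one is done. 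The residual case — $\hat v$ pointing steeply along $\partial\Omega$ — is the step I expect to be the main obstacle: it is harmless only because in that case the defining Lipschitz graph must itself be near-maximally steep towards $y$, so $\Omega\cap B_{5r}(X)$ is non-degenerate precisely in the $\hat v$-direction, whence a cap of $B_{5r}(X)$ of measure $\gtrsim_{d,M_0}r^d$ still lies in $\Omega\cap B_1(y)$; making this dichotomy quantitative while keeping all constants dependent only on $d$ and $M_0$ is the one genuinely delicate point, and the sub-cases $r$ close to $1/4$ reduce to the same argument with cruder constants.
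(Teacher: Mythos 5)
Your Fubini proofs of $\eqref{g-1*}$ and $\eqref{g-1}$ are correct and are the natural route. For $\eqref{g-0}$ the overlap bound $N_{5r}(y)\gtrsim_{d,M_0}r^d$ is indeed the right key input, and you correctly flagged the residual geometric case ($\hat v$ grazing the Lipschitz graph) as the delicate point; your sketch there is plausible but not yet a proof, and you should also note that the threshold $|y-x_0|<1-6r$ of your dichotomy becomes vacuous once $r>1/6$, so the ``hard case'' has to be set up to cover all of $U_1(x_0)$ when $r$ is near $1/4$. The paper itself defers this lemma to \cite[Lemma~6.2]{Wang-Xu-Zhao21} and \cite[Lemma~2.12]{Wang-Xu22}, so there is no in-text proof to compare against, but the overlap-plus-Fubini route is the expected one.

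The step that genuinely fails, however, is the one you treated as cosmetic. Jensen gives $(\dashint g)^{1/s}\le\dashint g^{1/s}$ only when $1/s\ge1$, and you restrict to $s\le1$ on the grounds that it is ``the only range used in the applications.'' It is not: in Step~1 of Part~II of the proof of Lemma~\ref{lemma:2.2}, $\eqref{g-0}$ is applied with inner exponent $p/2$ where $0<|2-p|\ll1$, so $s=2/p$ and $p$ is allowed to be slightly below $2$, forcing $s$ slightly above $1$; there Jensen reverses and your chain breaks at its last inequality. Fortunately the overlap bound already yields the full $s>0$ statement without Jensen. Setting $h(x):=\int_{U_1(x)\cap U_1(x_0)}|f|$ and $I_0:=\int_{U_1(x_0)}|f|$, Fubini together with $N_{5r}(y)\ge c\,|D_{5r}(X)|$ for all $y\in U_1(x_0)$ gives $\dashint_{D_{5r}(X)}h\ge c\,I_0$, while trivially $0\le h\le I_0$; a Chebyshev estimate then produces a set $A\subset D_{5r}(X)$ of relative measure $\ge c/2$ on which $h\ge\tfrac{c}{2}I_0$, hence on which $g(x)\gtrsim_{d,M_0} g(x_0)$ because $|U_1(x)|\sim_{d,M_0}1$, and therefore
\[
\dashint_{D_{5r}(X)}g^{1/s}\ \ge\ \frac{|A|}{|D_{5r}(X)|}\,\inf_A g^{1/s}\ \gtrsim_{d,M_0,s}\ g(x_0)^{1/s}
\]
for every $s>0$. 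You should replace the Jensen step by this Chebyshev argument; it keeps the overlap bound as the sole geometric ingredient and removes the spurious restriction on $s$.
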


\begin{proof}
See \cite[Lemma 6.2]{Wang-Xu-Zhao21} or \cite[Lemma 2.12]{Wang-Xu22}.
\end{proof}

\begin{lemma}\label{weight-1}
Let $R\geq 1$.
Assume that $F\in C^{\infty}_0(\Omega;L^\infty_{\langle\cdot\rangle})$\footnote{
It represents the space of smooth functions with compact support and values taken in $L^\infty_{\langle\cdot\rangle}$,
and the subscript of $L^\infty_{\langle\cdot\rangle}$ stresses that its probability measure is introduced from
the ensemble $\langle\cdot\rangle$.},
$1\leq p,q,s<\infty$.
Then, there holds
\begin{equation}\label{w-1}
\bigg(\int_{\Omega}\Big\langle\big(
\dashint_{\textcolor[rgb]{1.00,0.00,0.00}{U_{R}(x)}}|F|^s\big)^{\frac{p}{s}}
\Big\rangle^{\frac{q}{p}} dx\bigg)^{\frac{1}{q}}
\lesssim_{\textcolor[rgb]{0.00,0.00,1.00}{d,M_0}}
R^{d(1-\frac{1}{s})}
\bigg(\int_{\Omega}\Big\langle\big(
\dashint_{\textcolor[rgb]{1.00,0.00,0.00}{U_1(x)}}|F|^s\big)^{\frac{p}{s}}
\Big\rangle^{\frac{q}{p}} dx\bigg)^{\frac{1}{q}}.
\end{equation}
\end{lemma}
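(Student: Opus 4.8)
The plan is to reduce the mixed-norm inequality \eqref{w-1} to a pointwise estimate relating the average over $U_R(x)$ to the maximal-function-like quantity built from averages over unit-scale balls, and then to exploit the boundedness of the (restricted) Hardy–Littlewood maximal operator on $L^{q/?}$-type spaces. First I would observe that by zero extension of $F$ from $\Omega$ to $\mathbb{R}^d$ (as is done repeatedly in the paper, e.g. in the proof of Proposition \ref{P:3}) together with the geometrical comparability of $U_R(x)=B_R(x)\cap\Omega$ with $B_R(x)$ for a Lipschitz domain (Lemma \ref{integral-geometry}, which already supplies the constant depending only on $d,M_0$), it suffices to prove the inequality on all of $\mathbb{R}^d$ with $U_R,U_1$ replaced by $B_R,B_1$. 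So the target becomes
\begin{equation*}
\bigg(\int_{\mathbb{R}^d}\Big\langle\big(\dashint_{B_R(x)}|F|^s\big)^{\frac{p}{s}}\Big\rangle^{\frac{q}{p}}dx\bigg)^{\frac{1}{q}}
\lesssim_{d} R^{d(1-\frac{1}{s})}
\bigg(\int_{\mathbb{R}^d}\Big\langle\big(\dashint_{B_1(x)}|F|^s\big)^{\frac{p}{s}}\Big\rangle^{\frac{q}{p}}dx\bigg)^{\frac{1}{q}}.
\end{equation*}

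The key step is the pointwise bound. Cover $B_R(x)$ by finitely many (at most $C_d R^d$) unit balls $B_1(x_i)$ with $x_i\in B_R(x)$; then
\begin{equation*}
\dashint_{B_R(x)}|F|^s=\frac{1}{|B_R|}\int_{B_R(x)}|F|^s
\le \frac{1}{|B_R|}\sum_i\int_{B_1(x_i)}|F|^s
= \frac{|B_1|}{|B_R|}\sum_i\dashint_{B_1(x_i)}|F|^s
\lesssim_d R^{-d}\sum_i\dashint_{B_1(x_i)}|F|^s.
\end{equation*}
Hence $\big(\dashint_{B_R(x)}|F|^s\big)^{1/s}\lesssim_d R^{-d/s}\big(\sum_i\dashint_{B_1(x_i)}|F|^s\big)^{1/s}$, and since each $\dashint_{B_1(x_i)}|F|^s\le \inf_{y\in B_1(x_i)}\mathcal{M}\big(\dashint_{B_1(\cdot)}|F|^s\big)\cdots$ — more directly, each unit average is $\le \mathcal{M}(|F|^s)(z)$ for every $z$ in a fixed ball of radius $2R$ around $x$, and there are $\lesssim R^d$ of them, so $\sum_i\dashint_{B_1(x_i)}|F|^s\lesssim_d R^d\,\dashint_{B_{3R}(x)}\mathcal{M}(|F|^s)$. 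Combining, $\big(\dashint_{B_R(x)}|F|^s\big)^{1/s}\lesssim_d R^{d(1/s-1/s)}\cdots$; the cleanest bookkeeping is to write $\dashint_{B_R(x)}|F|^s\lesssim_d \mathcal{M}\big(\dashint_{B_1(\cdot)}|F|^s\big)(x)$ with no $R$-loss on the average itself, and then take $1/s$ powers: $\big(\dashint_{B_R(x)}|F|^s\big)^{1/s}\lesssim_d \big(\mathcal{M}(G)(x)\big)^{1/s}$ where $G(y):=\dashint_{B_1(y)}|F|^s$. At this point no factor $R^{d(1-1/s)}$ has appeared; that factor will instead be produced when passing from the $L^{q/?}$ norm of a fractional power of a maximal function back to the mixed norm — or, alternatively, it appears from the crude estimate $\dashint_{B_R}|F|^s\le R^d\,\frac{1}{|B_R|}\cdots$ that one is forced into when $s>1$ because the $\ell^1$-summation over the $\sim R^d$ covering balls only controls $\big(\sum\cdots\big)$, not $\big(\sum\cdots\big)^{1/s}$ without a loss of $R^{d(1-1/s)}$. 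I would carry out this last comparison carefully: writing $a_i:=\dashint_{B_1(x_i)}|F|^s\ge 0$, one has $\big(\frac{1}{N}\sum_{i=1}^N a_i\big)^{1/s}\le N^{(1-1/s)/1}\cdots$ — precisely, $\sum a_i\le N^{1-1/s}\big(\sum a_i^{?}\big)$ is false; rather $\big(\sum a_i\big)^{1/s}\le N^{1-1/s}\big(\sum a_i^{1/s}\cdot\text{?}\big)$. The correct elementary inequality is $\big(\sum_{i}a_i\big)^{1/s}\le \sum_i a_i^{1/s}$ for $s\ge1$ (subadditivity of $t\mapsto t^{1/s}$), giving $\big(\dashint_{B_R(x)}|F|^s\big)^{1/s}\lesssim_d R^{-d/s}\sum_i\big(\dashint_{B_1(x_i)}|F|^s\big)^{1/s}\le R^{-d/s}\cdot C_dR^d\cdot\sup_i(\cdots)$, and this sup is $\le \mathcal{M}(G^{1/s})(x)\cdot$(something), producing exactly the $R^{d-d/s}=R^{d(1-1/s)}$ prefactor times $\mathcal{M}(G^{1/s})(x)$.

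With the pointwise bound $\big(\dashint_{B_R(x)}|F|^s\big)^{1/s}\lesssim_d R^{d(1-1/s)}\,\mathcal{M}(G^{1/s})(x)$ in hand (here $G^{1/s}(y)=(\dashint_{B_1(y)}|F|^s)^{1/s}$ is an $x$-function for each fixed realization, but the maximal operator acts only in the spatial variable and commutes with $\langle\cdot\rangle$-integration by Minkowski / Fubini), I would raise to the power $p$, take $\langle\cdot\rangle$, raise to $q/p$, integrate in $x$, and raise to $1/q$; the only analytic input needed is the vector-valued (or scalar, applied after Minkowski's integral inequality) $L^{q}$-boundedness of $\mathcal{M}$ — precisely the unweighted case $\omega\equiv1$ of \eqref{f:19-2} in Lemma \ref{weight}, applied with exponent $q>1$ to the $L^p_{\langle\cdot\rangle}$-valued function $x\mapsto G^{1/s}(x)$, using that $\mathcal M$ is bounded on $L^q(\mathbb R^d;X)$ for any Banach space $X$ of Rademacher type, in particular $X=L^p_{\langle\cdot\rangle}$, or more elementarily just moving $\mathcal M$ inside $\langle\cdot\rangle^{1/p}$ via Minkowski when $p\ge 1$. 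This yields \eqref{w-1} with constant depending only on $d,M_0$ (and harmlessly on $p,q,s$, which the paper absorbs). The main obstacle, and the only place requiring care, is the combinatorial covering/power-counting that generates the sharp exponent $R^{d(1-1/s)}$ rather than a wasteful $R^{d}$: one must use subadditivity of $t\mapsto t^{1/s}$ on the $\sim R^d$ overlapping unit balls and not the naive Hölder bound, and one must check that the bounded overlap of the covering (depending on $d$ only) keeps the constant free of $R$; once that is done the rest is the standard maximal-function machinery already invoked throughout Section \ref{sec:3}.
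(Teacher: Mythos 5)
Your overall strategy — cover $B_R(x)$ by $\sim R^d$ unit cells, use the subadditivity of $t\mapsto t^{1/s}$ to bring the $1/s$ power through the sum and generate the factor $R^{d(1-1/s)}$, then integrate — is in the right spirit, but the way you close the estimate has two genuine gaps.

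First, the pointwise step. After writing $(\sum_i a_i)^{1/s}\le\sum_i a_i^{1/s}$ with $a_i=\dashint_{B_1(x_i)}|F|^s$, you bound $\sum_i a_i^{1/s}\le N\sup_i a_i^{1/s}$ and then claim the supremum is controlled by $\mathcal{M}\big((\dashint_{B_1(\cdot)}|F|^s)^{1/s}\big)(x)$ ``times something.'' That last inequality is false as a pointwise statement: the supremum of a nonnegative function over a ball of radius $R$ about $x$ is not dominated by its Hardy--Littlewood maximal function evaluated at $x$ (a single spike at distance $\sim R$ makes the supremum $\sim R^d$ larger than the maximal function). The correct and tight bound is on the \emph{sum}, not the sup: because each $a_i^{1/s}=(\dashint_{B_1(x_i)}|F|^s)^{1/s}$ is itself a unit-scale $L^s$-average, one has $a_i^{1/s}\lesssim\int_{Q_i}h$ for a unit cell $Q_i\ni x_i$ with $h(y)=(\dashint_{B_1(y)}|F|^s)^{1/s}$, so that $\sum_i a_i^{1/s}\lesssim\int_{B_{R+C}(x)}h$. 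This is precisely the kind of geometric comparison Lemma~\ref{integral-geometry} (estimate~\eqref{g-0}) packages. The outcome is a bound by a \emph{fixed} local average, $\big(\dashint_{B_R(x)}|F|^s\big)^{1/s}\lesssim R^{d(1-1/s)}\dashint_{B_{CR}(x)}h$, without any maximal function.

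Second, even if one grants a pointwise bound involving $\mathcal{M}(h)$, invoking the vector-valued Fefferman--Stein maximal inequality on $L^q(\mathbb{R}^d;L^p_{\langle\cdot\rangle})$ requires $1<p,q<\infty$ strictly, whereas the lemma is stated for $1\le p,q<\infty$, endpoints included. Your ``more elementary'' fallback of ``moving $\mathcal{M}$ inside $\langle\cdot\rangle^{1/p}$ by Minkowski'' is not available either: Minkowski's integral inequality commutes an $L^p$-norm with an integral (a fixed average), not with the supremum over scales that defines $\mathcal{M}$. Both defects disappear once the pointwise estimate is phrased with the fixed average $\dashint_{B_{CR}(x)}h$: one then applies Minkowski's integral inequality to pull $\langle\cdot\rangle^{1/p}$ inside the spatial average (this only needs $p\ge 1$), Jensen's inequality to push the $q$-th power inside (this only needs $q\ge 1$), and Fubini to conclude — giving exactly \eqref{w-1} on the full range $1\le p,q,s<\infty$ with constant depending only on $d$ (and $M_0$ via the passage from $U_r$ to $B_r$). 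This Fubini/Jensen/Minkowski route is what the paper's cited source (Josien--Otto, pp.\ 43--44) uses, and it is both simpler and stronger than the maximal-function machinery you reached for.
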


\begin{proof}
See \cite[pp.43-44]{Josien-Otto22} for the details.
\end{proof}

\paragraph{Acknowledgements.}
The authors are grateful to Prof. Felix Otto for sharing his very helpful insight on the part of the suboptimal Calder\'on-Zygmund estimate, which greatly simplifies our previous proof of Proposition $\ref{P:3}$.
The first author also appreciates
Prof. Zhifei Zhang for his instruction and encouragement when she held a post-doctoral position in Peking University.
The first author was supported by China Postdoctoral
Science Foundation (Grant No. 2022M710228).
The second author was supported by the Young Scientists Fund of the National Natural Science Foundation of China (Grant No. 11901262), and by the Fundamental Research Funds for the Central Universities (Grant No.lzujbky-2021-51);


\addcontentsline{toc}{section}{References}


%
%
%
%

\end{document}